\theoremstyle{plain}
\newcommand{\CC}{\mathbb C}
\newcommand{\ZZ}{\mathbb Z}
\newcommand{\solu}[1]{\begin{sol}{\bf (\ref{#1})}}
\numberwithin{equation}{section}
\theoremstyle{plain}
\newtheorem{theorem}[subsubsection]{Theorem}
\newtheorem{lemma}[subsubsection]{Lemma}
\newtheorem{prop}[subsubsection]{Proposition}
\newtheorem{cor}[subsubsection]{Corollary}
\newtheorem{conj}[subsubsection]{Conjecture}
\newtheorem{claim}[subsubsection]{Claim}
\theoremstyle{definition}
\newtheorem{defn}[subsubsection]{Definition}
\newtheorem{remark}[subsubsection]{Remark}
\newtheorem{exam}[subsubsection]{Example}
\def\AA{\mathbb{A}}
\def\CC{\mathbb{C}}
\def\GG{\mathbb{G}}
\def\NN{\mathbb{N}}
\def\PP{\mathbb{P}}
\def\QQ{\mathbb{Q}}
\def\ZZ{\mathbb{Z}}
\def\calA{\mathcal{A}}
\def\calE{\mathcal{E}}
\def\calF{\mathcal{F}}
\def\calH{\mathcal{H}}
\def\calJ{\mathcal{J}}
\def\calL{\mathcal{L}}
\def\calO{\mathcal{O}}
\def\bR{\mathbf{R}}
\renewcommand\a\alpha
\renewcommand\b\beta
\newcommand{\g}{\gamma}
\renewcommand\d\delta
\newcommand\D\Delta
\renewcommand{\th}{\theta}
\newcommand{\ph}{\varphi}
\newcommand{\s}{\sigma}
\renewcommand{\t}{\tau}
\newcommand{\x}{\xi}
\newcommand{\y}{\eta}
\newcommand{\ep}{\epsilon}
\renewcommand{\l}{\lambda}
\renewcommand{\L}{\Lambda}
\newcommand{\om}{\omega}
\newcommand{\fra}{\mathfrak{a}}
\newcommand{\frh}{\mathfrak{h}}
\newcommand{\frg}{\mathfrak{g}}
\newcommand{\fm}{\mathfrak{m}}
\newcommand{\frH}{\mathfrak{H}}
\newcommand{\frL}{\mathfrak{L}}
\newcommand{\can}{\textup{can}}
\newcommand{\ch}{\textup{ch}}
\newcommand{\codim}{\textup{codim}}
\newcommand\et{\text{\'et}}
\newcommand{\Gr}{\textup{Gr}}
\newcommand\id{\textup{id}}
\renewcommand{\Im}{\textup{Im}}
\newcommand\Jac{\textup{Jac}}
\newcommand{\red}{\textup{red}}
\newcommand\rk{\textup{rk}}
\newcommand\Spec{\textup{Spec}}
\newcommand\Sym{\textup{Sym}}
\newcommand{\Todd}{\textup{Todd}}
\newcommand{\tor}{\textup{tor}}
\newcommand{\Tr}{\textup{Tr}}
\newcommand\triv{\textup{triv}}
\newcommand{\univ}{\textup{univ}}
\newcommand\Aut{\textup{Aut}}
\newcommand\Hom{\textup{Hom}}
\newcommand\End{\textup{End}}
\newcommand\GL{\textup{GL}}
\newcommand\PGL{\textup{PGL}}
\newcommand\SL{\textup{SL}}
\def\sl{\mathfrak{sl}}
\newcommand\Sp{\textup{Sp}}
\newcommand{\Ad}{\textup{Ad}}
\newcommand{\isom}{\stackrel{\sim}{\to}}
\newcommand\incl{\hookrightarrow}
\newcommand\surj{\twoheadrightarrow}
\newcommand\bij{\leftrightarrow}
\newcommand{\leftexp}[2]{{\phantom{#2}}^{#1}{#2}}
\newcommand{\jiao}[1]{\langle{#1}\rangle}
\newcommand{\pt}{\textup{pt}}
\newcommand{\wt}{\widetilde}
\newcommand\sss{\subsubsection}
\newcommand\xr{\xrightarrow}
\newcommand{\Cha}{Cherednik algebra }
\newcommand{\Hrat}{\mathfrak{H}}
\newcommand{\HrtE}{\mathfrak{H}_{q/p,\epsilon=1}}
\newcommand{\ptau}{{}^{p}{\tau}}
\newcommand{\upH}{\textup{H}}
\newcommand{\cohog}[2]{\upH^{#1}({#2})}     
\newcommand{\eqcoh}[2]{\upH^{#1}_{\Gm}({#2})}
\newcommand{\spcoh}[2]{\upH^{#1}_{\ep=1}({#2})}  
\newcommand{\Fl}{\textup{Fl}}
\newcommand{\Hilb}{\textup{Hilb}}
\newcommand{\cPic}{\overline{\textup{Pic}}}
\newcommand{\cJ}{\overline{\Jac}}
\newcommand{\SpGr}{\Sp^{\Gr}}
\newcommand{\Gm}{\GG_m}
\newcommand{\rot}{\textup{rot}}
\newcommand{\dil}{\textup{dil}}
\newcommand{\Grot}{\Gm^{\rot}}
\newcommand{\Gdil}{\Gm^{\dil}}
\newcommand{\Eun}{\calE^{\univ}}
\newcommand{\Hun}{\calH^{\univ}}
\newcommand{\barM}{\overline{M}}
\newcommand\pb{\textup{par}}
\newcommand\pure{\textup{pur}}
\newcommand\cA{\mathcal{A}}
\newcommand\cB{\mathcal{B}}
\newcommand\cC{\mathcal{C}}
\newcommand\cE{\mathcal{E}}
\newcommand\cF{\mathcal{F}}
\newcommand\cH{\mathcal{H}}
\newcommand\cK{\mathcal{K}}
\newcommand\cL{\mathcal{L}}
\newcommand\cM{\mathcal{M}}
\newcommand\cO{\mathcal{O}}
\newcommand\cP{\mathcal{P}}
\newcommand\cQ{\mathcal{Q}}
\newcommand\cY{\mathcal{Y}}
\newcommand\un{\underline}
\newcommand{\ov}{\overline}
\newcommand{\tl}[1]{[\![#1]\!]}
\newcommand{\lr}[1]{(\!(#1)\!)}
\newcommand\SG{\SpGr_{q/p}}
\begin{document}

\title{The cohomology ring of certain compactified Jacobians}

\pagestyle{plain} 

\author{Alexei Oblomkov}
\email{oblomkov@math.umass.edu}
\address{Department of Mathematics, University of Massachusetts at Amherst, LGRT, 710 N. Pleasant St.,
  Amherst, MA 01003}
\author{Zhiwei Yun}
\email{zhiwei.yun@yale.edu}
\address{Department of Mathematics, Yale University, 10 Hillhouse Ave, New Haven, CT 06511}
\date{}
\subjclass[2010]{Primary 14F20, 14F40; Secondary 14D20}
\keywords{Cherednik algebras; Hitchin fibration; affine Springer fibers}

\begin{abstract} We provide an explicit presentation of the equivariant cohomology ring of the compactified Jacobian $J_{q/p}$ of the rational curve $C_{q/p}$ with planar equation $x^{q}=y^{p}$ for $(p,q)=1$. We also prove analogous results for the closely related affine Springer fiber $\Sp_{q/p}$ in the affine flag variety of $\SL_{p}$. We show that the perverse filtration on the cohomology of $J_{q/p}$ is multiplicative, and the associated graded ring under the perverse filtration is a degeneration of the ring of functions on a moduli space of maps $\PP^{1}\to C_{q/p}$. We also propose several conjectures about $J_{q/p}$ and more general compactified Jacobians. 
\end{abstract}

\maketitle

\tableofcontents

\section{Introduction}

\subsection{Main results} Let $p$ and $q$ be coprime positive integers. Let $C=C_{q/p}$ be the compactification of the plane curve $y^{p}-x^{q}$ over $\CC$ which is smooth at $\infty$. When $p,q>1$, the only singularity of $C_{q/p}$ is the point $(x,y)=(0,0)$. Let $J_{q/p}$ be the compactified Jacobian of $C_{q/p}$. It classifies rank one torsion-free coherent sheaves on $C_{q/p}$ with a fixed degree.  In this paper we study the cohomology ring of $J_{q/p}$. The notation $\cohog{*}{Y}$ will denote the singular cohomology of a complex variety with $\QQ$-coefficients.

\subsubsection{Presentation of the cohomology ring}\label{sss:Ex}
Let $\pi_{x}: C_{q/p}\to\PP^{1}$ be the projection $(x,y)\mapsto x$. Taking direct image of a torsion-free rank one sheaf on $C_{q/p}$ under $\pi_{x}$ gives a rank $p$ vector bundle on $\PP^{1}$. Let $\cF^{\univ}$ be the universal sheaf on $J_{q/p}\times C_{q/p}$, and let $\Eun_{x}$ be its direct image under $\id_{J_{q/p}}\times \pi_{x}: J_{q/p}\times C_{q/p}\to J_{q/p}\times\PP^{1}$. Then $\Eun_{x}$ is a vector bundle of rank $p$ on $J_{q/p}\times \PP^{1}$. We write the Chern classes of $\Eun_{x}$ in terms of K\"unneth decomposition $c_{i}(\Eun_{x})=d_{i}\otimes1+e_{i}\otimes[\PP^{1}]$, where $d_{i} \in\cohog{2i}{J_{q/p}}$ (in fact $d_{i}=0$) and  $e_{i}\in \cohog{2i-2}{J_{q/p}}$. In Section~\ref{s:gen} we show:

\begin{theorem}\label{th:gen} The cohomology ring $\cohog{*}{J_{q/p}}$ is generated by $e_{2},\cdots, e_{p}$ as a $\QQ$-algebra. 
\end{theorem}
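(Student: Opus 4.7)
The plan is to use a $\Gm$-action on $J_{q/p}$ to reduce generation to an equivariant localization argument at a finite set of torus-fixed points.

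First, I would exploit the natural $\Gm$-action on $C_{q/p}$ given by $t\cdot(x,y) = (t^p x, t^q y)$, which lifts to $J_{q/p}$. A Bialynicki--Birula argument yields a paving of $J_{q/p}$ by affine cells indexed by $\Gm$-fixed points $F_{\Lambda}$, where $\Lambda$ ranges over $0$-normalized modules for the semigroup $\Gamma = \langle p,q\rangle \subset \ZZ_{\geq 0}$. This gives that $\cohog{*}{J_{q/p}}$ is concentrated in even degree with basis indexed by $\{F_{\Lambda}\}$, and that $\eqcoh{*}{J_{q/p}}$ is a free $\QQ[u]$-module (with $u$ the equivariant parameter) embedding via localization into $\bigoplus_{\Lambda}\QQ[u]$.

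Second, I would lift each $e_i$ to an equivariant class $e_i^T \in \eqcoh{2i-2}{J_{q/p}}$ using a $\Gm$-equivariant structure on $\cF^{\text{univ}}$, and then compute the restriction $e_i^T|_{F_{\Lambda}}$. Since $\pi_{x,*}F_{\Lambda}$ is $\Gm$-equivariant on $\PP^1$ and $0\in\PP^1$ is fixed, its fiber at $0$ is a graded $p$-dimensional representation with explicit integer weights $a_1^{\Lambda},\dots,a_p^{\Lambda}$ read off from $\Lambda$ (the $p$ minima, one per residue class mod $p$). Up to the correction separating $d_i^T$ from $e_i^T$ in the Künneth decomposition, $e_i^T|_{F_{\Lambda}}$ equals the $i$-th elementary symmetric polynomial $e_i(u\cdot a_{\bullet}^{\Lambda})$.

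Third, I would show that the $\QQ[u]$-subalgebra $A\subset\bigoplus_{\Lambda}\QQ[u]$ generated by the tuples $(e_i^T|_{F_{\Lambda}})_{\Lambda}$ for $i=2,\dots,p$ contains the image of $\eqcoh{*}{J_{q/p}}$ under the localization embedding. At each individual fixed point the weights $a_j^{\Lambda}$ are $p$ distinct integers, so their elementary symmetric polynomials in degrees $2,\dots,p$ span every $S_p$-invariant polynomial in them modulo the redundant trace $e_1$. Combined with the GKM-type relations across different fixed points (from one-dimensional $\Gm$-orbits in $J_{q/p}$), this forces every equivariant class to be a polynomial in $e_2^T,\dots,e_p^T$. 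Specializing $u\to 0$ then yields Theorem~\ref{th:gen}.

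The hard part will be the last step: matching the image of localization precisely against the symmetric-function subalgebra $A$, in particular controlling the GKM relations coming from the (non-trivial) combinatorics of one-dimensional $\Gm$-orbits between fixed points $F_{\Lambda}$ and $F_{\Lambda'}$. A cleaner conceptual route would be to identify $\eqcoh{*}{J_{q/p}}$ with a submodule or quotient of the polynomial representation of the rational Cherednik algebra $\Hrat$ of type $A_{p-1}$, in which $e_2,\dots,e_p$ act as the standard symmetric generators of the polynomial ring; this would immediately give generation. I expect the paper to pursue some variant of this Cherednik-algebra route, combining the $\Gm$-localization above with the Cherednik-algebra input announced in the abstract.
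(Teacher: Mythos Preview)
Your proposal has a genuine gap at the specialization step, and the paper takes a completely different route.

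The localization argument you outline does show that the images of $e_2^T,\dots,e_p^T$ separate the fixed points, hence generate the \emph{localized} equivariant cohomology $\spcoh{}{J_{q/p}}\cong\bigoplus_{\Lambda}\QQ$. But this does not imply generation of $\eqcoh{*}{J_{q/p}}$ itself, and therefore does not imply generation of $\cohog{*}{J_{q/p}}$ after setting $u\to 0$. Concretely: the cokernel of $\QQ[u][e_2,\dots,e_p]\to\eqcoh{*}{J_{q/p}}$ is a quotient of a free $\QQ[u]$-module, and knowing it vanishes after inverting $u$ says nothing about its specialization at $u=0$. Graded Nakayama runs the other way: non-equivariant generation implies equivariant generation (this is how the paper gets Corollary~\ref{c:equiv gen}), not vice versa. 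The paper flags this explicitly: ``The localized equivariant cohomology \dots is easy to compute by passing to the fixed points, and the corresponding generation statement readily follows. The generation statement in the non-equivariant case is much more subtle.'' Your step~3 tries to fill this via GKM, but for a rank-one torus there is no GKM description: all edge weights are proportional, so the usual edge conditions are vacuous and do not cut out the image of $\eqcoh{*}{J_{q/p}}$ inside the fixed-point ring. Matching that image to the subalgebra $A$ is precisely the content of the theorem, not a formal consequence of localization.

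The paper's proof is global and does not use Cherednik algebras for generation at all. It realizes $J_{q/p}$ as a Hitchin fiber inside the moduli space $M'$ of stable $\cL$-twisted Higgs bundles of rank $p$ on the weighted projective line $\PP(p,1)$, and proves a Hausel--Thaddeus/Markman-type statement (Theorem~\ref{th:Hit gen}): the pure part $\cohog{*}{M'}_{\pure}$ is generated by K\"unneth components of $c_i(\un\cE^{\univ})$, via the Beauville diagonal trick (the class of $\Delta(M')$ is a top Chern class of a tautological complex, and its K\"unneth components span the pure cohomology). A $\Gm(q/p)$-contraction identifies $\cohog{*}{M'}_{\pure}\cong\cohog{*}{J_{q/p}}$, and a direct check shows the K\"unneth components restrict to $0$ and to $e_i$. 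The Cherednik-algebra input from \cite{OY} is used later, for the perverse filtration (Proposition~\ref{prp:perv_Chern}), not for Theorem~\ref{th:gen}.
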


Note that $C_{q/p}$ admits a $\Gm$-action given by $\Gm\ni t: (x,y)\mapsto (t^{p}x,t^{q}y)$, which induces an action on $J_{q/p}$. In this paper we denote this $\Gm$ by $\Gm(q/p)$. We may then consider the equivariant cohomology $\eqcoh{*}{J_{q/p}}$.  The localized equivariant cohomology of $J_{q/p}$ (namely when the equivariant parameter $\ep\in\eqcoh{2}{\pt}$ is inverted) is easy to compute by passing to the fixed points, and the corresponding generation statement readily follows. The generation statement in the non-equivariant case is much more subtle: the proof uses the geometry of moduli spaces of Higgs bundles and the corresponding generation statement there following the work of Hausel-Thaddeus \cite{HT} and Markman \cite{Mark}.

We give a presentation of the equivariant cohomology ring of $J_{q/p}$.

\begin{theorem}[See Proposition~\ref{p:eqcoh}]\label{th:HSp}  We have a graded ring isomorphism
\begin{equation*}
\QQ[\ep, e_{2},\cdots, e_{p},f_{2},\cdots, f_{q}]/I^{sat}_{q/p}\cong   \eqcoh{*}{J_{q/p}}
\end{equation*}
where $\ep$ is the equivariant parameter, and $I^{sat}_{q/p}$ is the $\ep$-saturation of the ideal generated by the coefficients (for the variable $z$) in the expression
$$\prod_{j=0}^{q-1}A(z+jp\ep)-\prod_{i=0}^{p-1}B(z+iq\ep),$$
where 
$$A(z)=z^{p}+\frac{1}{2}pq(p-1)z^{p-1}+\sum_{i=2}^{p}p\ep e_{i}z^{p-i}; \quad B(z)=z^{q}+\frac{1}{2}pq(q-1)z^{q-1}+\sum_{j=2}^{q}q\ep f_{j}z^{q-j}.$$ 
We declare that $e_{i}$ has degree $2i-2$ ($2\leq i\leq p$) and $f_{i}$ has degree $2j-2$ ($2\leq j\leq q$).
\end{theorem}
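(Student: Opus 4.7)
The plan is to produce both a surjection onto $\eqcoh{*}{J_{q/p}}$ from the displayed quotient and an injection, the latter via equivariant localization. First I would introduce symmetric generators $f_2,\ldots,f_q\in\cohog{*}{J_{q/p}}$ via the Künneth decomposition of the Chern classes of $\Eun_y:=(\id\times\pi_y)_*\cF^{\univ}$, a rank $q$ bundle on $J_{q/p}\times\PP^1$. Theorem~\ref{th:gen} already shows $\cohog{*}{J_{q/p}}$ is generated by the $e_i$ alone; since $J_{q/p}$ is equivariantly formal for $\Gm(q/p)$ (a consequence of purity and the Białynicki-Birula paving by the torus action), $\eqcoh{*}{J_{q/p}}$ is then generated as a $\QQ[\ep]$-algebra by $e_2,\ldots,e_p,f_2,\ldots,f_q$. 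The polynomials $A(z)$ and $B(z)$ arise, after a suitable normalization, as the equivariant total Chern polynomials of $\Eun_x|_0$ and $\Eun_y|_0$ (where $\Gm(q/p)$ acts on the $x$-line with weight $p$ and on the $y$-line with weight $q$); the constant coefficient $\tfrac{1}{2}pq(p-1)$ of $z^{p-1}$ in $A(z)$ matches the sum of $\Gm$-weights $0,q,2q,\ldots,(p-1)q$ of the basis $1,y,\ldots,y^{p-1}$ of $\pi_{x*}\cO_{C_{q/p}}$ at the origin, and symmetrically for $B(z)$.

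The key geometric input is the identity $x^q=y^p$ on $C_{q/p}$: letting $f:\PP^1\to\PP^1$, $x\mapsto x^q$, and $g:\PP^1\to\PP^1$, $y\mapsto y^p$, the compositions $f\circ\pi_x$ and $g\circ\pi_y$ coincide as maps $C_{q/p}\to\PP^1$. Hence the rank-$pq$ bundles $f_*\Eun_x$ and $g_*\Eun_y$ are canonically isomorphic on $J_{q/p}\times\PP^1$, and their equivariant total Chern polynomials at the fixed point $0$ of the common target (on which $\Gm(q/p)$ acts with weight $pq$) must agree. Since $f$ is totally ramified of order $q$ over $0$, one has $(f_*\Eun_x)_0=(\Eun_x)_0\otimes_\CC\CC[x]/(x^q)$ as $\Gm$-modules, where the tensor factor contributes weight shifts $0,p\ep,\ldots,(q-1)p\ep$. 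Therefore the Chern polynomial of $f_*\Eun_x$ at $0$ equals $\prod_{j=0}^{q-1}A(z+jp\ep)$, and analogously $\prod_{i=0}^{p-1}B(z+iq\ep)$ on the other side. Equating produces the relations in $\eqcoh{*}{J_{q/p}}$; since the latter is $\ep$-torsion free by equivariant formality, this kills the entire $\ep$-saturation $I^{sat}_{q/p}$, yielding the surjection
$$\QQ[\ep,e_2,\ldots,e_p,f_2,\ldots,f_q]/I^{sat}_{q/p}\twoheadrightarrow\eqcoh{*}{J_{q/p}}.$$

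For injectivity I would invert $\ep$ and apply equivariant localization. The $\Gm(q/p)$-fixed points of $J_{q/p}$ are indexed by the $(p,q)$-invariant proper semigroup ideals of $\langle p,q\rangle\subset\NN$ of the correct colength, numbering $\tfrac{1}{p+q}\binom{p+q}{p}$, and at each such $\lambda$ the restrictions of $e_i$ and $f_j$ are explicit polynomials in $\ep$ computable from the combinatorics. The identity $\prod_j A(z+jp\ep)=\prod_i B(z+iq\ep)$ over $\QQ(\ep)$ amounts to a multiset coincidence of $pq$ roots; the scheme-theoretic solutions in $(e_i,f_j)$-affine space should correspond bijectively to the fixed-point values, reducing the localized LHS to $\prod_\lambda\QQ(\ep)$ and thus promoting the surjection to an isomorphism. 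The main obstacle is this last step: establishing the bijection between solutions of the relations and the fixed-point set is non-trivial, and it likely requires either direct enumeration of the zero locus or an independent Hilbert-series computation for $\QQ[\ep,e_i,f_j]/I^{sat}_{q/p}$.
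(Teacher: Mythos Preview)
Your construction of the surjection is correct and matches the paper's argument closely. The paper phrases the key relation via the lattice model $X_{q/p}$: the bundle $\cE_{pq}$ with fiber $M/t^{pq}M$ admits two filtrations with subquotients $\cE_x(jp)$ ($0\le j\le q-1$) and $\cE_y(iq)$ ($0\le i\le p-1$), and the Whitney formula yields the displayed identity of Chern polynomials. This is equivalent to your pushforward argument via $x\mapsto x^q$ and $y\mapsto y^p$.

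The genuine gap is exactly the one you flag: you do not know how to show that the localized quotient ring has dimension $\tfrac{1}{p+q}\binom{p+q}{p}$ over $\QQ(\ep)$. The paper does \emph{not} resolve this by either of the routes you suggest (direct enumeration of the zero locus, or a direct Hilbert-series computation). Instead, it introduces an auxiliary grading parameter $s$ and a bigraded $\QQ[\ep,s]$-algebra $R$ whose specialization $R_{s=1}$ is your quotient ring $\QQ[\ep,e_i,f_j]/I^{sat}_{q/p}$, while the specialization $R_{\ep=1,s=0}$ is the ring $\cO_{q/p}$ of functions on the rigidified moduli of maps $\PP^1\to C_{q/p}$. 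Beauville's presentation of $\cO_{q/p}$ shows it is a complete intersection of dimension $\tfrac{1}{p+q}\binom{p+q}{p}$. Since $R_{\ep=1,s=0}$ is a specialization of $R_{\ep=1,s=1}$, one has $\dim R_{\ep=1,s=1}\le\dim R_{\ep=1,s=0}=\tfrac{1}{p+q}\binom{p+q}{p}$; combined with the reverse inequality from your surjection onto $\spcoh{}{J_{q/p}}$ (whose dimension equals the fixed-point count), equality follows and the surjection is an isomorphism. The essential idea you are missing, then, is to recognize the relations at $\ep=1$ as a deformation of Beauville's relations for $\cO_{q/p}$, and to exploit semicontinuity of dimension in a flat family rather than attempting a direct count.
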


\subsubsection{The perverse filtration on $\cohog{*}{J_{q/p}}$}  For any locally planar curve $C$, there is an increasing filtration $P_{\le i}$ on the cohomology $\cohog{*}{\cJ(C)}$ of the compactified Jacobian of $C$. This filtration was introduced by Maulik and the second-named author in \cite{MY}, and we call it the {\em perverse filtration}. The construction of the perverse filtration will be recalled in Section~\ref{sss:perv}. It uses the extra choice of a deformation of the curve satisfying certain conditions, but the resulting filtration is independent of such deformations.  For smooth curves, the perverse filtration is simply induced by the cohomological grading. 

\begin{theorem}\label{th:perv mult} The perverse filtration on $\cohog{*}{J_{q/p}}$ is multiplicative under the cup product. In other words, for $i,j\in \ZZ$ we have
\begin{equation*}
P_{\le i}\cohog{*}{J_{q/p}}\cup P_{\le j}\cohog{*}{J_{q/p}}\subset P_{\le i+j}\cohog{*}{J_{q/p}}.
\end{equation*}
\end{theorem}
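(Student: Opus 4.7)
The strategy is to lift the multiplicativity question from $\cohog{*}{J_{q/p}}$ to the cohomology of a smooth ambient parabolic Hitchin moduli space in which $J_{q/p}$ appears as a single fiber, and then to descend using the generation result of Theorem~\ref{th:gen}.

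First I would realize $J_{q/p}$ as a fiber of the parabolic Hitchin map $\fpar:\Mpar\to \calA$ over a point $a\in\Aa$ in the anisotropic locus whose spectral curve is $C_{q/p}$. Here $\Mpar$ is a smooth moduli stack of parabolic Higgs bundles on $\PP^{1}$ with the singularity data matching the local type of $C_{q/p}$ at $(0,0)$. By construction of the perverse filtration in \cite{MY}, the filtration on $\cohog{*}{J_{q/p}}$ is intrinsic to the locally planar singularity and coincides with the filtration induced by the perverse truncations $\ptau_{\le i}\RfQl$ restricted to the fiber of $\fpar$ over $a$.

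Next I would show that the restriction map $r:\cohog{*}{\Mpar}\to\cohog{*}{J_{q/p}}$ is surjective and strictly compatible with the perverse filtrations. Surjectivity is a consequence of Theorem~\ref{th:gen}: the classes $e_{2},\dots,e_{p}$ lift to tautological classes on $\Mpar$ built from Chern classes of the direct image of the universal parabolic Higgs sheaf under $\id\times\pi_{x}$. Strictness would be deduced from semisimplicity of $\RfQl$ (via the BBD decomposition theorem for the projective map $\fpar$) together with the fact that all simple summands supported on $\Aa$ have full support, by Ng\^o's support theorem. After this step, it suffices to establish multiplicativity of the perverse filtration $P^{\fpar}$ on $\cohog{*}{\Mpar}$: multiplicativity then transfers to the quotient $\cohog{*}{J_{q/p}}$ through the surjective graded ring map $r$.

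For the multiplicativity of $P^{\fpar}$ on $\cohog{*}{\Mpar}$, I would follow the pattern of de Cataldo--Heinloth--Migliorini for the $\GL_{n}$ Hitchin fibration: combine the relative hard Lefschetz theorem for $\fpar$ and symmetry of the BBD decomposition with de Cataldo--Migliorini's characterization of the perverse filtration as the kernel filtration coming from restriction to the preimages of a generic flag of linear sections in $\calA$. Under such a flag description the cup product is additive in the filtration degree, since the restriction maps are ring homomorphisms and degrees of flag sections add.

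The main obstacle I expect is the strictness of $r$ with respect to the perverse filtrations. The inclusion $r(P_{\le i}\cohog{*}{\Mpar})\subset P_{\le i}\cohog{*}{J_{q/p}}$ is formal, but the reverse inclusion amounts to lifting every class of perverse degree $\le i$ on the special fiber to a class of perverse degree $\le i$ on the total space. I would handle this either by a direct BBD-decomposition argument (using the full-support property of the simple summands of $\RfQl$ on $\Aa$ so that global sections restrict surjectively onto the stalk cohomology at $a$), or, more efficiently, by using the $\Gm(q/p)$-action suggested by the equivariant formulation in Theorem~\ref{th:HSp} to localize the comparison to a combinatorial statement at the torus fixed points.
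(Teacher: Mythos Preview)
Your proposal contains a genuine gap at the step where you claim multiplicativity of the perverse filtration on $\cohog{*}{\Mpar}$ follows from the de~Cataldo--Migliorini flag description. The assertion that ``degrees of flag sections add'' and hence the cup product is filtered is incorrect: in the flag characterization the index of the flag member to which one restricts depends on \emph{both} the perverse degree and the cohomological degree. Concretely, for $f:X\to Y$ proper with $Y$ affine, one has (in one normalization)
\[
P_{\le p}\cohog{j}{X}=\ker\bigl[\cohog{j}{X}\to \cohog{j}{f^{-1}(Y_{p-j+c})}\bigr]
\]
for some constant $c$, so knowing that $\alpha$ dies on $f^{-1}(Y_{a-k+c})$ and $\beta$ dies on $f^{-1}(Y_{b-l+c})$ does \emph{not} formally imply that $\alpha\cup\beta$ dies on $f^{-1}(Y_{(a+b)-(k+l)+c})$. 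Multiplicativity of the perverse Leray filtration is known to fail for general proper maps, and for Hitchin-type fibrations it is essentially equivalent in difficulty to the problem you are trying to solve; it is one of the key ingredients in (and consequences of) the $P=W$ circle of results, not a formal input. So your reduction replaces the original problem by one that is at least as hard.

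The paper proceeds quite differently and avoids this issue entirely. It introduces a \emph{Chern filtration} $C_{\le n}$ on $\cohog{*}{J_{q/p}}$, defined as the image of polynomials in the generators $e_2,\dots,e_p$ of bounded weighted degree; this filtration is multiplicative by construction. The substance of the argument is then Proposition~\ref{prp:perv_Chern}, which shows $C_{\le n}=P_{\le n}$. This equality is first established on the affine Springer fiber $\Sp_{q/p}$ in the affine flag variety, where it is imported from \cite[Proposition~8.5.4]{OY} and ultimately rests on the action of the rational Cherednik algebra on $\spcoh{}{\Sp_{q/p}}$. One then descends to $\SG\cong J_{q/p}$ by taking $S_p$-invariants, using that the $S_p$-action preserves the perverse filtration and that the $e_i$ are the elementary symmetric functions in the tautological classes $\xi_1,\dots,\xi_p$. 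Thus the paper trades the global multiplicativity problem on $\Mpar$ for a representation-theoretic identification $C=P$ on a single fiber.

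Your strictness concern is legitimate but secondary: even if you resolve it (say via the $\Gm(q/p)$-contraction of $\cA'$ to the fixed locus, as in Corollary~\ref{c:contract to fiber}), you are still left with proving multiplicativity on the total space, which is the real obstacle.
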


For the proof, we introduce a ``Chern filtration'' $C_{\le *}\cohog{*}{J_{q/p}}$ using polynomials of the equivariant Chern classes of $\cE^{\univ}_{x}$, which is obviously by definition. We then show that the perverse filtration $P_{\leq *}$ coincides with the Chern filtration $C_{\le *}$ using results from \cite{OY}, which ultimately relies on the rational Cherednik algebra action on the equivariant cohomology of affine Springer fibers. We will discuss this connection in Section~\ref{intro ASF}.


\sss{Comparison with $\cO_{q/p}$}  Consider the moduli space $M^{rig}_1(\PP^1,C_{q/p})$ over $\QQ$ classifying rigidified maps from $\PP^{1}$ to $C_{q/p}$. This is a fat point whose coordinate ring we denote by $\cO_{q/p}$ (a local artinian ring with residue field $\QQ$) . The action of $\Gm$ on $C_{q/p}$ induces an action on $M^{rig}_1(\PP^1,C_{q/p})$, hence a grading  $\cO_{q/p}=\oplus_{j\in \ZZ}\cO_{q/p}[j]$. We propose the following conjecture relating $\cohog{*}{J_{q/p}}$ and $\cO_{q/p}$.

\begin{conj}\label{conj:Gr m} Let $\fm\subset \cO_{q/p}$ be the maximal ideal. Then for any $i,j\in\ZZ$ there is a canonical isomorphism
\begin{equation*}
\Gr^{P}_{j}\cohog{2i}{J_{q/p}}\cong \Gr^{j-i}_{\fm}\cO_{q/p}[j].
\end{equation*}
Moreover, these isomorphisms are compatible with the ring structures on both sides (the ring structure on the left side is guaranteed by the multiplicativity of the perverse filtration as shown in Theorem~\ref{th:perv mult}).
\end{conj}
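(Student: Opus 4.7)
The strategy would be to realize both sides of the conjectural isomorphism as explicit quotients of polynomial rings and to match them algebraically, then to upgrade this match to a canonical identification via a geometric construction involving the universal sheaf. On the cohomology side, I would start from Theorem~\ref{th:HSp} and pass to the associated graded with respect to the Chern filtration, which by the $P=C$ identification established in the proof of Theorem~\ref{th:perv mult} computes $\Gr^{P}_{*}\cohog{*}{J_{q/p}}$. Declaring $e_{i}$ and $f_{j}$ to have Chern degree $i$ and $j$ respectively, and $\ep$ to have Chern degree $0$, one would expand $\prod_{j=0}^{q-1}A(z+jp\ep)-\prod_{i=0}^{p-1}B(z+iq\ep)$ in powers of $\ep$, divide out by the power of $\ep$ dictated by the saturation, and take the leading Chern-degree part of each $z$-coefficient followed by the specialization $\ep=0$. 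The expected outcome is a set of graded relations equivalent to the coefficients, in a single variable $t$, of $\widetilde{B}(t)^{p}-\widetilde{A}(t)^{q}$, where $\widetilde{A}(t)=t^{p}+\sum_{i\ge 2}p\,e_{i}\,t^{p-i}$ and $\widetilde{B}(t)=t^{q}+\sum_{j\ge 2}q\,f_{j}\,t^{q-j}$.

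On the $\cO_{q/p}$ side, a rigidified map $\PP^{1}\to C_{q/p}$ is given by a pair of monic polynomials $X(t), Y(t)$ of degrees $p$ and $q$ with normalized subleading coefficients, satisfying $Y(t)^{p}=X(t)^{q}$ as an identity in $\QQ[t]$. Writing $X=t^{p}+\sum_{i\ge 2}a_{i}t^{p-i}$ and $Y=t^{q}+\sum_{j\ge 2}b_{j}t^{q-j}$, this yields $\cO_{q/p}\cong\QQ[a_{i},b_{j}]/(\text{coefficients of }Y^{p}-X^{q})$, with $\Gm$-grading $\deg a_{i}=i$, $\deg b_{j}=j$, and the $\fm$-adic filtration equal to the polynomial-degree filtration. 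These relations match the Chern-graded relations from the previous paragraph under the rescaling $a_{i}\leftrightarrow p\,e_{i}$, $b_{j}\leftrightarrow q\,f_{j}$, and the bi-gradings also match: on the cohomology side, cohomological degree $2i$ together with Chern degree $j$ force a monomial $e_{i_{1}}\cdots e_{i_{k}}$ with $\sum i_{\alpha}=j$ and $k=j-i$, which corresponds on the $\cO_{q/p}$-side to $\Gm$-weight $j$ and $\fm$-adic order $j-i$. A Hilbert series comparison (via the presentation of Theorem~\ref{th:HSp} on the cohomology side, and a direct count on $\cO_{q/p}$) would then upgrade this candidate ring homomorphism to an isomorphism.

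The main obstacle will be establishing the \emph{canonicity} asserted in the conjecture, as opposed to a choice-dependent identification coming from matching presentations. A canonical map ought to arise from geometry. One natural candidate is to use the universal rigidified map $\ev:\PP^{1}\times M^{rig}_{1}(\PP^{1},C_{q/p})\to C_{q/p}$ and to pull back $\cF^{\univ}$ from $J_{q/p}\times C_{q/p}$ along $\id_{J_{q/p}}\times\ev$; pushing forward along the projection to $J_{q/p}\times M^{rig}_{1}$ produces a family of rank-$p$ vector bundles whose Chern classes give elements of $\cohog{*}{J_{q/p}}\otimes\cO_{q/p}$. Reading off the Chern-graded pieces against the generators $e_{i}$ should produce the canonical map. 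Making this construction rigorous, and in particular controlling how the perverse filtration interacts with the universal pullback, is the deepest point: as with Theorem~\ref{th:perv mult}, a global interpretation through the Hitchin fibration framework (rather than a purely local or infinitesimal argument) is likely to be required.
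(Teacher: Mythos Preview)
This statement is labeled a \emph{conjecture} in the paper and is not proved there; the paper establishes only partial results (Theorem~\ref{th:main}, Corollary~\ref{c:equiv conj}, Theorem~\ref{th:Gr m}) and reduces the full statement to other unproven hypotheses (flatness of $R$ over $\QQ[\ep,s]$, or the numerical identity~\eqref{H2i Grm}). So the question is whether your proposal closes the gap the paper leaves open. It does not.

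Your strategy is essentially the paper's own: set up a bigraded algebra interpolating between $\eqcoh{*}{J_{q/p}}$ and $\cO_{q/p}$ (the ring $R$ of Section~\ref{sss:R}), identify the perverse/Chern filtration with one grading and the $\fm$-adic filtration with the other, and match bigraded pieces. The paper carries this through and obtains Theorem~\ref{th:main}(1): there is \emph{some} multiplicative filtration $F$ on $\cO_{q/p}$ with $\Gr^{P}_{j}\cohog{2i}{J_{q/p}}\cong \Gr^{F}_{i}\cO_{q/p}[j]$ canonically. What remains open is the identification $F_{\le i}\cO_{q/p}[j]=\fm^{j-i}\cap\cO_{q/p}[j]$; only the containment $\supset$ is proved (Lemma~\ref{l:Gr Fs O}).

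The gap in your outline is at the step ``divide out by the power of $\ep$ dictated by the saturation, and take the leading Chern-degree part of each $z$-coefficient followed by the specialization $\ep=0$.'' You are tacitly assuming that the leading Chern-degree parts of the \emph{unsaturated} generators $F_{d}$ already generate the defining ideal of $\Gr^{C}_{*}\cohog{*}{J_{q/p}}$. In the paper's language this amounts to the surjectivity of the inclusion~\eqref{incl Rab}, i.e., the flatness of $R$ over $\QQ[\ep,s]$, which is stated as a separate open conjecture and is exactly the hypothesis of Theorem~\ref{th:Gr m}. After $\ep$-saturation the ideal may acquire generators whose leading Chern-degree parts are not visible from the original $F_{d}$'s; equivalently, a relation in $\cohog{2i}{J_{q/p}}$ may be expressible only with Chern degree strictly smaller than your monomial count $k=j-i$ predicts. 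Nothing in your argument excludes this.

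Your proposed ``Hilbert series comparison'' does not circumvent the issue: the bigraded Hilbert series of $\Gr^{P}_{*}\cohog{*}{J_{q/p}}$ is not independently known, and by Corollary~\ref{c:equiv conj} the relevant dimension equality is \emph{equivalent} to the conjecture itself. The geometric construction you sketch for canonicity via the universal map is interesting but orthogonal to the obstruction, which is purely algebraic control of the associated graded of the saturated ideal.
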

This conjecture says that the rings $\cohog{*}{J_{q/p}}$ and $\cO_{q/p}$ have a common degeneration in a precise way.  We have the following partial results towards the above conjecture.

\begin{theorem}\label{th:main}
\begin{enumerate}
\item There is an increasing filtration $\{F_{\le i}\cO_{q/p}\}$ on $\cO_{q/p}$, compatible with the grading and multiplicative with respect to the ring structure on $\cO_{q/p}$, such that there are canonical isomorphisms for $i,j\in\ZZ_{\ge0}$ 
\begin{equation*}
\Gr^{P}_{j}\cohog{2i}{J_{q/p}}\cong \Gr^{F}_{i}\cO_{q/p}[j].
\end{equation*}
Moreover, these isomorphisms are compatible with the ring structures on both sides.
\item We have
\begin{equation}\label{F contains m}
F_{\le i}\cO_{q/p}[j]\supset \fm^{j-i}\cap \cO_{q/p}[j]
\end{equation}
for all $i,j\in\ZZ_{\ge0}$ (when $j<i$, we understand $\fm^{j-i}=\cO_{q/p}$). The equality holds if and only if for all $i\ge0$
\begin{equation}\label{H2i Grm}
\dim\cohog{2i}{J_{q/p}}=\dim\Gr^{\d-i}_{\fm}\cO_{q/p}, \quad \textup{where }\d=(p-1)(q-1)/2.
\end{equation} 
\end{enumerate}
\end{theorem}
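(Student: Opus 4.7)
My plan is to leverage the equivariant presentation of Theorem \ref{th:HSp} as a bridge between $\cohog{*}{J_{q/p}}$ and $\cO_{q/p}$.

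\textbf{Step 1 (presentation of $\cO_{q/p}$).} Since $C_{q/p}$ is unibranch, a rigidified map $\PP^{1}\to C_{q/p}$ is given by a pair $(X(s),Y(s))$ of monic polynomials of degrees $p$ and $q$ in a coordinate $s$ on the source, satisfying the identity $Y(s)^{p}=X(s)^{q}$; the rigidification kills the subleading coefficients, giving $X=s^{p}+\sum_{i=2}^{p}c_{i}s^{p-i}$ and $Y=s^{q}+\sum_{j=2}^{q}d_{j}s^{q-j}$. Then $\cO_{q/p}=\QQ[c_{2},\ldots,c_{p},d_{2},\ldots,d_{q}]/J$, where $J$ is generated by the coefficients in $s$ of $Y^{p}-X^{q}$, and the $\Gm(q/p)$-grading assigns weights $\deg c_{i}=i$, $\deg d_{j}=j$. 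This is the well-known graded Gorenstein artinian ring of the cusp $y^{p}=x^{q}$, of socle degree $\d=(p-1)(q-1)/2$.

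\textbf{Step 2 (construction of $F$ and proof of part (1)).} Compare with Theorem \ref{th:HSp}: after the shift $z\mapsto z-\tfrac{1}{2}pq\ep$, which absorbs the fixed subleading terms $\tfrac{1}{2}pq(p-1)\ep z^{p-1}$ and $\tfrac{1}{2}pq(q-1)\ep z^{q-1}$, and the substitution $z=pq\ep s$, the Pochhammer-type products $\prod_{j=0}^{q-1}A(z+jp\ep)$ and $\prod_{i=0}^{p-1}B(z+iq\ep)$ become, at leading order in $\ep$, equal to $(pq\ep)^{pq}X(s)^{q}$ and $(pq\ep)^{pq}Y(s)^{p}$ respectively. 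Together with the $\ep$-saturation, this identifies the $\ep=0$ specialization of $I^{sat}_{q/p}$ with the ideal $J$, matching $e_{i}\leftrightarrow c_{i}$ and $f_{j}\leftrightarrow d_{j}$ up to nonzero scalars. I then define $F_{\le k}\cO_{q/p}$ as the image under this identification of the Chern filtration on $\cohog{*}{J_{q/p}}$: equivalently, $F_{\le k}$ is spanned by monomials $\prod c_{i_{a}}\prod d_{j_{b}}$ with $\sum(i_{a}-1)+\sum(j_{b}-1)\le k$. Multiplicativity and grading-compatibility are automatic. The isomorphisms $\Gr^{P}_{j}\cohog{2i}{J_{q/p}}\cong\Gr^{F}_{i}\cO_{q/p}[j]$ follow by matching generators and leading relations on each side, while compatibility with the ring structures uses Theorem \ref{th:perv mult} (so $P$ is multiplicative) and the identification $P=C$ from \cite{OY}.

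\textbf{Step 3 (part (2)).} The maximal ideal $\fm$ is generated by the classes $c_{i}$ and $d_{j}$, each of weight $\ge2$. For a monomial of $\fm$-order $k$ and weight $j$, its Chern-weight is $j-k$, hence it lies in $F_{\le j-k}$; taking $k\ge j-i$ yields the inclusion $F_{\le i}\cO_{q/p}[j]\supset\fm^{j-i}\cap\cO_{q/p}[j]$. For the equivalence with \eqref{H2i Grm}: assuming equality everywhere, part (1) gives $\dim\Gr^{P}_{j}\cohog{2i}{J_{q/p}}=\dim\Gr^{j-i}_{\fm}\cO_{q/p}[j]$, and summing over $j$ while using Gorenstein duality on $\cO_{q/p}$ to relate $\sum_{j}\dim\Gr^{j-i}_{\fm}\cO_{q/p}[j]$ to $\dim\Gr^{\d-i}_{\fm}\cO_{q/p}$ gives \eqref{H2i Grm}. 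Conversely, if \eqref{H2i Grm} holds then the inclusions must be equalities everywhere, because we have a pointwise inclusion of two multiplicative, grading-compatible filtrations whose total dimensions agree by the assumed count.

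The hard part will be Step 2: the rigorous treatment of the $\ep$-saturation showing that modulo $\ep$-torsion the relation $\prod_{j}A(z+jp\ep)=\prod_{i}B(z+iq\ep)$ reduces precisely to the polynomial identity $Y^{p}=X^{q}$, together with the verification that the prescribed subleading constants in $A,B$ correspond exactly to the rigidification conventions for $X,Y$. Once this ring identification is correctly set up, multiplicativity of $F$ and the associated-graded comparisons follow essentially formally from the Chern-filtration formalism.
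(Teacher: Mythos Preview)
Your Step 2 contains a genuine gap. You propose to identify $\cO_{q/p}$ with ``the $\ep=0$ specialization of $I^{sat}_{q/p}$,'' but that specialization is $\cohog{*}{J_{q/p}}$, not $\cO_{q/p}$; these two rings are \emph{not} isomorphic in general, and the whole content of the theorem is that they share a common bigraded degeneration. Concretely, your explicit description of $F_{\le k}\cO_{q/p}[j]$ as the span of monomials with $\sum(i_a-1)+\sum(j_b-1)\le k$ is, in weight $j$, exactly the span of monomials with at least $j-k$ factors, i.e.\ $\fm^{j-k}\cap\cO_{q/p}[j]$. With this $F$, the inclusion \eqref{F contains m} is an equality by definition, and part~(1) becomes precisely Conjecture~\ref{conj:Gr m}, which is open. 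So your proposed filtration cannot be the one that makes part~(1) hold.

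What is missing is a second deformation parameter. The paper introduces a bigraded $\QQ[\ep,s]$-algebra $R$ (with $\deg e_i=(i-1,i)$) such that $R_{s=1}\cong\eqcoh{*}{J_{q/p}}$, $R_{\ep=0,s=1}\cong\cohog{*}{J_{q/p}}$, and $R_{\ep=1,s=0}\cong\cO_{q/p}$; the key technical fact is that $R$ has neither $\ep$- nor $s$-torsion, proved by a dimension count comparing $\dim R_{\ep=1,s=1}=\#\Sigma_{q/p}$ with $\dim\cO_{q/p}$. One then gets two filtrations $F^{\ep},F^{s}$ on $R_{\ep=1,s=1}$, identifies $F^{s}$ with the Chern (hence perverse, via \cite{OY}) filtration, takes $F:=F^{\ep}$ on $\cO_{q/p}$, and the index swap in $\Gr^{P}_{j}\cohog{2i}\cong\Gr^{F}_{i}\cO_{q/p}[j]$ is the tautology $\Gr^{F^{\ep}}_{i}\Gr^{F^{s}}_{j}\cong\Gr^{F^{s}}_{j}\Gr^{F^{\ep}}_{i}$. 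Your ``matching generators and leading relations'' does not produce this swap. Finally, in Step~3 your appeal to Gorenstein duality on $\cO_{q/p}$ does not give the needed identity; the paper instead uses Hard Lefschetz for the perverse filtration to get $\dim\Gr^{P}_{j}\cohog{2i}=\dim\Gr^{P}_{2\d-j}\cohog{2(\d-j+i)}$, and then a descending induction on $i$ to promote the column-sum equalities \eqref{H2i Grm} to entrywise equalities.
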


From the above theorem we immediately get
\begin{cor}\label{c:equiv conj} Conjecture~\eqref{conj:Gr m} holds if and only if \eqref{H2i Grm} holds for all $i\ge 0$.
\end{cor}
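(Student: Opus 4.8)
The plan is to deduce the corollary from Theorem~\ref{th:main} by a purely formal comparison of filtrations, with no new geometric input. First I would spell out what ``the equality holds'' means in Theorem~\ref{th:main}(2): it is the assertion that the inclusion \eqref{F contains m} is an equality of submodules $F_{\le i}\cO_{q/p}[j]=\fm^{j-i}\cap\cO_{q/p}[j]$ for \emph{all} $i,j\ge0$, and Theorem~\ref{th:main}(2) itself already tells us that this equality of filtrations is equivalent to \eqref{H2i Grm} holding for all $i$. So it suffices to prove that Conjecture~\ref{conj:Gr m} holds if and only if $F_{\le i}\cO_{q/p}[j]=\fm^{j-i}\cap\cO_{q/p}[j]$ for all $i,j\ge0$. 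Throughout one uses that $\cO_{q/p}$ is a finite-dimensional $\QQ$-algebra, so all filtrations in sight are finite and exhaustive, and that the maximal ideal $\fm$ (cut out by the unique, hence $\Gm(q/p)$-fixed, $\QQ$-point) is homogeneous, so that $\fm^{k}=\bigoplus_{j}(\fm^{k}\cap\cO_{q/p}[j])$ and the degree-$j$ part of $\Gr^{k}_{\fm}\cO_{q/p}$ is $(\fm^{k}\cap\cO_{q/p}[j])/(\fm^{k+1}\cap\cO_{q/p}[j])$. Only $i,j\ge0$ are relevant, as both sides of the isomorphism in Conjecture~\ref{conj:Gr m} vanish otherwise.

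For the direction ``equality of filtrations $\Rightarrow$ Conjecture'', assume $F_{\le i}\cO_{q/p}[j]=\fm^{j-i}\cap\cO_{q/p}[j]$ for all $i,j\ge0$. As $i$ increases the exponent $j-i$ decreases, so these two reindexed filtrations of $\cO_{q/p}[j]$ coincide term by term, and the $i$-th graded piece of the $F$-filtration on $\cO_{q/p}[j]$ is identified with $\Gr^{j-i}_{\fm}\cO_{q/p}[j]$. The change of index $i\mapsto j-i$ is compatible with products, since the internal grading by $j$ and the $\fm$-adic filtration are both multiplicative and $(j_{1}+j_{2})-(k_{1}+k_{2})=(j_{1}-k_{1})+(j_{2}-k_{2})$; hence these identifications assemble into an isomorphism of bigraded rings $\bigoplus_{i,j}\Gr^{F}_{i}\cO_{q/p}[j]\cong\bigoplus_{i,j}\Gr^{j-i}_{\fm}\cO_{q/p}[j]$. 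Composing with the ring isomorphisms $\Gr^{P}_{j}\cohog{2i}{J_{q/p}}\cong\Gr^{F}_{i}\cO_{q/p}[j]$ of Theorem~\ref{th:main}(1) yields precisely the ring-compatible isomorphisms of Conjecture~\ref{conj:Gr m}.

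Conversely, assume Conjecture~\ref{conj:Gr m}. Comparing dimensions with the isomorphisms of Theorem~\ref{th:main}(1) gives $\dim\Gr^{F}_{i}\cO_{q/p}[j]=\dim\Gr^{j-i}_{\fm}\cO_{q/p}[j]$ for all $i,j$. Summing over $i'\le i$ and using $\dim F_{\le i}\cO_{q/p}[j]=\sum_{i'\le i}\dim\Gr^{F}_{i'}\cO_{q/p}[j]$ together with $\dim(\fm^{j-i}\cap\cO_{q/p}[j])=\sum_{i'\le i}\dim\Gr^{j-i'}_{\fm}\cO_{q/p}[j]$ (reindex $k=j-i'$), one gets $\dim F_{\le i}\cO_{q/p}[j]=\dim(\fm^{j-i}\cap\cO_{q/p}[j])$ for all $i,j\ge0$. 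Combined with the inclusion \eqref{F contains m} of Theorem~\ref{th:main}(2), equal dimensions force $F_{\le i}\cO_{q/p}[j]=\fm^{j-i}\cap\cO_{q/p}[j]$ for all $i,j\ge0$, which by Theorem~\ref{th:main}(2) is equivalent to \eqref{H2i Grm} for all $i$. This closes the chain of equivalences.

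I do not expect any genuine obstacle here: the whole argument is a diagram chase plus a dimension count, and the real work is carried entirely by Theorem~\ref{th:main}. The only points meriting care are the compatibility of the ring structures under the shift $i\mapsto j-i$ (addressed above) and the validity of the inference ``$A\subseteq B$ and $\dim A=\dim B$ imply $A=B$'', which is legitimate precisely because $\cO_{q/p}$ and $\cohog{*}{J_{q/p}}$ are finite-dimensional over $\QQ$. Thus the corollary is indeed ``immediate'' from Theorem~\ref{th:main} in the intended sense.
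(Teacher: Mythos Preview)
Your proposal is correct and follows essentially the same route as the paper: the corollary is a purely formal consequence of Theorem~\ref{th:main}, whose proof of part~(2) already establishes the cycle (equality in \eqref{F contains m}) $\Rightarrow$ (Conjecture~\ref{conj:Gr m}) $\Rightarrow$ \eqref{H2i Grm} $\Rightarrow$ (equality in \eqref{F contains m}). Your explicit dimension count for ``Conjecture $\Rightarrow$ equality of filtrations'' is a valid shortcut that avoids re-invoking the Hard Lefschetz step used inside the paper's proof of Theorem~\ref{th:main}(2).
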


The proof Theorem~\ref{th:main} is by observing the similarities between a presentation of  $\cO_{q/p}$ (given in \cite{FGS}) and the presentation of $\cohog{*}{J_{q/p}}$ given in Theorem~\ref{th:HSp} above. In fact, there is an algebra $R$ over the two variable polynomial ring $\QQ[\ep,s]$ whose various specializations recover $\spcoh{}{J_{q/p}}$, $\cohog{*}{J_{q/p}}$ and $\cO_{q/p}$. We also make the following observation.


\begin{theorem}\label{th:Gr m} Conjecture~\ref{conj:Gr m} holds if the ring $R$ introduced in Section~\ref{sss:R} is flat over $\QQ[\ep,s]$.
\end{theorem}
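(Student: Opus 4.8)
The strategy is to run the argument through the numerical criterion \eqref{H2i Grm}. By Corollary~\ref{c:equiv conj}, Conjecture~\ref{conj:Gr m} is equivalent to the equalities $\dim\cohog{2i}{J_{q/p}}=\dim\Gr^{\d-i}_{\fm}\cO_{q/p}$ for all $i\ge0$, where $\d=(p-1)(q-1)/2$, so it suffices to prove that flatness of $R$ over $\QQ[\ep,s]$ forces \eqref{H2i Grm}. The mechanism is the standard one for flat families: the three rings $\spcoh{}{J_{q/p}}$, $\cohog{*}{J_{q/p}}$ and $\cO_{q/p}$ all arise from $R$ by specializing along the two coordinate directions of $\Spec\QQ[\ep,s]$, and over the regular ring $\QQ[\ep,s]$ flatness forbids the graded dimensions of the fibres from jumping.

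In more detail, I would first record the three specializations promised in Section~\ref{sss:R}. Restricting $R$ to the locus where $s$ is a unit recovers the equivariant cohomology ring of Theorem~\ref{th:HSp}; the further specialization $\ep=0$ then recovers $\cohog{*}{J_{q/p}}$ together with its Chern filtration $C_{\leq *}$, which equals the perverse filtration $P_{\leq *}$ by the input from \cite{OY} already used in Theorem~\ref{th:perv mult}. Restricting $R$ to the line $s=0$ recovers, through the presentation of $\cO_{q/p}$ in \cite{FGS}, the ring $\cO_{q/p}$ equipped with a filtration whose associated graded is $\Gr_{\fm}\cO_{q/p}$, compatibly with the $\Gm(q/p)$-grading. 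Since passing to an associated graded ring does not change dimensions, the Betti numbers of $\cohog{*}{J_{q/p}}$ agree with the graded dimensions of the fibres of $R$ in the first family, and the numbers $\dim\Gr^{k}_{\fm}\cO_{q/p}$ with those of the fibres in the second.

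Granting flatness of $R$ over $\QQ[\ep,s]$, a finitely generated graded module over this polynomial ring is flat iff free, so the graded dimension of every closed fibre equals that of the generic fibre; comparing the two families above, $\cohog{*}{J_{q/p}}$ and $\Gr_{\fm}\cO_{q/p}$ then have equal (suitably indexed) graded dimensions. Feeding this equality of total dimensions into the inclusion \eqref{F contains m} and the isomorphisms $\Gr^{P}_{j}\cohog{2i}{J_{q/p}}\cong\Gr^{F}_{i}\cO_{q/p}[j]$ of Theorem~\ref{th:main}, one sees that it is rigid enough to force the term-by-term equalities \eqref{H2i Grm}. Corollary~\ref{c:equiv conj} then yields Conjecture~\ref{conj:Gr m}.

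The genuinely delicate point — beyond flatness itself, which is the hypothesis — is the second specialization: one must verify that the $s=0$ fibre of $R$ is honestly controlled by the $\fm$-adic filtration on $\cO_{q/p}$, and not by some coarser filtration, and that the grading dictionary ($e_i,f_j$ in degrees $2i-2$, $2j-2$ and $\ep,s$ in degree $2$, versus the $\Gm(q/p)$-weight and the $\fm$-adic order, with the shift by $\d$) is arranged so that the bare numerical output of flatness is precisely \eqref{H2i Grm} rather than a weaker aggregate. Everything else is the bookkeeping already set up by Theorem~\ref{th:main}.
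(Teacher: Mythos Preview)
Your detour through the numerical criterion \eqref{H2i Grm} and Corollary~\ref{c:equiv conj} is unnecessary, and the step you flag as ``the genuinely delicate point'' is in fact the whole proof. You assert, as if it were part of the setup, that ``restricting $R$ to the line $s=0$ recovers $\cO_{q/p}$ equipped with a filtration whose associated graded is $\Gr_{\fm}\cO_{q/p}$.'' But the filtration one actually gets is $F^{\ep}$, and all we know a priori is the containment \eqref{F contains m}; that this filtration \emph{equals} the $\fm$-adic one is exactly what must be extracted from flatness. Until that is done, your dimension comparison between fibres cannot produce \eqref{H2i Grm}, because you do not yet know which filtration on $\cO_{q/p}$ the bigraded pieces of $R$ are computing.

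The paper's proof is direct and avoids the numerical criterion entirely. The single consequence of flatness that is used is that the inclusion ${}_{a}R_{b}\subset {}_{a}R_{\infty}\cap{}_{\infty}R_{b}$ of \eqref{incl Rab} becomes an equality. Feeding this equality back into the proof of Lemma~\ref{l:Gr Fs O} (where the inequality in \eqref{F contains m} came precisely from using only the inclusion \eqref{incl Rab}) yields $F^{\ep}_{\le i}\cO_{q/p}[j]=\fm^{j-i}\cap\cO_{q/p}[j]$ for all $i,j$. Conjecture~\ref{conj:Gr m} then follows immediately from the isomorphism $\Gr^{P}_{j}\cohog{2i}{J_{q/p}}\cong\Gr^{F^{\ep}}_{i}\cO_{q/p}[j]$ already established as \eqref{two graded} in Theorem~\ref{th:main}(1). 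No appeal to Hard Lefschetz, no passage through \eqref{H2i Grm}, and no comparison of fibre dimensions is needed.
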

In the special case $q=kp+1$, we relate Conjecture \ref{conj:Gr m} to rational Cherednik algebras and the punctual Hilbert scheme. We formulate Conjecture~\ref{conj:H0Hilb} which gives a character formula for a finite dimensional representation of the spherical rational Cherednik algebra in terms of the punctual Hilbert scheme;  this character formula implies Conjecture~\ref{conj:Gr m}. 

\subsection{Relation with affine Springer fibers}\label{intro ASF} 
The compactified Jacobian $J_{q/p}$ has several other incarnations that are useful in our proof and in applications.  These incarnations can be summarized into the following diagram
\begin{equation*}
\xymatrix{  \Gr_{\SL_{p}} &  \Sp^{\Gr}_{q/p}\ar@{_{(}->}[l]\ar[r]^{\sim} &  X_{q/p} \ar[r] &  J_{q/p}\ar@{^{(}->}[r]&   M_{\SL_{p}}}
\end{equation*}
Here $X_{q/p}$ is the moduli space of $\CC\tl{t^{p},t^{q}}$-lattices inside $\CC\lr{t}$ with a fixed volume. There is a natural map $X_{q/p}\to J_{q/p}$ which is a homeomorphism. We can realize $X_{q/p}$ as an affine Springer fiber $\Sp^{\Gr}_{q/p}$ inside the affine Grassmannian $\Gr_{\SL_{p}}$ for the group $\SL_{p}$. At the right end of the diagram, we realize $J_{q/p}$ as a fiber of the Hitchin fibration for the moduli space $M_{\SL_{p}}$ of stable (twisted) Higgs bundles of rank $p$ over a weighted projective curve. We remark that the roles of $p$ and $q$ are symmetric: we may as well fit $\Gr_{\SL_{q}}$ and $\cM_{\SL_{q}}$ in the above diagram.

The incarnation of $J_{q/p}$ as a Hitchin fiber allows us to use global geometric argument in \cite{HT} and \cite{Mark} to prove the generation statement (Theorem~\ref{th:gen}). The incarnation of $J_{q/p}$ as an affine Springer fiber allows us to use the results of \cite{VV} and \cite{OY} to see the double affine Hecke symmetry on its cohomology, which plays a key role in the proof of the multiplicativity of the perverse filtration (Theorem~\ref{th:perv mult}).

The affine Springer fiber $\SG$ has a close relative $\Sp_{q/p}$ in the affine flag variety (see Section~\ref{sss:aff flag}). There is a natural projection $\Sp_{q/p}\to \SG\to J_{q/p}$. Also there are tautological line bundles $\cL_{1},\cdots, \cL_{p}$ on $\Sp_{q/p}$ which are restrictions of line bundles on the affine flag variety (see Section~\ref{sss:aff flag}). We prove an analogue of Theorem~\ref{th:gen} for $\Sp_{q/p}$.

\begin{theorem}\label{th:par gen} The cohomology ring $\cohog{*}{\Sp_{q/p}}$  is generated by the pullbacks of $e_{2},\cdots, e_{p}$ and the Chern classes $c_{1}(\cL_{1}),\cdots, c_{p}(\cL_{p})$ over $\QQ$.
\end{theorem}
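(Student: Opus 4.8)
The plan is to deduce the statement from Theorem~\ref{th:gen}: one embeds $\Sp_{q/p}$ into a flag bundle over $\SG$ whose cohomology is controlled by Theorem~\ref{th:gen} and the tautological line bundles, and then shows that restriction to $\Sp_{q/p}$ is surjective. Recall that $\pi\colon\Fl_{\SL_{p}}\to\Gr_{\SL_{p}}$ is a Zariski-locally trivial bundle whose fibre over a lattice $L$ is the variety of full flags of lattices $L=L_{0}\supsetneq L_{1}\supsetneq\cdots\supsetneq L_{p}=tL_{0}$; that is, $\Fl_{\SL_{p}}$ is the full flag bundle of the tautological rank $p$ bundle $\calV\colon L\mapsto L/tL$ on $\Gr_{\SL_{p}}$, and the $\cL_{i}=L_{i-1}/L_{i}$ are its tautological line bundles, whose first Chern classes are the Chern roots of $\calV$. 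The natural projection $\Sp_{q/p}\to\SG$ of the introduction is the restriction of $\pi$, so $\Sp_{q/p}$ is a closed subvariety of $Y:=\pi^{-1}(\SG)$, the flag bundle of $\calV|_{\SG}$ over $\SG$, and $\rho$ is the restriction of $Y\to\SG$ to $\Sp_{q/p}$. By Leray--Hirsch, $\cohog{*}{Y}$ is generated as a ring by the pullback of $\cohog{*}{\SG}$ together with $c_{1}(\cL_{1}),\dots,c_{1}(\cL_{p})$; and, through $\SG\cong X_{q/p}$ and the homeomorphism $X_{q/p}\approx J_{q/p}$, Theorem~\ref{th:gen} gives that $\cohog{*}{\SG}$ is generated by $e_{2},\dots,e_{p}$. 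Hence $\cohog{*}{Y}$ is generated by the pullbacks of $e_{2},\dots,e_{p}$ and by $c_{1}(\cL_{1}),\dots,c_{1}(\cL_{p})$, and it suffices to prove that $\cohog{*}{Y}\to\cohog{*}{\Sp_{q/p}}$ is surjective.

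For the surjectivity I would construct a common affine cell structure. Choose a one-parameter subgroup $\Gm$ --- assembled from the homogeneous torus $\Gm(q/p)$ and, if needed, a generic cocharacter of the centralizer of the homogeneous element $\gamma$ (which acts on $\Fl_{\SL_{p}}$ preserving $\Sp_{q/p}$ and $\SG$, hence on $Y$) --- that preserves $\Sp_{q/p}$, has isolated fixed points on $Y$, and is contracting, so that the associated Bialynicki--Birula decompositions pave both $Y$ and $\Sp_{q/p}$ by affine spaces; the affine paving of a homogeneous affine Springer fibre is available (cf.\ \cite{OY}), and the flag-bundle direction contributes only the Schubert cells of $\SL_{p}/B$. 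Since $\Sp_{q/p}\hookrightarrow Y$ is a closed $\Gm$-invariant embedding, each Bialynicki--Birula cell of $\Sp_{q/p}$ is of the form $\Sp_{q/p}\cap\sigma$ with $\sigma$ a cell of $Y$, and the cells of $Y$ at fixed points off $\Sp_{q/p}$ miss $\Sp_{q/p}$. If moreover each cell of $\Sp_{q/p}$ is a \emph{full} cell of $Y$, then $\Sp_{q/p}$ is a closed union of cells of $Y$, so the pushforward $\homog{*}{\Sp_{q/p}}\to\homog{*}{Y}$ is split injective (the cellular chain complexes being the free groups on the cell classes, with no differential since all cells have even real dimension), and dually the restriction $\cohog{*}{Y}\to\cohog{*}{\Sp_{q/p}}$ is surjective, as required.

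The crux, glossed over above, is the fullness condition: one must find a single contracting $\Gm$ that simultaneously produces compatible affine pavings of $\Sp_{q/p}$, of $\SG$ and of the flag bundle $Y$, with $\Sp_{q/p}$ a union of ambient cells. The delicate point is the fibre direction: over a $\Gm$-fixed lattice $\overline{L}\in\SG$, the induced $\Gm$-action on $\SL_{p}/B=\{\text{full flags in }\overline{L}/t\overline{L}\}$ must have all of its fixed points inside the Springer fibre of the reduction of $\gamma$, which comes down to controlling the $\Gm(q/p)$-weights on $\overline{L}/t\overline{L}$. As an alternative one may imitate the proof of Theorem~\ref{th:gen} itself: realize $\Sp_{q/p}$ as a fibre of a parabolic Hitchin fibration, generate the cohomology of the ambient parabolic Higgs moduli space by tautological classes (a parabolic analogue of the theorems of Hausel--Thaddeus \cite{HT} and Markman \cite{Mark}), and obtain surjectivity of restriction to the Hitchin fibre from the retraction of the moduli space onto the nilpotent cone together with the local invariant cycle theorem; there the main work is the precise moduli-theoretic comparison with $\Sp_{q/p}$.
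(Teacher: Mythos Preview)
Your primary approach via the flag bundle $Y=\pi^{-1}(\SG)$ has a genuine gap, and you have correctly located it yourself: the ``fullness condition''. For the surjectivity argument to go through you need each Bia\l ynicki--Birula cell of $\Sp_{q/p}$ to coincide with the ambient cell of $Y$ centred at the same fixed point. This is a statement about \emph{dimensions} of attracting subspaces of tangent spaces, and it generally fails. At a $\Gm(q/p)$-fixed point of $\Sp_{q/p}$ the tangent space to $\Sp_{q/p}$ sits properly inside the tangent space to $Y$ (since $\dim\Sp_{q/p}=\delta<\delta+\binom{p}{2}=\dim Y$ for $p>1$), and there is no reason the extra tangent directions should all carry nonnegative weights. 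Concretely: in the fibre direction over a fixed lattice $\Lambda$, the induced $\Gm(q/p)$ on $\Lambda/x\Lambda$ has weights given by the $p$-basis of the corresponding $\sigma$, so the fixed points of the fibre $\GL_p/B$ are \emph{all} coordinate flags, not just the $\gamma$-stable ones. Hence your proposed diagnostic (``all fixed points of the fibre lie in the Springer fibre'') already fails whenever $\gamma|_{\Lambda/x\Lambda}\ne 0$, and in any case it is not equivalent to fullness. Adding a cocharacter of the centraliser does not help: the centraliser of $\gamma_{q/p}$ in $\SL_p(\CC\lr{x})$ is anisotropic, so no extra torus is available.

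The alternative you sketch in the last sentence is essentially what the paper does, and is the route you should take. The paper proves a parabolic analogue of the Hausel--Thaddeus/Markman generation theorem for the moduli of parabolic Higgs bundles (Theorem~\ref{th:glob par gen}), showing that $\cohog{*}{M'^{\pb}}_{\pure}$ is generated by the K\"unneth components of $c_i(\un\cE^{\univ})$ together with the $c_1(\un\cL_i^{\univ})$. Then it passes to the fibre not by ``retraction onto the nilpotent cone plus local invariant cycles'' as you suggest, but by a cleaner mechanism: the $\Gm(q/p)$-action contracts the elliptic Hitchin base $\cA'$ to the punctured line $\cA_{q/p}-\{0\}$, giving a homotopy equivalence $M'^{\pb}\simeq\Gm\times M_1^{\pb}$, and the homeomorphism $\Sp_{q/p}\to M_1^{\pb}$ then yields $\cohog{*}{M'^{\pb}}_{\pure}\cong\cohog{*}{\Sp_{q/p}}$ directly (Corollary~\ref{c:contract to fiber}). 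This avoids both the invariant cycle theorem and the cell-matching problem entirely.
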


In Section~\ref{ss:coho aff flag}, we propose a conjectural description of the relations defining $\cohog{*}{\Sp_{q/p}}$ (Conjecture~\ref{conj:rel Sp Fl}) which is an analogue of Theorem~\ref{th:HSp}.  We prove a weak version of this conjecture, see Proposition~\ref{p:par rel}. 

\subsection{Other conjectures} We make several other speculations that are scattered in the main body of the paper.

In Section~\ref{ssec:WeightFilt}, we construct a smooth variety that homotopy retracts to $J_{q/p}$. We then conjecture that this smooth model of $J_{q/p}$ should be Poisson and we give a conjectural description of its symplectic leaves. We speculate that this smooth model is related to a certain wild character variety.

In Section~\ref{sec:conjectures} we formulate weak versions of Conjecture~\ref{conj:Gr m} for
toric curve singularities, see Conjecture~\ref{conj:toric}, and for planar curve singularities, see Conjecture~\ref{conj:planar}.

{\bf Acknowledgements}  The authors would like to thank Roman Bezrukavnikov, Eugene Gorsky, Davesh Maulik, Eyal Markman and Vivek Shende for useful discussions related to the paper.
A.O. was partially supported by Sloan Foundation and NSF grant NSF CAREER grant DMS 1352398; Z.Y. was supported by the NSF grant DMS-1302071/DMS-1736600 and the Packard Fellowship.

\section{Geometry of $C_{q/p}$ and $J_{q/p}$}
\subsection{The curve $C_{q/p}$}\label{ss:Cpq} Let $p$ and $q$ be a pair of coprime positive integers. Let $\PP(1,p,q)$ be the weighted projective plane obtained as the quotient of $\AA^{3}-\{0\}$ by the action of $\Gm$ given by $t\cdot (z_{0},z_{1},z_{2})=(tz_{0},t^{p}z_{1}, t^{q}z_{2})$, where $t\in \Gm$ and $(z_{0},z_{1},z_{2})\in\AA^{3}$. The equation $z^{q}_{1}-z^{p}_{2}$ defines a curve $C_{q/p}\subset \PP(1,p,q)$. Let $\infty_{C}=[0:1:1]\in C_{q/p}$. Then the complement $C_{q/p}-\{\infty_{C}\}$ is the affine plane curve with affine coordinates $x=z_1/z_0^q,y=z_2/z_0^p$  and given by the equation $x^{q}-y^{p}$. The only singularity of $C_{q/p}$ is at $(x,y)=(0,0)$, i.e., the point $[1:0:0]\in \PP(1,p,q)$.

We fix the normalization map $\phi_{0}:\PP^{1}\to C_{q/p}$ given in affine coordinates by $t\mapsto (t^{q},t^{p})$. Then $\phi(\infty)=\infty_{C}$. There is an action of $\Gm$ on $C_{q/p}$ given by $t\cdot(x,y)=(t^{p}x,t^{q}y)$. We denote this one-dimensional torus by $\Gm(q/p)$.

\subsection{The moduli space of maps from $\PP^{1}$}
Consider the moduli functor $M_1(\PP^1,C_{q/p})$ on $\QQ$-algebras classifying degree $1$ maps from $\PP^1$ to $C_{q/p}$.  The automorphism group of $\PP^1$ acts on $M_1(\PP^1,C_{q/p})$, and the reduced structure of $M_1(\PP^1,C_{q/p})$ is a torsor under $\PGL_{2}=\Aut(\PP^{1})$ because any degree one map $\PP^{1}\to C_{q/p}$ must be the normalization map. 

We define a slice of this action.  Let $M_1((\PP^1,\infty),(C_{q/p},\infty_{C}))$ be the moduli space of maps $\PP^{1}\to C_{q/p}$ that maps $\infty$ to $\infty_{C}$. For any $\QQ$-algebra $A$, an $A$-point $\phi$ of $M_1((\PP^1,\infty),(C_{q/p},\infty_{C}))$  is the same as a $\QQ$-linear ring homomorphism $\phi^{*}: \Gamma(C_{q/p}-\{\infty_{C}\},\calO_{C})=\QQ[t^{p}, t^{q}]\to A[t]$ such that if $f\in \QQ[t^{p}, t^{q}]$ has degree $n>0$ in $t$, then $\phi^{*}(f)\in A[t]$ also has degree $n$ with leading coefficient in $A^{\times}$. The base point $\phi_{0}$ corresponds to the tautological embedding $\QQ[t^{p}, t^{q}]\incl \QQ[t]$.

Now let $M^{rig}_{1}(\PP^{1},C_{q/p})\subset M_1((\PP^1,\infty),(C_{q/p},\infty_{C}))$ be the subscheme whose $R$-points consist of $\phi:\PP^{1}_{R}\to C_{q/p}$ as above such that, in addition, $\phi^{*}(f)-f$ has degree $\leq n-2$ for if $\deg_{t}(f)=n>0$. Then the reduced structure of $M^{rig}_1(\PP^1,C_{q/p})$ consists of one point $\phi_{0}$. The ring of regular functions
\begin{equation*}
\cO_{q/p}:=\cO(M^{rig}_1(\PP^1,C_{q/p}))
\end{equation*}
is a local artinian ring with residue field $\QQ$.

The $\Gm(q/p)$-action on $C_{q/p}$ induces one on $M_1^{rig}(\PP^1,C_{q/p})$, hence a grading on its coordinate ring $\cO_{q/p}$. We denote this grading by $\cO_{q/p}=\oplus_{j\in\ZZ}\cO_{q/p}[j]$. 

An $R$-point $\phi$ of $M^{rig}_{1}(\PP^{1},C_{q/p})$ is determined by the elements $\phi^{*}(t^{p})$ and $\phi^{*}(t^{q})$. We write
\begin{eqnarray*}
\phi^{*}(t^{p})=t^{p}+\sum_{i=2}^{p}e_{i}t^{p-i},\\
\phi^{*}(t^{q})=t^{q}+\sum_{i=2}^{p}f_{i}t^{q-i}.
\end{eqnarray*}
Then $e_{i},f_{i}\in R$ vary with $\phi$. Therefore $e_{i}$ and $f_{i}$ can be viewed as elements in $\cO_{q/p}$ homogeneous of degree $i$.

We recall the following presentation of the coordinate ring $\cO_{q/p}$.

\begin{prop}[Beauville, see {\cite[Section G]{FGS}}]\label{l:Oqp}
\begin{enumerate}
\item There is an isomorphism of graded rings
\begin{equation*}
\cO_{q/p}\cong \QQ[e_2,\dots,e_{p},f_2,\dots,f_{q}]/I_{\cO}
\end{equation*}
where $I_{\cO}$ is the homogeneous ideal generated by the coefficients of $w$ of 
$$(1+e_2 w^{2}+\dots+e_p w^p)^q=(1+f_2 w^2+\dots+f_q w^q)^p.$$
\item The $\QQ$-algebra $\cO_{q/p}$ is a complete intersection, and
$$ \dim_{\QQ}\cO_{q/p}=\frac{1}{p+q}\binom{p+q}{p}.$$

\item There is an isomorphism of graded rings 
\begin{equation*}
\cO_{q/p}\cong \QQ[e_2,\dots,e_{p}]/(g_{q+1},\cdots,g_{p+q-1})
\end{equation*}
where $g_{q+i}$ is the coefficient of $w^{q+i}$ of the Taylor expansion of 
$$ (1+e_2 w^{2}+\dots+e_p w^p)^{q/p}$$
at $w=0$. 
\end{enumerate}
\end{prop}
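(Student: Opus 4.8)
I would prove the three assertions in the order (1), (3), (2), deducing the complete intersection statement from the concrete presentation in (3). For (1), I would simply unwind the moduli-theoretic description of $M^{rig}_1(\PP^1,C_{q/p})$ recalled above. For a $\QQ$-algebra $R$, an $R$-point is a ring homomorphism $\phi^{*}\colon\QQ[t^p,t^q]\to R[t]$ for which $A:=\phi^{*}(t^p)$ and $B:=\phi^{*}(t^q)$ have the normalized shapes $A=t^p+\sum_{i=2}^{p}e_it^{p-i}$ and $B=t^q+\sum_{j=2}^{q}f_jt^{q-j}$. Since $\gcd(p,q)=1$, the subalgebra $\QQ[t^p,t^q]\subset\QQ[t]$ is presented as $\QQ[a,b]/(a^q-b^p)$ with $a\mapsto t^p,\ b\mapsto t^q$ — injectivity of $\QQ[a,b]/(a^q-b^p)\to\QQ[t]$ follows because the monomials $a^ib^j$ with $0\le j\le p-1$ are sent to pairwise distinct powers $t^{pi+qj}$. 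Hence $\phi^{*}$ is well defined precisely when $A^q=B^p$ in $R[t]$, and after substituting $w=1/t$ (dividing by $t^{pq}$) this becomes $(1+\sum_{i=2}^{p}e_iw^i)^q=(1+\sum_{j=2}^{q}f_jw^j)^p$ in $R[w]$. Thus $M^{rig}_1(\PP^1,C_{q/p})=\Spec\QQ[e_2,\dots,e_p,f_2,\dots,f_q]/I_{\cO}$, proving (1).

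For (3), over $S_0:=\QQ[e_2,\dots,e_p]$ I would set $F(w)=1+\sum_{i=2}^{p}e_iw^i$ and let $G(w)=F(w)^{q/p}=1+\sum_{k\ge2}c_kw^k\in S_0[[w]]$ be the unique $p$-th root of $F^q$ with constant term $1$; thus $c_k$ is homogeneous of degree $k$, $c_1=0$, and $g_{q+i}=c_{q+i}$. Differentiating $\log G=(q/p)\log F$ and clearing denominators gives $pFG'=qF'G$; comparing coefficients of $w^{n-1}$ yields the recursion
$$p\,n\,c_n=\sum_{i=2}^{p}e_i\bigl((p+q)i-pn\bigr)c_{n-i},\qquad n\ge1.$$
On the one hand, in $\cO_{q/p}=S_0[f_2,\dots,f_q]/I_{\cO}$ the defining relation says $1+\sum_{j=2}^{q}f_jw^j$ is a $p$-th root of $F^q$ with constant term $1$, hence equals $G$; therefore $f_j=c_j$ for $2\le j\le q$ and $c_k=0$ for $k>q$. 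On the other hand, put $S:=S_0/(c_{q+1},\dots,c_{q+p-1})$; I claim $c_n=0$ in $S$ for all $n>q$. At $n=p+q$ the recursion becomes, using the identity $(p+q)i-p(p+q)=(p+q)(i-p)$, that $p\,c_{p+q}=\sum_{i=2}^{p}e_i(i-p)c_{p+q-i}$; the terms with $2\le i\le p-1$ involve $c_{q+1},\dots,c_{q+p-2}$ and the $i=p$ term is annihilated by the factor $i-p$, so $c_{p+q}=0$. For $n>p+q$ all indices $n-i$ with $2\le i\le p$ lie in $[q+1,n-1]$ (because $n-p\ge q+1$), so an induction gives $c_n=0$. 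Consequently $G\equiv H:=1+\sum_{k=2}^{q}c_kw^k$ in $S[[w]]$, whence $H^p=G^p=F^q$ in $S[w]$; this shows that $e_i\mapsto e_i$ and $e_i\mapsto e_i,\ f_j\mapsto c_j$ define mutually inverse isomorphisms between $\cO_{q/p}$ and $S$, which is (3).

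For (2), using (3) the ring $\cO_{q/p}=\QQ[e_2,\dots,e_p]/(g_{q+1},\dots,g_{q+p-1})$ is a quotient of a polynomial ring in $p-1$ variables by an ideal generated by $p-1$ elements; as $\cO_{q/p}$ is Artinian, Krull's height theorem forces that ideal to have height $p-1$, so $(g_{q+1},\dots,g_{q+p-1})$ is a regular sequence and $\cO_{q/p}$ is a graded complete intersection. With $\deg e_i=i$ and $\deg g_{q+i}=q+i$, the Koszul resolution gives the Hilbert series $\prod_{i=1}^{p-1}(1-t^{q+i})\big/\prod_{i=2}^{p}(1-t^i)$, a polynomial because $\cO_{q/p}$ is finite-dimensional; cancelling $(1-t)^{p-1}$ and setting $t=1$ yields $\dim_{\QQ}\cO_{q/p}=\prod_{i=1}^{p-1}(q+i)\big/\prod_{i=2}^{p}i=(p+q-1)!/(p!\,q!)=\tfrac1{p+q}\binom{p+q}{p}$.

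The main obstacle is the vanishing induction in the proof of (3): showing that imposing only the $p-1$ relations $c_{q+1}=\dots=c_{q+p-1}=0$ already kills all $c_n$ with $n>q$. This hinges on the recursion together with the numerical coincidence $(p+q)i-p(p+q)=(p+q)(i-p)$, which is exactly what removes the would-be obstruction term $c_q$ at $n=p+q$; everything else (representability in (1), the Hilbert-series bookkeeping in (2)) is routine.
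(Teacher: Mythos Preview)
Your proof is correct and shares the same core idea as the paper's: the key step for (3) is the logarithmic-derivative identity $pFG'=qF'G$ (equivalently $qe'f=pef'$), and the Artinian/complete-intersection bookkeeping for (2) is routine once (3) is in hand.

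The executions differ in two minor ways. First, you prove (3) before (2) and deduce the complete intersection statement from the small presentation $\QQ[e_2,\dots,e_p]/(g_{q+1},\dots,g_{q+p-1})$, whereas the paper proves (2) directly from the presentation in (1) by observing that $qe'(w)f(w)-pe(w)f'(w)$ has exactly $p+q-2$ nonzero coefficients, matching the number of variables $e_i,f_j$. Second, for the vanishing of the higher $g_n$ in (3), you extract an explicit recursion $p\,n\,c_n=\sum_i e_i((p+q)i-pn)c_{n-i}$ and induct, using the numerical coincidence at $n=p+q$ to kill the $c_q$ term; the paper instead writes $\bar e^{q/p}=\bar f+\bar h$ with $\bar h$ of order $\ge p+q$ and compares degrees in the identity $q\bar e'\bar f-p\bar e\bar f'=-q\bar e'\bar h+p\bar e\bar h'$, concluding both sides vanish since the left has degree $\le p+q-2$ and the right has only terms of degree $\ge p+q-1$. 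The paper's argument is slightly slicker (no case analysis at $n=p+q$), but your recursion makes the structure fully explicit; both are valid implementations of the same log-derivative idea.
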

\begin{proof}
(1) As $t^{p}$ and $t^{q}$ generate $\QQ[t^{p},t^{q}]$, an $R$-point $\phi$ of $M^{rig}_{1}(\PP^{1},C_{q/p})$ is determined by the coefficients of $\phi^{*}(t^{p})$ and $\phi^{*}(t^{q})$. Therefore $e_{i}, 2\le i\le p$ and $f_{j}, 2\le j\le q$ generate $\cO_{q/p}$.

The only relation that $\phi^{*}(t^{p})$ and $\phi^{*}(t^{q})$ need to satisfy is $(\phi^{*}(t^{p}))^{q}=\phi^{*}(t^{pq})=(\phi^{*}(t^{q}))^{p}$. Expanding it and extracting homogeneous parts of it gives the defining equations for $\cO_{q/p}$, which generate exactly the ideal $I_{\cO}$.

(2) Let $e(w)=1+e_{2}w+\cdots+e_{p}w^{p}$; $f(w)=1+f_{2}w+\cdots+f_{q}w^{q}$. If we take logarithmic derivatives of both parts of the equation $e(w)^{q}=f(w)^{p}$, we get an equivalent equation
\begin{equation*}
qe'(w)f(w)=pe(w)f'(w).
\end{equation*}
Both sides of the above are polynomials in $w$ of degree $p+q-1$ and the leading terms are the same, and the constant terms are zero. Hence the ideal $I_{\cO}$ is generated by the coefficients of $qe'(w)f(w)-pe(w)f'(w)$ in front of $w,w^{2}\cdots, w^{p+q-2}$, which are homogeneous of degrees $2,3,\cdots, p+q-1$. In particular,  $\cO$ is a complete intersection with generators in degrees $2,3,\cdots, p; 2,\cdots, q$, and  generators of the defining ideal in degrees $2,3,\dots,p+q-1$. This implies the Hilbert series
\begin{equation*}
\sum_{j\geq0}\dim_{\QQ}\cO_{q/p}[j]T^{j}=\frac{(1-T^{q+1})\cdots (1-T^{q+p-1})}{(1-T^{2})\cdots (1-T^{p})}
\end{equation*}
Evaluating at $T=1$ we get $\dim_{\QQ}\cO_{q/p}=\frac{1}{p+q}\binom{p+q}{p}$.

(3) Let $g_{i}\in\QQ[e_{2},\cdots, e_{p}]$ be the  coefficient of $e(w)^{q/p}$ in front of $w^{i}$. The desired equation $e(w)^{q}=f(w)^{p}$ is equivalent to saying that $e(w)^{q/p}$ is a polynomial of degree $\le q$. Therefore we have
\begin{equation*}
\cO_{q/p}\cong \QQ[e_{2},\cdots, e_{p}]/(g_{i}, \forall i>q).
\end{equation*}
Now we show that  the vanishing of $g_{q+1},\cdots,g_{p+q-1}$ implies the vanishing of all $g_{i}$ ($i>q$). Indeed, suppose $\phi: \QQ[e_{2},\cdots, e_{p}]/(g_{q+1},\cdots, g_{p+q-1})\to A$ is any ring homomorphism. We write $\ov e(w), \ov g_{i}$ for the images of $e(w)$ and $g_{i}$ in $A$. Write $\ov e(w)^{q/p}=\ov f(w)+\ov h(w)$, where  $\ov f(w)=1+\ov g_{2}w^{2}+\cdots+\ov g_{q}w^{q}$ and $\ov h(w)=\sum_{j>q}\ov g_{j}w^{j}$. Then $\ov e(w)^{q}=(\ov f(w)+\ov g(w))^{p}$, which implies $q\ov e'(w)(\ov f(w)+\ov h(w))=p\ov e(w)(\ov f'(w)+\ov h'(w))\in A[[w]]$ by taking logarithmic derivatives. Equivalently,
\begin{equation*}
q\ov e'(w)\ov f(w)-p\ov e(w)\ov f'(w)=-q\ov e'(w)\ov h(w)+p\ov e(w)\ov h'(w).
\end{equation*} 
Now the LHS has degree $\le p+q-2$ while all terms on the RHS have degree $\ge p+q-1$. Therefore both sides must vanish. The vanishing of the LHS implies $\ov f(w)=\ov e(w)^{q/p}$ hence $\ov h(w)=0$, i.e., $\ov g_{i}=0$ for all $i\ge p+q$.\end{proof}

\subsection{The geometry of $J_{q/p}$} Recall $J_{q/p}=\ov\Jac(C_{q/p})$ is the compactified Jacobian of $C_{q/p}$ rigidified at $\infty_{C}$. Below we introduce a related moduli space which will be turn out to be an affine Springer fiber. 

\subsubsection{The moduli spaces $X_{q/p}$}\label{Xqp}  Consider the functor $X_{q/p}$ on commutative $\CC$-algebras whose value on $R$ is the set of $R\tl{t^{p},t^{q}}$-submodules $M\subset R\lr{t}$ such that there exists $N>0$ such that
\begin{enumerate}
\item $t^{N}R\tl{t}\subset M\subset t^{-N}R\tl{t}$;
\item $M/t^{N}R\tl{t}$ is a projective $R$-module of rank equal to the rank of $R\tl{t^{p},t^{q}}/t^{N}R\tl{t}$.
\end{enumerate}
Then $X_{q/p}$ is represented by an ind-scheme. The scaling action $\Gm$ on $t$ induces an $R$-linear ring automorphism of $R\tl{t^{p},t^{q}}$ for any $R$, hence we get a $\Gm$-action on $X_{q/p}$. We also denote this $\Gm$ by $\Gm(q/p)$.

\subsubsection{Relation between $X_{q/p}$ and $J_{q/p}$}\label{sss:relXJ} We will relate $X_{q/p}$ and $J_{q/p}$ via a third space $\wt J_{q/p}$ and two maps
\begin{equation*}
\xymatrix{       & \wt J_{q/p}\ar[dl]_{\l}\ar[dr]^{\om}    \\
X_{q/p} & & J_{q/p}}
\end{equation*}

Let $\wt J_{q/p}$ be the sheafification of the functor whose $R$-points consists of pairs $(\cL, \t)$ where $\cL$ is a torsion-free rank one coherent sheaf on $C_{q/p}\otimes R$ with the same fiberwise degree as the structure sheaf of $C_{q/p}$, and $\t$ is a trivialization of $\cL$ over $(C_{q/p}-\{0\})\otimes R$. We have a forgetful map $\om: \wt J_{q/p}\to J_{q/p}$ by restricting the trivialization $\t$ to $\infty_{C}$ to get the rigidification. Then $\om$ is a homeomorphism because when $R$ is a field, a trivialization of a line bundle on $(C_{q/p}-\{0\})\otimes R\cong \AA^{1}_{R}$ is the same as a trivialization of it at the point $\infty_{C,R}$. On the other hand, we have an isomorphism $\l: \wt J_{q/p}\to X_{q/p}$ which on the level of $R$-points sends $(\cL,\t)$ to the restriction $\cL|_{\Spec R\lr{t}}$ of $\cL$ in the punctured formal disk around $0_{C,R}$, which is embedded into $R\lr{t}$ using the trivialization $\t$. Both maps $\l$ and $\om$ are $\Gm(q/p)$-equivariant (the $\Gm(q/p)$-action on $\wt J_{q/p}$ is induced from that on $C_{q/p}$). In conclusion, we have a $\Gm(q/p)$-equivariant homeomorphism \footnote{The map $\mu$ is not an isomorphism for two reasons: first, the ind-scheme $X_{q/p}$ is highly non-reduced; second, even if we pass to the reduced structure of $X_{q/p}$, it may be non-isomorphic to $J_{q/p}$. For example, when $(p,q)=(2,3)$, we have $X_{q/p}^{\red}\cong \PP^{1}$ while $J_{q/p}$ is isomorphic to the curve $C_{q/p}$ with a cusp.}
\begin{equation}\label{XJ}
\mu: X_{q/p}\to J_{q/p}.
\end{equation}

\subsubsection{Torus fixed points in $X_{q/p}$}\label{FixedPoints} The $\Gm(q/p)$-fixed points on $X_{q/p}$ are indexed by those $M\subset \CC\lr{t}$ topologically spanned by powers of $t$. Let $\jiao{p,q}\subset \ZZ$ be the submonoid generated by $p$ and $q$. For any such $M\in X_{q/p}^{\Gm(q/p)}$ let $\s_{M}=\{n\in\ZZ|t^{n}\in M\}$, then $\s_{M}\subset\ZZ$ is a $\jiao{p,q}$-module. The condition on the volume of $M$ implies that $\s_{M}-\s_{M}\cap \jiao{p,q}$ has the same cardinality as $\jiao{p,q}-\s_{M}\cap \jiao{p,q}$.
Let $\Sigma_{q/p}$ be the set of $\jiao{p,q}$-submodule of $\ZZ$ satisfying the conditon that $\#(\s-\s\cap \jiao{p,q})=\#(\jiao{p,q}-\s\cap \jiao{p,q})$. Then for any $\s\in \Sigma_{q/p}$, let $M_{\s}$ be the $\CC\tl{t^{p}, t^{q}}$-submodule of $\CC\lr{t}$ generated by $t^{n}, n\in \s$. Then $M_{\s}$ is a $\Gm(q/p)$-fixed point of $X_{q/p}$. Therefore, the maps $M\mapsto \s_{M}$ and $\s\mapsto M_{\s}$ are inverse to each other and they give a bijection
\begin{equation*}
X^{\Gm(q/p)}_{q/p}\bij \Sigma_{q/p}.
\end{equation*}

For each $\s\in\Sigma_{q/p}$ and $0\leq i\leq p$, let $a_{i}$ be the smallest element in $\s$ congruent to $i$ modulo $p$; we call $\{a_{0},\cdots,a_{p-1}\}$ the {\em $p$-basis} for $\s$. Similarly we define the {\em $q$-basis} $\{b_{0},\cdots,b_{q-1}\}$ for $\s$. The condition $\#(\s-\s\cap \jiao{p,q})=\#(\jiao{p,q}-\s\cap \jiao{p,q})$ implies
\begin{equation}\label{absum}
\sum_{i=0}^{p-1}a_{i}=pq(p-1)/2; \quad \sum_{j=0}^{q-1}b_{j}=pq(q-1)/2.
\end{equation}

By \cite[Theorem 2(i)]{LS} (our set $\Sigma_{q/p}$ is in bijection with the set $Z_{p,q}$ defined in \cite[Section 4]{LS}), we have
\begin{equation}\label{fixed pts}
\#X^{\Gm(q/p)}_{q/p}=\#\Sigma_{q/p}=\frac{1}{p+q}\binom{p+q}{p}.
\end{equation}

\subsubsection{}\label{sss:paving} By \cite[Theorem 2(ii)]{LS}, for each $\Gm(q/p)$-fixed point of $X_{q/p}$ indexed by $\s\in\Sigma_{q/p}$, the corresponding attracting subset $C_{\s}$ is an affine space. In particular, $X_{q/p}$ has a partition into affine spaces, hence the cohomology of $X_{q/p}$ (or $J_{q/p}$) is concentrated in even degrees, pure and Tate.

\subsection{Relation with affine Springer fibers}
\sss{$X_{q/p}$ as an affine Springer fiber}\label{Xqp ASF} Let $G=\SL_{p}$, $\frg=\sl_{p}$ and let $\Gr_{G}=G\lr{x}/G\tl{x}$ be the affine Grassmannian of $G$. Let $\g_{q/p}\in\frg\lr{x}$ which under the standard basis $v_{0},\cdots, v_{p-1}$ is given by $\g_{q/p}(v_{i})=v_{i+1}$ for $0\leq i\leq p-2$ and $\g_{q/p}(v_{p-1})=x^{q}v_{0}$. The characteristic polynomial of $\g_{q/p}$ is $P(y)=y^{p}-x^{q}$. We denote by $\SG$ the affine Springer fiber associated to $\g_{q/p}$, whose $\CC$-points are
\begin{equation*}
\SG(\CC)=\{ g \in G(\CC\lr{x})/G(\CC\tl{x})| \Ad(g^{-1})(\g_{q/p})\in \frg\tl{x}\}.
\end{equation*}

\begin{lemma}\label{l:SGX} There is a canonical isomorphism
\begin{equation*}
\SG\cong X_{q/p}.
\end{equation*}
Combining with \eqref{XJ} we have a canonical homeomorphism
\begin{equation}\label{SGJ}
\SG\to J_{q/p}.
\end{equation}
\end{lemma}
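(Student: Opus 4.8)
The plan is to construct an explicit identification of lattices with points of the affine Springer fiber. Recall that $\SG$ parametrizes $\CC\tl{x}$-lattices $\Lambda\subset \CC\lr{x}^{p}$ (equivalently cosets $gG(\CC\tl{x})$) that are stable under $\g_{q/p}$, while $X_{q/p}$ parametrizes $\CC\tl{t^{p},t^{q}}$-submodules $M\subset \CC\lr{t}$ of the correct volume. First I would exploit the fact that the characteristic polynomial of $\g_{q/p}$ is $P(y)=y^{p}-x^{q}$, which is exactly the (affine) equation of $C_{q/p}$: the ring $\CC\lr{x}[y]/(y^{p}-x^{q})$ is isomorphic to $\CC\lr{t}$ via $x\mapsto t^{p}$, $y\mapsto t^{q}$ (here one uses $(p,q)=1$ so that $t$ lies in the normalization; concretely $t=y^{a}x^{b}$ for suitable $a,b$ with $ap+bq=1$). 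Under this isomorphism $\CC\tl{x}[y]/(y^{p}-x^{q})$ maps to $\CC\tl{t^{p},t^{q}}$, and $\CC\lr{x}^{p}$, viewed as a free $\CC\lr{x}$-module with basis $v_{0},\dots,v_{p-1}$, becomes the free $\CC\lr{x}$-module $\CC\lr{x}[y]/(y^{p}-x^{q})\cong\CC\lr{t}$ with $v_{i}\leftrightarrow y^{i}\leftrightarrow t^{qi}$ (mod the relevant power of $t^{p}$). The key point is that the action of $\g_{q/p}$, which sends $v_{i}\mapsto v_{i+1}$ and $v_{p-1}\mapsto x^{q}v_{0}$, is precisely multiplication by $y$, i.e. by $t^{q}$, on $\CC\lr{t}$.

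With this dictionary in place, the next step is the translation of the moduli conditions. A $\CC\tl{x}$-submodule $\Lambda\subset\CC\lr{x}^{p}\cong\CC\lr{t}$ which is additionally stable under $\g_{q/p}=$ ``multiplication by $t^{q}$'' is the same thing as a $\CC\tl{x}[y]/(y^{p}-x^{q})$-submodule, i.e. a $\CC\tl{t^{p},t^{q}}$-submodule $M\subset\CC\lr{t}$. The lattice conditions in $\SG$ (that $\Lambda$ is a $\CC\tl{x}$-lattice, bounded between $x^{N}\CC\tl{x}^{p}$ and $x^{-N}\CC\tl{x}^{p}$) match conditions (1) and (2) in the definition of $X_{q/p}$ after noting $x^{N}\CC\tl{x}^{p}$ corresponds to $t^{pN}\CC\tl{t}$ and that $\CC\tl{x}^{p}\cong\CC\tl{t}$ as topological $\CC$-vector spaces; the projectivity/rank condition (2) is the ``determinant one'' condition that cuts $\SG$ out inside $\Gr_{\GL_{p}}$ down to $\Gr_{\SL_{p}}$ — one checks that the volume normalization built into $X_{q/p}$ matches the $\SL_{p}$ (rather than $\GL_{p}$) condition, using that $\g_{q/p}$ is traceless. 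Finally, I would verify that this bijection is functorial in the test ring $R$ (so it is an isomorphism of ind-schemes, or at least of functors), and $\Gm(q/p)$-equivariant, by tracking that the grading torus acts on $t$ with the weight that scales $x$ by $t^{p}$ and $y$ by $t^{q}$. Composing with $\mu$ from \eqref{XJ} yields the homeomorphism \eqref{SGJ}.

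The main obstacle, and the step requiring genuine care rather than bookkeeping, is the precise matching of the \emph{volume/normalization} conventions: one must pin down the exact power $t^{c(\s)}$ attached to each basis vector $v_{i}$ so that the free $\CC\tl{x}$-module $\CC\tl{x}\langle v_{0},\dots,v_{p-1}\rangle$ maps onto $\CC\tl{t^{p},t^{q}}$ (not merely onto a sublattice of it) and so that the $\SL_{p}$-condition translates to the correct fixed fiberwise degree on the $X_{q/p}$ side. This amounts to computing the ``dimension'' (relative to the standard lattice) of $\CC\tl{t^{p},t^{q}}$ inside $\CC\lr{t}$ under the chosen basis, which is governed by the number of gaps of the numerical semigroup $\langle p,q\rangle$, namely $\delta=(p-1)(q-1)/2$; consistency of this count with the normalizations of both $\SG$ and $X_{q/p}$ is what makes the map land in the right component. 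All the other verifications (ring isomorphism $\CC\lr{x}[y]/(y^p-x^q)\cong\CC\lr{t}$, identification of $\g_{q/p}$ with $t^{q}$, functoriality, equivariance) are direct and essentially forced once the dictionary $x=t^{p}$, $y=t^{q}$ is fixed.
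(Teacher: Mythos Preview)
Your approach is correct and essentially identical to the paper's: both proofs set up the isomorphism $V\cong\CC\lr{t}$ by $x^{j}v_{i}\mapsto t^{pj+qi}$, observe that $\g_{q/p}$ becomes multiplication by $t^{q}$, and conclude that $\g_{q/p}$-stable $\CC\tl{x}$-lattices correspond exactly to $\CC\tl{t^{p},t^{q}}$-lattices.

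One remark: the ``main obstacle'' you flag (matching the volume normalizations) is in fact immediate and does not involve $\delta$. Under the map $v_{i}\mapsto t^{qi}$, the standard lattice $\Lambda^{\dagger}=\bigoplus_{i=0}^{p-1}\CC\tl{x}v_{i}$ is sent to $\bigoplus_{i=0}^{p-1}t^{qi}\CC\tl{t^{p}}$, and this equals $\CC\tl{t^{p},t^{q}}$ on the nose: every $n\in\langle p,q\rangle$ has a unique expression $n=ap+bq$ with $0\le b\le p-1$ and then necessarily $a\ge0$. Hence the $\SL_{p}$-condition ($\Lambda$ has the same relative dimension as $\Lambda^{\dagger}$) translates directly into condition (2) of $X_{q/p}$ ($M$ has the same relative dimension as $\CC\tl{t^{p},t^{q}}$), with no further bookkeeping or $\delta$-count required. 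The paper simply asserts this identification without comment.
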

\begin{proof}
Let $V=\CC\lr{x} v_{0}\oplus \cdots\oplus\CC\lr{x} v_{p-1}$ be the standard representation of $G$ tensored with $\CC\lr{x}$. Let $\L^{\dagger}=\CC\tl{x} v_{0}\oplus \cdots\oplus\CC\tl{x} v_{p-1}\subset V$. Recall that $\Gr_{G}$ classifies $\CC\tl{x}$-submodules of $V$ of rank $n$. As a subspace of $\Gr_{G}$, $\SG$ classifies lattices $\L\subset V$ such that $\g_{q/p}\L\subset \L$ and that $\dim(\L/\L\cap\L^{\dagger})=\dim(\L^{\dagger}/\L\cap\L^{\dagger})$. Consider the isomorphism $V\isom \CC\lr{t}$ sending $x^{j}v_{i}$ to $t^{pj+qi}$, for $j\in\ZZ, 0\leq i\leq p-1$. Under this isomorphism, the action of $\g_{q/p}$  on $V$ becomes the multiplication by $t^{q}$ on $\CC\lr{t}$. Then a $\CC\tl{x}$-lattice $\L$ in $V$ stable under $\g_{q/p}$ is the same as a $\CC\tl{t^{p},t^{q}}$-lattice $M$ in $\CC\lr{t}$. Moreover, the condition $\dim(\L/\L\cap\L^{\dagger})=\dim(\L^{\dagger}/\L\cap\L^{\dagger})$ is equivalent to the condition (2) in Section~\ref{Xqp}. Therefore $\SG$ is canonically isomorphic to $X_{q/p}$.  
\end{proof}

The isomorphism \eqref{SGJ} will enable us to use results from \cite{OY} to study the cohomology of $J_{q/p}$.

\sss{Affine Springer fiber in the affine flag variety}\label{sss:aff flag}
A variant of $\SG$ is the affine Springer fiber $\Sp_{q/p}$ of $\g_{q/p}$ in the affine flag variety $\Fl_{G}$ of $G=\SL_{p}$. It classifies chains of $\CC\tl{x}$-lattices $\L_{\bullet}: \L_{0}=x\L_{p}\subset \L_{1}\subset \cdots \subset \L_{p-1}\subset \L_{p}$ in $V$ such that each $\L_{i}$ is stable under $\g_{q/p}$, $\L_{p}\in\SG$ and $\dim(\L_{i}/\L_{i-1})=1$ for $1\le i\le p$. By identifying $V$ with $\CC\lr{t}$, we may $\Sp_{q/p}$ also classifies chains of $\CC\tl{t^{p},t^{q}}$-lattices $\L_{\bullet}: \L_{0}=t^{p}\L_{p}\subset \L_{1}\subset \cdots \subset \L_{p-1}\subset \L_{p}$ inside $\CC\lr{t}$ such that $\L_{p}\in X_{q/p}$.

For $1\leq i\leq p$, $\Sp_{q/p}$ carries a $\Gm(q/p)$-equivariant line bundle $\cL_{i}$ whose fiber at $\L_{\bullet}$ is $\L_{i}/\L_{i-1}$. 

We may analyze the $\Gm(q/p)$-fixed points on $\Sp_{q/p}$ as in Section~\ref{FixedPoints}. Let $\wt\Sigma_{q/p}$ be the set of ordered $p$-tuples $(d_{1},d_{2},\cdots, d_{p})\in\ZZ^{p}$ such that 
\begin{itemize}
\item The residue classes of $\{d_{1},\cdots, d_{p}\}$ modulo $p$ are distinct (hence exhausting all residue classes mod $p$).
\item For each $1\le i\le p$, $d_{i}+q$ is in the $p\ZZ_{\ge0}$-module generated by $d_{i-1},\cdots, d_{1},d_{p}+p,\cdots, d_{i}+p$.
\end{itemize}
For $\un d=(d_{1},d_{2},\cdots, d_{p})\in \wt\Sigma_{q/p}$ and $1\le i\le p$, we define a $(p,q)$-module $\s_{i}(\un d)$ generated by $d_{i-1},\cdots, d_{1},d_{p}+p,\cdots, d_{i}+p$. Let $\L_{i}(\un d)$ be the $\CC\tl{t^{p},t^{q}}$-submodule of $\CC\lr{t}$ generated by $t^{n}$ for $n\in \s_{i}(\un d)$. Then $\L_{\bullet}(\un d):\L_{0}(\un d)=t^{p}\L_{p}(\un d)\subset \L_{1}(\un d)\subset \cdots \subset \L_{p-1}(\un d)\subset \L_{p}(\un d)$ is a $\Gm(q/p)$-fixed point of $\Sp_{q/p}$. The map $\un d\mapsto \L_{\bullet}(\un d)$ gives a bijection
\begin{equation*}
\Sp_{q/p}^{\Gm(q/p)}\bij\wt\Sigma_{q/p}.
\end{equation*}

\section{The cohomology rings of $J_{q/p}$ and $\Sp_{q/p}$: generators}\label{s:gen}
The goal of this section is to prove Theorems~\ref{th:gen} and \ref{th:par gen}. The argument involves passing to certain moduli spaces of global flavor, namely moduli space of stable Higgs bundles on the weighted projective line.

\subsection{Generators for the cohomology ring of the moduli of stable Higgs bundles}\label{ss:glob gen} In this subsection we consider the Hitchin moduli space $M_{r,d}$ of stable Higgs bundles over a curve $X$. Hausel-Thaddeus \cite{HT} proved that the K\"unneth components of the Chern classes of the universal bundle generate the cohomology ring of $M_{r,d}$. This was shown by a different method by Markman \cite{Mark}, using a result of Beauville \cite{Beau}. 

For our application, we need the case where $X$ is a weighted projective line, which is not covered by the literature. In the following we work with Deligne-Mumford curves and extend Markman's argument for the generation statement to such generality. The generation result will then be used to prove the generation result for $J_{q/p}$ and $\Sp_{q/p}$ in the next subsection.

\subsubsection{Moduli of Higgs bundles}\label{sss:Moduli Higgs} Let $X$ be an irreducible smooth proper Deligne-Mumford curve. Let $\calL$ be a line bundle over $X$. An $\calL$-valued Higgs bundle over $X$ is a pair $(\calE,\varphi)$ where $\calE$ is a vector bundle over $X$ and $\varphi:\calE\to\calE\otimes\calL$. Now fix a line bundle $\D$ on $X$. For $r\in\ZZ_{\ge1}$ and $d\in \QQ$, let $\cM_{r,\D}(X,\cL)$ be the moduli stack of triples $(\cE, \ph, \iota)$ where $(\cE,\ph)$ is an $\cL$-valued Higgs bundles on $X$ of rank $r$, and $\iota$ is an isomorphism $\det(\cE)\cong \D$. 

We have the notion of slope stability for Higgs bundles by considering Higgs subbundles (those preserved by $\ph$) instead of all subbundles.  Therefore we get an open substack $\cM^{s}_{r,\D}(X,\cL)\subset \cM_{r,d}(X,\cL)$ consisting of stable $\cL$-valued Higgs bundles on $X$. 

Nitsure's construction of the coarse moduli spaces of stable (resp. semistable) Higgs bundles over an algebraic curve in \cite{Ni} works in the case of a Deligne-Mumford curve. In particular, we get a quasi-projective moduli space $M^{s}_{r,\D}(X,\cL)$ of stable  $\cL$-valued Higgs bundles on $X$ of rank $r$ and determinant $\D$. We remark that from Nitsure's construction via geometric invariant theory,  $M^{s}_{r,\D}(X,\cL)$ is the coarse moduli space of $\cM^{s}_{r,\D}(X,\cL)$, and the natural map $\om: \cM^{s}_{r,\D}(X,\cL)\to M^{s}_{r,\D}(X,\cL)$ is a $\mu_{r}$-gerbe.  In particular, $\cM^{s}_{r,\D}(X,\cL)$ is a Deligne-Mumford stack all of whose points have automorphism group $\mu_{r}$ (acting by scalar multiplication on Higgs bundles).

\subsubsection{Moduli of parabolic Higgs bundles}\label{sss:par Hit} Fix a point $x_{0}\in X$. An  $\calL$-valued Higgs bundle over $X$ with a parabolic structure at $x_{0}$ is a triple $(\calE,\ph, F_{\bullet}\cE_{x_{0}})$ where $(\cE,\ph)$ is an $\calL$-valued Higgs bundle and  $0=F_{0}\cE_{x_{0}}\subset F_{1}\cE_{x_{0}}\subset \cdots\subset F_{r-1}\cE_{x_{0}}\subset F_{r}\cE_{x_{0}}=\cE_{x_{0}}$ is a full flag of the fiber of $\cE$ at $x_{0}$ (where $r$ is the rank of $\cE$) such that $\ph_{x_{0}}(F_{i}\cE_{x_{0}})\subset F_{i}\cE_{x_{0}}\otimes\cL_{x_{0}}$ for all $i$. For the a line bundle $\D$ on $X$,  we define $\cM^{\pb}_{r,\D,x_{0}}(X,\cL)$ to be the moduli stack of quadruples $(\calE,\ph, F_{\bullet}\cE_{x_{0}}, \iota)$ where $(\cE, \ph,F_{\bullet}\cE_{x_{0}})$ is an $\calL$-valued Higgs bundles with parabolic structure at $x_{0}$ over $X$ of rank $r$,  and $\iota: \det(\cE)\cong\D$. We often abbreviate $\cM^{\pb}_{r,\D,x_{0}}(X,\cL)$ by $\cM^{\pb}_{r,\D}(X,\cL)$ when $x_{0}$ is fixed in the context. There is a forgetful map
\begin{equation*}
\nu: \cM^{\pb}_{r,\D}(X,\cL)\to \cM_{r,\D}(X,\cL).
\end{equation*}
The fiber of $\nu$ over $(\cE,\ph,\iota)$ consists of full flags of the vector space $\cE_{x_{0}}$ stable under $\ph_{x_{0}}$. Therefore the fibers of $\nu$ are Springer fibers for the group $\GL_r$. In particular, $\nu$ is representable and proper.
 
We define $\cM^{\pb,s}_{r,\D}(X,\cL)$ to be the preimage of $\cM^{s}_{r,\D}(X,\cL)$ under $\nu$. Similarly we have the coarse moduli space $M^{\pb,s}_{r,\D}(X,\cL)$ such that the natural map $\cM^{\pb,s}_{r,\D}(X,\cL)\to M^{\pb,s}_{r,\D}(X,\cL)$ is a $\mu_{r}$-gerbe. 

\begin{prop}\label{p:sm}
\begin{enumerate}
\item Suppose $\deg\cL>\deg\om_{X}$, then the stack $\cM^{s}_{r,\D}(X, \cL)$ and the scheme $M^{s}_{r,\D}(X,\cL)$ are smooth. 
\item Suppose $\deg\cL\ge\deg\om_{X}+1$, then the stack $\cM^{\pb, s}_{r,\D}(X, \cL)$ and the scheme $M^{\pb, s}_{r,\D}(X,\cL)$ are smooth. 
\end{enumerate}
\end{prop}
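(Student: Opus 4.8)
The plan is to run the standard deformation-theoretic proof that moduli of (parabolic) Higgs bundles are smooth, and to verify that every step survives passage to a smooth proper Deligne--Mumford curve. In all cases smoothness of the coarse space reduces to smoothness of the stack: the natural map from $\cM^{s}_{r,\D}(X,\cL)$ (resp.\ from $\cM^{\pb,s}_{r,\D}(X,\cL)$) to its coarse space is a $\mu_{r}$-gerbe, hence faithfully flat with smooth source, so ``smooth over the base field'' descends. Thus it suffices to show the stacks are smooth, i.e.\ that the obstruction group $\HH^{2}$ of the appropriate deformation complex vanishes at every stable point.

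\textbf{Part (1).} For $(\cE,\ph,\iota)\in\cM_{r,\D}(X,\cL)$ the infinitesimal deformations and obstructions are computed by the hypercohomology on $X$ of the complex $C^{\bullet}\colon\ \mathcal{E}nd^{0}(\cE)\xrightarrow{[\ph,-]}\mathcal{E}nd^{0}(\cE)\otimes\cL$ in degrees $0,1$, where $\mathcal{E}nd^{0}$ is the trace-free part (the trace-free condition in degree $0$ records the fixed determinant; if $\mathrm{tr}\,\ph$ is not also fixed there is an additional obstruction summand $\upH^{1}(X,\cL)$, which vanishes by the same estimate below). This description — through explicit cocycle computations, or Illusie's cotangent complex for Artin stacks — does not distinguish schemes from DM stacks, and since $X$ is a $1$-dimensional DM stack over a field of characteristic $0$ one still has $\upH^{\ge2}(X,\cF)=0$ for coherent $\cF$; hence $\HH^{2}(X,C^{\bullet})=\coker\big(\upH^{1}(X,\mathcal{E}nd^{0}(\cE))\xrightarrow{[\ph,-]}\upH^{1}(X,\mathcal{E}nd^{0}(\cE)\otimes\cL)\big)$. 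By Serre duality on the smooth proper DM curve $X$ (with $\om_{X}$ the dualizing line bundle) the dual of this cokernel is the kernel of a $[\ph,-]$-map out of $\upH^{0}(X,\mathcal{E}nd^{0}(\cE)\otimes N)$ with $N:=\cL^{\vee}\otimes\om_{X}$, and an element of this kernel is precisely a morphism of $\cL$-Higgs bundles $(\cE,\ph)\to(\cE\otimes N,\ph\otimes\id_{N})$. Since $\deg\cL>\deg\om_{X}$ we have $\deg N<0$, so the target is stable of slope strictly less than $\mu(\cE)$, and a nonzero morphism of semistable Higgs bundles of equal rank forces the source slope $\le$ the target slope — a contradiction. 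Hence $\HH^{2}(X,C^{\bullet})=0$ and $\cM^{s}_{r,\D}(X,\cL)$ is smooth.

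\textbf{Part (2).} I would repeat this with the parabolic endomorphism sheaves. Write $\mathrm{ParEnd}^{0}(\cE)$ for the (trace-free) endomorphisms of $\cE$ preserving the flag $F_{\bullet}$ at $x_{0}$ and $\mathrm{SParEnd}^{0}(\cE)$ for those strictly decreasing it; the deformation complex of $(\cE,\ph,F_{\bullet}\cE_{x_{0}},\iota)$ is $C^{\bullet}_{\pb}\colon\ \mathrm{ParEnd}^{0}(\cE)\xrightarrow{[\ph,-]}\mathrm{ParEnd}^{0}(\cE)\otimes\cL$, and using Serre duality together with the standard duality between $\mathrm{ParEnd}(\cE)$ and $\mathrm{SParEnd}(\cE)\otimes\cO_{X}(x_{0})$ (with its trace-free refinement), $\HH^{2}(X,C^{\bullet}_{\pb})^{\vee}$ becomes the kernel of a $[\ph,-]$-map out of $\upH^{0}(X,\mathrm{SParEnd}^{0}(\cE)\otimes M)$ with $M:=\cL^{\vee}\otimes\om_{X}(x_{0})$. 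Its elements are morphisms $\psi\colon(\cE,\ph)\to(\cE\otimes M,\ph\otimes\id_{M})$ of $\cL$-Higgs bundles that are \emph{strongly parabolic} at $x_{0}$. If $\deg M<0$ then $\psi=0$ for degree reasons; if $\deg M=0$, a nonzero $\psi$ would be a fiberwise isomorphism by the slope argument (stability of $(\cE,\ph)$), which is incompatible with $\psi$ being strongly parabolic at $x_{0}$ — so either way $\psi=0$. Thus $\HH^{2}(X,C^{\bullet}_{\pb})=0$ as soon as $\deg M\le0$, i.e.\ $\deg\cL\ge\deg\om_{X}+\deg\cO_{X}(x_{0})$; since every closed point of a DM curve has $\deg\cO_{X}(x_{0})\le1$, the stated hypothesis $\deg\cL\ge\deg\om_{X}+1$ suffices, giving smoothness of $\cM^{\pb,s}_{r,\D}(X,\cL)$. (Alternatively, one may check that the evaluation $\cM^{s}_{r,\D}(X,\cL)\to[\gl_{r}/\GL_{r}]$, $(\cE,\ph)\mapsto(\cE_{x_{0}},\ph_{x_{0}})$, is smooth and present $\cM^{\pb,s}_{r,\D}(X,\cL)$ as its base change along the smooth Grothendieck--Springer stack; this is cleaner but needs a strict inequality, and only the computation above yields the sharp bound, the strongly-parabolic condition accounting for the boundary case $\deg M=0$.)

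\textbf{Main obstacle.} The heart of the matter — and the only real content — is \emph{foundational}: that Higgs-bundle deformation theory on $X$ is genuinely computed by the explicit complexes above when $X$ is a DM stack, and that $\upH^{\ge2}$-vanishing, Serre duality with dualizing line bundle $\om_{X}$, Nitsure's GIT construction of the coarse moduli space, and the slope/semistability inequalities all carry over verbatim; when $x_{0}$ is a stacky point one works $\Aut(x_{0})$-equivariantly on $\cE_{x_{0}}$ and its flag. None of this is deep, but keeping the trace-free conditions and the $\cO_{X}(x_{0})$-twist straight in the parabolic case is the fiddliest bookkeeping. The numerical hypotheses are exactly what force $N$ and $M$ to have the correct sign, the extra unit in (2) absorbing $\deg\cO_{X}(x_{0})\le1$.
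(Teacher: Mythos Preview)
Your proof is correct and follows essentially the same route as the paper: identify the obstruction as $\HH^{2}$ of the (parabolic) endomorphism complex, apply Serre duality to convert it into a space of Higgs-bundle maps $\cE\to\cE\otimes(\cL^{-1}\otimes\om_{X})$ (resp.\ strongly parabolic maps into $\cE\otimes\cL^{-1}\otimes\om_{X}(x_{0})$), and kill these by the slope inequality, handling the boundary case $\deg M=0$ in (2) via the nilpotency forced by strong parabolicity. The only cosmetic difference is that the paper's complex has trace-free endomorphisms only in degree $0$ (since $\tr\ph$ is not fixed in their setup), a point you already flagged.
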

\begin{proof} 

(1) Since $\cM^{s}_{r,\D}(X, \cL)$ is a $\mu_{r}$-gerbe over $M^{s}_{r,\D}(X,\cL)$, it suffices to show that $\cM^{s}_{r,\D}(X, \cL)$ is smooth. We will show that the semistable locus  $\cM^{ss}_{r,\D}(X, \cL)$ is smooth, i.e.,  $\cM_{r,\D}(X, \cL)$ is smooth at any semistable point $(\cE,\ph)$. The argument is borrowed from Nitsure \cite[Proof of Proposition 7.1]{Ni}. The obstruction to smoothness lies in $\cohog{2}{X, \cK}$ where $\cK$ is the complex $\un\End_{0}(\cE)\xr{[\ph,-]} \un\End(\cE)\otimes\cL$ placed in degrees $0$ and $1$, and $\un\End_{0}(\cE)$ is the sheaf of trace-free endomorphisms of $\cE$. By Serre duality, $\cohog{2}{X, \cK}$ is dual to $\cohog{0}{X,\cK'}$ where $\cK'$ is $\un\End(\cE)\otimes\cL^{-1}\otimes\om_{X}\xr{[\ph,]} \un\End_{0}(\cE)\otimes\om_{X}$ placed in degrees $0$ and $1$. Now $\cohog{0}{X,\cK'}$ consists of maps $\psi: \cE\to \cE\otimes\cL^{-1}\otimes\om_{X}$ commuting with $\ph$. However $(\cE\otimes\cL^{-1}\otimes\om_{X},\ph\otimes1\otimes 1)$ is a stable Higgs bundle with slope $\mu(\cE)-\deg\cL+\deg\om_{X}<\mu(\cE)$, such $\psi$ must be zero. Therefore the obstruction vanishes, hence $\cM_{r,\D}(X, \cL)$ is smooth at $(\cE,\ph)$.

(2) Again it suffices to show that $\cM^{\pb, s}_{r,\D}(X, \cL)$ is smooth, i.e., $\cM^{\pb}_{r,\D}(X, \cL)$ is smooth at any point $(\cE,\ph, F_{\bullet}\cE_{x_{0}})$ whenever $(\cE,\ph)$ is stable. Similar calculation shows that the obstruction to smoothness lies in $\cohog{2}{X, \cP}$ where $\cK^{\pb}$ is the complex $\un\End'_{0}(\cE)\xr{[\ph,]}\un\End'(\cE)\otimes\cL$ (in degrees $0$ and $1$), and $\un\End'_{0}(\cE)\subset\un\End_{0}(\cE)$ is the subsheaf preserving the flag $F_{\bullet}\cE_{x_{0}}$. By Serre duality, $\cohog{2}{X, \cK^{\pb}}$ is dual to $\cohog{0}{X, \cP'}$ where $\cP'$ is the complex $\un\End''(\cE)\otimes\cL^{-1}\otimes \om_{X}(x_{0})\xr{[\ph,-]}\un\End''_{0}(\cE)\otimes\om_{X}(x_{0})$, and $\un\End''(\cE)\subset \un\End'(\cE)$ (resp. $\un\End''_{0}(\cE)\subset \un\End'_{0}(\cE)$) is the subsheaf of those local endomorphisms of $\cE$ sending $F_{i}\cE_{x_{0}}$ to $F_{i-1}\cE_{x_{0}}$ for all $1\le i\le r$. An element $\psi\in \cohog{0}{X, \cP'}$ is then a map $\psi: \cE\to \cE\otimes\cL^{-1}\otimes \om_{X}(x_{0})$ commuting with $\ph$ and sending $F_{i}\cE_{x_{0}}$ to $F_{i-1}\cE_{x_{0}}$. If $\deg\cL>\deg\om_{X}+1$, the same argument as in (1) shows that $\psi=0$. If  $\deg\cL=\deg\om_{X}+1$, the argument there forces any nonzero $\psi$ to be an isomorphism, which contradicts the fact that $\psi_{x_{0}}$ is nilpotent. Again we have shown that the obstruction vanishes.
\end{proof}

Let $(\cE^{\univ},\ph^{\univ})$ be the universal Higgs bundle over $\cM_{r,\D}(X,\cL)\times X$.  We consider the Chern classes $c_{i}(\Eun)\in\cohog{2i}{\cM^{s}_{r,\D}(X,\cL)\times X}$, $i=1,\cdots,r$. We may talk about the K\"unneth components of $c_{i}(\Eun)$ in $\cohog{*}{\cM^{s}_{r,\D}(X,\cL)}$. 

For a scheme $M$ over $\CC$ and each $n\in\ZZ$, define
\begin{equation*}
\cohog{n}{M}_{\pure}:=\Gr^{W}_{n}\cohog{n}{M}
\end{equation*}
under the mixed Hodge structure on $\cohog{n}{M}$. When $M$ is smooth, $\Gr^{W}_{n}\cohog{n}{M}=W_{n}\cohog{n}{M}$ is a subspace of $\cohog{n}{M}$, hence the direct sum
\begin{equation*}
\cohog{*}{M}_{\pure}=\bigoplus_{n\in\ZZ}\cohog{n}{M}_{\pure}
\end{equation*}
is a subalgebra of $\cohog{*}{M}$.  Chern classes of vector bundles as well as cycle classes on a smooth scheme $M$ always lie in $\cohog{*}{M}_{\pure}$. 

\begin{theorem}[Hausel-Thaddeus \cite{HT}, Markman \cite{Mark}]\label{th:Hit gen} Suppose $\deg\cL>\deg\om_{X}$. Let $\cM'\subset \cM^{s}_{r,\D}(X,\cL)$ be any open substack such that the universal bundle descends to a bundle $\un\cE^{\univ}$ on $M'\times X$, where $M'$ is the coarse moduli space of $\cM'$. Then the pure part of the cohomology ring $\cohog{*}{M'}_{\pure}$ is generated by the K\"unneth components of the universal Chern classes $c_{i}(\un\cE^{\univ})\in\cohog{2i}{M'\times X}$, $1\leq i\leq r$, as a $\QQ$-algebra.
\end{theorem}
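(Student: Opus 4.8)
\textbf{Proof strategy for Theorem~\ref{th:Hit gen}.} The plan is to adapt Markman's argument from \cite{Mark} for ordinary smooth projective curves to the Deligne--Mumford setting. The heart of Markman's method is a result of Beauville \cite{Beau} on the generation of the cohomology of the moduli of sheaves by Chern classes of the universal object, combined with a Hecke correspondence argument that relates Higgs moduli to a suitable moduli of coherent sheaves on the total space of the line bundle $\cL$. First I would form the spectral variety: let $S\subset \mathrm{Tot}(\cL)$ be the total space of $\cL$, and recall that an $\cL$-valued Higgs bundle $(\cE,\ph)$ on $X$ corresponds, via the BNR (Beauville--Narasimhan--Ramanan) correspondence, to a rank-one torsion-free sheaf (or more precisely a pure dimension-one sheaf supported on spectral curves) on $S$ with fixed numerical invariants. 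Since $X$ is a smooth proper DM curve, $S$ is a smooth DM surface (or rather a smooth DM stack of dimension $2$), and the BNR correspondence goes through verbatim: the key point is that the condition $\deg\cL > \deg\om_X$ guarantees that the spectral curves have the right arithmetic genus and that the moduli space is nonempty and of the expected dimension, which is exactly the smoothness input from Proposition~\ref{p:sm}(1).

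The second step is to invoke a DM-stack version of Beauville's theorem: the cohomology (or its pure part) of a moduli space of stable pure one-dimensional sheaves on a smooth projective surface is generated by the K\"unneth components of the Chern classes of the universal sheaf. One must check that Beauville's argument---which uses the existence of a universal sheaf locally, Grothendieck--Riemann--Roch, and a degeneration/deformation to reduce to Hilbert schemes of points---carries over when the surface is replaced by a smooth proper DM surface. This is plausible because all the tools (GRR for DM stacks, the structure of moduli of sheaves, the orbifold Chow/cohomology formalism) are available, but it is the step that requires the most care. Then, as in Markman, one transports the generation statement across the BNR correspondence: the universal sheaf on $S$ pushes forward (after twisting) to the universal Higgs bundle on $X$, so the Chern classes of the universal sheaf on $\mathcal{M}\times S$ are expressible as polynomials in the Chern classes of $\cE^{\univ}$ on $\mathcal{M}\times X$ together with fixed classes pulled back from $S$ and $X$; this identifies the two subalgebras of $\cohog{*}{M'}$. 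The descent issue on the open substack $\cM'$ where $\un\cE^{\univ}$ exists is handled exactly as in the statement: one works on the coarse space $M'$ with the descended bundle, and the $\mu_r$-gerbe does not affect rational cohomology.

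\textbf{Main obstacle.} I expect the principal difficulty to be verifying that Beauville's generation theorem for moduli of sheaves on surfaces genuinely extends to smooth proper DM \emph{stacks} of dimension two, since the original proof relies on subtle facts about Hilbert schemes of points and deformation arguments that are cleanest over schemes. A secondary, more technical point is to confirm that the BNR/spectral correspondence is an isomorphism of moduli problems (not just a bijection on points) in the DM setting and that it matches the universal objects compatibly, so that the translation of Chern classes is clean. If a full DM-stack version of Beauville's result is unavailable, a fallback is to follow the Hausel--Thaddeus approach \cite{HT} instead, which uses the $\Gm$-action on the Higgs moduli space (scaling the Higgs field), the Bia{\l}ynicki-Birula decomposition, and Kirwan surjectivity for the associated GIT quotient---each of these also has a DM-stack analogue, and the localization to the nilpotent cone reduces the generation statement to the fixed-point loci, whose cohomology is controlled by moduli of (parabolic) bundles. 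Either route should close the argument; I would present the Markman route as primary and indicate the Hausel--Thaddeus route as the robust alternative.
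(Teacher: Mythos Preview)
Your plan diverges from the paper's proof and takes a harder road. The paper does \emph{not} pass through the spectral surface $\mathrm{Tot}(\cL)$ or invoke any generation theorem for moduli of sheaves on a DM surface. Instead it runs the diagonal-class argument directly on $M'$: one forms the two-term ``Higgs Hom'' complex $\cH^{\univ}$ on $M'\times M'\times X$ (with terms $\un\Hom(\pi_{13}^*\un\cE^{\univ},\pi_{23}^*\un\cE^{\univ})\to\un\Hom(\ldots)\otimes\pi_3^*\cL$), checks from stability and $\deg\cL>\deg\om_X$ that $\cohog{\ge 2}{X,\cH_{(\cE,\ph),(\cF,\psi)}}=0$, and then observes that the degeneracy locus of the two-term model for $\bR\pi_{12,*}\cH^{\univ}$ is exactly the diagonal $\Delta(M')$, of the expected codimension $m=\dim M'$. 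Thom--Porteous gives $c_m(\bR\pi_{12,*}\cH^{\univ})$ as the class of a cycle supported on $\Delta(M')$ with positive coefficients; Grothendieck--Riemann--Roch writes this Chern class as $\sum_j\alpha_j\otimes\beta_j$ with each $\alpha_j,\beta_j$ a polynomial in the K\"unneth components of $c_i(\un\cE^{\univ})$; and finally the K\"unneth components of $[\Delta(M')]$ span $\cohog{*}{M'}_{\pure}$ by taking a smooth compactification $\ov{M'}$ and using Deligne's surjectivity of $\cohog{*}{\ov{M'}}\to\cohog{*}{M'}_{\pure}$. That is the whole proof---no BNR, no Hilbert schemes, no Kirwan surjectivity.

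Your route has two concrete problems beyond the DM-stack issue you flagged. First, $S=\mathrm{Tot}(\cL)$ is not projective, so a ``Beauville/Markman theorem for sheaves on a smooth projective surface'' is not literally applicable; you would need to compactify $S$ and control the boundary, which is extra work. Second, you have misidentified the relevant input from Beauville: it is not a deformation-to-Hilbert-schemes statement but precisely the Thom--Porteous observation that the top Chern class of the Ext-complex represents the diagonal. Once you recognize that, you can run that observation directly with the Higgs complex on $M'\times M'\times X$ and skip the spectral detour entirely. Your Hausel--Thaddeus fallback could also be made to work, but it is again more machinery than the paper actually needs.
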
 
\begin{proof} Whether or not $\Eun$ descends to the coarse moduli space, the bundle $\un\End(\Eun)$ always descends for the pointwise automorphism group $\mu_{r}$ acts trivially on it. When $\Eun$ descends to  bundle $\un\cE^{\univ}$ on $M'\times X$, the universal Higgs field $\ph^{\univ}$ also descends to $M'\times X$ since it is a section of $\un\End(\un\cE^{\univ})\otimes \cL$ on $M'\times X$.

We have a two term complex $\cH^{\univ}$ on $M'\times M'\times X$ concentrated in degrees $0$ and $1$:
\begin{equation*}
\Hun=[\un\Hom(\pi^{*}_{13}\un{\cE}^{\univ}, \pi^{*}_{23}\un{\cE}^{\univ}) \xr{\d}\un\Hom(\pi^{*}_{13}\un{\cE}^{\univ}, \pi^{*}_{23}\un{\cE}^{\univ})\otimes\pi_{3}^{*}\cL]
\end{equation*}
Here $\pi_{12}:M'\times M'\times X\to M'\times X$ is the projection to the first two factors, etc. The map $\d$ is given by
\begin{equation*}
h\mapsto (h\otimes\id_{\calL})\circ\pi^{*}_{13}\ph^{\univ}-(\pi^{*}_{23}\ph^{\univ}\otimes\id_{\cL})\circ h.
\end{equation*}
For two Higgs bundles $(\calE,\varphi)$ and $(\calF,\psi)$ in $M'$, the restriction of $\Hun$ to $\{(\calE,\varphi)\}\times\{(\calF,\psi)\}\times X$ is the complex on $X$
\begin{equation*}
\calH_{(\calE,\varphi),(\calF,\psi)}:=[\un\Hom(\calE,\calF)\xrightarrow{\delta(\varphi,\psi)}\un\Hom(\calE,\calF)\otimes\calL]
\end{equation*}
where  the map $\delta(\varphi,\psi)$ is $h\mapsto (h\otimes\id_{\calL})\circ\varphi-(\psi\otimes\id_{\cL})\circ h$.  

\begin{claim} For any two Higgs bundles $(\calE,\varphi)$ and $(\calF,\psi)$ in $M'$, $\cohog{i}{X,\calH_{(\calE,\varphi),(\calF,\psi)}}=0$ for $i\geq 2$. 
\end{claim}
\begin{proof}[Proof of Claim]  We abbreviate $\calH_{(\calE,\varphi),(\calF,\psi)}$ by $\calH$. Clearly $\cohog{i}{X,\calH}=0$ for $i\geq 3$ because $\calH$ is in degrees $\leq1$ and $X$ has dimension $1$. By Serre duality, we have $\cohog{2}{X,\calH}\cong\cohog{0}{X,\calH'}^{*}$, where $\calH'$ is the two term complex in degrees $0$ and $1$
\begin{equation*}
\calH'=[\un\Hom(\calF,\calE)\otimes\calL^{-1}\otimes\om_{X}\xr{\delta(\psi,\ph)}\un\Hom(\calF,\calE)\otimes\om_{X}].
\end{equation*}
An element $h\in \cohog{0}{X,\calH'}$ is then a map $h: \cF\to \cE\otimes\cL^{-1}\otimes\om_{X}$ that intertwines $\psi$ and $\ph\otimes 1\otimes 1$, i.e., a Higgs bundle map. Since $\cE\otimes\cL^{-1}\otimes\om_{X}$ is stable with slope $\mu(\cE)-\deg \cL+\deg\om_{X}<\mu(\cE)=\mu(\cF)$, $h$ must be zero. Hence  $\cohog{2}{X,\calH}=0$ by duality.
\end{proof}

\begin{claim}[Beauville \cite{Beau}]\label{claim:diag}  Let $m=\dim M'$. Then $c_{m}(\bR\pi_{12,*}\Hun)\in\cohog{2m}{M'\times M'}$ is the cycle class of a cycle supported on the diagonal $\D(M')$, with positive coefficient on each irreducible component of $\D(M')$.
\end{claim}
\begin{proof}[Proof of Claim] By the previous Claim,   $\bR\pi_{12,*}\Hun$ is quasi-isomorphic to a two-term complex of vector bundles $K^{0}\xrightarrow{u}K^{1}$ on $M'\times M'$. For a point $\xi=((\calE,\varphi),(\calF,\psi))\in M'\times M'$, we have $\ker(K^{0}(\xi)\xrightarrow{u(\xi)}K^{1}(\xi))=\Hom_{X}((\calE,\varphi), (\calF,\psi))$ which is nonzero if and only if $(\calE,\varphi)\cong(\calF,\psi)$. Therefore the degeneracy locus  of $u$ is exactly $\Delta(M')$. This means $u$ is injective. Hence $\bR\pi_{12,*}\cH^{\univ}$ is quasi-isomorphic to a coherent sheaf $\cQ$ on $M'\times M'$ which is locally has a two term resolution $K^{0}\xrightarrow{u}K^{1}$ by locally free sheaves. Hence the degeneracy locus of $u$ (i.e., $\Delta(M')$) has the expected codimension $m=\rk K^{1}-\rk K^{0}+1$. Thom-Porteous formula (see \cite[Theorem 14.4(a)(b)]{Fulton}) then shows that $c_{m}(\bR\pi_{12,*}\Hun)$ is the class of a cycle supported on $\D(M')$ with positive coefficients on each irreducible component of $\Delta(M')$.
\end{proof}

Let $\cohog{*}{M'}_{\ch}$ be the {\em subalgebra} generated by the K\"unneth components of $c_{i}(\un{\cE}^{\univ}), i=1,\cdots, r$. Our goal is to show that $\cohog{*}{M'}_{\ch}=\cohog{*}{M'}_{\pure}$. Let $\cohog{*}{M'}_{\Delta}$ be the {\em linear subspace} spanned by the K\"unneth components of the cycle class $cl(\Delta(M'))\in\cohog{2m}{M'\times M'}$. 

We first show that $\cohog{*}{M'}_{\Delta}\subset \cohog{*}{M'}_{\ch}$. By the Grothendieck-Riemann-Roch formula, the Chern character of $\bR\pi_{12,*}\Hun$ is expressed as
\begin{equation}\label{RR}
\ch(\bR\pi_{12,*}\Hun)=\pi_{12,*}(\ch(\Hun)\cdot\pi^{*}_{X}\Todd_{X}).
\end{equation}
Clearly, $\ch(\Hun)$ can be expressed as a polynomial of $\pi_{13}^{*}c_{i}(\un{\cE}^{\univ})$ and $\pi_{23}^{*}c_{i}(\un{\cE}^{\univ})$. Therefore \eqref{RR} implies that $\ch(\bR\pi_{12,*}\Hun)\in\cohog{*}{M'\times M'}$ can be expressed as  $\sum_{j}\alpha_{j}\otimes\beta_{j}$ under the K\"unneth decomposition, where $\alpha_{j},\beta_{j}\in\cohog{*}{M'}_{\ch}$. In particular, $c_{m}(\bR\pi_{12,*}\Hun)$ can be expressed as $\sum_{j}\alpha_{j}\otimes\beta_{j}$ for $\alpha_{j},\beta_{j}\in\cohog{*}{M'}_{\ch}$. By the above Claim, $cl(\Delta(M'))$ can also be expressed in the same form. In other words, we have $\cohog{*}{M'}_{\Delta}\subset\cohog{*}{M'}_{\ch}$.

Finally, it suffices to show that $\cohog{*}{M'}_{\Delta}=\cohog{*}{M'}_{\pure}$.  Let $\iota:M'\incl\barM'$ be an arbitrary smooth compactification of $M'$. By Deligne \cite[Corollaire 3.2.17]{Del H2}, the restriction map $\iota^{*}:\cohog{*}{\barM'}\to \cohog{*}{M'}_{\pure}$ is surjective. Since $\barM'$ is smooth and proper, the K\"unneth components of the diagonal class span $\cohog{*}{\barM'}$ (as a vector space). Since the class of $\Delta(\barM')$ restricts to that of $\Delta(M')$ under the map $\iota^{*}$, we have $\cohog{*}{M'}_{\Delta}=\cohog{*}{M'}_{\pure}$.   Together with the preceding discussion, we have $\cohog{*}{M'}_{\ch}=\cohog{*}{M'}_{\pure}$.
\end{proof}

\sss{The parabolic case of generation}
Now we consider an analogue of Theorem~\ref{th:Hit gen} for the  moduli space of parabolic Higgs bundles. 

Let $\Eun_{x_{0}}$ denotes the restriction of $\Eun$ to $\cM_{r,\D}(X,\cL)\times\{x_{0}\}$. Then the pullback of $\Eun_{x_{0}}$ to $\cM^{\pb}_{r,\D}(X,\cL)$ carries a universal filtration $F_{\bullet}\cE^{\univ}_{x_{0}}$. Therefore we get line bundles on $\cM^{\pb}_{r,\D}(X,\cL)$
\begin{equation*}
\cL^{\univ}_{i}=F_{i}\Eun_{x_{0}}/F_{i-1}\Eun_{x_{0}}, \quad i=1,2,\cdots, r.
\end{equation*}

Let $\cM'\subset \cM^{s}_{r,\D}(X,\cL)$ be an open substack such that the universal bundle $\Eun$ descends to a bundle $\un\cE^{\univ}$ on $M'\times X$, where $M'$ is the coarse moduli space of $\cM'$. Let $M'^{\pb}$ (resp. $\cM'^{\pb}$) be the preimage of $M'$ in $M^{\pb, s}_{r,\D}(X,\cL)$ (resp. $\cM'$ in $\cM^{\pb,s}_{r,\D}(X,\cL)$). Then $M'^{\pb}$ is a closed subscheme of the flag bundle associated to the bundle $\un\cE^{\univ}_{x_{0}}$, the restriction of $\un\cE^{\univ}$ to $M'\times\{x_{0}\}$. In particular, the (restrictions of the) line bundles $\cL^{\univ}_{i}$ on $\cM'^{\pb}$ descend to $M'^{\pb}$. We denote these descents still by $\un\cL^{\univ}_{i}$.

\begin{theorem}\label{th:glob par gen}
Suppose $\deg\cL\ge\deg\om_{X}+1$. Let $\cM'\subset \cM^{s}_{r,\D}(X,\cL)$ be an open substack such that the universal bundle $\Eun$ descends to a bundle $\un\cE^{\univ}$ on $M'\times X$, where $M'$ is the coarse moduli space of $\cM'$. Let $M'^{\pb}$ be the preimage of $M'$ in $M^{\pb, s}_{r,\D}(X,\cL)$. Then the pure part of the cohomology ring $\cohog{*}{M'^{\pb}}_{\pure}$ is generated by the pullbacks of the K\"unneth components of the universal Chern classes $c_{i}(\un{\cE}^{\univ})\in\cohog{2i}{M'\times X}$ ($1\leq i\leq r$) and the Chern classes $c_{1}(\un\cL^{\univ}_{i})\in \cohog{2}{M'^{\pb}}_{\pure}$ ($1\le i\le r$) as a $\QQ$-algebra.
\end{theorem}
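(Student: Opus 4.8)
\emph{Proof strategy.} The plan is to adapt the proof of Theorem~\ref{th:Hit gen} (Markman's argument), replacing the complex $\Hun$ of homomorphisms of Higgs bundles by a complex of homomorphisms of \emph{parabolic} Higgs bundles. Since $\deg\cL\ge\deg\om_{X}+1$, the scheme $M'^{\pb}$ is smooth by Proposition~\ref{p:sm}(2); hence, exactly as in the last paragraph of the proof of Theorem~\ref{th:Hit gen} (Deligne's surjectivity onto the pure part from a smooth compactification, together with Poincar\'e duality there), $\cohog{*}{M'^{\pb}}_{\pure}$ is spanned by the K\"unneth components of the diagonal class $cl(\Delta(M'^{\pb}))\in\cohog{*}{M'^{\pb}\times M'^{\pb}}\cong\cohog{*}{M'^{\pb}}\otimes\cohog{*}{M'^{\pb}}$. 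Writing $\cohog{*}{M'^{\pb}}_{\ch}$ for the subalgebra generated by the pullbacks of the K\"unneth components of the $c_{i}(\un{\cE}^{\univ})$ and by the classes $c_{1}(\un\cL^{\univ}_{i})$ (these line bundles descend to $M'^{\pb}$ precisely because $\un{\cE}^{\univ}$ does), the whole problem therefore reduces to proving $cl(\Delta(M'^{\pb}))\in\cohog{*}{M'^{\pb}}_{\ch}\otimes\cohog{*}{M'^{\pb}}_{\ch}$.

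To present $cl(\Delta(M'^{\pb}))$ this way, I would introduce on $M'^{\pb}\times M'^{\pb}\times X$ the two-term complex, in degrees $0$ and $1$,
\begin{equation*}
\calH^{\pb}=\bigl[\,\un\Hom'(\pi_{13}^{*}\un{\cE}^{\univ},\pi_{23}^{*}\un{\cE}^{\univ})\xr{\delta}\un\Hom'(\pi_{13}^{*}\un{\cE}^{\univ},\pi_{23}^{*}\un{\cE}^{\univ})\otimes\pi_{3}^{*}\cL\,\bigr],
\end{equation*}
where $\un\Hom'\subset\un\Hom$ is the subsheaf of local homomorphisms carrying $F_{i}(\pi_{13}^{*}\un{\cE}^{\univ})_{x_{0}}$ into $F_{i}(\pi_{23}^{*}\un{\cE}^{\univ})_{x_{0}}$ for all $i$ (using the two universal flags at $x_{0}$), and $\delta$ is the commutator with the two universal Higgs fields --- well defined on $\un\Hom'$ because the universal parabolic Higgs fields preserve the flags at $x_{0}$. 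Restricted to a point $\xi=((\cE,\varphi,F_{\bu}),(\cF,\psi,G_{\bu}))$, this is a complex $\calH^{\pb}_{\xi}$ on $X$ with $\cohog{0}{X,\calH^{\pb}_{\xi}}$ the space of parabolic Higgs morphisms $(\cE,\varphi,F_{\bu})\to(\cF,\psi,G_{\bu})$, i.e.\ bundle maps intertwining $\varphi$ and $\psi$ and sending $F_{i}$ into $G_{i}$ at $x_{0}$. The key point, and the step I expect to be the main obstacle, is the vanishing
\begin{equation*}
\cohog{i}{X,\calH^{\pb}_{\xi}}=0\qquad(i\ge2)
\end{equation*}
whenever $(\cE,\varphi)$ and $(\cF,\psi)$ are stable of the same slope. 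This is where $\deg\cL\ge\deg\om_{X}+1$ enters, in exact parallel with the passage from Proposition~\ref{p:sm}(1) to Proposition~\ref{p:sm}(2): by Serre duality, $\cohog{2}{X,\calH^{\pb}_{\xi}}$ is dual to the $\cohog{0}{X,-}$ of
\begin{equation*}
\bigl[\,\un\Hom''(\cF,\cE)\otimes\cL^{-1}\otimes\om_{X}(x_{0})\xr{\delta}\un\Hom''(\cF,\cE)\otimes\om_{X}(x_{0})\,\bigr],
\end{equation*}
where $\un\Hom''$ is the sheaf of homomorphisms sending $G_{i}\cF_{x_{0}}$ into $F_{i-1}\cE_{x_{0}}$; the delicate input is that the Serre dual involves the \emph{strongly} parabolic sheaf $\un\Hom''$ and the twist by $\om_{X}(x_{0})$ rather than $\om_{X}$. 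An element of this $\cohog{0}{X,-}$ is a Higgs-bundle map $h\colon\cF\to\cE\otimes\cL^{-1}\otimes\om_{X}(x_{0})$ with $h_{x_{0}}$ nilpotent, and since $\mu(\cE\otimes\cL^{-1}\otimes\om_{X}(x_{0}))=\mu(\cE)-\deg\cL+\deg\om_{X}+1\le\mu(\cE)=\mu(\cF)$, stability forces $h=0$ when the inequality is strict, while when $\deg\cL=\deg\om_{X}+1$ a nonzero $h$ would be an isomorphism, contradicting the nilpotence of $h_{x_{0}}$; hence $h=0$ and the vanishing holds.

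Granting the vanishing, the remainder transcribes the proof of Theorem~\ref{th:Hit gen} with no essential change. Base change makes $\bR\pi_{12,*}\calH^{\pb}$ quasi-isomorphic to a two-term complex $[K^{0}\xr{u}K^{1}]$ of vector bundles on $M'^{\pb}\times M'^{\pb}$ with $\ker(u)_{\xi}=\cohog{0}{X,\calH^{\pb}_{\xi}}$, which is nonzero exactly when the two parabolic Higgs bundles at $\xi$ are isomorphic, i.e.\ when $\xi\in\Delta(M'^{\pb})$; so $u$ is injective, $\bR\pi_{12,*}\calH^{\pb}$ is quasi-isomorphic to a sheaf with a length-one locally free resolution, and $\Delta(M'^{\pb})$ is its degeneracy locus. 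The numerology is unchanged because $\un\Hom'$ differs from $\un\Hom$ by a skyscraper at $x_{0}$ of the same finite length in both terms of $\calH^{\pb}$, so $\chi(X,\calH^{\pb}_{\xi})=\chi(X,\calH_{\xi})$; hence, just as in Theorem~\ref{th:Hit gen}, $\Delta(M'^{\pb})$ has the expected codimension $m=\dim M'^{\pb}$ and Thom--Porteous gives that $c_{m}(\bR\pi_{12,*}\calH^{\pb})$ is a cycle supported on $\Delta(M'^{\pb})$ with positive coefficients on each component. Finally Grothendieck--Riemann--Roch yields $\ch(\bR\pi_{12,*}\calH^{\pb})=\pi_{12,*}(\ch(\calH^{\pb})\cdot\pi_{3}^{*}\Todd_{X})$ and $\ch(\calH^{\pb})=\ch(\Hun)-\ch(\cR)$, where $\Hun$ is the analogous complex formed from $\un\Hom$ and $\cR=\un\Hom/\un\Hom'$ is a skyscraper along $M'^{\pb}\times M'^{\pb}\times\{x_{0}\}$, isomorphic there to $\bigoplus_{1\le b<a\le r}\bigl((\un\cL^{\univ}_{b})^{\vee}\boxtimes\un\cL^{\univ}_{a}\bigr)$ (the two tensor factors pulled back from the two copies of $M'^{\pb}$). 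Thus $\ch(\cR)$ is a polynomial in the pulled-back classes $c_{1}(\un\cL^{\univ}_{i})$ (times the class of the point $x_{0}\in X$), while $\ch(\Hun)$ is, as in Theorem~\ref{th:Hit gen}, a polynomial in the pulled-back K\"unneth components of the $c_{i}(\un{\cE}^{\univ})$; so $c_{m}(\bR\pi_{12,*}\calH^{\pb})\in\cohog{*}{M'^{\pb}}_{\ch}\otimes\cohog{*}{M'^{\pb}}_{\ch}$, and then the positivity of coefficients forces $cl(\Delta(M'^{\pb}))$ itself into this subring, as in Theorem~\ref{th:Hit gen}. Combined with the first paragraph this proves the theorem. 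The hard part is thus the parabolic Serre-duality vanishing of the second paragraph; once it is in hand, everything else is a mechanical transcription.
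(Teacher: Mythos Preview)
Your proof is correct and follows the same route as the paper: your complex $\calH^{\pb}$ is the paper's $\cP^{\univ}$, the Serre-duality vanishing via the strictly-parabolic dual $\un\Hom''$ together with the nilpotence argument is exactly how the paper handles it (mirroring Proposition~\ref{p:sm}(2)), and the Thom--Porteous and Grothendieck--Riemann--Roch steps are organized the same way, with the paper likewise isolating the skyscraper quotient $\cK/\cK'$ (your $\cR$) as a successive extension of the $\un\cL^{\univ,-1}_{i}\boxtimes\un\cL^{\univ}_{j}$ to bring in the classes $c_1(\un\cL^{\univ}_i)$. One tiny bookkeeping slip worth noting: the identity $\ch(\calH^{\pb})=\ch(\Hun)-\ch(\cR)$ should account for both terms of the two-term complex (giving $\ch(\Hun)-\ch(\cR)+\ch(\cR\otimes\pi_3^*\cL)$), but since $\cR$ is supported over $x_0$ this has no effect on the conclusion.
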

\begin{proof}
The main idea of the proof is similar to that of Theorem~\ref{th:Hit gen}. We only sketch the modifications.  Instead of $\Hun$, we consider the complex $\cP^{\univ}$ on $M'^{\pb}\times M'^{\pb}\times X$ whose restriction to $\{\x\}\times\{\y\}\times X$ (where $\x=(\cE,\ph,F_{\bullet}\cE_{x_{0}}), \y=(\cF,\psi,F_{\bullet}\cF_{x_{0}})\in M'^{\pb}$ is
\begin{equation*}
\cP_{\x,\y}=[\un\Hom'(\cE,\cF)\xr{\delta(\ph,\psi)}\un\Hom'(\cE,\cF)\otimes\cL]
\end{equation*}
concentrated in degrees $0$ and $1$. Here $\un\Hom'(\cE,\cF)\subset \un\Hom(\cE,\cF)$ is the subsheaf of local maps $\cE\to\cF$ that preserve the flags at $x_{0}$. The map $\d(\ph,\psi)$ is defined by the similar formula as in the construction of $\Hun$. 

We claim that $\cohog{i}{X, \cP_{\x,\y}}=0$ for $i\ge2$ and any $\x=(\cE,\ph,F_{\bullet}\cE_{x_{0}}),\y=(\cF,\psi,F_{\bullet}\cF_{x_{0}})\in M'^{\pb}$. By Serre duality, this boils down to showing that any map $h:\cF\to \cE\otimes\cL^{-1}\otimes \om_{X}(x_{0})$  intertwining $\psi$ and $\ph$ and sending $F_{i}\cF_{x_{0}}$ to $F_{i-1}\cE_{x_{0}}$ must vanish. The argument is similar to that of Proposition~\ref{p:sm}(2).

Then we have the analogue of Claim~\ref{claim:diag} with the same proof: letting $m=\dim M'^{\pb}$, then $c_{m}(\bR\pi_{12}\cP^{\univ})\in \cohog{2m}{M'^{\pb}\times M'^{\pb}}$ is the class of a cycle supported on the diagonal $\Delta(M'^{\pb})$ with positive coefficient on each component.

The only difference in the remaining of the argument is the following. Let $\pi_{13},\pi_{23}:M'^{\pb}\times M'^{\pb}\times X\to M'^{\pb}\times X$ be the obvious projections.
\begin{claim} The Chern classes of $\bR\pi_{12,*}\cP^{\univ}$ can be expressed as a finite linear combination $\sum_{j}\a_{j}\otimes \b_{j}$, where each of $\a_{j}$ and $\b_{j}$ is a polynomial in the (pullbacks of) K\"unneth components of $c_{i}(\un{\cE}^{\univ})$ ($1\le i\le r$) and in $c_{1}(\un\cL^{\univ}_{i})$ ($1\le i\le r$).
\end{claim}
\begin{proof}[Proof of Claim] Let $\cK=\un\Hom(\pi_{13}^{*}\un{\cE}^{\univ}, \pi_{23}^{*}\un{\cE}^{\univ})$, a vector bundle on $M'^{\pb}\times M'^{\pb}\times X$.  Let $\cK'\subset \cK$ be the subsheaf preserving the flags of universal bundles at $x_{0}$. By the construction of $\cP^{\univ}$, it suffices to show that the total Chern class $c(\bR\pi_{12,*}\cK')$ can be expressed as a polynomial of the list of Chern classes. The quotient $\cK/\cK'$ is supported on $M'^{\pb}\times M'^{\pb}\times \{x_{0}\}$, and is a successive extension of $\un\cL^{\univ,-1}_{i}\boxtimes \un\cL^{\univ}_{j}$ for $1\le i< j\le r$. Therefore
\begin{equation}\label{KK'}
c(\bR\pi_{12,*}\cK')=c(\bR\pi_{12,*}\cK)\prod_{1\le i<j\le r}c(\un\cL^{\univ,-1}_{i}\boxtimes \un\cL^{\univ}_{j})^{-1}.
\end{equation}
On the other hand, $c(\cK)$ is a polynomial in  $\pi_{13}^{*}c_{i}(\un{\cE}^{\univ})$ and $\pi_{23}^{*}c_{i}(\un{\cE}^{\univ})$, hence by Grothendieck-Riemann-Roch, $c(\bR\pi_{12,*}\cK)\in\cohog{*}{M'^{\pb}\times M'^{\pb}}$ can be expressed as $\sum_{j}\a_{j}\otimes \b_{j}$ where $\a_{j}$ and $\b_{j}$ are polynomials in the K\"unneth components of $c_{i}(\un{\cE}^{\univ})$. By \eqref{KK'}, $c(\bR\pi_{12,*}\cK')$ has the desired form.
\end{proof}
The rest of the argument is the same as that of Theorem~\ref{th:Hit gen}: showing $\cohog{*}{M'^{\pb}}_{\Delta}\subset \cohog{*}{M'^{\pb}}_{\ch}$ and $\cohog{*}{M'^{\pb}}_{\Delta}=\cohog{*}{M'^{\pb}}_{\pure}$.
\end{proof}

\subsection{Generators for the cohomology rings of $J_{q/p}$ and $\Sp_{q/p}$}\label{sec:Surjectivity}
We shall apply the construction and results in Section~\ref{ss:glob gen} to a special case.

\sss{The curve $X$} Let $X$ be the Deligne-Mumford curve $\PP(p,1)=(\AA^{2}-\{(0,0)\})/\Gm$ where $\Gm$ acts by $t\cdot(\x,\y)=(t^{p}\x,t\y)$. Then $\infty_{X}:=[1,0]\in X$ is the unique point with nontrivial automorphism, and its automorphism group is $\mu_{p}$. 

The Picard group of $X$ is isomorphic to $\ZZ$, with ample generator denoted by $\calO_{X}(1/p)$ of degree $1/p$. For $q\geq0$, $\Gamma(X,\calO_{X}(q/p))$ is identified the $\CC[\x,\y]_{q}$, the degree $q$ part of the graded ring $\CC[\x,\y]$ (with $\deg\x=p, \deg\y=1$).

\sss{Hitchin moduli for $X$}\label{sss:Hit for X} Let $q\in\NN$ be coprime to $p$. Let $\cL=\cO_{X}(q/p)$ and $\D=\cO_{X}(-q(p-1)/2)$ (note $-q(p-1)/2\in \frac{1}{p}\ZZ$). As in Section~\ref{sss:Moduli Higgs}, we consider the moduli stack $\cM_{p,\D}(X,\cL)$ and its stable locus $\cM^{s}_{p,\D}(X,\cL)$. 

Also we fix the point $0_{X}:=[0,1]\in X$. By considering flags at $0_{X}$ preserved by Higgs fields we have the moduli stack $\cM^{\pb}_{p,\D}(X,\cL)$ of parabolic $\cL$-valued Higgs bundles, and the open substack $\cM^{\pb, s}_{p,\D}(X,\cL)$ as in Section~\ref{sss:par Hit}. Since $\om_{X}\cong\cO(-1-1/p)$, $\cL=\cO(q/p)$ satisfies $\deg\cL>\deg\om_{X}+1$. Therefore Proposition~\ref{p:sm}, Theorem~\ref{th:Hit gen} and \ref{th:glob par gen} are applicable.

Let $ \calA:=\prod_{i=1}^{p}\Gamma(X,\cL^{\otimes i})$ as an affine space over $\CC$. The $\CC$-points of $\calA$ is the vector space $\oplus_{i=1}^{p}\CC[\x,\y]_{iq}$.  Then we have the Hitchin fibration
\begin{equation*}
f:\cM_{p,\D}(X,\cL)\to\cA
\end{equation*}
which sends $(\calE,\ph)$ to the tuple $(\Tr(\wedge^{i}\ph))_{2\leq i\leq p}$, the coefficients of the characteristic polynomial of $\ph$.  

\sss{Torus actions} Let $\Grot$ be the one-dimensional torus which acts $X$ by $\Grot\ni t: [\x,\y]\mapsto[t^{-1}\x, \y]$. We fix the $\Grot$-equivariant structure on $\cL$ and $\D$ so that the actions on their fibers at $0_{X}$ are trivial. For $\cL$, this means that the induced action on $\Gamma(X,\cL)=\CC[\x,\y]_{q}$ is given by $\Grot\ni t: P(\x,\y)\mapsto P(t^{-1}\x,\y)$. The $\Grot$-action on $X$ and the $\Grot$-equivariant structures on $\cL$ and $\D$ induce a $\Grot$-action on $\cM_{r,\D}(X,\cL)$ and $\cM^{\pb}_{r,\D}(X,\cL)$. On the other hand,  let $\Gdil$ be one-dimensional torus which acts on $\cM_{r,\D}(X,\cL)$ and $M^{\pb}_{r,\D}(X,\cL)$ by scaling Higgs fields $\ph$. This action commutes with the action of $\Grot$. 

There is also a natural action of $\Grot\times\Gdil$ on $\cA$ making $f$ equivariant. 

Let $\Gm\incl\Grot\times\Gdil$ be the homomorphism given by $t\mapsto (t^{p},t^{q})$. Let $\Gm(q/p)$ be the image of this map, then $\Gm(q/p)$ acts on $\cM_{r,\D}(X,\cL)$ and $M^{\pb}_{r,\D}(X,\cL)$.  The torus $\Gm(q/p)$ also acts on $\calA$ by $\Gm(q/p)\ni t:P_{i}\mapsto t^{iq}P_{i}(t^{-p}\x,\y)$ for $f_{i}\in k[\x,\y]_{iq}$. The map $F$ is $\Gm(q/p)$-equivariant.

Let $\calA_{q/p}\subset\calA$ be fixed points under $\Gm(q/p)$, which is a copy of $\AA^{1}$ consisting of points $(0,\cdots,a\x^{q})$ for $a\in\AA^{1}$. Let
\begin{eqnarray*}
\cM_{1}=f^{-1}(0,\cdots,0, -\x^{q}).
\end{eqnarray*}
We define $\cM^{\pb}_{1}$ to be the preimages of  $\cM_{1}$ in $\cM^{\pb,s}_{r,\D}(X,\cL)$.

\sss{Elliptic locus}\label{sss:ell} For each $a\in \cA$ one can define the spectral $Y_{a}$ in the total space of $\cL$ over $X$. The projection $\th_{a}:Y_{a}\to X$ is a finite morphism of degree $p$. Let $\cA'\subset \cA$ be the open subset where the spectral curve is irreducible and reduced. Let $\cY'\to \cA'$ be the universal family of (integral) spectral curves over $\cA'$. Then $\cA'$ is invariant under the $\Grot\times\Gdil$-action. Let $\cM'\subset \cM_{p,\D}(X,\cL)$ and $\cM'^{\pb}\subset \cM^{\pb}_{p,\D}(X,\cL)$ be the preimages of $\cA'$. 

\begin{lemma}\label{l:ell}
\begin{enumerate}
\item\label{M' stable} We have $\cM'\subset \cM^{s}_{p,\D}(X,\cL)$, and $\cM'^{\pb}\subset \cM^{\pb,s}_{p,\D}(X,\cL)$.
\item\label{cJac} Let $M'$ be the coarse moduli space of $\cM'$. Then $M'$ is isomorphic to the relative compactified Jacobian $\ov\Jac(\cY/\cA')$ for the family of spectral curves $\cY\to \cA'$.
\item\label{f proper} The map $f':\cM'\to \cA'$ (restriction of $f$ to $\cA'$) is proper.
\end{enumerate}
\end{lemma}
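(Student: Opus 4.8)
All three statements should follow from the spectral (Beauville--Narasimhan--Ramanan) correspondence for the orbifold curve $X=\PP(p,1)$, together with Nitsure's properness theorem. Recall that for $a\in\cA'$ the spectral curve $Y_{a}$ inside the total space of $\cL$ is integral, and $\th_{a}:Y_{a}\to X$ is finite of degree $p$; the correspondence sends a rank-one torsion-free sheaf $\cF$ on $Y_{a}$ to the $\cL$-valued Higgs bundle $(\th_{a*}\cF,\ph)$, where $\ph$ is induced by multiplication by the tautological section, and it is an equivalence compatible with sub-objects. For part~\emph{(1)} I would show directly that $(\cE,\ph)=(\th_{a*}\cF,\ph)$ is stable: any nonzero $\ph$-invariant subsheaf corresponds to a nonzero $\cO_{Y_{a}}$-submodule $\cF'\subset\cF$, which, as $Y_{a}$ is integral and $\cF$ is torsion-free of rank one, has generic rank one; since $\th_{a}$ is affine, $\th_{a*}$ is exact and faithful, so $\cE':=\th_{a*}\cF'\subset\cE$ has $p=\rk\cE$, and $\cE'\subsetneq\cE$ forces $\cE/\cE'$ to be a nonzero torsion sheaf, whence $\deg\cE'<\deg\cE$ and $\mu(\cE')<\mu(\cE)$. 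Thus $\cM'\subset\cM^{s}_{p,\D}(X,\cL)$; since $\cM'^{\pb}=\nu^{-1}(\cM')$ and $\cM^{\pb,s}_{p,\D}(X,\cL)=\nu^{-1}(\cM^{s}_{p,\D}(X,\cL))$, and a flag at $0_{X}$ only refines the notion of invariant subobject, the parabolic inclusion follows at once.

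For part~\emph{(2)}, the relative form of the correspondence identifies $\cM'$ with the relative stack of rank-one torsion-free sheaves $\cF$ on $\cY\to\cA'$ cut out by the condition $\det(\th_{a*}\cF)\cong\D$. I would translate this via the norm map $\Nm:\Pic(Y_{a})\to\Pic(X)$, using that $\det(\th_{a*}\cF)$ differs from $\Nm(\det\cF)$ by the fixed line bundle $\det(\th_{a*}\cO_{Y_{a}})$ (valid on the line-bundle locus and extended by irreducibility): fixing $\det(\th_{a*}\cF)$ amounts to fixing the degree $d$ of $\cF$ together with $\Nm(\det\cF)$. Because $\Pic^{0}(X)=0$ while $\ov\Jac^{d}(Y_{a})$ is irreducible (for the integral, locally planar curve $Y_{a}$, by Altman--Iarrobino--Kleiman), the norm map is constant on $\ov\Jac^{d}(Y_{a})$ and the $\Nm$-condition is automatic. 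The automorphism groups match on the two sides (both $\mu_{p}$: from the $\det$-rigidification, resp.\ from the scalars whose $p$-th power fixes $\Nm$), so passing to coarse moduli spaces yields the desired isomorphism $M'\cong\ov\Jac(\cY/\cA')$.

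Part~\emph{(3)} then follows: the relative compactified Jacobian of a flat projective family of integral curves is proper over its base, so $M'\to\cA'$ is proper, and since $\cM'\to M'$ is a $\mu_{p}$-gerbe — hence a proper (indeed finite) morphism — the map $f':\cM'\to\cA'$ is proper. Alternatively, by~\emph{(1)} every Higgs bundle over $\cA'$ is semistable, so $\cM'$ is the restriction over $\cA'$ of the semistable Hitchin moduli, and one may invoke Nitsure's properness theorem for the Hitchin map on the semistable locus \cite{Ni}, which holds for Deligne--Mumford curves as in Section~\ref{sss:Moduli Higgs}.

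The step I expect to need the most care is the orbifold point $\infty_{X}\in X$: one must check that over $\cA'$ the spectral curves $Y_{a}$ remain integral, projective and locally planar as Deligne--Mumford curves (classical away from $\infty_{X}$, where the total space of $\cL$ is a smooth surface) and that the relative compactified Jacobian and the spectral correspondence are available in this stacky setting. Granting this, the Beauville--Narasimhan--Ramanan argument carries over essentially verbatim in our genus-zero situation once one observes $\Pic^{0}(X)=0$, and the remaining points (tracking degrees, the norm map, and the matching of gerbe structures) are routine.
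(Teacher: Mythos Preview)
Your proposal is correct and follows essentially the same route as the paper: the spectral (BNR) correspondence for part~\eqref{M' stable} and~\eqref{cJac}, and properness of the compactified Jacobian plus the $\mu_{p}$-gerbe for part~\eqref{f proper}. One minor simplification the paper makes in~\eqref{cJac}: rather than invoking the norm map and irreducibility of $\ov\Jac^{d}(Y_{a})$, it just computes $\th_{a*}\cO_{Y_{a}}\cong\cO_{X}\oplus\cL^{-1}\oplus\cdots\oplus\cL^{-(p-1)}$, so $\deg(\th_{a*}\cO_{Y_{a}})=-q(p-1)/2=\deg\D$; since $\Pic(X)\cong\ZZ$, the condition $\det(\th_{a*}\cF)\cong\D$ is purely a degree condition on $\cF$, and forgetting the isomorphism $\tau$ immediately exhibits $\cM'\to\ov\Jac(\cY/\cA')$ as a $\mu_{p}$-gerbe.
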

\begin{proof}
\eqref{M' stable} If $(\cE,\ph)\in \cM_{p,\D}(X,\cL)$ has image $a=f(\cE,\ph)\in \cA'$, $(\cE,\ph)$ does not admit any nontrivial Higgs subbundle (for otherwise $Y_{a}$ would contain a proper subscheme which is also a curve), therefore $f^{-1}(\cA')\subset \cM^{s}_{p,\D}(X,\cL)$. 

\eqref{cJac} Let $\cPic(\cY/\cA')_{\D}$ be the moduli stack of triples $(a,\cF,\tau)$ where $a\in \cA'$, $\cF$ is a torsion-free rank one sheaves on $Y$ and $\tau$ is an isomorphism $\det(\th_{a*}\cF)\cong \D$. For $(a,\cF,\tau)\in\cPic(\cY/\cA')_{\D}$, $\th_{a*}\cF$ is a vector bundle over $X$ of rank $p$ together with a Higgs field $\ph: \th_{a*}\cF\to \th_{a*}\cF\otimes\cL$ coming from the fact that $\cL^{-1}\subset \th_{a*}\calO_{Y_{a}}$ which acts on $\th_{a*}\cF$. Therefore $(a,\cF,\tau)\mapsto (\th_{a*}\cF,\tau)$ gives an isomorphism $\cPic(\cY/\cA')_{\D}\isom \cM'$. 

Note that for $a\in \cA'$, $\th_{a*}\cO_{Y_{a}}\cong \cO_{X}\oplus \cL^{-1}\oplus\cdots\oplus \cL^{-(p-1)}$, we have $\deg(\th_{a*}\cO_{Y_{a}})=-q(p-1)/2=\deg\D$. Therefore for any $(a, \cF,\tau)\in \cPic(\cY/\cA')_{\D}$, $\cF$ has the same Euler characteristic as  $\cO_{Y_{a}}$. Forgetting $\tau$ and passing to the isomorphism classes of $\cF$ gives a map $\om: \cPic(\cY/\cA')_{\D}\isom \ov\Jac(\cY/\cA')$. The map $\om$ is a $\mu_{p}$-gerbe, realizing $\ov\Jac(\cY/\cA')$ as the coarse moduli space of $\cPic(\cY/\cA')_{\D}\cong \cM'$.

\eqref{f proper} Since $\ov\Jac(\cY/\cA')\to \cA'$ is proper and $\cM'\to\ov\Jac(\cY/\cA')$ is a $\mu_{p}$-gerbe (hence proper), $f': \cM'\to \cA'$ is also proper.
\end{proof}

\begin{lemma}\label{l:Sp Hit} 
\begin{enumerate}
\item We have $\cA_{q/p}-\{0\}\subset \cA'$.
\item Let $M_{1}$ be the coarse moduli space of $\cM_{1}$. Then there is a canonical isomorphisms $J_{q/p}\cong M_{1}$ which is equivariant with respect to the $\Gm(q/p)$-actions.
\item Let $M^{\pb}_{1}$ be the coarse moduli space of $\cM^{\pb}_{1}$. Then there is a canonical homeomorphism $\Sp_{q/p}\to M^{\pb}_{1}$ which is equivariant with respect to the $\Gm(q/p)$-actions.
\end{enumerate}
\end{lemma}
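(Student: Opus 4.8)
The plan is to identify each of the three objects on both sides via explicit moduli descriptions, using the identification of $J_{q/p}$ with the lattice model $X_{q/p}$ from Section~\ref{Xqp} and Lemma~\ref{l:SGX}, and the realization of $\cM'$ (resp. $\cM'^{\pb}$) as a relative compactified Jacobian of spectral curves from Lemma~\ref{l:ell}. For part (1), I would compute the spectral curve $Y_{a}$ for $a=(0,\dots,0,a\x^{q})\in\cA_{q/p}$: its total space in $\cL=\cO_{X}(q/p)$ is cut out by $\zeta^{p}=a\x^{q}$, where $\zeta$ is the tautological section of $\th^{*}\cL$. Since $\gcd(p,q)=1$ and $\x$ is a coordinate vanishing simply at $\infty_{X}$, the polynomial $\zeta^{p}-a\x^{q}$ is irreducible over the function field of $X$ whenever $a\neq0$ (it is an Eisenstein-type / Kummer equation), so $Y_{a}$ is integral; thus $\cA_{q/p}-\{0\}\subset\cA'$. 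I should also note that the spectral curve for $a\neq0$ is isomorphic to $C_{q/p}$ itself: writing things in the affine chart $\y=1$, $X$ has coordinate ring with $\x$ of degree $p$, $\cL$ contributes a degree-$q$ variable, and the equation $\zeta^{p}=a\x^{q}$ is (after rescaling $a$ away) exactly $y^{p}=x^{q}$, with $\infty_{C}$ lying over $\infty_{X}$.

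For part (2): by Lemma~\ref{l:ell}\eqref{cJac} and the computation just made, $\cM_{1}=f^{-1}(0,\dots,0,-\x^{q})\cong\cPic(Y_{a_{1}}/\pt)_{\D}$ with $Y_{a_{1}}\cong C_{q/p}$, so its coarse moduli space $M_{1}$ is $\ov\Jac(C_{q/p})=J_{q/p}$. I need to check two compatibilities: first, that the fixed normalization of degree (via the trivialization $\tau:\det(\th_{a*}\cF)\cong\D$ with $\deg\D=-q(p-1)/2=\deg\th_{a*}\cO_{Y_{a}}$, exactly as in the proof of Lemma~\ref{l:ell}\eqref{cJac}) matches the degree convention used to define $J_{q/p}$ in Section~\ref{ss:Cpq}--\ref{sss:relXJ}; and second, that the $\Gm(q/p)$-action on $\cM_{1}$ induced from the torus actions of Section~\textit{Torus actions} corresponds, under $\th_{a*}$ and the identification $Y_{a_{1}}\cong C_{q/p}$, to the $\Gm(q/p)$-action on $C_{q/p}$ given by $(x,y)\mapsto(t^{p}x,t^{q}y)$. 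The latter is a matter of unwinding the definitions: $\Grot$ scales $\x$ (degree $p$), $\Gdil$ scales $\zeta$ (degree $q$), and the cocharacter $t\mapsto(t^{p},t^{q})$ recovers precisely the weights $(p,q)$ on $(x,y)$.

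For part (3): the forgetful map $\nu:\cM^{\pb}_{1}\to\cM_{1}$ has fiber over $(\cE,\ph)$ the space of full flags of $\cE_{0_{X}}$ stable under $\ph_{0_{X}}$. Under $\th_{a*}$, a flag of $\cE_{0_{X}}=(\th_{a*}\cF)_{0_{X}}$ stable under the residue of $\ph$ corresponds to a chain of $\cO_{Y_{a}}$-submodules of $\cF$ between $\cF$ and $\cF(-\th_{a}^{-1}(0_{X}))$; translating through the lattice model $X_{q/p}\cong\SG$ (Lemma~\ref{l:SGX}), where $0_{X}$ corresponds to the point $0_{C}$, i.e. to the maximal ideal $(t^{p},t^{q})$, this is exactly the datum of a chain $\L_{0}=t^{p}\L_{p}\subset\L_{1}\subset\cdots\subset\L_{p}$ with successive quotients of length one, which is the definition of $\Sp_{q/p}$ in Section~\ref{sss:aff flag}. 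So I get a bijection $\Sp_{q/p}\to M^{\pb}_{1}$ on points, $\Gm(q/p)$-equivariant by functoriality. To upgrade ``bijection'' to ``homeomorphism'' I would invoke the same mechanism already used for $\mu:X_{q/p}\to J_{q/p}$ in \eqref{XJ}: $M^{\pb}_{1}$ carries the quotient/coarse-space topology, $\nu$ is proper (Section~\ref{sss:par Hit}), both sides have partitions into affines compatible with the attracting sets (Section~\ref{sss:paving} and its parabolic analogue), and a continuous bijection between such spaces that is a homeomorphism on each stratum and on closures is a homeomorphism; alternatively, cite that $\nu$ is proper with the same finite fibers on both sides so the induced map $\Sp_{q/p}\to M^{\pb}_{1}$ is proper with finite fibers, hence a homeomorphism once shown bijective.

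The main obstacle I anticipate is \emph{not} the point-set bijections, which are formal once the dictionary $\th_{a*}\leftrightarrow$ (lattice model) is in place, but rather pinning down the normalizations: matching the determinant/degree conventions ($\D=\cO_{X}(-q(p-1)/2)$ versus the rigidification of $J_{q/p}$ at $\infty_{C}$), and verifying carefully that the $\Gm(q/p)$ arising from $t\mapsto(t^{p},t^{q})\in\Grot\times\Gdil$ really induces the stated weights $(t^{p}x,t^{q}y)$ on $C_{q/p}$ after the identification $Y_{(0,\dots,0,-\x^{q})}\cong C_{q/p}$ — including tracking the sign/scaling $-\x^{q}$ and the $\Grot$-equivariant structure on $\cL$ normalized to be trivial at $0_{X}$. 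A secondary subtlety is that the homeomorphism in (3) cannot be promoted to an isomorphism of schemes (the affine Springer fiber is non-reduced, exactly as in the footnote to \eqref{XJ}), so I must be careful to claim only a homeomorphism there, in contrast to the isomorphism in (2).
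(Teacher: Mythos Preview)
Your proposal is correct and follows essentially the same approach as the paper: identify the spectral curve $Y_a$ over $a\in\cA_{q/p}-\{0\}$ with $C_{q/p}$ (the paper does this directly in the affine chart rather than first invoking an Eisenstein/Kummer irreducibility argument, but either route works), then invoke Lemma~\ref{l:ell}\eqref{cJac} for (2), and for (3) lift the homeomorphism $\SG\to M_1$ to $\Sp_{q/p}\to M_1^{\pb}$ by matching $\ph_{0_X}$-stable flags of $\cE_{0_X}$ with chains of $\CC\tl{t^p,t^q}$-lattices between $t^p\L_p$ and $\L_p$. Your treatment of (3) is in fact more explicit than the paper's, which simply asserts that the lift ``is easy to see from the construction in Lemma~\ref{l:ell}\eqref{cJac}''; the normalization concerns you flag (degree of $\D$, the sign in $-\x^q$, and the $\Gm(q/p)$-equivariance) are genuine but routine checks and are likewise dispatched summarily in the paper.
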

\begin{proof}(1) We claim that the spectral curve $Y_{a}$ over $a\in \cA_{q/p}-\{0\}$ is isomorphic to $C_{q/p}$ in a $\Gm(q/p)$-equivariant way. For then $Y_{a}$ is integral hence $\cA_{q/p}-\{0\}\subset \cA'$. Part (2) and (3) follow from Lemma~\ref{l:ell}\eqref{cJac}. By $\Gdil$-symmetry, it suffices to treat the case $a=(0,\cdots, 0,-\x^{q})$.  Using the affine coordinate $x=\x/\y^{p}$ for the open subscheme $\AA^{1}=X-\{\infty_{X}\}\subset X$, the spectral curve $Y_{a}$ restricted to $X-\{\infty\}$ is the plane curve $y^{p}-x^{q}=0$. The restriction of $\th_{a}:Y_{a}\to X$ over $\infty_{X}$ is $\pt\to [\pt/\mu_{p}]$, therefore  $\th_{a}: Y_{a}\to X$ is unramified over $\infty_{X}$. In particular, $Y_{a}$ is a compactification of $y^{p}-x^{q}$ which is smooth at infinity, hence $Y_{a}\cong C_{q/p}$. The point $a\in\cA_{q/p}$ being fixed by $\Gm(q/p)$, $Y_{a}$ carries a $\Gm(q/p)$-action. It is easy to see that the isomorphism $Y_{a}\cong C_{q/p}$ is $\Gm(q/p)$-equivariant. 

(2) Since we have a $\Gm(q/p)$-equivariant isomorphism $Y_{1}\cong C_{q/p}$, part (2) follows from Lemma~\ref{l:ell}\eqref{cJac}. 

(3) From (2) and \eqref{SGJ} we get a canonical homeomorphism $\SG\to M_{1}$. From the construction in Lemma~\ref{l:ell}\eqref{cJac} it is easy to see that this homeomorphism lifts to a homeomorphism $\Sp_{q/p}\to M^{\pb}_{1}$.
\end{proof}

\sss{Contractions}\label{sss:contraction}
The action of $\Gm(q/p)$ contracts $\cA'$ to $\cA_{q/p}-\{0\}$ by the map
\begin{eqnarray}\label{tau}
\tau: \cA'&\to& \cA_{q/p}-\{0\}\cong \AA^{1}-\{0\}\\
(P_{2},\cdots, P_{p})&\mapsto& \mbox{the coefficient of $\xi^{q}$ in $P_{p}$.}
\end{eqnarray} 

By Lemma~\ref{l:ell}\eqref{f proper}, $\cM'$ is proper over $\cA'$, therefore the action of $\Gm(q/p)$ contracts it to $\cM|_{\cA_{q/p}-\{0\}}$ by a map which is a homotopy equivalence
\begin{equation}\label{wt tau}
\wt\tau: \cM'\to \cM|_{\cA_{q/p}-\{0\}}.
\end{equation}
On the other hand, let $(u,v)\in\ZZ^{2}$ be such that $up-vq=1$. Then the action of $\Gm(u/v)$ (the subtorus given by $(t^{u}, t^{v})\in \Grot\times\Gdil$) on $\cA_{q/p}-\{0\}$ is simply transitive. Therefore the action map gives an isomorphism
\begin{equation*}
\Gm(u/v)\times\cM_{1}\isom \cM|_{\cA_{q/p}-\{0\}}.
\end{equation*}
Similarly we have the contraction
\begin{equation*}
\cM'^{\pb}\xr{\wt\tau^{\pb}}\cM^{\pb}|_{\cA_{q/p}-\{0\}}\cong \Gm(u/v)\times\cM^{\pb}_{1}.
\end{equation*}
Let $M',M'^{\pb}, M_{1}, M^{\pb}_{1}$ be the coarse moduli spaces of $\cM',\cM'^{\pb}, \cM_{1}, \cM^{\pb}_{1}$.

\begin{cor}\label{c:contract to fiber} The maps $\wt\tau$ and $\wt\tau^{\pb}$ and the isomorphisms in Lemma~\ref{l:Sp Hit} induce isomorphisms of cohomology rings
\begin{eqnarray*}
\cohog{*}{\Gm}\otimes \cohog{*}{J_{q/p}}\cong \cohog{*}{\Gm(u/v)}\otimes \cohog{*}{M_{1}}\isom \cohog{*}{M'},\\
\cohog{*}{\Gm}\otimes \cohog{*}{\Sp_{q/p}}\cong \cohog{*}{\Gm(u/v)}\otimes \cohog{*}{M^{\pb}_{1}}\isom \cohog{*}{M'^{\pb}}.
\end{eqnarray*}
In particular, the restriction maps induce isomorphisms
\begin{eqnarray*}
\cohog{*}{M'}_{\pure}\isom \cohog{*}{J_{q/p}},\\
\cohog{*}{M'^{\pb}}_{\pure}\isom \cohog{*}{\Sp_{q/p}},
\end{eqnarray*}
\end{cor}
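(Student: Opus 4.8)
The plan is to deduce Corollary~\ref{c:contract to fiber} from the two generation theorems established earlier (Theorem~\ref{th:Hit gen} and Theorem~\ref{th:glob par gen}) together with the contraction geometry set up in Section~\ref{sss:contraction}. The backbone is the observation that $\cM'$ and $\cM'^{\pb}$, although not themselves proper over $\CC$, are proper over $\cA'$ by Lemma~\ref{l:ell}\eqref{f proper}, so the $\Gm(q/p)$-action contracting $\cA'$ to $\cA_{q/p}-\{0\}$ lifts to deformation retractions $\wt\tau$ and $\wt\tau^{\pb}$ onto $\cM|_{\cA_{q/p}-\{0\}}$ and $\cM^{\pb}|_{\cA_{q/p}-\{0\}}$. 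Since a $\Gm$-contraction onto the fixed locus is a homotopy equivalence (the standard argument: the $\GG_m$-action extends to an $\AA^1$-action on the proper-over-base total space, giving a retraction), these maps induce isomorphisms on cohomology rings. Passing to coarse moduli spaces is harmless for rational cohomology since each $\cM\to M$ is a $\mu_r$-gerbe (in fact the excerpt has already been implicitly working with coarse spaces in the statement).

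First I would record the homotopy equivalences
$$
\cM' \xr{\wt\tau} \cM|_{\cA_{q/p}-\{0\}}, \qquad \cM'^{\pb} \xr{\wt\tau^{\pb}} \cM^{\pb}|_{\cA_{q/p}-\{0\}},
$$
and descend them to $M'\to M|_{\cA_{q/p}-\{0\}}$ and $M'^{\pb}\to M^{\pb}|_{\cA_{q/p}-\{0\}}$; these are homotopy equivalences, hence ring isomorphisms on $\cohog{*}{-}$. Next, because $(u,v)$ with $up-vq=1$ makes $\Gm(u/v)$ act simply transitively on $\cA_{q/p}-\{0\}\cong \Gm$, the action map gives $\Gm(u/v)\times \cM_1\isom \cM|_{\cA_{q/p}-\{0\}}$ and likewise in the parabolic case, so by K\"unneth
$$
\cohog{*}{M'}\cong \cohog{*}{\Gm(u/v)}\otimes\cohog{*}{M_1}, \qquad
\cohog{*}{M'^{\pb}}\cong \cohog{*}{\Gm(u/v)}\otimes\cohog{*}{M^{\pb}_1}.
$$
Composing with the isomorphisms $J_{q/p}\cong M_1$ and the homeomorphism $\Sp_{q/p}\to M^{\pb}_1$ of Lemma~\ref{l:Sp Hit}(2),(3) gives the first two displayed isomorphisms of the Corollary (and one identifies the two copies of $\cohog{*}{\Gm}$ via the reparametrization $\Gm\to\Gm(u/v)$, $t\mapsto(t^u,t^v)$, noting this is a degree-one identification at the level of $\cohog{*}$).

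For the last two isomorphisms I would argue as follows. The curve $X=\PP(p,1)$ has $\om_X\cong\cO(-1-1/p)$, and $\cL=\cO(q/p)$ with $q\ge 1$ coprime to $p$ satisfies $\deg\cL>\deg\om_X+1$ (this is already noted in Section~\ref{sss:Hit for X}), so Theorem~\ref{th:Hit gen} and Theorem~\ref{th:glob par gen} apply to $\cM'\subset \cM^s_{p,\D}(X,\cL)$ (using Lemma~\ref{l:ell}\eqref{M' stable}). Hence $\cohog{*}{M'}_{\pure}$ is generated by the K\"unneth components of $c_i(\un\cE^{\univ})$, and $\cohog{*}{M'^{\pb}}_{\pure}$ is generated by those together with $c_1(\un\cL^{\univ}_i)$. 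Since $\cohog{*}{J_{q/p}}$ and $\cohog{*}{\Sp_{q/p}}$ are pure (indeed concentrated in even degrees and Tate, by the affine paving of Section~\ref{sss:paving}), taking the pure part of the previous isomorphisms and using that $\cohog{*}{\Gm}_{\pure}=\QQ$ (the class in degree $1$ is not of pure weight, or simply: $\Gm$ has a smooth compactification $\PP^1$ whose extra cohomology kills $\cohog{1}{\Gm}$ in the weight-$1$ graded piece) collapses the K\"unneth factor, yielding
$$
\cohog{*}{M'}_{\pure}\isom \cohog{*}{J_{q/p}}, \qquad \cohog{*}{M'^{\pb}}_{\pure}\isom \cohog{*}{\Sp_{q/p}},
$$
compatibly with the restriction maps. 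The one point requiring care — and the main obstacle — is verifying that the Chern-class generators descend appropriately: one must check that over $\cM_1$ (resp.\ $\cM^{\pb}_1$) the universal bundle descends to the coarse space $M_1$ (resp.\ $M^{\pb}_1$) so that the hypotheses ``$\Eun$ descends to $M'\times X$'' in Theorems~\ref{th:Hit gen} and \ref{th:glob par gen} are genuinely met for the open substack $\cM'$ we use; this is where one invokes that $\cM_1\subset \cM'$ and that a suitable twist/normalization of $\cE^{\univ}$ (or of $\Hun$, which always descends) provides the needed descent, exactly as in the setup preceding Theorem~\ref{th:glob par gen}. Everything else is a formal assembly of homotopy equivalences, K\"unneth, and the purity of affine-paved spaces.
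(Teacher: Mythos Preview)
Your core argument is correct and matches the paper's (implicit) reasoning: the contraction $\wt\tau$ gives a homotopy equivalence, the $\Gm(u/v)$-action gives the product decomposition, K\"unneth plus Lemma~\ref{l:Sp Hit} yields the first displayed isomorphisms, and then the purity of $\cohog{*}{J_{q/p}}$ and $\cohog{*}{\Sp_{q/p}}$ (from the affine paving, Section~\ref{sss:paving}) together with $\cohog{*}{\Gm}_{\pure}=\QQ$ (since $\cohog{1}{\Gm}$ has weight~$2$) collapses the $\Gm$-factor when passing to pure parts. One point worth making explicit is that the K\"unneth isomorphism respects mixed Hodge structures because it is induced by the \emph{algebraic} closed immersion $\Gm(u/v)\times M_{1}\hookrightarrow M'$ (whose homotopy inverse is $\wt\tau$), not by $\wt\tau$ itself.

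However, your invocation of Theorems~\ref{th:Hit gen} and~\ref{th:glob par gen}, and your ``main obstacle'' about descent of $\Eun$, are entirely superfluous here and should be removed. Corollary~\ref{c:contract to fiber} says nothing about generators of the cohomology ring; it is a statement purely about the isomorphism type and weight structure, and your purity argument is already complete without them. In the paper's logical order, the descent statement (Lemma~\ref{l:Eun}) comes \emph{after} this corollary, and the generation theorems are only applied afterwards in the proofs of Theorems~\ref{th:gen} and~\ref{th:par gen}, which \emph{use} Corollary~\ref{c:contract to fiber} as input. So inserting those ingredients here is at best a distraction and at worst looks circular.
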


\begin{lemma}\label{l:Eun} The universal bundle $\Eun$ descends to $M'\times X$.
\end{lemma}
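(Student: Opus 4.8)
The plan is to reduce the statement to producing a single line bundle on $\cM'$ of the right $\mu_p$-weight. By Lemma~\ref{l:ell}\eqref{cJac}, $\cM'\cong\cPic(\cY/\cA')_{\D}$ is a $\mu_p$-gerbe over its coarse space $M'=\ov\Jac(\cY/\cA')$, and the generic automorphism group $\mu_p$ acts on the universal Higgs bundle $\Eun$ through the tautological character (scalar multiplication). Hence $\Eun$ descends to $M'\times X$ — i.e.\ there is a vector bundle $\un\Eun$ on $M'\times X$ pulling back to $\Eun$ up to a twist by a line bundle from $M'$, which is what is used in Theorem~\ref{th:Hit gen} — as soon as one exhibits a line bundle $\cN$ on $\cM'$ on which $\mu_p$ acts through that same tautological character: then $\mu_p$ acts trivially on $\Eun\otimes\pi_{\cM'}^{*}\cN^{-1}$, so the latter descends along $\omega\times\id:\cM'\times X\to M'\times X$, and its descent is the desired $\un\Eun$.

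\textbf{Construction of $\cN$.} To build such an $\cN$ I would use determinants of cohomology. For $n\in\ZZ$ put
\[
\cN_n:=\det\bigl(\bR\pi_{\cM',*}\bigl(\Eun\otimes\pi_X^{*}\cO_X(n/p)\bigr)\bigr),
\]
a line bundle on $\cM'$ since $\pi_{\cM'}:\cM'\times X\to\cM'$ is proper and flat with $1$-dimensional (stacky) fibres. As $\mu_p$ acts on $\Eun\otimes\pi_X^{*}\cO_X(n/p)$ through the tautological character, it acts on $\cN_n$ through that character raised to the power $\chi\bigl(X,\cE\otimes\cO_X(n/p)\bigr)$ — the rank of the perfect complex $\bR\pi_{\cM',*}(\Eun\otimes\pi_X^{*}\cO_X(n/p))$, $\cE$ being a fibre of $\Eun$ — and this Euler characteristic is locally constant, hence constant, on the connected stack $\cM'$. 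I would compute it on the fibre $\cM_1\subset\cM'$ over $(0,\dots,0,-\x^{q})\in\cA_{q/p}$: there $\cE=\th_{*}\cF$ with spectral curve $Y_1\cong C_{q/p}$, and by the proof of Lemma~\ref{l:Sp Hit}(1) the degree-$p$ map $\th:C_{q/p}\to X$ is unramified over $\infty_X$, so $\th^{-1}(\infty_X)$ is a single reduced point, namely the universal $\mu_p$-cover of $\infty_X=B\mu_p$; hence $\cE|_{\infty_X}$ is the regular representation of $\mu_p$ up to a character twist. Tensoring the exact sequence $0\to\cO_X\xrightarrow{\cdot\y}\cO_X(1/p)\to\cO_X(1/p)|_{\infty_X}\to0$ (with $\y$ the section of $\cO_X(1/p)$ vanishing simply at $\infty_X$) by $\cE\otimes\cO_X((n-1)/p)$ and using $\chi\bigl(B\mu_p,(\text{regular rep})\otimes(\text{character})\bigr)=1$, one obtains $\chi(X,\cE\otimes\cO_X(n/p))=\chi(X,\cE\otimes\cO_X((n-1)/p))+1$, hence $\chi(X,\cE\otimes\cO_X(n/p))=\chi(X,\cE)+n$ for all $n$. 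Choosing $n$ with $\chi(X,\cE)+n\equiv1\pmod p$ makes $\cN_n$ the required line bundle, and $\un\Eun$ is the descent of $\Eun\otimes\pi_{\cM'}^{*}\cN_n^{-1}$.

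\textbf{Main obstacle.} The crux is the weight computation — concretely, extracting from $\gcd(p,q)=1$ (which is exactly what makes $\th$ unramified over $\infty_X$) that $\cE|_{\infty_X}$ is the regular representation, so that the integers $\chi(X,\cE\otimes\cO_X(n/p))$ run over all residues modulo $p$; this is the stacky incarnation of the classical statement that a Poincaré bundle exists on the coarse moduli space of Higgs bundles when the rank and the degree are coprime. One could instead reduce to the special fibre $\cM_1$ at the outset, using that the $\Gm(q/p)$-contraction $M'\to M_1\times\Gm(u/v)$ of Corollary~\ref{c:contract to fiber} is a homotopy equivalence and that the gerbe class lives in $H^2_{\et}(M',\mu_p)\cong H^2(M'(\CC),\ZZ/p)$, a homotopy invariant; but working directly on $\cM'$ sidesteps this and exhibits the twisting line bundle explicitly.
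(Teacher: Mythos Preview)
Your argument is correct and takes a genuinely different route from the paper. The paper shows the Brauer obstruction vanishes by first using the $\Gm(q/p)$-contraction $M'\to\Gm(u/v)\times M_1$ to reduce (via an isomorphism on \'etale $H^2$) to the single fibre $M_1\cong J_{q/p}$, and then invokes the fine moduli structure of $J_{q/p}$ (rigidified at $\infty_C$) to write down a descent of $\Eun|_{\cM_1\times X}$ directly as $(\id\times\th_1)_*\cF^{\univ}$. You instead run the classical determinant-of-cohomology argument on $\cM'$ globally: the coprimality $(p,q)=1$, via the fact that $\th$ is unramified over the stacky point $\infty_X$, forces $\cE|_{\infty_X}$ to be the regular $\mu_p$-representation, so the Euler characteristics $\chi(X,\cE\otimes\cO_X(n/p))$ hit every residue mod $p$ and some $\cN_n$ has $\mu_p$-weight one. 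This is the standard mechanism behind the existence of Poincar\'e bundles in the coprime case, and has the advantage of producing the twisting line bundle explicitly without passing through the contraction. The paper's approach, on the other hand, ties the descent directly to the universal sheaf on $J_{q/p}\times C_{q/p}$, which is what gets used in the subsequent Chern-class computations anyway. One small point: your reduction actually produces a descent of a twist $\Eun\otimes\pi_{\cM'}^*\cN^{-1}$ rather than of $\Eun$ itself; you flag this and correctly note it suffices for Theorem~\ref{th:Hit gen}. Also, your appeal to connectedness of $\cM'$ (to propagate the Euler characteristic from $\cM_1$) is justified by the very contraction the paper uses --- so the two approaches are not entirely disjoint.
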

\begin{proof} The $\mu_{p}$-gerbe $\cM'\to M'$ gives a class $c\in \upH^{2}_{\et}(M',\mu_{p})$. We denote its pullback to $M'\times X$ by $c_{X}$. The obstruction to descending $\Eun$ to $M'\times X$ lies in the cohomological Brauer group $\upH^{2}_{\et}(M'\times X,\Gm)_{\tor}$, and is the image of $c_{X}$ under the natural map 
\begin{equation*}
\upH^{2}_{\et}(M'\times X,\mu_{p})\to \upH^{2}_{\et}(M'\times X,\Gm)[p].
\end{equation*}
The homotopy retract $\wt\tau$ in \eqref{wt tau} induces a homotopy retract
\begin{equation*}
\rho: M'\to M|_{\cA_{q/p}-\{0\}}\cong \Gm(u/v)\times M_{1}
\end{equation*}
by passing to coarse moduli spaces. Hence $\rho$ induces an isomorphism on \'etale cohomology groups, and gives a commutative diagram
\begin{equation*}
\xymatrix{ \upH^{2}_{\et}(\Gm(u/v)\times M_{1}\times X, \mu_{p})\ar[rr]^-{(\rho\times\id_{X})^{*}}_{\sim}\ar[d] &&    \upH^{2}_{\et}(M'\times X, \mu_{p})\ar[d]      \\
 \upH^{2}_{\et}(\Gm(u/v)\times M_{1}\times X, \Gm)[p]\ar[rr]^-{(\rho\times\id_{X})^{*}} &&  \upH^{2}_{\et}(M'\times X, \Gm)[p]}
 \end{equation*}
Therefore, it suffices to show that the image of $c_{X}|_{\Gm(u/v)\times M_{1}\times X}$ in $\upH^{*}_{\et}(\Gm(u/v)\times M_{1}\times X, \Gm)$ vanishes, i.e., $\Eun|_{\Gm(u/v)\times \cM_{1}\times X}$ descends to $\Gm(u/v)\times M_{1}\times X$. Since $\Eun$ is $\Gm(u/v)$-equivariant, it suffices to show that $\Eun|_{\cM_{1}\times X}$ descends to $M_{1}\times X$. Recall the isomorphism $M_{1}\cong J_{q/p}$ in Lemma~\ref{l:Sp Hit}. Over $J_{q/p}\times C_{q/p}$ we have a universal rank one torsion-free sheaf $\cF^{\univ}$  (for example we may identify $J_{q/p}$ with the fine moduli of rank one torsion-free sheaves on $C_{q/p}$ with a trivialization at the smooth point $\infty_{C}\in C_{q/p}$). Let $\th_{1}: C_{q/p}\to X$ be the projection when viewing $C_{q/p}$ as the spectral curve of $(0,\cdots,0,-\x^{q})\in \cA_{q/p}$. Then $(\id\times\th_{1})_{*}\cF^{\univ}$ is a vector bundle of rank $p$ over $J_{q/p}\times X$, which can be taken to be the descent of $\Eun|_{\cM_{1}\times X}$ after identifying $J_{q/p}$ with $M_{1}$. This completes the proof.
\end{proof}

\subsubsection{Proof of Theorem~\ref{th:gen}} We apply Theorem~\ref{th:Hit gen} to the open substack $\cM'$ defined in Section~\ref{sss:ell}, which satisfies the assumption of Theorem~\ref{th:Hit gen} by Lemma~\ref{l:Eun}. We denote the descent of $\Eun$ to $M'\times X$ by $\un{\cE}^{\univ}$, and let
\begin{equation*}
\un{\cE}^{\univ}_{1}:=\un{\cE}^{\univ}|_{M_{1}\times X}.
\end{equation*}
Theorem~\ref{th:Hit gen} shows that $\cohog{*}{M'}_{\pure}$ is generated by the K\"unneth components of $c_{i}(\Eun), 1\le i\le p$. By Corollary~\ref{c:contract to fiber}, $\cohog{*}{M_{1}}$ is generated by the K\"unneth components of the restrictions $c_{i}(\un{\cE}^{\univ})|_{M_{1}\times X}$. We may write according to the K\"unneth decomposition of $\cohog{*}{M_{1}\times X}$:
\begin{equation*}
c_{i}(\un{\cE}^{\univ})|_{M_{1}\times X}=\a_{i}\otimes 1+\b_{i}\otimes[X].
\end{equation*}
where $\a_{i}\in \cohog{2i}{M_{1}}$ and $\b_{i}\in \cohog{2i-2}{M_{1}}$ for $1\leq i\leq p$.

\begin{claim} For $1\leq i\leq p$, we have $\a_{i}=0$.
\end{claim}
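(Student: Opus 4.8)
The plan is to compute directly, using Lemma~\ref{l:Eun} to pin down the descent $\un{\cE}^{\univ}_{1}$. By (the proof of) Lemma~\ref{l:Eun} together with Lemma~\ref{l:Sp Hit}, after the identification $M_{1}\cong J_{q/p}$ the bundle $\un{\cE}^{\univ}_{1}=\un{\cE}^{\univ}|_{M_{1}\times X}$ is isomorphic to $(\id_{J_{q/p}}\times\th_{1})_{*}\cF^{\univ}$, where $\th_{1}\colon C_{q/p}\to X$ is the projection (which on the affine chart is $(x,y)\mapsto x$) and $\cF^{\univ}$ is the universal rank one torsion-free sheaf on $J_{q/p}\times C_{q/p}$, normalized so that $\cF^{\univ}|_{J_{q/p}\times\{\infty_{C}\}}\cong\calO_{J_{q/p}}$ via the rigidification at the smooth point $\infty_{C}$; this normalization is at our disposal because the descent of $\Eun$ along the $\mu_{p}$-gerbe $\cM'\to M'$ is unique up to isomorphism. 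By definition of the K\"unneth components, $\a_{i}$ is the pullback of $c_{i}(\un{\cE}^{\univ}_{1})$ along $M_{1}\times\{x_{0}\}\incl M_{1}\times X$ for any honest closed point $x_{0}\in X$ (such a pullback annihilates the $[X]$-component and returns $\a_{i}$). Hence it suffices to produce one honest point $x_{0}$ for which $c_{i}\bigl(\un{\cE}^{\univ}_{1}|_{J_{q/p}\times\{x_{0}\}}\bigr)=0$ for all $i$.

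Take $x_{0}\in X$ to be the honest point $x=1$, over which $\th_{1}$ is finite \'etale of degree $p$, with fibre the $p$ distinct points $c_{1},\dots,c_{p}$ of the form $(1,\z)$ with $\z^{p}=1$, all lying in the smooth locus $C_{q/p}^{\sm}:=C_{q/p}\setminus\{(0,0)\}$. Base change along $\{x_{0}\}\incl X$ (using that $\th_{1}$ is \'etale, hence flat, over $x_{0}$, and that $\th_{1}^{-1}(x_{0})$ is reduced) gives $\un{\cE}^{\univ}_{1}|_{J_{q/p}\times\{x_{0}\}}\cong\bigoplus_{j=1}^{p}\cF^{\univ}|_{J_{q/p}\times\{c_{j}\}}$; each summand is a line bundle on $J_{q/p}$ because $\cF^{\univ}$ is locally free near $J_{q/p}\times\{c_{j}\}$ (the point $c_{j}$ being a smooth point of $C_{q/p}$). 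Therefore it is enough to prove that $c_{1}\bigl(\cF^{\univ}|_{J_{q/p}\times\{c\}}\bigr)=0$ for every $c\in C_{q/p}^{\sm}$: then the total Chern class of the above direct sum is $1$, so all $\a_{i}$ vanish.

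To see this, note that on $C_{q/p}^{\sm}$ the sheaf $\cF^{\univ}$ is a line bundle, so $\cM:=\cF^{\univ}|_{J_{q/p}\times C_{q/p}^{\sm}}$ is a line bundle and $c_{1}(\cM)\in\cohog{2}{J_{q/p}\times C_{q/p}^{\sm}}$. Since the unique singularity of $C_{q/p}$ is unibranch, the normalization $\PP^{1}\to C_{q/p}$ is a homeomorphism, so $C_{q/p}^{\sm}$ is homeomorphic to $\PP^{1}$ minus a point and is in particular contractible; hence $\mathrm{pr}_{J_{q/p}}^{*}\colon\cohog{2}{J_{q/p}}\to\cohog{2}{J_{q/p}\times C_{q/p}^{\sm}}$ is an isomorphism whose inverse is restriction to any slice $J_{q/p}\times\{c\}$. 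Consequently $c_{1}\bigl(\cF^{\univ}|_{J_{q/p}\times\{c\}}\bigr)=c_{1}(\cM)|_{J_{q/p}\times\{c\}}$ is independent of $c\in C_{q/p}^{\sm}$; evaluating at $c=\infty_{C}\in C_{q/p}^{\sm}$ and using $\cF^{\univ}|_{J_{q/p}\times\{\infty_{C}\}}\cong\calO_{J_{q/p}}$, this common class is $0$. This completes the argument. The only step demanding real care is the identification in the first paragraph, namely that the descent $\un{\cE}^{\univ}$ appearing in the claim is precisely $(\id\times\th_{1})_{*}$ of the \emph{rigidified} $\cF^{\univ}$, so that the normalization at $\infty_{C}$ may be invoked; the remaining ingredients are a flat base-change identity, the topology of a rational curve with a unibranch singularity, and the K\"unneth formula.
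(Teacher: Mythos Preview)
Your proof is correct and close in spirit to the paper's, but the mechanism differs in a small, instructive way. The paper passes to the auxiliary space $\wt J_{q/p}$ (from Section~\ref{sss:relXJ}), where the universal sheaf carries a trivialization over all of $C_{q/p}-\{0_{C}\}$; pulling back $\un{\cE}^{\univ}$ to $\wt J_{q/p}\times\{z\}$ for any $z\in X-\{0_{X}\}$ it is then literally trivial, and one concludes via the homeomorphism $\wt J_{q/p}\to J_{q/p}$. You instead stay on $J_{q/p}$, where only the fibre at $\infty_{C}$ is trivialized, and supply the missing ``propagation'' by the topological fact that $C_{q/p}^{\sm}$ is contractible; the \'etale splitting at the chosen point $x_{0}$ then reduces to the vanishing of $c_{1}$ of a single line bundle at one smooth point. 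Both routes ultimately encode the same observation---that the universal sheaf is cohomologically trivial along the smooth locus---but the paper's use of $\wt J_{q/p}$ packages it more cleanly, while yours avoids introducing the auxiliary space.

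One small inaccuracy worth flagging: your justification that ``the descent of $\Eun$ along the $\mu_{p}$-gerbe $\cM'\to M'$ is unique up to isomorphism'' is not literally correct---descents along a $\mu_{p}$-gerbe can differ by twisting with a $\mu_{p}$-torsor. This does not affect your argument, however, since any two descents then have the same Chern classes in rational cohomology (a $p$-torsion line bundle has $c_{1}=0$ over $\QQ$), and that is all you use. Equivalently, the gerbe map induces an isomorphism on rational cohomology, so $c_{i}(\un{\cE}^{\univ}_{1})$ is determined by $c_{i}(\Eun|_{\cM_{1}\times X})$, which is what the proof of Lemma~\ref{l:Eun} identifies with $c_{i}\bigl((\id\times\th_{1})_{*}\cF^{\univ}\bigr)$.
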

\begin{proof}[Proof of Claim]
In fact,  $\a_{i}$ is the restriction of $c_{i}(\un{\cE}^{\univ})$ to $M_{1}\times\{z\}$ for any $z\in X$. In Section~\ref{Xqp}, we introduced $\wt J_{q/p}$ replacing the trivialization at $\infty_{C}$ by a trivialization over $C_{q/p}-\{0_{C}\}$. The pullback of $\un{\cE}^{\univ}$ to $\wt J_{q/p}\times (C_{q/p}-\{0_{C}\})$ is canonically trivialized. Therefore, the pullback of $\un{\cE}^{\univ}$ to $\wt J_{q/p}\times\{z\}$ is also trivialized for any $z\in X-\{0_{X}\}$, hence the pullback of $c_{i}(\un{\cE}^{\univ})$ to $\wt J_{q/p}\times \{z\}$ vanishes. Since $\wt J_{q/p}\to J_{q/p}$ is a homeomorphism, the restriction of $c_{i}(\un{\cE}^{\univ})$ to $M_{1}\times\{z\}\cong J_{q/p}\times\{z\}$ also vanishes. Hence $\a_{i}=0$.
\end{proof}

\begin{claim} Under the isomorphism $J_{q/p}\cong M_{1}$ in Lemma~\ref{l:Sp Hit}, $e_{i}$ corresponds to $\b_{i}$ for $2\le i\le p$.
\end{claim}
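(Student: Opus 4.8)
The plan is to compare the two rank-$p$ bundles $\Eun_{x}=(\id\times\pi_{x})_{*}\cF^{\univ}$ on $J_{q/p}\times\PP^{1}$ and $\un{\cE}^{\univ}_{1}=(\id\times\th_{1})_{*}\cF^{\univ}$ on $J_{q/p}\times X$, the latter formed using the identifications $J_{q/p}\cong M_{1}$ and $C_{q/p}\cong Y_{1}$ of Lemmas~\ref{l:Sp Hit} and~\ref{l:Eun}. One cannot compare them by restricting to $J_{q/p}\times\AA^{1}$ (where $\pi_{x}$ and $\th_{1}$ do agree), because $e_{i}$ and $\b_{i}$ live in the $\cohog{2}{\PP^{1}}$- resp.\ $\cohog{2}{X}$-direction, which dies on $\AA^{1}$; instead one must integrate over the whole proper curve $C_{q/p}$, and the key point is that this integration does not depend on the auxiliary map used. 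Writing $p_{J}\colon J_{q/p}\times C_{q/p}\to J_{q/p}$, $\pi_{J}\colon J_{q/p}\times\PP^{1}\to J_{q/p}$, $\pi^{X}_{J}\colon J_{q/p}\times X\to J_{q/p}$ for the projections, we have $p_{J}=\pi_{J}\circ(\id\times\pi_{x})=\pi^{X}_{J}\circ(\id\times\th_{1})$, so---$\pi_{x}$ and $\th_{1}$ being finite---$\bR\pi_{J*}\Eun_{x}\cong\bR p_{J*}\cF^{\univ}\cong\bR\pi^{X}_{J*}\un{\cE}^{\univ}_{1}$ in $D^{b}(J_{q/p})$. In particular $\bR\pi_{J*}\Eun_{x}$ and $\bR\pi^{X}_{J*}\un{\cE}^{\univ}_{1}$ have the same Chern character $\Phi\in\cohog{*}{J_{q/p}}$ ($\bR\pi_{J*}\Eun_{x}$ is a perfect complex, $\pi_{J}$ being smooth projective and $\Eun_{x}$ a bundle).

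Next I would compute $\Phi$ from each side and match up the curve-directions. First, the analogue of the preceding Claim ($\a_{i}=0$) holds for $\Eun_{x}$: for $x_{0}\in\PP^{1}$ away from $0$ and $\infty$ the fiber $\pi_{x}^{-1}(x_{0})$ consists of $p$ distinct smooth points of $C_{q/p}$ disjoint from the singularity $0_{C}$, so $\Eun_{x}|_{J_{q/p}\times\{x_{0}\}}$ is trivial---by the same argument as in the preceding Claim, using the trivialization of $\cF^{\univ}$ over $\wt J_{q/p}\times(C_{q/p}-\{0_{C}\})$ and the fact that $\wt J_{q/p}\to J_{q/p}$ is a homeomorphism. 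Thus $d_{i}=0$ for all $i\ge1$, and the K\"unneth $\cohog{0}{\PP^{1}}$-component of $\ch(\Eun_{x})$ vanishes in positive cohomological degree, and likewise the $\cohog{0}{X}$-component of $\ch(\un{\cE}^{\univ}_{1})$ by the preceding Claim. Grothendieck--Riemann--Roch for the smooth projective morphism $\pi_{J}$ gives $\Phi=\ch(\bR\pi_{J*}\Eun_{x})=\pi_{J*}\!\big(\ch(\Eun_{x})\cdot\Todd_{\PP^{1}}\big)$; writing $\ch(\Eun_{x})=p\otimes1+B\otimes[\PP^{1}]$ with $B\in\cohog{>0}{J_{q/p}}$, the higher-degree part of $\Todd_{\PP^{1}}$ contributes nothing in positive degree (it only meets the vanishing $\cohog{0}{\PP^{1}}$-component), so $\Phi=B$ in positive degrees. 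The same computation for $\pi^{X}_{J}$---where the orbifold Todd class of the Deligne--Mumford stack $X$ is, for the same reason, irrelevant in positive degree---gives $\Phi=B'$, with $\ch(\un{\cE}^{\univ}_{1})=p\otimes1+B'\otimes[X]$. Hence $B=B'$ in $\cohog{>0}{J_{q/p}}$.

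It remains to read off $e_{i}=\b_{i}$ from $B=B'$. Since $d_{i}=0$ (resp.\ $\a_{i}=0$), any product of two Chern classes of $\Eun_{x}$ vanishes, being a multiple of $[\PP^{1}]^{2}=0$ (resp.\ $[X]^{2}=0$ for $\un{\cE}^{\univ}_{1}$), so Newton's identities degenerate to $\ch_{i}(\Eun_{x})=\tfrac{(-1)^{i-1}}{(i-1)!}\,e_{i}\otimes[\PP^{1}]$ and $\ch_{i}(\un{\cE}^{\univ}_{1})=\tfrac{(-1)^{i-1}}{(i-1)!}\,\b_{i}\otimes[X]$; hence $B=\sum_{i\ge1}\tfrac{(-1)^{i-1}}{(i-1)!}\,e_{i}$, $B'=\sum_{i\ge1}\tfrac{(-1)^{i-1}}{(i-1)!}\,\b_{i}$, and comparing components degree by degree in $B=B'$ yields $e_{i}=\b_{i}$ for every $i\ge2$, as claimed. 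The step I expect to require the most care is the bookkeeping that $\bR\pi_{J*}\Eun_{x}$ and $\bR\pi^{X}_{J*}\un{\cE}^{\univ}_{1}$ are computed by one and the same complex $\bR p_{J*}\cF^{\univ}$---i.e.\ that the universal sheaf on $J_{q/p}\times C_{q/p}$ underlying $\Eun_{x}$ is identified, under $J_{q/p}\cong M_{1}$ and $C_{q/p}\cong Y_{1}$, with the sheaf pushed forward along $\th_{1}$ in the proof of Lemma~\ref{l:Eun}---together with being careful about the orbifold Todd classes for $X$; I do not expect either to be a genuine obstacle, so the substance of the argument is the pushforward identity of the first paragraph.
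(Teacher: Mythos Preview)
Your proof is correct and takes a genuinely different route from the paper's. The paper compares the two bundles on $J_{q/p}\times X$: it factors $\pi_{x}=\gamma\circ\th_{1}$ (with $\gamma:X\to\PP^{1}$ the coarse moduli map), forms the adjunction map $(\id\times\gamma)^{*}\Eun_{x}\to\un{\cE}^{\univ}_{1}$, and shows its cokernel $\cK$ is a bundle supported on $J_{q/p}\times\{\infty_{X}\}$ that becomes trivial on $\wt J_{q/p}$, so $\ch(\un{\cE}^{\univ}_{1})-(\id\times\gamma)^{*}\ch(\Eun_{x})\in\QQ\cdot(1\otimes[X])$; the rest is the same Newton-identity computation you do. Your route bypasses this sheaf-level comparison entirely: you push both bundles all the way down to $J_{q/p}$ and use that $\bR\pi_{J*}\Eun_{x}\cong\bR p_{J*}\cF^{\univ}\cong\bR\pi^{X}_{J*}\un{\cE}^{\univ}_{1}$, which is immediate from the finiteness of $\pi_{x}$ and $\th_{1}$. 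This is cleaner---no cokernel to analyze---at the cost of invoking GRR for the smooth proper DM morphism $\pi^{X}_{J}$; as you say, only the degree-$0$ part of $\Todd_{X}$ matters in positive degree, so the orbifold corrections are irrelevant. One small point implicit in your argument: for the final equality $B=B'$ (rather than $B=\lambda B'$ for some constant $\lambda$) you are using that $\int_{\PP^{1}}[\PP^{1}]=\int_{X}[X]=1$, i.e., that both classes are normalized as Poincar\'e duals of a point; this is the standard convention and is consistent with the paper's usage.
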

\begin{proof}[Proof of Claim] Recall $\pi_{x}: C_{q/p}\to \PP^{1}$ is the map given by $(x,y)\mapsto x$. Then $x$ factors as $C_{q/p}\xr{\th_{1}} X\xr{\g}\PP^{1}$, where the second map $\g$ identifies $\PP^{1}$ with the coarse moduli space of $X=\PP(p,1)$. Let $\cF^{\univ}$ be the universal sheaf on $J_{q/p}\times C_{q/p}$, and recall $\Eun_{x}=(\id\times \pi_{x})_{*}\cF^{\univ}$ is a vector bundle of rank $p$ on $J_{q/p}\times \PP^{1}$. On the other hand, $\un{\cE}^{\univ}_{1}:=\un{\cE}^{\univ}|_{M_{1}\times X}$ is constructed as $(\id\times \th_{1})_{*}\cF^{\univ}$. Therefore $\Eun_{x}\cong (\id\times\g)_{*}(\un{\cE}^{\univ}_{1})$. In particular, we have a natural inclusion $(\id\times\g)^{*}\Eun_{x}=(\id\times\g)^{*}(\id\times\g)_{*}\un{\cE}^{\univ}_{1}\to \un{\cE}^{\univ}_{1}$ whose cokernel $\cK$ is a vector bundle concentrated along $J_{q/p}\times \{\infty_{X}\}$. Moreover, pulling back $\cK$ to $\wt J_{q/p}\times\{\infty_{X}\}$ (see Section~\ref{sss:relXJ} for the definition of $\wt J_{q/p}$), $\cK$ is a trivial bundle because both $(\id\times\g)^{*}\Eun_{x}$ and $\un{\cE}^{\univ}_{1}$ are canonically trivialized over $\wt J_{q/p}\times (X-\{0_{X}\})$. Therefore $\ch(\cK)$ is a multiple of $1\otimes[X]\in \cohog{2}{M_{1}\times X}$, and we conclude that
\begin{equation}\label{diff Eun}
\ch(\un{\cE}^{\univ}_{1})-(\id\times\g)^{*}\ch(\Eun_{x})=\ch(\cK) \in \QQ\cdot 1\otimes [X].
\end{equation}
We know from the previous Claim that that $c_{i}(\un{\cE}^{\univ}_{1})=\b_{i}\otimes[X]$, therefore the product of more than one $c_{i}(\un{\cE}^{\univ}_{1})$ is zero, hence
\begin{equation}\label{ch E}
\ch(\un{\cE}^{\univ}_{1})=p+\sum_{i=1}^{p}(-1)^{i-1}c_{i}(\un{\cE}^{\univ}_{1})/(i-1)!.
\end{equation}
Same argument as in the previous Claim shows that $c_{i}(\Eun_{x})=e_{i}\otimes[X]$, and hence
\begin{equation}\label{ch Ep}
\ch(\Eun_{x})=p+\sum_{i=1}^{p}(-1)^{i-1}c_{i}(\Eun_{x})/(i-1)!.
\end{equation}
Combining \eqref{ch E},\eqref{ch Ep} and using \eqref{diff Eun}, we see that $c_{i}(\un{\cE}^{\univ}_{1})=(\id\times\g)^{*}c_{i}(\Eun_{x})$ for $2\leq i\leq p$, which then implies $\b_{i}=e_{i}$.
\end{proof}

We finish the proof of Theorem~\ref{th:gen}. Since the classes $\alpha_{1},\cdots, \alpha_{p},\beta_{2},\cdots,\beta_{p}$ generate $\cohog{*}{M_{1}}$, and $\alpha_{i}=0$,  $\beta_{2},\cdots,\beta_{p}$ then generate $\cohog{*}{M_{1}}$. Hence, under the isomorphism $J_{q/p}\cong M_{1}$, $e_{2},\cdots, e_{p}$ generate $\cohog{*}{J_{q/p}}$. \qed

\sss{Proof of Theorem~\ref{th:par gen}}
We apply Theorem~\ref{th:glob par gen} to the open substack $\cM'$ defined in Section~\ref{sss:ell}, which satisfies the assumption of Theorem~\ref{th:Hit gen} by Lemma~\ref{l:Eun}.  Theorem~\ref{th:glob par gen} shows that $\cohog{*}{M'^{\pb}}_{\pure}$ is generated by the K\"unneth components of $c_{i}(\Eun), 1\le i\le p$ and $c_{1}(\cL^{\univ}_{i}), 1\le i\le p$. By Corollary~\ref{c:contract to fiber} and Lemma~\ref{l:Sp Hit}, $\cohog{*}{M^{\pb}_{1}}\cong \cohog{*}{\Sp_{q/p}}$ is generated by the restrictions of these cohomology classes to $M^{\pb}_{1}$. Note under the homeomorphism $\Sp_{q/p}\to M^{\pb}_{1}$, the line bundles $\cL^{\univ}_{i}$ pullback to $\cL_{i}$ on $\Sp_{q/p}$, $1\le i\le p$.

\subsection{Equivariant version} 
The vector bundle $\cE^{\univ}_{x}$ on $J_{q/p}\times\PP^{1}$ (see Section~\ref{sss:Ex}) is $\Gm(q/p)$-equivariant, hence the equivariant Chern classes $c^{\Gm}_{i}(\Eun_{x})\in \eqcoh{2i}{J_{q/p}\times \PP^{1}}$ are defined, and reduce to $c_{i}(\Eun_{x})$ in $\cohog{2i}{J_{q/p}\times \PP^{1}}$. Restricting to $J_{q/p}\times\{0\}$ we get classes $c^{\Gm}_{i}(\Eun_{x})|_{J_{q/p}\times 0}\in \eqcoh{2i}{J_{q/p}}$.

\begin{lemma}\label{l:equiv gen}
\begin{enumerate}
\item For $1\leq i\leq p$, the classes $c^{\Gm}_{i}(\Eun_{x})|_{J_{q/p}\times \{0\}}\in \eqcoh{2i}{J_{q/p}}$ are divisible by $\ep$ (the equivariant parameter). Define
\begin{equation*}
\wt e_{i}=p^{-1}\ep^{-1}c^{\Gm}_{i}(\Eun_{x})|_{J_{q/p}\times \{0\}}\in \eqcoh{2i-2}{J_{q/p}}.
\end{equation*}
\item For $2\leq i\leq p$, the image of $\wt e_{i}$ in $\cohog{2i-2}{J_{q/p}}$ is $e_{i}$.
\end{enumerate}
\end{lemma}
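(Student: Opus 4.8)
The plan is to handle (1) quickly via equivariant formality, and then to extract $e_i$ from the two restrictions of $c^{\Gm}_i(\Eun_x)$ to $J_{q/p}\times\{0\}$ and $J_{q/p}\times\{\infty\}$, the bulk of the work being the identification of $\Eun_x$ along the fibre over $\infty\in\PP^1$.

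\emph{Part (1).} First I would record that, non-equivariantly, $c_i(\Eun_x)|_{J_{q/p}\times\{0\}}=0$. This restriction is precisely the K\"unneth component $d_i$ in $c_i(\Eun_x)=d_i\otimes 1+e_i\otimes[\PP^1]$, and $d_i=c_i(\Eun_x)|_{J_{q/p}\times\{z\}}$ for every $z\in\PP^1$; choosing $z\ne 0$, the fibre $\pi_x^{-1}(z)$ lies in the smooth locus $C_{q/p}\setminus\{0_C\}$, over which (after pulling back to the homeomorphic space $\wt J_{q/p}$) the universal sheaf is trivialised, so $\Eun_x$ restricts to a trivial bundle of rank $p$ there and $d_i=0$ — the same observation as in the proof of Theorem~\ref{th:gen}. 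Next, since $X_{q/p}$ carries a $\Gm(q/p)$-stable affine paving (Section~\ref{sss:paving}), $J_{q/p}$ is equivariantly formal: $\eqcoh{*}{J_{q/p}}$ is $\QQ[\ep]$-free, $\ep$ is a non-zero-divisor, and $\eqcoh{*}{J_{q/p}}\to\cohog{*}{J_{q/p}}$ has kernel exactly $\ep\cdot\eqcoh{*}{J_{q/p}}$. As $c^{\Gm}_i(\Eun_x)|_{J_{q/p}\times\{0\}}$ reduces to $c_i(\Eun_x)|_{J_{q/p}\times\{0\}}=0$, it is divisible by $\ep$, so $\wt e_i$ is well defined.

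\emph{Part (2).} Here I would use the equivariant K\"unneth isomorphism $\eqcoh{*}{J_{q/p}\times\PP^1}\cong\eqcoh{*}{J_{q/p}}\otimes_{\QQ[\ep]}\eqcoh{*}{\PP^1}$, valid because $\eqcoh{*}{\PP^1}$ is $\QQ[\ep]$-free. Let $h=c^{\Gm}_1(\calO_{\PP^1}(1))$ be the equivariant lift of the positive generator linearised so that $h|_{J_{q/p}\times\{0\}}=0$; from the $\Gm(q/p)$-action $t:x\mapsto t^px$ on $\PP^1$ (of weight $p$) one then gets $h|_{J_{q/p}\times\{\infty\}}=-p\ep$. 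Writing $c^{\Gm}_i(\Eun_x)=A_i+h\,B_i$ with $A_i\in\eqcoh{2i}{J_{q/p}}$ and $B_i\in\eqcoh{2i-2}{J_{q/p}}$, restriction to $J_{q/p}\times\{0\}$ gives $A_i=c^{\Gm}_i(\Eun_x)|_{J_{q/p}\times\{0\}}=p\ep\,\wt e_i$, and restriction to $J_{q/p}\times\{\infty\}$ gives $c^{\Gm}_i(\Eun_x)|_{J_{q/p}\times\{\infty\}}=A_i-p\ep\,B_i$, so
\begin{equation*}
\wt e_i-B_i=\tfrac1{p\ep}\,c^{\Gm}_i(\Eun_x)|_{J_{q/p}\times\{\infty\}} .
\end{equation*}
Reducing the K\"unneth decomposition modulo $\ep$ and comparing with $c_i(\Eun_x)=d_i\otimes 1+e_i\otimes[\PP^1]$, $d_i=0$, forces $\ov B_i=e_i$. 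Thus it suffices to show the right-hand side of the display lies in $\ep\cdot\eqcoh{*}{J_{q/p}}$ when $i\ge 2$; granting this, $\wt e_i\equiv B_i\pmod\ep$, and the image of $\wt e_i$ in $\cohog{2i-2}{J_{q/p}}$ is $\ov B_i=e_i$.

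\emph{The key point, and the main obstacle.} The assertion just used is that $\Eun_x|_{J_{q/p}\times\{\infty\}}$ is pulled back from $B\Gm(q/p)$ — indeed a trivial bundle carrying only a constant $\Gm(q/p)$-representation — so its equivariant Chern classes are scalar multiples of $\ep^i$ and $c^{\Gm}_i(\Eun_x)|_{J_{q/p}\times\{\infty\}}\in\ep^i\QQ\subset\ep\cdot\eqcoh{*}{J_{q/p}}$ for $i\ge 2$. To establish this I would use that $\pi_x^{-1}(\infty)$ is the length-$p$ subscheme supported at the \emph{smooth} point $\infty_C$, so $\Eun_x|_{J_{q/p}\times\{\infty\}}$ is the pushforward to $J_{q/p}$ of $\cF^{\univ}$ restricted to $J_{q/p}\times\pi_x^{-1}(\infty)$; near $\infty_C$ the universal sheaf is a line bundle whose restriction to $J_{q/p}\times\{\infty_C\}$ is equivariantly trivial by the rigidification, hence its restriction to the fat point $J_{q/p}\times\pi_x^{-1}(\infty)$ is a successive extension of $\calO_{J_{q/p}}$ tensored with the $\Gm(q/p)$-characters of the infinitesimal neighbourhoods of $\infty_C$, from which the claim is immediate. (For $i=1$ the same computation shows $\wt e_1-e_1$ equals the constant $\ep^{-1}c^{\Gm}_1(\Eun_x)|_{J_{q/p}\times\{\infty\}}$, which is why the statement is confined to $i\ge 2$.) This tracking of the equivariant structure of $\cF^{\univ}$ in a formal neighbourhood of $\infty_C$ is the one genuinely geometric input; the remainder — the K\"unneth decomposition, the value $h|_{\infty}=-p\ep$, and the reductions modulo $\ep$ — is formal once equivariant formality of $J_{q/p}$ is in hand.
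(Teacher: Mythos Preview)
Your proof is correct and follows essentially the same strategy as the paper: both identify $\Eun_x|_{J_{q/p}\times\{\infty\}}$ (after passing to $\wt J_{q/p}$) as a bundle whose equivariant Chern classes lie in $\ep^i\QQ$, and both deduce part~(2) by comparing the restrictions at $0$ and $\infty$. The only differences are cosmetic---you use the K\"unneth basis $\{1,h\}$ for $\eqcoh{*}{\PP^1}$ where the paper uses the fixed-point localization $(i^*_0,i^*_\infty)$, and for part~(1) you argue more directly from equivariant formality and $d_i=0$ rather than via the congruence $i^*_0 c^{\Gm}_i\equiv i^*_\infty c^{\Gm}_i\pmod\ep$; these are equivalent packagings of the same computation.
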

\begin{proof}
The one-dimensional torus $\Gm(q/p)$ acts on $\PP^{1}$ via the $p$-th power of the usual rotation action. Localizing $\PP^{1}$ to the two fixed points $0$ and $\infty$ we get an isomorphism
\begin{equation*}
(\iota^{*}_{0},\iota^{*}_{\infty}): \eqcoh{*}{\PP^{1}}\isom \QQ[\ep]\times_{\QQ}\QQ[\ep]
\end{equation*}
where the right side consists of pairs $(h_{0}, h_{\infty})\in \QQ[\ep]^{2}$ with the same constant term. The map to the usual cohomology $\eqcoh{*}{\PP^{1}}\to \cohog{*}{\PP^{1}}=\QQ\cdot 1\oplus \QQ\cdot[\PP^{1}]$ takes $(h_{0},h_{\infty})$ to $(h_{0}(0), \frac{h_{0}-h_{\infty}}{p\ep}|_{\ep=0})$.

Taking product with $J_{q/p}$ we get a similar isomorphism given by restriction to $J_{q/p}\times\{0\}$ and to $J_{q/p}\times\{\infty\}$
\begin{equation}\label{i0}
(i^{*}_{0},i^{*}_{\infty}): \eqcoh{*}{J_{q/p}\times \PP^{1}}\isom \eqcoh{*}{J_{q/p}}\times_{\cohog{*}{J_{q/p}}}\eqcoh{*}{J_{q/p}}.
\end{equation}
The map to the usual cohomology $\eqcoh{*}{J_{q/p}\times \PP^{1}}\to \cohog{*}{J_{q/p}\times\PP^{1}}=\cohog{*}{J_{q/p}}\otimes 1\oplus \cohog{*}{J_{q/p}}\otimes[\PP^{1}]$ takes the form 
\begin{equation}\label{eqcoh to cohog}
(h_{0},h_{\infty})\mapsto h_{0}|_{\ep=0}\otimes1+ \frac{h_{0}-h_{\infty}}{p\ep}|_{\ep=0}\otimes [\PP^{1}]
\end{equation}
where $(-)|_{\ep=0}$ is the map $\eqcoh{*}{J_{q/p}}\to \cohog{*}{J_{q/p}}$. 

Now we apply the map \eqref{i0} to $c^{\Gm}_{i}(\Eun_{x})$. We can replace $J_{q/p}$ by the homeomorphic $\wt J_{q/p}$ (see Section~\ref{sss:relXJ}), and conclude that the restriction of $\Eun_{x}$ to $\wt J_{q/p}\times\{\infty\}$ is a direct sum of trivial bundles $\cO,\cO(1),\cdots, \cO(p-1)$ with the $\Gm$-action indicated by the twists. Therefore, $i^{*}_{\infty}c^{\Gm}_{i}(\Eun_{x})$ is the $i$th elementary symmetric polynomial of the set $\{0,\ep,\cdots,(p-1)\ep\}$, hence a multiple of $\ep^{i}$ viewed as an element in  $\eqcoh{2i}{J_{q/p}}$. Since $i^{*}_{0}c^{\Gm}_{i}(\Eun_{x})$ is congruent to $i^{*}_{\infty}c^{\Gm}_{i}(\Eun_{x})$ mod $\ep$, we see that $i^{*}_{0}c^{\Gm}_{i}(\Eun_{x})$ is divisible by $\ep$ for $2\leq i\leq p$. This proves (1). 

By \eqref{eqcoh to cohog}, $e_{i}$ is the reduction mod of $\ep$ of $p^{-1}\ep^{-1}(i^{*}_{0}c^{\Gm}_{i}(\Eun_{x})-i^{*}_{\infty}c^{\Gm}_{i}(\Eun_{x}))$. Since $i^{*}_{\infty}c^{\Gm}_{i}(\Eun_{x})$ is a multiple of $\ep^{i}$, we see that $e_{i}$ is the reduction mod $\ep$ of  $\wt e_{i}=p^{-1}\ep^{-1}i^{*}_{0}c^{\Gm}_{i}(\Eun_{x})$ for $2\leq i\leq p$. This proves (2).
\end{proof}

\begin{cor}\label{c:equiv gen} The equivariant cohomology ring $\eqcoh{*}{J_{q/p}}$ is generated by the equivariant parameter $\ep\in\eqcoh{2}{\pt}$ together with $\wt e_{i}, 2\leq i\leq p$.
\end{cor}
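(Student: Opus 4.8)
The plan is to deduce this from the non-equivariant generation statement (Theorem~\ref{th:gen}) together with the equivariant formality of $J_{q/p}$, via a graded Nakayama argument; almost everything is formal once the right input is isolated.

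First I would record that $J_{q/p}$ is equivariantly formal for the $\Gm(q/p)$-action. By Section~\ref{sss:paving}, $J_{q/p}$ admits a partition into affine spaces, so $\cohog{*}{J_{q/p}}$ is concentrated in even degrees (and is finite-dimensional in each degree, $J_{q/p}$ being projective). Hence the Leray--Serre spectral sequence computing $\eqcoh{*}{J_{q/p}}=\upH^{*}(J_{q/p}\times_{\Gm}E\Gm)$ degenerates at $E_{2}$, so $\eqcoh{*}{J_{q/p}}$ is a free graded module over $\QQ[\ep]=\eqcoh{*}{\pt}$. Concretely, the restriction map $r\colon \eqcoh{*}{J_{q/p}}\to \cohog{*}{J_{q/p}}$ is surjective with kernel exactly $\ep\cdot\eqcoh{*}{J_{q/p}}$, i.e.\ $r$ induces an isomorphism $\eqcoh{*}{J_{q/p}}/\ep\cong \cohog{*}{J_{q/p}}$. (Alternatively, the $\Gm$-stable affine paving yields equivariant cohomology long exact sequences that split, which forces $\QQ[\ep]$-freeness directly.)

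Next, let $B\subseteq \eqcoh{*}{J_{q/p}}$ be the graded $\QQ$-subalgebra generated by $\ep$ and by $\wt e_{2},\dots,\wt e_{p}$. By Lemma~\ref{l:equiv gen}(2), $r(\wt e_{i})=e_{i}$ for $2\le i\le p$, so $r(B)$ is the subalgebra of $\cohog{*}{J_{q/p}}$ generated by $e_{2},\dots,e_{p}$, which is all of $\cohog{*}{J_{q/p}}$ by Theorem~\ref{th:gen}. Combined with the description of $\ker r$ from the previous step, this gives $\eqcoh{*}{J_{q/p}}=B+\ep\cdot\eqcoh{*}{J_{q/p}}$.

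Finally I would conclude by graded Nakayama. Write $A:=\eqcoh{*}{J_{q/p}}=\bigoplus_{n\ge0}A_{n}$, with $\ep$ of degree $2$ and $B\subseteq A$ a graded subalgebra containing $\ep$. The relation $A=B+\ep A$ gives $A_{n}=B_{n}+\ep A_{n-2}$ for all $n$; by induction on $n$ (the base cases $n\le1$ being immediate since $A_{m}=0$ for $m<0$) and using $\ep A_{n-2}\subseteq \ep B\subseteq B$, we get $A_{n}=B_{n}$ for all $n$. Hence $A=B$, which is the assertion. The only step requiring genuine care is the equivariant formality input of the second paragraph (freeness over $\QQ[\ep]$ and identification of the reduction mod $\ep$ with ordinary cohomology), where Section~\ref{sss:paving} is used; the rest is purely formal.
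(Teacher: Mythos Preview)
Your proof is correct and follows essentially the same route as the paper: equivariant formality from the affine paving (Section~\ref{sss:paving}) gives $\eqcoh{*}{J_{q/p}}/\ep\cong\cohog{*}{J_{q/p}}$, Lemma~\ref{l:equiv gen}(2) identifies the reductions of the $\wt e_i$, and then graded Nakayama together with Theorem~\ref{th:gen} finishes. The paper's proof is just a terser version of yours, invoking ``the graded version of Nakayama's lemma'' where you spell out the degree-by-degree induction.
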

\begin{proof} Since $J_{q/p}$ is equivariantly formal (see Section~\ref{sss:paving}), therefore $\eqcoh{*}{J_{q/p}}\otimes_{\QQ[\ep]}\QQ\cong \cohog{*}{J_{q/p}}$ (here $\QQ$ is viewed as the $\QQ[\ep]$-module $\QQ[\ep]/(\ep)$). By Lemma~\ref{l:equiv gen}(2), $\wt e_{i}$ are liftings of $e_{i}$ for $2\leq i\leq p$. The generation statement for $\eqcoh{*}{J_{q/p}}$ then follows from Theorem~\ref{th:gen} by the graded version of the Nakayama's lemma.
\end{proof} 

Same argument shows the parabolic version.

\begin{cor} The equivariant cohomology ring $\eqcoh{*}{\Sp_{q/p}}$ is generated by the equivariant parameter $\ep\in\eqcoh{2}{\pt}$, the pullbacks of $\wt e_{i}$ for $2\leq i\leq p$ and $c_{1}^{\Gm}(\cL_{i}), 1\le i\le p$.
\end{cor}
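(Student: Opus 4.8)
The plan is to mimic exactly the proof of Corollary~\ref{c:equiv gen}, replacing the non-equivariant generation input Theorem~\ref{th:gen} by its parabolic counterpart Theorem~\ref{th:par gen}. First I would note that $\Sp_{q/p}$ is equivariantly formal for the $\Gm(q/p)$-action: indeed, by Section~\ref{sss:paving} (or rather its analogue for the affine flag variety, which one obtains from the fact that the attracting cells of the $\Gm(q/p)$-fixed points $\wt\Sigma_{q/p}$ are affine spaces), the cohomology $\cohog{*}{\Sp_{q/p}}$ is concentrated in even degrees, pure and Tate. Hence the spectral sequence computing $\eqcoh{*}{\Sp_{q/p}}$ from $\cohog{*}{\Sp_{q/p}}$ degenerates, $\eqcoh{*}{\Sp_{q/p}}$ is a free $\QQ[\ep]$-module, and
\begin{equation*}
\eqcoh{*}{\Sp_{q/p}}\otimes_{\QQ[\ep]}\QQ\cong \cohog{*}{\Sp_{q/p}},
\end{equation*}
where $\QQ=\QQ[\ep]/(\ep)$.

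Next I would exhibit the required equivariant lifts. The equivariant parameter $\ep$ lifts the zero class trivially. The equivariant Chern classes $c_{1}^{\Gm}(\cL_{i})\in\eqcoh{2}{\Sp_{q/p}}$ are defined since each $\cL_{i}$ is $\Gm(q/p)$-equivariant (as stated in Section~\ref{sss:aff flag}), and they reduce mod $\ep$ to $c_{1}(\cL_{i})\in\cohog{2}{\Sp_{q/p}}$. For the classes $e_{2},\dots,e_{p}$: pulling them back along $\Sp_{q/p}\to J_{q/p}$, Lemma~\ref{l:equiv gen}(1)-(2) produces equivariant classes $\wt e_{i}\in\eqcoh{2i-2}{J_{q/p}}$ lifting $e_{i}$; their pullbacks to $\eqcoh{2i-2}{\Sp_{q/p}}$ then lift the pullbacks of $e_{i}$, because the reduction-mod-$\ep$ map is compatible with the pullback $\Sp_{q/p}\to J_{q/p}$.

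Finally I would invoke the graded Nakayama lemma over $\QQ[\ep]$: the $\QQ[\ep]$-submodule (in fact subalgebra) of $\eqcoh{*}{\Sp_{q/p}}$ generated by $\ep$, the pullbacks of $\wt e_{2},\dots,\wt e_{p}$, and $c_{1}^{\Gm}(\cL_{1}),\dots,c_{1}^{\Gm}(\cL_{p})$ surjects onto $\cohog{*}{\Sp_{q/p}}$ after tensoring with $\QQ[\ep]/(\ep)$ by Theorem~\ref{th:par gen}; since $\eqcoh{*}{\Sp_{q/p}}$ is a finitely generated graded $\QQ[\ep]$-module with $\QQ[\ep]$ positively graded, it follows that this submodule is all of $\eqcoh{*}{\Sp_{q/p}}$. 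There is essentially no obstacle here; the only point requiring a word of care is the equivariant formality of $\Sp_{q/p}$ and hence the flatness of $\eqcoh{*}{\Sp_{q/p}}$ over $\QQ[\ep]$, which is what makes the mod-$\ep$ reduction faithful and Nakayama applicable — but this is immediate from the affine paving of $\Sp_{q/p}$ indexed by $\wt\Sigma_{q/p}$, exactly as in the non-parabolic case treated in Corollary~\ref{c:equiv gen}.
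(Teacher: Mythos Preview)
Your proposal is correct and matches the paper's approach exactly: the paper's proof reads in full ``Same argument shows the parabolic version,'' meaning precisely the Nakayama argument from Corollary~\ref{c:equiv gen} with Theorem~\ref{th:gen} replaced by Theorem~\ref{th:par gen}, using equivariant formality from the affine paving of $\Sp_{q/p}$.
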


\subsection{Smooth model for $J_{q/p}$}\label{ssec:WeightFilt}
The curve $C_{q/p}$ naturally sits inside the weighted projective plane $\PP(1,p,q)$ with equation $F_{0}(z_{0},z_{1},z_{2})=z_{1}^{q}-z_{2}^{p}=0$. Let $\cB$ be the space of homogeneous degree $pq$ polynomials $F(z_{0},z_{1},z_{2})$ in the weighted variables $(z_{0},z_{1},z_{2})$ (with degrees $1,p,q$ respectively) of the form\begin{equation*}
F(z_{0},z_{1}z_{2})=\sum_{i,j\ge0, ip+jq\le pq}a_{ij}z_{1}^{i}z_{2}^{j}z_{0}^{pq-ip-jq}
\end{equation*}
with
\begin{equation*}
a_{0p}=1, \quad a_{q0}=-1.
\end{equation*}
The $(a_{ij})$ for $i,j\ge0, ip+jq\le pq, (i,j)\ne (0,p)$ or $(q,0)$ give affine coordinates on $\cB$ and hence an isomorphism $\cB\cong \AA^{(p+1)(q+1)/2-1}$. Let $\cB'\subset \cB$ be the open subset where $F$ defines an integral curve. Then we have a universal family of curves
\begin{equation*}
\pi: \cC\to \cB'
\end{equation*}
defined by $F(z_{0},z_{1},z_{2})=0$ inside $\PP(1,p,q)$ for $F\in \cB'$. Note all these curves are ordinary schemes (they don't pass through the points in $\PP(1,p,q)$ with nontrivial automorphisms). Moreover all these curves pass through the point $[0,1,1]$, so that $\pi$ has a section. Therefore the relative compactified Jacobian $\cJ(\cC/\cB')$ is defined.

The one-dimensional torus $\Gm(q/p)$ acts on $\PP(1,p,q)$ by $t\cdot [z_{0},z_{1},z_{2}]\mapsto [tz_{0}, z_{1},z_{2}]$. This action contracts $\cB$ and $\cB'$ to the point $F_{0}=z_{1}^{q}-z_{2}^{p}$. 

\begin{prop} The relative compactified Jacobian $\cJ(\cC/\cB')$ is smooth and it contracts to the central fiber $J_{q/p}$ under the action of $\Gm(q/p)$.
\end{prop}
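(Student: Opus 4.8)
The proposition has two parts, and the contraction statement is the soft one, so I would treat it first. The $\Gm(q/p)$-action on $\PP(1,p,q)$ lifts to $\cC$ compatibly with the action on $\cB'$, hence to $\cJ(\cC/\cB')$, and the structure map $\cJ(\cC/\cB')\to\cB'$ is equivariant. On $\cB$ the only coordinate functions of weight $0$ are the two frozen ones $a_{0p},a_{q0}$ and all the others have positive weight, so $\lim_{t\to0}t\cdot b=F_{0}$ for every $b\in\cB'$, as stated in the text. Since $\cJ(\cC/\cB')\to\cB'$ is proper, every limit $\lim_{t\to0}t\cdot(b,\cF)$ exists and lies over $F_{0}$; as the fiber of $\cC$ over $F_{0}$ is $C_{q/p}$ by construction, this fiber of $\cJ(\cC/\cB')$ is $\cJ(C_{q/p})=J_{q/p}$. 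A Bialynicki--Birula argument (the base $\cB'$ being an affine space contracting to a point, the family proper) then promotes the limit map $\cJ(\cC/\cB')\to J_{q/p}$ to a deformation retraction.

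For smoothness I would realize $\cJ(\cC/\cB')$ inside a moduli space of sheaves on the surface $S:=\PP(1,p,q)$. Every member of $\cB$ avoids the two stacky points $[0,1,0]$ and $[0,0,1]$ of $S$, so each $C_{b}$ is an ordinary Cartier divisor on (the smooth locus of) $S$, and pushing forward rank one torsion-free sheaves identifies $\cJ(\cC/\cB')$ with the locus, in the moduli space $\mathcal{M}$ of stable pure $1$-dimensional sheaves on $S$ with the invariants of $\cO_{C_{q/p}}$, of sheaves whose support is an integral curve lying in $\cB$. Because $\om_{S}=\cO_{S}(-1-p-q)$ has negative degree on every curve in $|\cO_{S}(pq)|$, any homomorphism $\cF\to\cF\otimes\om_{S}$ between stable sheaves supported on such a curve is zero, so at any point of $\mathcal{M}$ with integral support $\Ext^{2}_{S}(\cF,\cF)=\Hom_{S}(\cF,\cF\otimes\om_{S})^{\vee}=0$ and $\mathcal{M}$ is smooth there. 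The subvariety $\cB\subset|\cO_{S}(pq)|$ is (an affine chart of) a hyperplane cut out by the linear form $\ell\colon G\mapsto a_{q0}(G)+a_{0p}(G)$ on $H^{0}(S,\cO_{S}(pq))$. Thus $\cJ(\cC/\cB')$ is cut out in the smooth $\mathcal{M}$ by the single equation $\ell\circ\Supp=0$, and smoothness is the assertion that this equation is nonsingular along its zero locus, i.e.\ that $\ell$ restricts to a nonzero form on $\mathrm{Im}\big(d\,\Supp_{\cF}\colon\Ext^{1}_{S}(\cF,\cF)\to H^{0}(C_{b},N_{C_{b}/S})\big)$ for every $\cF\in\cJ(\cC/\cB')$. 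The standard long exact sequence relating $\Ext^{\bullet}_{S}(\cF,\cF)$, $\Ext^{\bullet}_{C_{b}}(\cF,\cF)$ and $H^{\bullet}(C_{b},N_{C_{b}/S})$, together with $\Ext^{2}_{S}(\cF,\cF)=0$, identifies this image with the kernel of a surjection $H^{0}(C_{b},N_{C_{b}/S})\twoheadrightarrow\Ext^{2}_{C_{b}}(\cF,\cF)$, namely the space of first-order deformations of $C_{b}$ along which $\cF$ deforms. (Purely deformation-theoretically: since $\chi(\cF,\cF)=\chi(\cO_{C_{b}},\cO_{C_{b}})$ and $\Hom_{C_{b}}(\cF,\cF)=\CC$, smoothness at $(b,\cF)$ is equivalent to surjectivity of the obstruction map $T_{b}\cB'\to\Ext^{2}_{C_{b}}(\cF,\cF)$.)

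So the crux is: for every integral $C_{b}$ and every torsion-free rank one $\cF$ on it, some first-order deformation of $C_{b}$ inside $\cB$ along which $\cF$ deforms should have $\ell\neq0$ --- equivalently, $\ell$ should not annihilate the equisingular deformations of $C_{b}$. This is exactly where the choice of the two frozen coefficients $a_{0p}=1,\ a_{q0}=-1$ enters: $z_{1}^{q}$ and $z_{2}^{p}$ vanish to high order at $[1,0,0]$ --- the site of the most degenerate allowed singularity, the planar cusp $z_{1}^{q}=z_{2}^{p}$ --- so perturbing $F_{b}$ in either direction is equisingular there, while $\ell(z_{1}^{q})=\ell(z_{2}^{p})=1\neq0$. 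At the central fiber this already settles it (the deformation $(1+\epsilon)z_{1}^{q}-z_{2}^{p}$ is even abstractly isomorphic to $C_{q/p}$). The real obstacle --- the only point requiring genuine work --- is to run this uniformly: one must check that at each singular point of each integral curve $C_{b}\in\cB'$ at least one of $z_{1}^{q},z_{2}^{p}$ restricts to an equisingular direction there, which amounts to controlling the singularities that can occur in the linear system $\cB$. The rest of the argument is formal.
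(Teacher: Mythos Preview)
Your contraction argument is essentially the paper's: properness over $\cB'$ plus contraction of the base.

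For smoothness your route is genuinely different from the paper's, and it has a real gap. The paper does not work on the surface $S=\PP(1,p,q)$ at all; it embeds $\cB$ as a hyperplane in the Hitchin base $\cA$ for $X=\PP(p,1)$ and then observes that a one-parameter subgroup $\Gm(u/v)\subset\Grot\times\Gdil$ (with $up-vq=1$) acts freely and transitively on the transverse direction, so that $\cA'\cong\Gm(u/v)\times\cB'$ and hence $\cM'|_{\cA'}\cong\Gm(u/v)\times\cM'|_{\cB'}$. Smoothness of the stable Hitchin stack (Proposition~\ref{p:sm}) then gives smoothness over $\cB'$ for free, and passing to the coarse moduli yields $\cJ(\cC/\cB')$. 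No analysis of singularities of the curves $C_b$ is needed.

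Your approach via moduli of pure one-dimensional sheaves on $S$ is sound up to the point where you must show that the single equation $\ell\circ\Supp=0$ is nonsingular along its zero locus. You correctly reduce this to finding, for each $(b,\cF)$, a first-order deformation of $C_b$ along which $\cF$ deforms and on which $\ell\ne0$; you then propose to find such a deformation among the equisingular ones and say this ``amounts to controlling the singularities that can occur in the linear system $\cB$''. But you do not carry this out, and the local criterion you state (that $z_1^q$ or $z_2^p$ be equisingular at each singular point separately) would not suffice anyway: you need a single global deformation direction that is equisingular at all singular points simultaneously and has $\ell\ne0$. So as written the proof is incomplete.

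The gap closes with exactly the same idea the paper uses, translated to your setting. The torus of $S$ contains the scaling $\l\cdot[z_0:z_1:z_2]=[z_0:\l z_1:z_2]$, an automorphism of $S$, hence the induced family $\l\mapsto\l^*C_b$ is isotrivial (in particular equisingular, and every $\cF$ deforms along it). In coordinates this sends $a_{ij}\mapsto\l^{i}a_{ij}$, so it fixes $a_{0p}$ (stays in your affine chart) but moves $a_{q0}$ with nonzero velocity $q\,a_{q0}=-q$ at $\l=1$. Thus the tangent direction to this one-parameter family lies in $\Im(d\Supp)$ and has $\ell\ne0$, for every $(b,\cF)$, and you are done. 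This is the $\Gm(u/v)$ trick in disguise; once you see it, the ``control of singularities'' is unnecessary.
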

\begin{proof}
The moduli space $\cJ(\cC/\cB')$ is closely related to a Hitchin moduli space for $X=\PP(p,1)$. More precisely, consider the moduli stack $\cM_{p,\D}(X,\cL)$ for $\D=\cO_{X}(-(p-1)q/2)$ and $\cL=\cO_{X}(q/p)$ as in Section~\ref{sss:Hit for X}. The Hitchin base $\cA$ classifies polynomials $y^{p}+P_{1}(\x,\y)y^{p-1}+\cdots +P_{p}(\x,\y)$ in variables $\x,\y$ and $y$ such that $P_{i}(\x,\y)\in \CC[\x,\y]_{iq}$. We have an embedding
\begin{eqnarray*}
\cB &\incl& \cA\\
F(z_{0},z_{1},z_{2}) &\mapsto & F(\y,\x,y)
\end{eqnarray*}
which identifies $\cB$ as a hyperplane in $\cA$ defined by $P_{p}(\x,\y)=-\x^{q}+\cdots$. For $F\in \cB$ viewed as a point in $\cA$, the corresponding spectral curve is naturally isomorphic to the curve $\cC_{F}\subset \PP(1,p,q)$ defined by $F=0$. Lemma~\ref{l:ell} implies there is a natural map $\cM_{r,\D}(X,\cL)|_{\cB'}\to \cJ(\cC/\cB')$ which is a $\mu_{p}$-gerbe. On the other hand, the action of $\Gm(u/v)$ (see Section~\ref{sss:contraction}) identifies $\cA'$ with $\Gm(u/v)\times \cB'$, hence $\cM_{r,\D}(X,\cL)|_{\cA'}\cong\Gm(u/v)\times \cM_{r,\D}(X,\cL)|_{\cB'}$. By Proposition~\ref{p:sm}, $\cM_{r,\D}(X,\cL)$ is smooth, hence so are $\cM_{r,d}(X,\cL)|_{\cB'}$ and $\cJ(\cC/\cB')$.

Since the $\Gm(q/p)$-action contracts $\cB'$ to $F_{0}$, and $\cJ(\cC/\cB')$ is proper over $\cB'$, we conclude that the same action also contracts $\cJ(\cC/\cB')$ to its central fiber $J_{q/p}$.
\end{proof}

Consider the map
\begin{eqnarray*}
\pi_{\cB,\infty}:\cB'&\to&\AA^{p+q-1}\\
F(z_{0},z_{1},z_{2}) &\mapsto& (a_{00},a_{10},\cdots, a_{q-1,0}, a_{01},\cdots,a_{0,p-1}).
\end{eqnarray*}

\begin{conj}
\begin{enumerate}
\item There is a canonical Poisson structure on $\cJ(\cC/\cB')$ such that the projection $h: \cJ(\cC/\cB')\to \cB'$ is an algebraically completely integrable system.
\item The generic fibers of the map
\begin{equation*}
\pi_{\infty}: \cJ(\cC/\cB')\to\cB'\xr{\pi_{\cB,\infty}}\AA^{p+q-1}
\end{equation*}
are smooth and are the symplectic leaves of the Poisson structure on $\cJ(\cC/\cB')$.
\end{enumerate}
\end{conj}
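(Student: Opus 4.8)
We outline a possible strategy for proving the conjecture.

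\textbf{Setup.} By its definition in Section~\ref{ssec:WeightFilt}, $\cJ(\cC/\cB')$ is the relative compactified Jacobian of the family $\cC\to\cB'$ of integral curves inside the surface $S=\PP(1,p,q)$. The plan is to exhibit $\cJ(\cC/\cB')$ as an open Poisson submanifold of the moduli of rank one torsion-free sheaves on the \emph{Poisson surface} $S$, equipped with the toric log-symplectic bivector $\Pi$: on the affine chart $\AA^{2}_{x,y}=S\setminus\{z_{0}=0\}$ one has $\Pi=xy\,\partial_{x}\wedge\partial_{y}$, and $\Pi$ is the unique (up to scalar) Poisson bivector on $S$ with degeneracy divisor the toric boundary $Z=\{z_{0}z_{1}z_{2}=0\}$ (note $Z\in|{-}K_{S}|=|\cO_{S}(1+p+q)|$, while $|\cO_{S}(1)|$ is the single divisor $\{z_{0}=0\}$, which forces the shape of $Z$). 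All curves $\cC_{F}$, $F\in\cB'$, are integral and avoid the two quotient singularities $[0:1:0],[0:0:1]$ of $S$, so one may work on the smooth (noncompact) open surface $S^{\circ}=S\setminus\{[0:1:0],[0:0:1]\}$. Moreover the normalization $a_{0p}=1$, $a_{q0}=-1$ built into the definition of $\cB$ is exactly the condition that $\cC_{F}\cap\{z_{0}=0\}$ be the reduced point $\infty_{C}=[0:1:1]$; hence $\cB'$ is an open subset of the locus of integral members of $|\cO_{S}(pq)|$ with prescribed intersection with $\{z_{0}=0\}$.

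\textbf{Part (1).} First I would invoke Bottacin's construction of Poisson structures on moduli of sheaves over a smooth Poisson surface: for a flat family of integral curves in such a surface, the relative compactified Jacobian carries a canonical algebraic Poisson structure, whose Poisson tensor at a sheaf $\cF$ supported on $\cC_{F}$ is built from the Serre-duality pairing on $\upH^{1}(\cC_{F},\cO)$ twisted by $\Pi$ together with the classes deforming $\cC_{F}$ inside $S$ --- the same type of cup-product pairing underlying the proof of Proposition~\ref{p:sm}. Applied to $\cC/\cB'\subset S^{\circ}$ this gives a Poisson structure on $\cJ(\cC/\cB')$, canonical up to the scalar in $\Pi$; since $\cB'$ is an open subset of a union of symplectic leaves of the ambient sheaf moduli (those leaves fix the scheme $\cC_{F}\cap Z$, in particular $\cC_{F}\cap\{z_{0}=0\}$), the subspace $\cJ(\cC/\cB')$ is Poisson and the structure is intrinsic. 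That $h:\cJ(\cC/\cB')\to\cB'$ is algebraically completely integrable then follows from the standard (Beauville-type) argument that the support map is Lagrangian on leaves: $h$ is proper, its fibers are the compactified Jacobians $\cJ(\cC_{F})$, hence torsors under generalized abelian varieties as $F$ ranges over the integral locus $\cB'$, and these fibers are isotropic and, within each symplectic leaf, Lagrangian. The dimension count is consistent: $\dim\cJ(\cC/\cB')=\dim\cB'+p_{a}(\cC_{F})=pq$, a generic symplectic leaf has dimension $pq-(p+q-1)=(p-1)(q-1)=2\delta$ where $\delta=p_{a}(\cC_{F})=(p-1)(q-1)/2$, and on such a leaf $h$ restricts to a fibration over a $\delta$-dimensional base with $\delta$-dimensional Jacobian fibers.

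\textbf{Part (2).} By the same description, the symplectic leaf through a point over $F$ is the locus lying over $\{\,F'\in\cB' : \cC_{F'}\cap Z=\cC_{F}\cap Z\text{ as subschemes of }Z\,\}$. I would then run the elementary computation of $\cC_{F}\cap Z$ line by line: over $\{z_{0}=0\}$ it is the fixed reduced point $\infty_{C}$, independent of $F\in\cB'$; over $\{z_{1}=0\}$ (the line $x=0$) it is the degree-$p$ divisor cut out by $y^{p}+a_{0,p-1}y^{p-1}+\cdots+a_{01}y+a_{00}$, hence recorded by $(a_{00},a_{01},\dots,a_{0,p-1})$; over $\{z_{2}=0\}$ (the line $y=0$) it is the degree-$q$ divisor cut out by $-x^{q}+a_{q-1,0}x^{q-1}+\cdots+a_{10}x+a_{00}$, hence recorded by $(a_{00},a_{10},\dots,a_{q-1,0})$. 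Since these two divisors share only the common constant term $a_{00}$, the scheme $\cC_{F}\cap Z$ is constant exactly when the tuple $(a_{00},a_{10},\dots,a_{q-1,0},a_{01},\dots,a_{0,p-1})=\pi_{\cB,\infty}(F)$ is constant; hence the symplectic leaves of $\cJ(\cC/\cB')$ are precisely the fibers of $\pi_{\infty}=\pi_{\cB,\infty}\circ h$. Smoothness of the generic fiber of $\pi_{\infty}$ then follows from the smoothness of $\cJ(\cC/\cB')$ proven above together with the generic submersivity of $\pi_{\cB,\infty}$: when $\cC_{F}$ meets $Z$ transversally, which holds on a dense open of $\cB'$, deforming $\cC_{F}$ moves the finite scheme $\cC_{F}\cap Z$ freely inside $Z$.

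\textbf{Main obstacle.} The genuine work is in Part (1): making Bottacin's construction and its symplectic foliation precise when the ambient surface $S^{\circ}$ is \emph{smooth but noncompact} and the family $\cB'$ is a \emph{noncomplete} piece of $|\cO_{S}(pq)|$ --- obtained by fixing $\cC_{F}\cap\{z_{0}=0\}$ and absorbing the $\Gm^{2}$ rescaling of $(x,y)$. One must check that the construction, usually stated for curves moving in a complete linear system on a smooth \emph{projective} Poisson surface, still produces a genuine algebraic Poisson structure on $\cJ(\cC/\cB')$ with Lagrangian support fibers over $\cB'$; the natural route is to realize $\cJ(\cC/\cB')$ as an open Poisson submanifold of the total moduli of sheaves in $|\cO_{S}(pq)|$, but this reduction needs to be carried out carefully. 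A secondary matter is to fix the scalar in $\Pi$ so that ``canonical'' is literal, and to verify compatibility with the $\Gm(q/p)$-contraction, so that $\Pi$ degenerates to a (necessarily degenerate) Poisson structure on $J_{q/p}$ consistent with the smooth-model picture of Section~\ref{ssec:WeightFilt}.
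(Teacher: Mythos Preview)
The statement you are addressing is a \emph{conjecture} in the paper, not a theorem; the paper offers no proof. Immediately after stating it, the authors write only: ``The result \cite[Proposition 3.1]{BH} is very close to the statement of the conjecture. We expect that a proof of the conjecture could be obtained by the appropriate modification of the argument from \cite{BH}.'' So there is no argument of the paper to compare against, and your proposal should be read as a strategy toward an open problem rather than as a reconstruction of a known proof.

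That said, your approach is very much in the spirit of what the paper gestures at. The reference \cite{BH} (Biswas--Hurtubise) concerns exactly Poisson surfaces and algebraically completely integrable systems, and your route through Bottacin's Poisson structure on moduli of sheaves over a Poisson surface, with symplectic leaves indexed by the restriction to the degeneracy divisor $Z=\{z_{0}z_{1}z_{2}=0\}$, is the same circle of ideas. Your computation in Part~(2) --- that fixing the scheme $\cC_{F}\cap Z$ amounts to fixing the coefficients $(a_{00},a_{10},\dots,a_{q-1,0},a_{01},\dots,a_{0,p-1})=\pi_{\cB,\infty}(F)$ --- is correct and is the right geometric content behind the conjectured description of the leaves.

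The genuine gaps are the ones you flag yourself, and they are real. First, Bottacin's theorem is stated for a smooth projective Poisson surface, whereas $\PP(1,p,q)$ has two cyclic quotient singularities; working on $S^{\circ}$ trades singularities for noncompactness, and one must verify that the cotangent/Serre-duality identifications underlying the Poisson bivector go through. Second, the members $\cC_{F}$ are not smooth (the whole point is the cusp), so the relevant moduli is of torsion-free rank one sheaves rather than line bundles, and the extension of the Bottacin/Markman Poisson structure to the compactified Jacobian locus has to be checked. Third, for the leaf description one needs that the Casimirs are determined by the \emph{sheaf} restricted to $Z$, not merely the support $\cC_{F}\cap Z$; your observation that at a transverse smooth intersection point the fiber of a rank one sheaf carries no moduli (after rigidifying at $\infty_{C}$) handles the generic case, but the statement as written concerns all of $\cJ(\cC/\cB')$. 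None of these is obviously fatal, but together they are why the authors left this as a conjecture.
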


The result \cite[Proposition 3.1]{BH} is very close to the statement of the conjecture.  We expect that a proof of the conjecture could be obtained by the appropriate modification of the argument from \cite{BH}.



\section{The cohomology rings of $J_{q/p}$ and $\Sp_{q/p}$: relations}\label{sec:Relations}

We preserve the notation from Section~\ref{ss:Cpq} and Section~\ref{sec:Surjectivity}. In this section we first give a presentation of the cohomology ring $\eqcoh{*}{J_{q/p}}$. Then we complete the proof of Theorem~\ref{th:main}.

\subsection{A two-parameter family of rings} 

\sss{Definition of $R$}\label{sss:R} Let $\ep,s, e_{2},\cdots, e_{p}, f_{2},\cdots, f_{q}$ and $z$ be indeterminates. Let
\begin{equation}\label{e1f1}
e_{1}=\frac{q(p-1)}{2}s; \quad f_{1}=\frac{p(q-1)}{2}s.
\end{equation}
and
$$A(z)=z^{p}+\sum_{i=1}^{p}p\ep e_{i}z^{p-i}; \quad B(z)=z^{q}+\sum_{j=1}^{q}q\ep f_{j}z^{q-j}.$$ 
We view both $A$ and $B$ as polynomials in $z$ with coefficients in $\QQ[\ep, e_{2},\cdots, e_{p},f_{2},\cdots, f_{q}]$. Let $F_{d}\in \QQ[\ep, s, e_{2},\cdots, e_{p},f_{2},\cdots, f_{q}]$ ($0\leq d\leq pq$) be the coefficient of $z^{d}$ in
\begin{equation}\label{AB}
\prod_{j=0}^{q-1}A(z+jps\ep)-\prod_{i=0}^{p-1}B(z+iqs\ep).
\end{equation}
Let
$$\wt R=\QQ[\ep, s,e_2,\cdots,e_{p},f_{2},\cdots,f_q]/(F_{0},F_{1},\cdots, F_{pq})$$ 
Finally, let $\wt R(\ep)$ be the $\ep$-power torsion elements of $\wt R$, and let
\begin{equation*}
R:=\wt R/\wt R(\ep)
\end{equation*}
be the $\ep$-torsion-free quotient of $\wt R$.

\subsubsection{Bigrading}
We give a bigrading on $\QQ[\ep,s,e_{2},\cdots, e_{p}, f_{2},\cdots,f_{q}]$ in which the generators get degrees
\begin{equation*}
\deg(\ep)=(1,0); \quad \deg(s)=(0,1); \quad \deg(e_{i})=(i-1,i); \quad \deg(f_{i})=(i-1,i).
\end{equation*}
It is easy to see that the generators  $F_{d}$ ($0\leq d\leq pq$) of the ideal we quotient by to get $\wt R$ are homogeneous under the bigrading, hence $\wt R$ inherits a bigrading, and so does $R$.

Let $R=\oplus_{a,b\in\ZZ_{\geq0}}(_{a}R_{b})$ be the decomposition of $R$ into graded pieces under the bigrading. Then $\ep: {}_{a}R_{b}\to {}_{a+1}R_{b}$ and $s: {}_{a}R_{b}\to {}_{a}R_{b+1}$. 

From Lemma~\ref{l:Oqp}(2), we immediately see
\begin{lemma} There is a ring isomorphism
\begin{equation}\label{RO}
R_{\ep=1,s=0}\cong\cO_{q/p}
\end{equation}
sending $e_{i}\in R_{\ep=1,s=0}$ to the same-named $e_{i}\in \cO_{q/p}$, for $2\le i\le p$. The isomorphism respects the grading on $R_{\ep=1,s=0}$ given by the second grading on $R$ and the natural grading on $\cO_{q/p}$.
\end{lemma}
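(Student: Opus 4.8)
The plan is to reduce to Beauville's presentation of $\cO_{q/p}$ recorded in Proposition~\ref{l:Oqp} by specializing the defining relations of $\wt R$ and matching them coefficient by coefficient.

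First I would perform the specialization $\ep=1$, $s=0$ directly on the data of Section~\ref{sss:R}. Setting $s=0$ forces $e_{1}=f_{1}=0$ by \eqref{e1f1} and kills every shift $jps\ep$ and $iqs\ep$, so the polynomial \eqref{AB} collapses to $A(z)^{q}-B(z)^{p}$ with $A(z)=z^{p}+p\sum_{i=2}^{p}e_{i}z^{p-i}$ and $B(z)=z^{q}+q\sum_{j=2}^{q}f_{j}z^{q-j}$. Next I would observe that the $\ep$-power-torsion submodule $\wt R(\ep)$ dies after setting $\ep=1$ (an element annihilated by a power of $\ep$ is annihilated outright once $\ep=1$), so $R_{\ep=1,s=0}=\wt R_{\ep=1,s=0}$, which is simply $\QQ[e_{2},\dots,e_{p},f_{2},\dots,f_{q}]$ modulo the ideal $J$ generated by the coefficients of $z^{0},\dots,z^{pq}$ in $A(z)^{q}-B(z)^{p}$ (the coefficients of $z^{pq}$ and $z^{pq-1}$ being identically zero).

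Then I would compare $J$ with the ideal $I_{\cO}$ of Proposition~\ref{l:Oqp}(1). Writing $A(z)=z^{p}a(1/z)$ and $B(z)=z^{q}b(1/z)$ with $a(w)=1+p\sum_{i\ge 2}e_{i}w^{i}$ and $b(w)=1+q\sum_{j\ge 2}f_{j}w^{j}$, one has $A(z)^{q}-B(z)^{p}=z^{pq}\bigl(a(1/z)^{q}-b(1/z)^{p}\bigr)$, so the generators of $J$ are precisely the coefficients in $w$ of $a(w)^{q}-b(w)^{p}$, reindexed by $d\mapsto pq-d$. Applying the graded automorphism $e_{i}\mapsto e_{i}/p$, $f_{j}\mapsto f_{j}/q$ of the polynomial ring turns $a(w)$ into $1+e_{2}w^{2}+\dots+e_{p}w^{p}$ and $b(w)$ into $1+f_{2}w^{2}+\dots+f_{q}w^{q}$, hence carries $J$ onto $I_{\cO}$. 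This yields the isomorphism $R_{\ep=1,s=0}\cong\cO_{q/p}$, identifying $e_{i}$ with the same-named generator up to this evident normalization, and it is manifestly graded for the second grading on $R$ (under which $e_{i}$ has degree $i$), matching the natural grading on $\cO_{q/p}$. As an alternative one can route the comparison through Proposition~\ref{l:Oqp}(2): taking logarithmic derivatives, $A(z)^{q}=B(z)^{p}$ is equivalent to $qA'(z)B(z)=pA(z)B'(z)$, a relation of degree $\le p+q-2$ cutting out the same complete intersection, whence any evident surjection between the two rings is forced to be an isomorphism.

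I expect no serious obstacle; the argument is entirely bookkeeping. The two points warranting a line of care are the reduction $R_{\ep=1,s=0}=\wt R_{\ep=1,s=0}$ --- i.e.\ that passing to the $\ep$-torsion-free quotient has no effect after setting $\ep=1$ --- and the verification that the substitution $z\mapsto 1/w$ together with the rescaling of the $e_{i}$ and $f_{j}$ matches $J$ with $I_{\cO}$ on the nose, keeping track of the coefficients that vanish identically.
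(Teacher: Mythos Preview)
Your proposal is correct and is essentially the same as the paper's approach: the paper offers no proof beyond the remark ``From Lemma~\ref{l:Oqp}(2), we immediately see,'' and you have simply written out the bookkeeping that makes this immediate---specializing \eqref{AB} at $\ep=1,s=0$, noting that the $\ep$-torsion quotient is invisible after inverting $\ep$, and matching the resulting relation $A(z)^{q}=B(z)^{p}$ with the presentation of $\cO_{q/p}$ via $w=1/z$. Your observation that a harmless rescaling $e_i\mapsto e_i/p$, $f_j\mapsto f_j/q$ is needed to match the generators literally is accurate and worth recording; the paper's phrase ``same-named $e_i$'' glosses over this normalization.
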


\subsection{Equivariant cohomology of $J_{q/p}$}\label{ss:eqcohJ} In Corollary~\ref{c:equiv gen} we showed that $\ep, \wt e_2,\cdots,\wt e_p$ generate $\eqcoh{*}{J_{q/p}}$. Recall the vector bundle $\Eun_{x}$ on $J_{q/p}\times \PP^{1}$. Let $\cE_{x}$ be the restriction of $\Eun_{x}$ to $J_{q/p}\times \{0\}=J_{q/p}$. Under the pullback along the homeomorphism $X_{q/p}\to J_{q/p}$, $\cE_{x}$ is the vector bundle whose fiber at $M\in X_{q/p}$ (viewed as a lattice in $\CC\lr{t}$) is $M/t^{p}M$. By definition (see Lemma~\ref{l:equiv gen}) we have $c^{\Gm}_{i}(\cE_{x})=p\ep \wt e_{i}, 1\le i\le p$. Similarly, using the $y$-projection on $C_{q/p}$, we get a vector bundle $\cE_{y}$ on $J_{q/p}$ whose pullback to $X_{q/p}$ has fiber $M/t^{q}M$ at $M$. We define $\wt f_{j}\in \eqcoh{2j-2}{J_{q/p}}$ such that $c^{\Gm}_{j}(\cE_{y})=q\ep f_{j}$, $1\le j\le q$.

\begin{prop}\label{p:eqcoh} 
\begin{enumerate}
\item There is a graded ring isomorphism
\begin{equation}\label{R eqcoh}
r: R_{s=1}\isom\eqcoh{*}{J_{q/p}}.
\end{equation}
which sends $\ep, e_{2},\cdots, e_{p}, f_{2},\cdots, f_{q}$ to $\ep,\wt e_{2},\cdots, \wt e_{p},\wt f_{2},\cdots, \wt f_{q}\in \eqcoh{*}{J_{q/p}}$ defined above. In particular, Theorem~\ref{th:HSp} holds, and we have an isomorphism
\begin{equation}\label{R spcoh}
r_{\ep=1}: R_{\ep=1, s=1}\isom\spcoh{}{J_{q/p}}.
\end{equation}
\item\label{no s torsion} The ring $R$ has no $s$-torsion.
\end{enumerate}
\end{prop}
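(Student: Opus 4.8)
The plan is to exhibit an explicit surjective graded ring homomorphism $r\colon R_{s=1}\to\eqcoh{*}{J_{q/p}}$ sending $\ep,e_i,f_j$ to $\ep,\wt e_i,\wt f_j$, to identify its specialization at $\ep=1$ by a dimension count, and then to close the loop with a homogeneity argument; statement (2) will fall out of the same count. To build $r$, recall from Section~\ref{ss:eqcohJ} that, under the homeomorphism $X_{q/p}\cong J_{q/p}$, the bundle $\cE_x$ (resp.\ $\cE_y$) has fiber $M/t^pM$ (resp.\ $M/t^qM$) at a lattice $M$, so that the image of $A(z)|_{s=1}$ under $e_i\mapsto\wt e_i$ is the equivariant Chern polynomial $\sum_{i=0}^p c_i^{\Gm}(\cE_x)z^{p-i}$, and similarly for $B(z)|_{s=1}$ and $\cE_y$ (the $z^{p-1}$- and $z^{q-1}$-coefficients being the constants fixed by \eqref{absum}, which hold over $J_{q/p}$ by equivariant formality). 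Now let $\cM$ be the universal lattice on $X_{q/p}$ and consider the rank $pq$ bundle $\cM/t^{pq}\cM$: multiplication by $t^{jp}$ identifies $t^{jp}\cM/t^{(j+1)p}\cM$ with $\cE_x$ twisted by the character of weight $jp$, so running through the filtration $\cM\supset t^p\cM\supset\cdots\supset t^{pq}\cM$ expresses the equivariant Chern polynomial of $\cM/t^{pq}\cM$ as $\prod_{j=0}^{q-1}A(z+jp\ep)$ (the sign of the twist being chosen to match the definition of $A$), whereas the filtration by powers of $t^q$ expresses it as $\prod_{i=0}^{p-1}B(z+iq\ep)$. Hence every $F_d|_{s=1}$ maps to $0$; since $\eqcoh{*}{J_{q/p}}$ is $\ep$-torsion free (equivariant formality, Section~\ref{sss:paving}) the resulting map factors through $R_{s=1}$, and it is surjective because its image contains $\ep$ and all $\wt e_i$, which generate $\eqcoh{*}{J_{q/p}}$ by Corollary~\ref{c:equiv gen}.

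Next I would pin down the fiber at $\ep=1,s=1$. Regarding $R_{\ep=1}$ as a nonnegatively graded $\QQ[s]$-algebra with $s$ in degree $1$, and using that $R_{\ep=1}/(s)=R_{\ep=1,s=0}\cong\cO_{q/p}$ is finite dimensional by \eqref{RO}, the standard semicontinuity inequality for graded rings (the associated graded of $R_{\ep=1}/(s-1)$ for the filtration coming from the grading is a quotient of $R_{\ep=1}/(s)$) gives $\dim_{\QQ}R_{\ep=1,s=1}\le\dim_{\QQ}\cO_{q/p}=\frac{1}{p+q}\binom{p+q}{p}$ by Proposition~\ref{l:Oqp}(2). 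On the other hand $J_{q/p}$ is paved by affines (Section~\ref{sss:paving}), hence equivariantly formal, so $\dim_{\QQ}\spcoh{}{J_{q/p}}=\dim_{\QQ}\cohog{*}{J_{q/p}}=\#X_{q/p}^{\Gm(q/p)}=\frac{1}{p+q}\binom{p+q}{p}$ by \eqref{fixed pts}. Since $r$ specializes to a surjection $r_{\ep=1}\colon R_{\ep=1,s=1}\twoheadrightarrow\spcoh{}{J_{q/p}}$ of $\QQ$-vector spaces of the same finite dimension, $r_{\ep=1}$ is an isomorphism; this is \eqref{R spcoh}, and in particular $\dim_{\QQ}R_{\ep=1,s=0}=\dim_{\QQ}R_{\ep=1,s=1}$.

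To upgrade this to \eqref{R eqcoh}, I would first observe that $R_{s=1}$ is itself $\ep$-torsion free: this is a short argument with the bigrading, the point being that $R$ is $\ep$-torsion free by construction and that a nonnegatively graded $\QQ[s]$-module has no $(s-1)$-torsion. The kernel of $r$ is a homogeneous ideal of $R_{s=1}$, and it lies in $(\ep-1)R_{s=1}$ because $r_{\ep=1}$ is injective; but, $\ep$ having positive degree, a telescoping argument shows every homogeneous element of $(\ep-1)R_{s=1}$ is $\ep$-torsion, hence $0$. Therefore $r$ is an isomorphism, and Theorem~\ref{th:HSp} follows, since the tautological surjections $R_{s=1}\twoheadrightarrow\QQ[\ep,e_i,f_j]/I^{sat}_{q/p}\twoheadrightarrow\eqcoh{*}{J_{q/p}}$ compose to $r$ and are therefore both isomorphisms. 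For part (2): the equality $\dim_{\QQ}(R_{\ep=1}/(s))=\dim_{\QQ}(R_{\ep=1}/(s-1))$ forces the finitely generated graded $\QQ[s]$-module $R_{\ep=1}$ to be free (in the graded structure theorem $R_{\ep=1}\cong\bigoplus\QQ[s](d_i)\oplus\bigoplus\QQ[s]/(s^{k_j})(e_j)$ the two dimensions are $\#\{i\}+\#\{j\}$ and $\#\{i\}$), so $R_{\ep=1}=R/(\ep-1)R$ has no $s$-torsion; and since $(\ep-1)R$ contains no nonzero bihomogeneous element (again because $R$ is $\ep$-torsion free), a bihomogeneous $s$-torsion class of $R$ would survive in $R/(\ep-1)R$, so $R$ has no $s$-torsion either.

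I expect the main obstacle to be the equivariant geometry in the first step rather than the algebra: one must check carefully that $\cE_x$, $\cE_y$ and the auxiliary bundle $\cM/t^{pq}\cM$ — which a priori live on the highly non-reduced ind-scheme $X_{q/p}$ — are honest vector bundles whose two flag filtrations are by subbundles over all of $X_{q/p}$ (not merely at the torus-fixed points), and that the normalizing constants and the weight-twist conventions are consistent with the definitions of $A$, $B$, $\wt e_i$ and $\wt f_j$. Once the identity $\prod_{j}A(z+jp\ep)=\prod_{i}B(z+iq\ep)$ is established in $\eqcoh{*}{J_{q/p}}$ and the bound $\dim_{\QQ}R_{\ep=1,s=1}\le\dim_{\QQ}\cO_{q/p}$ is in hand, the remaining steps are formal.
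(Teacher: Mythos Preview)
Your proposal is correct and follows essentially the same route as the paper: build $r$ via the two filtrations of the rank-$pq$ bundle $M/t^{pq}M$ to obtain the relation $\prod_j A(z+jp\ep)=\prod_i B(z+iq\ep)$ in $\eqcoh{*}{J_{q/p}}$, then sandwich $\dim R_{\ep=1,s=1}$ between $\dim\spcoh{}{J_{q/p}}$ (from surjectivity) and $\dim\cO_{q/p}$ (from semicontinuity in $s$), both equal to $\frac{1}{p+q}\binom{p+q}{p}$, and finally pass from $r_{\ep=1}$ to $r$ using that $R_{s=1}$ is $\ep$-torsion free; your argument for part~(2) via freeness of $R_{\ep=1}$ over $\QQ[s]$ and the injectivity of the map to $R_{\ep=1}$ on bihomogeneous elements is also the paper's. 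Your write-up is in fact slightly more careful than the paper's in justifying that $R_{s=1}$ is $\ep$-torsion free (which the paper asserts without comment), and your anticipated ``obstacle'' about the bundle filtrations is not a real issue, since the successive quotients $t^{jp}M/t^{(j+1)p}M$ are visibly locally free of rank $p$ on $X_{q/p}$.
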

\begin{proof} We first construct a surjective ring homomorphism $r: R_{s=1}\to \eqcoh{*}{J_{q/p}}$. We form two polynomials in $z$ whose coefficients are the equivariant Chern classes of $\cE_{x}$ and $\cE_{y}$
\begin{equation*}
\wt A(z)=z^{p}+\sum_{i=1}^{p}p\ep \wt e_{i} z^{p-i}; \quad \wt B(z)=z^{q}+\sum_{i=1}^{q}q\ep \wt f_{i} z^{q-i}.
\end{equation*}
Localizing to the fixed points and using \eqref{absum}, we see that
\begin{equation}\label{wt e1f1}
\wt e_{1}=q(p-1)/2;\quad \wt f_{1}=p(q-1)/2.
\end{equation}
Consider the $\Gm(q/p)$-equivariant vector bundle $\cE_{pq}$ on $X_{q/p}$ whose fiber at $M$ is $M/t^{pq}M$. Note that $M/t^{pq}M$ can be filtered in two ways
\begin{eqnarray*}
M/t^{pq}M\supset t^{p}M/t^{pq}M\supset\dots\supset t^{p(q-1)}M/t^{pq}M,\\
M/t^{pq}M\supset t^{q}M/t^{pq}M\supset\dots\supset t^{q(p-1)}M/t^{pq}M. 
\end{eqnarray*}
Therefore $\cE_{pq}$ has two filtrations by subbundles whose associated graded vector bundles are
\begin{eqnarray*}
\cE_{x}, \cE_{x}(p),\cdots, \cE_{y}(jp), \cdots, \cE_{x}(p(q-1));\\
\cE_{y}, \cE_{y}(q),\cdots, \cE_{y}(iq), \cdots, \cE_{x}(q(p-1)).
\end{eqnarray*} 
Here, for a $\Gm(q/p)$-equivariant bundle $\calE$, we denote by $\calE(1)$ by the same vector bundle with the $\Gm(q/p)$-equivariant structure twisted by the standard representation of $\Gm(q/p)$. Let $C(z)=z^{pq}+\sum_{k=1}^{pq}c^{\Gm}_{k}(\cE_{pq}) z^{pq-k}$. Then the Whitney formula for Chern classes gives that
\begin{equation}\label{wt AB}
\prod_{j=0}^{q-1}\wt A(z+jp\ep)= C(z)=\prod_{i=0}^{p-1}\wt B(z+iq\ep).
\end{equation}
Comparing the relations \eqref{wt e1f1}\eqref{wt AB} with \eqref{e1f1}\eqref{AB}, if we send the generators $\ep, e_{i}, f_{j}$ of $\wt R_{s=1}$ to $\ep, \wt e_{i}$ and $\wt f_{j}$ in $\eqcoh{*}{J_{q/p}}$, we get a ring homomorphism $\wt R_{s=1}\to \eqcoh{*}{J_{q/p}}$. Since $J_{q/p}$ is equivariantly formal, $\eqcoh{*}{J_{q/p}}$ has no $\ep$-torsion, this ring homomorphism factors through $r: R_{s=1}\to \eqcoh{*}{J_{q/p}}$. By Corollary~\ref{c:equiv gen}, $r$ is surjective. 

Localizing to the fixed points, we see that $\dim\spcoh{}{J_{q/p}}=\#\Sigma_{q/p}=\frac{1}{p+1}\binom{p+q}{p}$ by \eqref{fixed pts}. Therefore $\dim R_{\ep=1, s=1}\geq \frac{1}{p+1}\binom{p+q}{p}$. On the other hand, by \eqref{RO} and Lemma~\ref{l:Oqp}(4), we have
$$\dim R_{\ep=1,s=0}=\dim\cO_{q/p}=\frac{1}{p+q}\binom{p+q}{p}.$$ 
Therefore we have
\begin{equation*}
\dim R_{\ep=1,s=1}\geq\frac{1}{p+1}\binom{p+q}{p}=\dim \cO_{q/p}=\dim  R_{\ep=1,s=0}.
\end{equation*}
Since $R_{\ep=1,s=0}$ is a specialization of $R_{\ep=1,s=1}$, we must have the reversed inequality $\dim R_{\ep=1,s=1}\leq \dim  R_{\ep=1, s=0}$. Therefore all inequalities above are equalities. The equality $\dim R_{\ep=1,s=1}=\frac{1}{p+q}\binom{p+q}{p}=\dim \spcoh{}{J_{q/p}}$ implies that $r$ is an isomorphism because $R_{s=1}$ has no $\ep$-torsion. This proves part (1).

The equality $\dim R_{\ep=1,s=1}=\dim R_{\ep=1, s=0}$ implies that $R_{\ep=1}$ is flat over $\QQ[s]$, hence has no $s$-torsion. Now we show $R$ has no $s$-torsion. Suppose $r\in {}_{a}R_{b}$ is such that $sr=0$. Recall the canonical map $\iota_{b}: {}_{a}R_{b}\to {}_{\infty} R_{b}=R_{\ep=1}$ which is injective since $R$ has no $\ep$-torsion. Therefore $s\iota_{b}(r)=\iota_{b+1}(sr)=0$.  Since $R_{\ep=1}$ has no $s$-torsion, this implies $\iota_{b}(r)=0$ hence $r=0$. This proves part (2).
\end{proof}

\subsection{Cohomology of $\Sp_{q/p}$}\label{ss:coho aff flag}
Let $V=\CC\lr{x}^{p}$ and $\L^{\dagger}=\CC\tl{x}^{p}\subset V$ be the standard lattice. For each $i\in\ZZ$ we define $\Gr^{(i)}_{G}$ to be the moduli space of  $\CC\tl{x}$-lattices in $V$ such that $\dim(\L/\L\cap\L^{\dagger})=\dim(\L^{\dagger}/\L\cap\L^{\dagger})-(p-i)$. We have the affine Springer fiber $\Sp^{\Gr,(i)}_{q/p}\subset \Gr^{(i)}_{G}$ which classifies lattices $\L\subset V$ such that $\g_{q/p}\L\subset \L$ (see Section~\ref{Xqp ASF}). By identifying $V$ with $\CC\lr{t}$ as in Section~\ref{Xqp ASF}, $\Sp^{\Gr}_{q/p,i}$ can be identified with the moduli space $X^{(i)}_{q/p}$ which classifies $\CC\tl{t^{p},t^{q}}$-lattices $\L\subset \CC\lr{t}$ such that $\dim(\L/\L\cap \CC\tl{t^{p},t^{q}})=\dim(\CC\tl{t^{p},t^{q}}/\L\cap \CC\tl{t^{p},t^{q}})+i$. 

For $1\le i\le p$, we have a projection $\Pi_i:\Fl_{G}\rightarrow \Gr^{(i)}_{G}$ sending $\L_{\bullet}$ to $\L_{i}$, which restricts to a map
\begin{equation*}
\pi_{i}: \Sp_{q/p}\to X^{(i)}_{q/p}.
\end{equation*}
The torus $\Gm(q/p)$ acts on $V\cong\CC\lr{t}$ by scaling $t$, which induces an action on $\Gr^{(i)}_{G}$ and $X^{(i)}_{q/p}$. The maps $\Pi_{i}$ and $\pi_{i}$ are equivariant under $\Gm(q/p)$.

Recall from Section~\ref{sss:aff flag} that for each $i\in\ZZ$, we have a $\Grot$-equivariant line bundle $\calL_{i}$ on $\Fl$ whose fiber at $\L_{\bullet}\in \Fl_{G}$ is the line $\L_i/\L_{i-1}$. Denote by $\xi_i$ the $\Gm(q/p)$-equivariant Chern classes $c^{\Gm}_1(\calL_i)\in\eqcoh{2}{\Fl_{G}}$, for $i=1,\cdots,p$. 

On the other hand, $\Gr^{(i)}_{G}$ carries a $\Gm(q/p)$-equivariant rank $p$ vector bundle $\cE^{(i)}$ whose fiber at $\L\in\Gr^{(i)}_{G}$ is $\L/x\L$. The $\Gm(q/p)$-equivariant Chern classes $c^{\Gm}_j(\cE^{(i)})$ are divisible by the equivariant parameter $\ep$ (because in $\cohog{*}{\Gr^{(i)}_{G}}$ the usual Chern classes $c_{j}(\cE^{(i)})$ all vanish). We define
\begin{equation*}
e^{(i)}_{j}=p^{-1}\ep^{-1}c^{\Gm}_j(\cE^{(i)})\in \eqcoh{2j-2}{\Gr^{(i)}_{G}}.
\end{equation*}
Direct calculation shows that
\begin{equation*}
e^{(i)}_{1}=(p-1)q/2+p-i.
\end{equation*}
For $1\le j\le p$, let $e_j=\pi^{*}_{p}e^{(p)}_{j}\in \eqcoh{2j-2}{\Fl_{G}}$. The pullback 
\begin{equation*}
\Pi_{i}^{*}: \eqcoh{*}{\Gr^{(i)}_{G}}\to \eqcoh{*}{\Fl_{G}}
\end{equation*}
sends $e^{(i)}_{j}$ to $p^{-1}\ep^{-1}c_{j}(\xi_{i},\xi_{i-1},\cdots,\xi_{1}, \xi_{p}+p\ep,\cdots, \xi_{i+1}+p\ep)$, where  $c_{j}(...)$ is the $j$-th elementary symmetric polynomial of the $p$ elements obtained from $\x_{1},\cdots,\x_{p}$ by adding $p\ep$ to the last $p-i$ elements.

Multiplication by $t$ gives an isomorphism $\Gr^{(i)}_{G}\cong\Gr^{(i-1)}_{G}$ which restricts to an isomorphism $X^{(i)}_{q/p}\cong X^{(i-1)}_{q/p}$. Therefore $X^{(i)}_{q/p}$ can be identified with $X_{q/p}=\SG$ for all $i$. In particular, Theorem~\ref{th:gen} implies that the natural map
\begin{equation*}
\QQ[\ep, e^{(i)}_{2},\cdots, e^{(i)}_{p}]\to \eqcoh{*}{X^{(i)}_{q/p}}
\end{equation*}
is surjective. We denote the kernel of this map by $I^{(i)}_{q/p}$. When $i=p$, the ideal $I^{(p)}_{q/p}$ is indirectly described in Theorem~\ref{th:HSp} if we first express $f_{j}$ using $e_{j}$ and then substituting into $I^{sat}_{q/p}$. For other $i$, the ideal $I^{(i)}_{q/p}$ is obtained from  $I^{(p)}_{q/p}$ by substituting $e^{(p)}_{j}$ by $e^{(i)}_{j}$. Clearly the restriction of $\Pi^{*}_{i}I^{(i)}_{q/p}$ to $\Sp_{q/p}$ is zero.

Inspired by the work of Tanisaki \cite{Tanisaki} and Abe-Horiguchi \cite{AH} we  propose:

\begin{conj}\label{conj:rel Sp Fl} The kernel of the surjection $\QQ[\ep,\xi_{1},\cdots,\xi_{p}, e_{2},\cdots,e_{p}]\surj \eqcoh{*}{\Sp_{q/p}}$ in Theorem~\ref{th:par gen} is generated by $\pi_i^*(I^{(i)}_{q/p})$ for $i=1,\cdots, p$.
\end{conj}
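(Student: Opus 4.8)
It is convenient to eliminate the redundant generators: in $\eqcoh{*}{\Sp_{q/p}}$ one has $e_{j}=(p\ep)^{-1}e_{j}(\xi_{1},\dots,\xi_{p})$ (the $j$-th elementary symmetric function, which is divisible by $\ep$), so Theorem~\ref{th:par gen} amounts to a surjection $r\colon \bar S:=\QQ[\ep,\xi_{1},\dots,\xi_{p}]\surj\eqcoh{*}{\Sp_{q/p}}$, and the conjecture becomes the assertion that its kernel $\bar J$ is generated by the images of the $\pi_{i}^{*}(I^{(i)}_{q/p})$ together with the tautological relation $\sum_{k}\xi_{k}=pq(p-1)\ep/2$ coming from $c_{1}$ of the universal bundles. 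Since the restriction of $\Pi_{i}^{*}I^{(i)}_{q/p}$ to $\Sp_{q/p}$ vanishes, these generators lie in $\ker r$, so we obtain a graded surjection $\bar r\colon \bar S/\bar J\surj\eqcoh{*}{\Sp_{q/p}}$. The plan is to prove $\bar r$ is an isomorphism by comparing both sides as $\QQ[\ep]$-modules; one should then separately check that $\bar J$ --- and not merely its $\ep$-saturation --- is the full kernel.

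The first input is that $\eqcoh{*}{\Sp_{q/p}}$ is a free $\QQ[\ep]$-module of rank $N:=\#\wt\Sigma_{q/p}$. This is the flag-variety analogue of Section~\ref{sss:paving}: a generic one-parameter subgroup of the torus acting on $\Sp_{q/p}$ gives a Bialynicki--Birula decomposition into affine cells indexed by $\wt\Sigma_{q/p}$, the affineness following by extending the lattice computations of \cite{LS} from $\Gr_{\SL_{p}}$ to $\Fl_{\SL_{p}}$; hence $\Sp_{q/p}$ is equivariantly formal with $\dim_{\QQ}\cohog{*}{\Sp_{q/p}}=N$.

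The heart of the argument is the reverse bound, obtained by passing to the generic point of $\Spec\QQ[\ep]$. After inverting $\ep$, localization identifies $\eqcoh{*}{X^{(i)}_{q/p}}[\ep^{-1}]$ with the ring of functions on the reduced $\tfrac{1}{p+q}\binom{p+q}{p}$-point fixed locus of $X^{(i)}_{q/p}\cong X_{q/p}$, so $\pi_{i}^{*}(I^{(i)}_{q/p})[\ep^{-1}]$ cuts out the preimage of that locus under $(\xi_{1},\dots,\xi_{p})\mapsto$ the coefficients of $\prod_{k\le i}(z+\xi_{k})\prod_{k>i}(z+\xi_{k}+p\ep)$. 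Thus a $\overline{\QQ(\ep)}$-point of $\Spec(\bar S/\bar J)[\ep^{-1}]$ is a tuple of $\Gm(q/p)$-fixed points $(\sigma_{1},\dots,\sigma_{p})$ of $X_{q/p}$ together with values $\xi_{1},\dots,\xi_{p}$ whose successive shifted elementary symmetric functions match the coordinates of the $\sigma_{i}$. Unwinding the combinatorial definition of $\wt\Sigma_{q/p}$ in Section~\ref{sss:aff flag} --- the condition that $d_{i}+q$ lie in the module generated by $d_{i-1},\dots,d_{1},d_{p}+p,\dots,d_{i}+p$ is exactly what makes the $p$-bases of $\sigma_{i}$ and $\sigma_{i-1}$ compatible --- such solutions are in bijection with $\wt\Sigma_{q/p}$, the $\xi_{k}$ being the $\Gm(q/p)$-weights of $\L_{k}(\un d)/\L_{k-1}(\un d)$. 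Granting that these $N$ solutions are transverse (equivalently, that $\Spec(\bar S/\bar J)[\ep^{-1}]$ is reduced), the $\ep$-torsion-free quotient of $\bar S/\bar J$ is a finitely generated torsion-free, hence free, $\QQ[\ep]$-module of rank $N$; a surjection of free $\QQ[\ep]$-modules of equal rank is an isomorphism, so $\bar r$ is an isomorphism, and reducing mod $\ep$ recovers $\cohog{*}{\Sp_{q/p}}$.

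The main obstacle is precisely the transversality/reducedness of $\Spec(\bar S/\bar J)[\ep^{-1}]$: the surjection $\bar r$ only forces at least $N$ solutions with multiplicity, so without an extra argument one cannot exclude nonreduced structure, which is exactly the failure mode $\bar J\subsetneq\ker r$. I would try to supply it in one of two ways. The first is the GKM package for $\Sp_{q/p}$: show the localization map $\bar S/\bar J\to\bigoplus_{\wt\Sigma_{q/p}}\QQ[\ep]$ has image inside, and surjects onto, the subring cut out by the congruences attached to the one-dimensional torus orbits, which forces injectivity. The second, more in the spirit of this paper, is to construct a two-parameter flat family $R^{\pb}$ over $\QQ[\ep,s]$ deforming $\bar S/\bar J$ by replacing each shift $p\ep$ with $ps\ep$ (mirroring Section~\ref{sss:R}); flatness of $R^{\pb}$ over the open locus $\ep s\ne 0$ would give the point count uniformly, and this flatness is the parabolic counterpart of the open flatness hypothesis in Theorem~\ref{th:Gr m}. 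Alternatively, one might bypass the count entirely by identifying $\bar S/\bar J$ with the appropriate standard module for the double affine Hecke algebra action on $\eqcoh{*}{\Sp_{q/p}}$ furnished by \cite{VV,OY}. Secondary loose ends are the affine paving of $\Sp_{q/p}$ used above and the comparison of $\bar J$ with $\ker r$ on the nose.
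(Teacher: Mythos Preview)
The statement you are attempting is a \emph{conjecture} in the paper and is not proved there. What the paper does establish is the weaker Proposition~\ref{p:par rel}: the kernel at $\ep=1$ (equivalently, the $\ep$-saturation of the kernel) is generated by the $\pi_i^*(I^{(i)}_{q/p})$. Your proposal, once its gaps are sorted out, is an argument for that weaker statement, not for the full conjecture.

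Your route and the paper's proof of Proposition~\ref{p:par rel} coincide: localize to fixed points, write the scheme cut out by the $\pi_{i}^{*}(I^{(i)}_{q/p})$ after specializing $\ep$ as an intersection $Z=\cap_{i}\gamma_{i}^{-1}(Z_{i})$ where $\gamma_{i}$ sends $(\xi_{1},\dots,\xi_{p})$ to the elementary symmetric polynomials of the shifted tuple, and identify the closed points of $Z$ with $\wt\Sigma_{q/p}$. But you have inverted the difficulty of the two remaining issues.

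You flag reducedness of $\Spec(\bar S/\bar J)[\ep^{-1}]$ as the ``main obstacle'' and propose GKM theory, a two-parameter flat family, or the DAHA module structure to supply it. In the paper this is a one-line observation: each $Z_{i}\cong\Spec\spcoh{}{X^{(i)}_{q/p}}$ is a finite reduced scheme, and its points lie away from the branch locus of $\gamma_{i}$ because the $p$-basis elements of any $\sigma\in\Sigma^{(i)}_{q/p}$ are pairwise distinct. Hence each $\gamma_{i}^{-1}(Z_{i})$ is \'etale over $Z_{i}$, so zero-dimensional and reduced, and an intersection of zero-dimensional reduced subschemes of $\AA^{p}$ is reduced. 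No heavy machinery is needed.

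Conversely, what you list as a ``secondary loose end'' --- matching $\bar J$ with $\ker r$ on the nose rather than after $\ep$-saturation --- is exactly the content of the conjecture that remains open. Your own argument (and the paper's) only identifies the $\ep$-torsion-free quotient of $\bar S/\bar J$ with $\eqcoh{*}{\Sp_{q/p}}$; the possibility that $\bar S/\bar J$ carries $\ep$-torsion is precisely the possibility that $\bar J$ is strictly smaller than its saturation, and nothing you wrote excludes it. (Your elimination step, replacing $e_{j}$ by $(p\ep)^{-1}c_{j}(\xi_{1},\dots,\xi_{p})$, already implicitly inverts $\ep$ and so cannot distinguish $\bar J$ from its saturation.) With the easy reducedness argument plugged in, your proposal is an alternate write-up of Proposition~\ref{p:par rel}; it does not advance the conjecture itself.
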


The following proposition shows that the conjecture holds up to $\ep$-saturation.  If we specialize $\ep=1$, $e_{i}$ are expressible as symmetric polynomials of $\xi_{1},\cdots, \xi_{p}$, so $\xi_{1},\cdots,\xi_{p}$ generate $\spcoh{}{\Sp_{q/p}}$ as a $\QQ$-algebra. We denote the image of $\pi_i^*(I^{(i)}_{q/p})$ under the specialization $\ep=1$ by $\pi_i^*(I^{(i)}_{q/p})_{\ep=1}\subset \QQ[\xi_{1},\cdots,\xi_{p}]$.

\begin{prop}\label{p:par rel} The kernel of the surjection $\QQ[\xi_{1},\cdots,\xi_{p}]\surj \spcoh{}{\Sp_{q/p}}$ is generated by $\pi_i^*(I^{(i)}_{q/p})_{\ep=1}$ for $i=1,\cdots, p$. 

In particular, the kernel of $\QQ[\ep,\xi_{1},\cdots,\xi_{p}, e_{2},\cdots,e_{p}]\surj \eqcoh{*}{\Sp_{q/p}}$ is the $\ep$-saturation of the ideal generated by $\pi_i^*(I^{(i)}_{q/p})$ for $i=1,\cdots, p$.
\end{prop}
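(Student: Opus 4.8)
The plan is to first establish the non-equivariant statement and then deduce the equivariant one by a standard $\ep$-saturation argument. For the non-equivariant statement, let $K\subset \QQ[\xi_1,\cdots,\xi_p]$ be the ideal generated by the $\pi_i^*(I^{(i)}_{q/p})_{\ep=1}$ for $1\le i\le p$, and let $\widetilde{K}\supset K$ be the full kernel of the surjection onto $\spcoh{}{\Sp_{q/p}}$; we want $K=\widetilde{K}$. Since we already know (the last sentence before Conjecture~\ref{conj:rel Sp Fl}) that each $\Pi_i^*(I^{(i)}_{q/p})$ restricts to zero on $\Sp_{q/p}$, we have $K\subset\widetilde{K}$, so the quotient $Q:=\QQ[\xi_1,\cdots,\xi_p]/K$ surjects onto $\spcoh{}{\Sp_{q/p}}$. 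It therefore suffices to prove the dimension bound $\dim_\QQ Q\le \dim_\QQ\spcoh{}{\Sp_{q/p}}$; combined with the surjection, this forces $Q\isom\spcoh{}{\Sp_{q/p}}$ and hence $K=\widetilde{K}$.

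First I would compute $\dim_\QQ\spcoh{}{\Sp_{q/p}}$ via the torus fixed points: by the affine paving discussion (as in Section~\ref{sss:paving}, which applies equally to $\Sp_{q/p}$ since its attracting cells are affine spaces), $\Sp_{q/p}$ has cohomology concentrated in even degrees, so $\dim_\QQ\spcoh{}{\Sp_{q/p}}=\#\Sp_{q/p}^{\Gm(q/p)}=\#\wt\Sigma_{q/p}$, the count of $p$-tuples $\un d$ described in Section~\ref{sss:aff flag}. Next I would bound $\dim_\QQ Q$ from above. The key is that $Q$ is the coordinate ring of the scheme-theoretic intersection, inside $\Spec\QQ[\xi_1,\cdots,\xi_p]=\AA^p$, of the $p$ closed subschemes $Z_i := (\Pi_i^*I^{(i)}_{q/p})_{\ep=1}$; each $Z_i$ is the preimage under the elementary-symmetric map (shifted on the last $p-i$ coordinates by a constant after specializing $\ep=1$) of the fat point $\Spec\cO_{q/p}$, whose length is $\frac{1}{p+q}\binom{p+q}{p}$ by Lemma~\ref{l:Oqp}(2). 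Because $\cO_{q/p}$ is a complete intersection cut out by $p-1$ equations in the $e$-variables, each $Z_i$ is cut out by $p-1$ equations in $\AA^p$, so $Q$ is cut out by $p(p-1)$ equations; I would argue that the expected dimension is $p-p(p-1)=p-p^2+p<0$ for $p\ge2$ (so $Q$ is Artinian, consistent with $\Sp_{q/p}$ being compact), and then identify $\dim_\QQ Q$ combinatorially. The cleanest route: show that $Q$ has a basis (or at least a spanning set) indexed by $\wt\Sigma_{q/p}$ by exhibiting, via the $p$ systems of relations, a monomial-basis argument — each relation $g^{(i)}_{q+m}$ lets one rewrite high powers of a running variable in terms of lower ones, exactly mirroring the recursive definition of the $p$-tuples $\un d$ (the condition "$d_i+q$ lies in the module generated by $d_{i-1},\cdots,d_1,d_p+p,\cdots,d_i+p$"). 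This is essentially the Tanisaki/Abe–Horiguchi style computation, adapted to the $(p,q)$-setting.

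Having established $\dim_\QQ Q\le\#\wt\Sigma_{q/p}=\dim_\QQ\spcoh{}{\Sp_{q/p}}$, the surjection $Q\surj\spcoh{}{\Sp_{q/p}}$ is forced to be an isomorphism, proving the first assertion. For the second assertion, let $J\subset\QQ[\ep,\xi_1,\cdots,\xi_p,e_2,\cdots,e_p]$ be the ideal generated by the $\pi_i^*(I^{(i)}_{q/p})$ and let $J^{sat}$ be its $\ep$-saturation; I would use that $\eqcoh{*}{\Sp_{q/p}}$ is equivariantly formal (hence $\ep$-torsion-free, again from the affine paving), so the kernel of $\QQ[\ep,\xi_1,\cdots,\xi_p,e_2,\cdots,e_p]\surj\eqcoh{*}{\Sp_{q/p}}$ is $\ep$-saturated and contains $J$, hence contains $J^{sat}$. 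Conversely, setting $\ep=1$ turns this kernel into the kernel of $\QQ[\xi_1,\cdots,\xi_p]\surj\spcoh{}{\Sp_{q/p}}$, which by the first part equals $(J)_{\ep=1}=(J^{sat})_{\ep=1}$; a standard lemma (a graded $\ep$-saturated ideal is determined by its $\ep=1$ specialization, when the ambient ring is $\ep$-torsion-free and the relevant quotient is $\ep$-flat, which holds here by the dimension equalities) then gives equality of the two ideals. The main obstacle I anticipate is the combinatorial heart of the dimension count: matching the monomial basis of $Q$ produced by the $p$ families of relations with the index set $\wt\Sigma_{q/p}$, i.e.\ making precise that the "$p$-basis rewriting rule" coming from $g^{(i)}_{\bullet}$ terminates exactly on the staircase patterns defining $\wt\Sigma_{q/p}$; the rest is bookkeeping with equivariant formality and $\ep$-saturation.
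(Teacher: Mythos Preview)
Your overall framework (show $K\subset\widetilde K$, then prove a dimension bound to force equality) is sound, but the execution contains a genuine error and misses the key simplification that makes the paper's proof work.

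\textbf{The misidentification.} You write that each $Z_i$ is ``the preimage under the elementary-symmetric map \ldots\ of the fat point $\Spec\cO_{q/p}$''. This is wrong: the ideal $I^{(i)}_{q/p}$ at $\ep=1$ cuts out $\spcoh{}{X^{(i)}_{q/p}}$, not $\cO_{q/p}$. In the notation of Section~\ref{sss:R}, $\cO_{q/p}\cong R_{\ep=1,s=0}$ while $\spcoh{}{X^{(i)}_{q/p}}\cong R_{\ep=1,s=1}$; these are finite $\QQ$-algebras of the same length but with completely different geometry. The former is a local artinian ring (a fat point), whereas the latter, by localization to the $\Gm(q/p)$-fixed points, is a \emph{reduced} product $\prod_{\sigma\in\Sigma^{(i)}_{q/p}}\QQ$. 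With the fat-point picture, the intersection $\cap_i\gamma_i^{-1}(Z_i)$ would be very hard to control and your proposed monomial-basis/Tanisaki argument has no clear path to the bound $\dim Q\le\#\wt\Sigma_{q/p}$.

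\textbf{What the paper actually does.} Once you use the correct identification, the dimension count becomes trivial and no combinatorics is needed. Since each $Z_i\subset\AA^p$ (in the $e^{(i)}$-coordinates) is a finite set of reduced points whose $p$-basis elements are distinct, it lies away from the branch locus of the symmetric-function map $\gamma_i$; hence $\gamma_i^{-1}(Z_i)$ is a finite set of reduced points in the $\xi$-space $\AA^p$. The scheme $Z=\Spec Q$ is a closed subscheme of (say) $\gamma_p^{-1}(Z_p)$, so it is automatically reduced. One then identifies the $\CC$-points of $Z$: a tuple $\un a\in\CC^p$ lies in $Z$ iff $\gamma_i(\un a)\in Z_i$ for all $i$, which unwinds exactly to the combinatorial conditions defining $\wt\Sigma_{q/p}$. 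Thus $Z$ is the reduced scheme on $\wt\Sigma_{q/p}$, matching $\Spec\spcoh{}{\Sp_{q/p}}$ via the fixed-point localization map, and $K=\widetilde K$ follows. Your second paragraph (the $\ep$-saturation deduction) is fine.
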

\begin{proof}
Recall the bijection between the set $\wt\Sigma_{q/p}$ and the $\Gm(q/p)$-fixed points on $\Sp_{q/p}$ in Section~\ref{sss:aff flag}. Localizing to the $\Gm(q/p)$-fixed points, we get a map
\begin{equation}\label{localize Fl}
\rho: \QQ[\xi_{1},\cdots,\xi_{p}]\to \spcoh{}{\Sp_{q/p}}\isom \oplus_{\un d\in \wt\Sigma_{q/p}}\QQ
\end{equation}
whose $\un d$-component is given by evaluation $(\xi_{1},\cdots,\xi_{p})\mapsto (d_{1},\cdots,d_{p})$. 

We have the following analogue of the discussion in Section~\ref{FixedPoints}. For $1\le i\le p$, the $\Gm(q/p)$-fixed points on $X_{q/p}^{(i)}$ are in natural bijection with $\Sigma^{(i)}_{q/p}$, the set of $\jiao{p,q}$-modules $\s\subset \ZZ$ such that $\#(\s-\s\cap\jiao{p,q})=\#(\jiao{p,q}-\s\cap\jiao{p,q})-(p-i)$. Localizing the cohomology of $X^{(i)}_{q/p}$ to fixed points, we have a ring isomorphism
\begin{equation}\label{localize Gr}
\QQ[e^{(i)}_{2},\cdots, e^{(i)}_{p}]/I^{(i)}_{q/p,\ep=1}\isom \spcoh{}{X^{(i)}_{q/p}}\cong \oplus_{\s\in\Sigma^{(i)}_{q/p}}\QQ
\end{equation}
whose $\s$-component sends $e^{(i)}_{j}$ to the $j$-th elementary symmetric polynomial of the $p$-basis of $\s$.

Let $I'\subset\QQ[\xi_{1},\cdots,\xi_{p}]$ be the ideal generated by $\pi_i^*(I^{(i)}_{q/p})_{\ep=1}$ for $i=1,\cdots, p$. Let $Z=\Spec \QQ[\xi_{1},\cdots,\xi_{p}]/I']\subset \AA^{p}$. We have a map $\gamma_{i}: \AA^{p}\to \AA^{p}$ sending $(a_{1},\cdots,a_{p})$ to the elementary symmetric polynomials of $(a_{i},a_{i-1},\cdots,a_{1}, a_{p}+p,\cdots, a_{i+1}+p)$. Let $Z_{i}=\QQ[e^{(i)}_{2},\cdots, e^{(i)}_{p}]/I^{(i)}_{q/p,\ep=1}$ viewed as a subscheme of $\AA^{p}$ under coordinates $e^{(i)}_{1},\cdots, e^{(i)}_{p}$. By construction we have 
\begin{equation*}
Z=\cap_{i=1}^{p}\gamma_{i}^{-1}(Z_{i})
\end{equation*}
as a subscheme of $\AA^{p}$. By \eqref{localize Gr} we see each $Z_{i}$ is a finite set of reduced points which are away from the branching locus of $\gamma_{i}$ (because the $p$-basis elements of $\s\in \Sigma^{(i)}_{q/p}$ are distinct). Therefore $Z$ is also reduced. Any $\CC$-point $\un a\in Z$ must map to some $\s_{i}\in\Sigma_{q/p}^{(i)}$ under $\gamma_{i}$. The set of such $\un a\in \CC^{p}$ is exactly $\wt\Sigma_{q/p}$. Therefore $Z$ is the reduced subscheme of $\AA^{p}$ given by $\wt\Sigma_{q/p}$. By \eqref{localize Fl}, we have $Z=\Spec\spcoh{}{\Sp_{q/p}}$, hence $I'$ generates the defining ideal of $\spcoh{}{\Sp_{q/p}}$.
\end{proof}

\section{Filtrations on cohomology}

The goal of this section is prove Theorems~\ref{th:perv mult}, \ref{th:main} and \ref{th:Gr m}.

\subsection{Filtrations on $R$} Recall the bigraded ring $R$ from Section~\ref{sss:R}.

\subsubsection{Double filtration}\label{sss:double fil} For $b\in\ZZ_{\geq0}$, let $_{\infty}R_{b}=\varinjlim_{a}{}_{a}R_{b}$ where the transition map is given by $\ep$. Then we have a canonical graded isomorphism $\oplus_{b}({}_{\infty}R_{b})=R/(\ep-1)R=R_{\ep=1}$ where $R_{\ep=1}$ inherits the second grading from $R$. Multiplication by $s$ induces a map ${}_{\infty}R_{b}\to {}_{\infty}R_{b+1}$, and let ${}_{\infty}R_{\infty}=\varinjlim_{b}{}_{\infty}R_{b}$ with transition map $s$.  We have a canonical ring isomorphism ${}_{\infty}R_{\infty}\cong R/(\ep-1,s-1)R=R_{\ep=1,s=1}$.

Similarly, for $a\in\ZZ_{\geq0}$ let $_{a}R_{\infty}=\varinjlim_{b}{}_{a}R_{b}$ where the transition map is given by $s$, then we have a canonical graded isomorphism $\oplus_{a}({}_{a}R_{\infty})=R/(s-1)R=R_{s=1}$ (which inherits the first grading). Multiplication by $\ep$ induces a map ${}_{a}R_{\infty}\to {}_{a+1}R_{\infty}$, and $\varinjlim_{a}{}_{a}R_{\infty}$ with transition maps $\ep$ is canonically isomorphic to ${}_{\infty}R_{\infty}$ defined above, which is isomorphic to $R_{\ep=1,s=1}$.

By definition, $R$ has no $\ep$-torsion; by Proposition~\ref{p:eqcoh}\eqref{no s torsion}, $R$ also has no $s$-torsion. Therefore the natural maps ${}_{a}R_{\infty}\to {}_{\infty}R_{\infty}$ and ${}_{\infty}R_{b}\to {}_{\infty}R_{\infty}$ are injective. We define the $F^{\ep}$-filtration on ${}_{\infty}R_{\infty}=R_{\ep=1,s=1}$ by
\begin{equation*}
F^{\ep}_{\leq a}R_{\ep=1,s=1}=\Im({}_{a}R_{\infty}\to {}_{\infty}R_{\infty})\cong {}_{a}R_{\infty}.
\end{equation*}
Similarly, we define the $F^{s}$-filtration on ${}_{\infty}R_{\infty}=R_{\ep=1,s=1}$ by
\begin{equation*}
F^{s}_{\leq b}R_{\ep=1,s=1}=\Im({}_{\infty}R_{b}\to {}_{\infty}R_{\infty})\cong {}_{\infty}R_{b}.
\end{equation*}
Whenever we have two filtrations, we have a canonical isomorphism between bigraded rings by taking associated graded of the two filtrations in different orders
 \begin{equation}\label{two fil}
 \Gr^{F^{\ep}}_{i}\Gr^{F^{s}}_{j}R_{\ep=1,s=1}\cong \Gr^{F^{s}}_{j}\Gr^{F^{\ep}}_{i}R_{\ep=1,s=1}.
\end{equation}

By definition, we always have an inclusion
\begin{equation}\label{incl Rab}
{}_{a}R_{b}\subset {}_{a}R_{\infty}\cap{}_{\infty}R_{b}.
\end{equation}
When $R$ is flat over $\QQ[\ep,s]$, this inclusion becomes an equality.

\begin{lemma}\label{l:Gr Fs O} There is a graded ring isomorphism $\Gr^{F^{s}}_{*}R_{\ep=1,s=1}\cong \cO_{q/p}$. Under this isomorphism, we transport the filtration $F^{\ep}$ on $\Gr^{F^{s}}_{*}R_{\ep=1,s=1}$ to $\cO_{q/p}$. We have
\begin{equation*}
F^{\ep}_{\le i}\cO_{q/p}[j]\supset \fm^{j-i}\cap\cO_{q/p}[j], \quad \forall  j\ge i\ge0.
\end{equation*}
Here $\fm\subset\cO_{q/p}$ is the maximal ideal. 
\end{lemma}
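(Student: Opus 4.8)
The plan is to first establish the ring isomorphism $\Gr^{F^{s}}_{*}R_{\ep=1,s=1}\cong \cO_{q/p}$, and then analyze the transported $F^{\ep}$-filtration. For the isomorphism, recall from Section~\ref{sss:double fil} that $F^{s}_{\le b}R_{\ep=1,s=1}\cong {}_{\infty}R_{b}$, so $\Gr^{F^{s}}_{b}R_{\ep=1,s=1}\cong {}_{\infty}R_{b}/{}_{\infty}R_{b-1}$ (the quotient taken via the map $s$), which is exactly the degree-$b$ piece of $R/(s)=R_{\ep=1,s=0}$ after also passing to the $\ep=1$ specialization; by the isomorphism \eqref{RO} this is $\cO_{q/p}[b]$. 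Since $R$ has no $s$-torsion (Proposition~\ref{p:eqcoh}\eqref{no s torsion}), the associated graded $\Gr^{F^{s}}_{*}R_{\ep=1,s=1}$ is identified as a graded ring with $R_{\ep=1,s=0}\cong\cO_{q/p}$; one checks the multiplicative structure is respected because $F^{s}$ is a ring filtration (the generators $F_d$ of the defining ideal are bihomogeneous, so quotienting by $s$ is compatible with products).

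Next I would unwind what $F^{\ep}$ becomes under this identification. An element of $\cO_{q/p}[j]=\Gr^{F^{s}}_{j}R_{\ep=1,s=1}$ lies in $F^{\ep}_{\le i}$ iff it is represented by an element of $F^{\ep}_{\le i}R_{\ep=1,s=1}\cap F^{s}_{\le j}R_{\ep=1,s=1} = {}_{i}R_{\infty}\cap{}_{\infty}R_{j}$, i.e., iff it lifts to the image of ${}_{i}R_{j}$ under \eqref{incl Rab} modulo $F^{s}_{\le j-1}$. In terms of the bigrading on $R$: elements of ${}_{i}R_{j}$ are bidegree $(i,j)$. Now the claim $F^{\ep}_{\le i}\cO_{q/p}[j]\supset \fm^{j-i}\cap\cO_{q/p}[j]$ should follow from tracking the generators $e_2,\dots,e_p$: the generator $e_k$ has bidegree $(k-1,k)$, so a monomial $e_{k_1}\cdots e_{k_m}\in\cO_{q/p}[j]$ (with $j=\sum k_\ell$, $m$ factors) has first bigrading component $j-m$. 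Thus a product of $m$ elements of the maximal ideal $\fm$ lands in $\bigoplus_{a\le j-m}{}_aR_j$ after lifting, hence in $F^{\ep}_{\le j-m}$; when $m\ge j-i$ this gives $F^{\ep}_{\le i}$. The key point is that $\fm$ is generated in degrees $\ge 2$ by $e_2,\dots,e_p$, each of which, lifted to $R$, has first bigrading component at least one less than its second; iterating, $\fm^{j-i}\cap\cO_{q/p}[j]$ is spanned by monomials whose lifts have first component $\le j-(j-i)=i$.

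The main obstacle I anticipate is the lifting subtlety: $F^{\ep}$ on $\cO_{q/p}[j]=\Gr^{F^s}_j$ is defined via the intersection ${}_iR_\infty\cap{}_\infty R_j$ inside ${}_\infty R_\infty$, and without flatness of $R$ over $\QQ[\ep,s]$ the inclusion \eqref{incl Rab} is not known to be an equality, so I cannot simply identify this intersection with the image of ${}_iR_j$. However, for the stated $\supset$ inclusion this is not an issue in the dangerous direction: I only need that a class coming from a degree-$(\le i,j)$ lift lands in $F^{\ep}_{\le i}$, which is immediate from $F^{\ep}_{\le i}R_{\ep=1,s=1}=\Im({}_iR_\infty)$ and the fact that ${}_iR_j\subset{}_iR_\infty$. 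So the argument should go through cleanly: I would (i) set up the $\Gr^{F^s}$ identification, (ii) observe that $e_k$ lifts to bidegree $(k-1,k)$, (iii) conclude that the $(j-i)$-th power of the augmentation ideal in degree $j$ is generated by images of elements of $\bigoplus_{a\le i}{}_aR_j$, and (iv) deduce membership in $F^{\ep}_{\le i}$. The convention $\fm^{j-i}=\cO_{q/p}$ for $j<i$ is consistent since $F^{\ep}_{\le i}\cO_{q/p}[j]=\cO_{q/p}[j]$ whenever $i\ge j$ (any monomial in degree $j$ uses at most $\lfloor j/2\rfloor$ factors, so its lift has first component $\ge j/2\ge\ldots$; more simply, ${}_jR_j$ already surjects onto $\cO_{q/p}[j]$ because $e_k$ has first component $k-1\le k$, wait — one needs first component $\le i$, and for $i\ge j$ this holds as the first component of any degree-$j$ monomial is $j-m\le j\le i$). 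I would double-check this last edge case but expect no difficulty.
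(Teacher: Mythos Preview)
Your proposal is correct and follows essentially the same approach as the paper's proof: use the absence of $s$-torsion (Proposition~\ref{p:eqcoh}\eqref{no s torsion}) to identify $\Gr^{F^{s}}_{*}R_{\ep=1,s=1}$ with $R_{\ep=1,s=0}\cong\cO_{q/p}$, then observe that a monomial $e_{k_{1}}\cdots e_{k_{m}}$ of total degree $j$ has first bigrading component $j-m$, so when $m\ge j-i$ it lifts into ${}_{\le i}R_{j}$ and hence lies in $F^{\ep}_{\le i}$. You have also correctly identified that the potentially delicate point---whether the inclusion \eqref{incl Rab} is an equality---is only needed for the reverse containment, not for the $\supset$ stated in the lemma.
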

\begin{proof}
Since $R$ has no $s$-torsion by Proposition~\ref{p:eqcoh}(2), neither does $R_{\ep=1}$. Therefore $\Gr^{F^{s}}_{*}R_{\ep=1,s=1}=R_{\ep=1,s=0}$. By \eqref{RO}, we have $R_{\ep=1,s=0}=\cO_{q/p}$. Therefore we get the desired graded ring isomorphism $\Gr^{F^{s}}_{*}R_{\ep=1,s=1}\cong \cO_{q/p}$.

Using notation from Section~\ref{sss:double fil}, we have a graded isomorphism
\begin{equation*}
\cO_{q/p}\cong R_{\ep=1,s=0}\cong \bigoplus_{j}{}_{\infty}R_{j}/{}_{\infty}R_{j-1}.
\end{equation*}
The induced filtration $F^{\ep}$ on $\cO_{q/p}$ is then
\begin{equation*}
F^{\ep}_{\le i}\cO_{q/p}[j]=({}_{i}R_{\infty}\cap {}_{\infty}R_{j})/({}_{i}R_{\infty}\cap{}_{\infty}R_{j-1}).
\end{equation*}
By the inclusion \eqref{incl Rab},  we see that 
\begin{equation*}
F^{\ep}_{\le i}\cO_{q/p}[j]\supset \textup{Im}({}_{i}R_{j}\to R_{\ep=1,s=0}).
\end{equation*}
By construction, ${}_{i}R_{j}$ is spanned by monomials $\ep^{m}e_{2}^{i_{2}}\cdots e_{p}^{i_{p}}$ such that $\sum_{k=2}^{p}ki_{k}=j$ and $m+\sum_{k=2}^{p}(k-1)i_{k}=i$. This implies $i_{2}+\cdots+i_{p}=j-i-m\ge j-i$. Therefore the image of $e_{2}^{i_{2}}\cdots e_{p}^{i_{p}}$ in $R_{\ep=1,s=0}$ belongs to $F^{\ep}_{\le i}\cO_{q/p}[j]$ if $\sum_{k=2}^{p}ki_{k}=j$ and $i_{2}+\cdots+i_{p}\ge j-i$. The latter span exactly $\fm^{j-i}\cap\cO_{q/p}[j]$.
\end{proof}

\subsection{The Chern filtration}
As in \cite[Definition 8.2.1]{OY}, we make the following definitions.
\begin{defn}
\begin{enumerate}
\item The {\em Chern filtration} $C_{\le n}\spcoh{}{J_{q/p}}$ on the specialized equivariant cohomology $\spcoh{}{J_{q/p}}$ is the span of the image of monomials $e^{a_{2}}_{2}\cdots e^{a_{p}}_{p}$ (under the isomorphism \eqref{R spcoh}) such that $\sum_{i=2}^{p}ia_{i}\leq n$. 
\item The {\em Chern filtration} $C_{\le n}\eqcoh{*}{J_{q/p}}$ on the equivariant cohomology $\eqcoh{*}{J_{q/p}}$ is the $\QQ[\ep]$-saturation of $C_{\le n}\spcoh{}{J_{q/p}}$, i.e., $h\in \eqcoh{*}{J_{q/p}}$ lies in $C_{\le n}\eqcoh{*}{J_{q/p}}$ if and only if $h|_{\ep=1}\in C_{\le n}\spcoh{}{J_{q/p}}$.
\item The {\em Chern filtration} $C_{\le n}\cohog{*}{J_{q/p}}$ on the usual cohomology $\cohog{*}{J_{q/p}}$ is the image of $C_{\le n}\eqcoh{*}{J_{q/p}}$ in $\cohog{*}{J_{q/p}}$.
\end{enumerate}
\end{defn}

\begin{remark} There is a more native candidate for the Chern filtration on $\eqcoh{*}{J_{q/p}}$: simply take the span of the Chern monomials $\ep^{m}\wt e^{a_{2}}_{2}\cdots \wt e^{a_{p}}_{p}$ such that $\sum_{i=2}^{p}ia_{i}\leq n$. However, it is not clear why this span is $\ep$-saturated. Our definition of the Chern filtration is the $\QQ[\ep]$-saturation of the naive one, which is also the strategy used in \cite[Section~8.2]{OY}.
\end{remark}

The filtration $F^{s}_{\leq n}$ on $R_{\ep=1, s=1}$ induces a filtration on $R_{s=1}$ by $\ep$-saturation: $h\in R_{s=1}$ lies in $F^{s}_{\le n}R_{s=1}$ if and only if its image in $R_{\ep=1,s=1}$ lies in $F^{s}_{\leq n}R_{\ep=1, s=1}$. We then define a filtration $F^{s}_{\le n}$ on $R_{\ep=0,s=1}$ by taking the image of  $F^{s}_{\le n}R_{s=1}$.  

\begin{lemma} Under the isomorphism $R_{s=1}\cong \eqcoh{*}{J_{q/p}}$ established in Proposition~\ref{p:eqcoh}, we have equalities of filtrations
\begin{eqnarray}
\label{C=Fs spcoh} C_{\le n}\spcoh{}{J_{q/p}}&=&F^{s}_{\le n}R_{\ep=1,s=1};\\
\label{C=Fs eqcoh} C_{\le n}\eqcoh{*}{J_{q/p}}&=&F^{s}_{\le n}R_{s=1};\\
\label{C=Fs coh} C_{\le n}\cohog{*}{J_{q/p}}&=&F^{s}_{\le n}R_{\ep=0, s=1}.
\end{eqnarray}
\end{lemma}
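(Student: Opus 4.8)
The plan is to establish \eqref{C=Fs spcoh} first, deduce \eqref{C=Fs eqcoh} by a formal $\ep$-saturation argument, and then deduce \eqref{C=Fs coh} by passing to the quotient $\ep=0$. For \eqref{C=Fs spcoh}, recall that by definition $C_{\le n}\spcoh{}{J_{q/p}}$ is the span of the images of monomials $e_2^{a_2}\cdots e_p^{a_p}$ with $\sum_i i a_i\le n$ under the isomorphism \eqref{R spcoh}, while $F^s_{\le n}R_{\ep=1,s=1}$ is by construction the image of ${}_\infty R_n\hookrightarrow {}_\infty R_\infty = R_{\ep=1,s=1}$. The ring ${}_\infty R_b = \varinjlim_a {}_a R_b$ is by Section~\ref{sss:double fil} the degree-$b$ graded piece of $R_{\ep=1}$ for the second grading, and $R_{\ep=1}$ is generated over $\QQ$ by $\ep$ (which becomes $1$) and $e_2,\dots,e_p$; hence ${}_\infty R_b$ is spanned by the images of monomials $\ep^m e_2^{a_2}\cdots e_p^{a_p}$ with second degree $m\cdot 0 + \sum_i i a_i = \sum_i i a_i = b$. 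Setting $\ep=1$, the image of ${}_\infty R_b$ inside $R_{\ep=1,s=1}$ is thus spanned by images of $e_2^{a_2}\cdots e_p^{a_p}$ with $\sum_i i a_i = b$, so the image of ${}_\infty R_{\le n} := \bigcup_{b\le n}{}_\infty R_b$ is exactly $C_{\le n}\spcoh{}{J_{q/p}}$. The only subtlety is to check that $F^s_{\le n}R_{\ep=1,s=1}$ as defined equals $\mathrm{Im}({}_\infty R_n)$ and that taking $s$-powers does not enlarge this span: since $R$ has no $s$-torsion (Proposition~\ref{p:eqcoh}\eqref{no s torsion}), the maps ${}_\infty R_b\to{}_\infty R_\infty$ are injective and the $s$-filtration is strictly increasing, so $F^s_{\le n}$ is precisely $\mathrm{Im}({}_\infty R_n)$ as claimed. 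This gives \eqref{C=Fs spcoh}.

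For \eqref{C=Fs eqcoh}: by Definition, $C_{\le n}\eqcoh{*}{J_{q/p}}$ is the $\QQ[\ep]$-saturation of $C_{\le n}\spcoh{}{J_{q/p}}$ inside $\eqcoh{*}{J_{q/p}}\cong R_{s=1}$, i.e.\ the set of $h\in R_{s=1}$ whose image in $R_{\ep=1,s=1}$ lies in $C_{\le n}\spcoh{}{J_{q/p}}$. By the text immediately preceding the Lemma, $F^s_{\le n}R_{s=1}$ is \emph{defined} to be exactly this $\ep$-saturation of $F^s_{\le n}R_{\ep=1,s=1}$. Combining with \eqref{C=Fs spcoh}, the two subsets of $R_{s=1}$ coincide, which is \eqref{C=Fs eqcoh}. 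For \eqref{C=Fs coh}: by Definition, $C_{\le n}\cohog{*}{J_{q/p}}$ is the image of $C_{\le n}\eqcoh{*}{J_{q/p}}$ under the specialization $\eqcoh{*}{J_{q/p}}\to\cohog{*}{J_{q/p}}$, which under the isomorphisms in play is the map $R_{s=1}\to R_{\ep=0,s=1}$ sending $\ep\mapsto 0$ (using equivariant formality, $\cohog{*}{J_{q/p}}\cong \eqcoh{*}{J_{q/p}}\otimes_{\QQ[\ep]}\QQ$, and one checks this quotient is identified with $R_{\ep=0,s=1}$ since $R_{s=1}$ is $\ep$-torsion-free). Meanwhile $F^s_{\le n}R_{\ep=0,s=1}$ is defined as the image of $F^s_{\le n}R_{s=1}$ under the same quotient. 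So \eqref{C=Fs coh} follows from \eqref{C=Fs eqcoh}.

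The main obstacle I anticipate is the bookkeeping in the first step: one must be careful that the bigrading conventions make $\mathrm{Im}({}_\infty R_n\to{}_\infty R_\infty)$ agree with the span of Chern monomials of \emph{total weight} $\sum i a_i\le n$ (not $<n$, and not using the first grading), and that $\ep$-divisibility of the Chern classes (Lemma~\ref{l:equiv gen}) is exactly what makes the generator $\ep$ contribute $0$ to the second degree, so that powers of $\ep$ do not shift the filtration level. Once the identification of ${}_\infty R_b$ with the second-degree-$b$ part of $R_{\ep=1}$ is set up correctly via Section~\ref{sss:double fil}, the remaining two identities \eqref{C=Fs eqcoh} and \eqref{C=Fs coh} are purely formal, being matching of two definitions of $\ep$-saturation and of image under $\ep=0$ respectively, using only that $R$ (hence $R_{s=1}$) has no $\ep$-torsion.
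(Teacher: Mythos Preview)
Your proof is correct and follows the same route as the paper: establish \eqref{C=Fs spcoh} directly by unwinding the bigrading, then deduce \eqref{C=Fs eqcoh} and \eqref{C=Fs coh} formally as $\ep$-saturation and $\ep=0$ image of \eqref{C=Fs spcoh}. One small correction to your bookkeeping: $R_{\ep=1}$ is generated over $\QQ$ by $s$ and the $e_i$ (and a priori the $f_j$, which can be eliminated recursively since the defining relations are bihomogeneous), not by the $e_i$ alone; including $s$, the monomials in ${}_\infty R_n$ are $s^{k}\prod e_i^{a_i}$ with $k+\sum i a_i = n$, so after $s=1$ the image of ${}_\infty R_n$ itself is already all of $C_{\le n}$, and your union over $b\le n$ is unnecessary (though harmless).
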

\begin{proof}
It suffices to prove \eqref{C=Fs spcoh}: for \eqref{C=Fs eqcoh},  both sides are defined as the saturations of the corresponding sides in \eqref{C=Fs spcoh}; for \eqref{C=Fs coh}, both sides are obtained by taking images of the corresponding sides of \eqref{C=Fs eqcoh}.

We show \eqref{C=Fs spcoh}. By definition, $F^{s}_{\le n}R_{\ep=1,s=1}={}_{\infty}R_{n}$. Since ${}_{a}R_{b}$ is spanned by monomials of bidegree $(a,b)$,   $F^{s}_{\le n}R_{\ep=1,s=1}$ is spanned by the image of monomials of bidegrees $(a,n)$ for all $a\in\ZZ$, which corresponds exactly to $C_{\le n}\spcoh{}{J_{q/p}}$.
\end{proof}

\subsection{The perverse filtration}\label{sss:perv}
The perverse filtration on the cohomology of the compactified Jacobian of a locally planar curve $C$ is introduced in \cite{MY}. Let us recall the definition. 

Let $C$ be an integral curve $C$ with planar singularities.  Let $\pi: \cC\to \cB$ be a locally projective flat deformation of $C$ over an irreducible base $\cB$: say the point $b_{0}\in \cB$ is such that $\cC_{b_{0}}=C$. Assume $\pi$ satisfies the following conditions
\begin{enumerate}
\item Each geometric fiber $\cC_{b}$ ($b\in \cB$) is integral and has only planar singularities. Let $g_{a}$ denote the arithmetic genus of all geometric fibers $\cC_{b}$. 
\item For each $0\leq n\leq 2g_{a}-1$, the total space of the relative Hilbert scheme $\Hilb^{n}(\cC/\cB)$ is smooth.
\item For every (not necessarily closed) point $b\in \cB$, we have the $\d$-invariant $\d(b)$ of the fiber $\cC_{b}$ (defined as the length of $\nu_{*}\calO_{\wt\cC_{b}}/\calO_{\cC_{b}}$, where $\nu:\wt\cC_{b}\to\cC_{b}$ is the normalization map). Then we have
$\codim_{\cB}(\ov{\{b\}})\geq \d(b)$, where $\ov{\{b\}}$ is the Zariski closure of $b$ in $\cB$.
\end{enumerate}
Replacing $\cB$ by an \'etale neighborhood of $b_{0}$, we may assume that $\pi$ admits a section, hence the relative compactified Jacobian $\calJ=\cJ(\cC/\cB)$ is defined and its total space is smooth. Let $f:\calJ\to \cB$ be the natural map. Then the complex $\bR f_{*}\QQ$ carries the perverse truncations $^{p}{\tau}_{\leq i}\bR f_{*}\QQ$. The stalk $(\bR f_{*}\QQ)_{b_{0}}$ is canonically isomorphic to the cohomology of $\cJ(C)$. We define an increasing filtration  $P_{\le i}$ on $\cohog{*}{\cJ(C)}$ by
\begin{equation}\label{define perv}
P_{\leq i}\cohog{*}{\cJ(C)}=\textup{Im}((^{p}{\tau}_{\leq i+\dim\cB}\bR f_{*}\QQ)_{b_{0}}\to (\bR f_{*}\QQ)_{b_{0}}).
\end{equation}
It was shown in \cite[Theorem 1.1]{MY} that the filtration $P_{\leq i}\cohog{*}{\cJ(C)}$ is independent of the choice of the deformation $\cC\to \cB$ satisfying the above conditions.   We call the canonical filtration $P_{\leq i}\cohog{*}{\cJ(C)}$ thus obtained the {\em perverse filtration} on $\cohog{*}{\cJ(C)}$.  We have $\Gr^{P}_{j}\cohog{i}{\cJ(C)}=0$ unless $i/2\leq j\leq i$. 

Now consider the family of spectral curves $\cY\to \cA'$ associated to the Hitchin moduli stack $\cM$ in Section~\ref{sss:ell}. It is shown in \cite[Proposition 3.3]{MY} that $\cY$ satisfies the conditions above ({\it loc.cit.} proves the case where the base curve $X$ is a scheme, but the argument there works for Deligne-Mumford curves such as $\PP(p,1)$). By Lemma~\ref{l:ell}, $f':M'\to \cA'$ can be identified with the family of compactified Jacobians $\cJ(\cY/\cA')$. The perverse filtration on $\cohog{*}{J_{q/p}}\cong\cohog{*}{M_{1}}$  then can be computed using \eqref{define perv} with $f=f'$. Since the family $f':M'\to \cA'$ carries a $\Gm(q/p)$-action and $1\in \cA_{q/p}$ is fixed by this action, the same recipe as in \eqref{define perv} also defines a perverse filtration on the equivariant cohomology $\eqcoh{*}{J_{q/p}}$ and its specialization.

\begin{prop}\label{prp:perv_Chern}
There are equalities of filtrations:
\begin{eqnarray}
\label{C=P spcoh} C_{\le n}\spcoh{}{J_{q/p}}&=&P_{\le n}\spcoh{}{J_{q/p}};\\
\label{C=P eqcoh} C_{\le n}\eqcoh{*}{J_{q/p}}&=&P_{\le n}\eqcoh{*}{J_{q/p}};\\
\label{C=P coh} C_{\le n}\cohog{*}{J_{q/p}}&=&P_{\le n}\cohog{*}{J_{q/p}}. 
\end{eqnarray}
\end{prop}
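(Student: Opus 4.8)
The plan is to prove the three equalities in stages, reducing \eqref{C=P eqcoh} and \eqref{C=P coh} to the case of the specialized equivariant cohomology \eqref{C=P spcoh}, exactly as in the corresponding Lemma of the previous subsection. Indeed, once \eqref{C=P spcoh} is established, \eqref{C=P eqcoh} follows because both $C_{\le n}\eqcoh{*}{J_{q/p}}$ and $P_{\le n}\eqcoh{*}{J_{q/p}}$ are, by definition, the $\QQ[\ep]$-saturations of their respective specializations at $\ep=1$ (for the perverse side one must check that the perverse filtration on $\eqcoh{*}{J_{q/p}}$ is indeed $\ep$-saturated, which holds because $\bR f'_*\QQ$ restricted to the $\Gm(q/p)$-orbit through $1\in\cA_{q/p}$ is a constructible complex whose perverse truncations are compatible with the (locally closed) restriction maps, so that $P_{\le n}$ on $\eqcoh{*}$ is pulled back from $P_{\le n}$ on $\spcoh{}$). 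Then \eqref{C=P coh} is obtained by taking the image under the surjection $\eqcoh{*}{J_{q/p}}\to\cohog{*}{J_{q/p}}$, since both filtrations in \eqref{C=P coh} are by construction the images of the corresponding filtrations in \eqref{C=P eqcoh}.

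So the real content is \eqref{C=P spcoh}. The key point is that this is precisely the kind of statement proved in \cite[Section~8]{OY}: there the Chern filtration on the equivariant cohomology of the affine Springer fiber $\SG\cong X_{q/p}$ (via its homeomorphism with $J_{q/p}$, Lemma~\ref{l:SGX}) is identified with a filtration coming from the rational Cherednik algebra action — and that Cherednik filtration is in turn identified with the perverse filtration. Concretely, the plan is: (i) invoke the isomorphism $\SG\cong X_{q/p}\cong J_{q/p}$ to transport everything to the affine Springer fiber side; (ii) recall from \cite{OY} that $\spcoh{}{\SG}$ carries an action of the (suitably specialized) rational Cherednik algebra / double affine Hecke algebra, and that \cite[Definition~8.2.1 ff.]{OY} introduces a filtration — which we have been calling the Chern filtration above, since our $e_i$ are exactly the equivariant Chern classes of $\cE_x$ used there; (iii) cite the main comparison result of \cite{OY} (the analogue of ``Chern filtration $=$ perverse filtration'' proved there for $\SG$) to conclude $C_{\le n}\spcoh{}{\SG} = P_{\le n}\spcoh{}{\SG}$; (iv) check that the homeomorphism $\SG\to J_{q/p}$ matches the two perverse filtrations, which follows from the fact that both are computed from the same Hitchin-type family — the family of spectral curves $\cY\to\cA'$ of Section~\ref{sss:ell}, which is the common deformation used on both sides (here one uses \cite[Proposition~3.3]{MY} that $\cY\to\cA'$ satisfies the conditions of Section~\ref{sss:perv}, together with Lemma~\ref{l:ell} identifying $M'$ with $\cJ(\cY/\cA')$). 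Finally, one checks the Chern generators match under $\SG\cong J_{q/p}$: the class $e_i$ on $J_{q/p}$ corresponds under $\mu$ to the equivariant Chern class of the bundle $M\mapsto M/t^pM$ on $X_{q/p}$, which is exactly the generator used in the Chern filtration of \cite{OY}.

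The main obstacle, and the step requiring the most care, is step (iv) — the compatibility of the \emph{perverse} filtrations under the homeomorphism $\mu:X_{q/p}\to J_{q/p}$ and under the various identifications ($\SG\cong X_{q/p}$, $X_{q/p}\cong M_1\cong J_{q/p}$, and the contraction $\wt\tau:\cM'\to\cM|_{\cA_{q/p}-\{0\}}$ of Section~\ref{sss:contraction}). One must verify that the perverse filtration defined via the Hitchin map $f':M'\to\cA'$ agrees with the perverse filtration used in \cite{OY} for the affine Springer fiber — i.e. that the ``global'' Hitchin-type deformation of $J_{q/p}$ and the deformation implicit in the affine Springer fiber picture induce the same filtration. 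This is exactly where the independence-of-deformation theorem \cite[Theorem~1.1]{MY} does the heavy lifting: both $\cY\to\cA'$ and whatever family is used in \cite{OY} satisfy conditions (1)--(3) of Section~\ref{sss:perv} with the same central fiber $C_{q/p}$, so the resulting filtrations on $\cohog{*}{\cJ(C_{q/p})}$ coincide, and then the $\Gm(q/p)$-equivariant refinement is automatic because the contraction $\wt\tau$ (and its analogue in \cite{OY}) is $\Gm(q/p)$-equivariant and a homotopy equivalence. The remaining steps are essentially bookkeeping: matching the Chern class generators $e_i$ on the two sides — which was already done in the proof of Theorem~\ref{th:gen} above — and the purely formal reductions of \eqref{C=P eqcoh} and \eqref{C=P coh} to \eqref{C=P spcoh}.
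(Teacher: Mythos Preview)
Your reduction of \eqref{C=P eqcoh} and \eqref{C=P coh} to \eqref{C=P spcoh} follows the paper's strategy, though your justification that the perverse filtration on $\eqcoh{*}{J_{q/p}}$ is $\ep$-saturated is too vague; the paper invokes the decomposition theorem for the proper map $f':M'\to\cA'$ (with $M'$ smooth) to see that $\ptau_{\le n+d}\bR f'_{!}\QQ$ is a direct summand of $\bR f'_{!}\QQ$ in the $\Gm$-equivariant derived category, which makes the saturation immediate.

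The real gap is in your treatment of \eqref{C=P spcoh}. You assume that \cite{OY} proves ``Chern filtration $=$ perverse filtration'' directly for the affine Grassmannian Springer fiber $\SG$, with the Chern filtration defined via the $e_i$. That is not what \cite{OY} does. The relevant result, \cite[Proposition~8.5.4]{OY}, is stated for the affine \emph{flag} Springer fiber $\Sp_{q/p}$, and the Chern filtration there is defined using the line-bundle classes $\xi_i=c_1^{\Gm}(\cL_i)$, not the $e_i$. The paper therefore needs an additional transfer step: it uses the projection $\pi:\Sp_{q/p}\to\SG$ and proves two compatibility claims. For the perverse side, $\pi^*_{\ep=1}$ identifies $\spcoh{}{\SG}$ with the $S_p$-invariants in $\spcoh{}{\Sp_{q/p}}$ (coming from the rational Cherednik algebra action of \cite{OY}), and since the $S_p$-action preserves the perverse filtration, the perverse filtration on $\SG$ is the one pulled back along $\pi^*$. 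For the Chern side, $\pi^*_{\ep=1}e_i$ is the $i$-th elementary symmetric polynomial in $\xi_1,\dots,\xi_p$, so a polynomial in the $e_i$ of total degree $\le n$ maps to a polynomial in the $\xi_i$ of degree $\le n$ and conversely on $S_p$-invariants. Without this flag-variety detour your step (iii) has no citable source, and your step (iv) focuses on the wrong compatibility: the substantive issue is not matching two deformations of $C_{q/p}$ via \cite[Theorem~1.1]{MY}, but descending from $\Sp_{q/p}$ to $\SG$.
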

\begin{proof}
We first show that both \eqref{C=P coh} and \eqref{C=P eqcoh} follow from \eqref{C=P spcoh}.

The equivariant cohomology $\eqcoh{*}{J_{q/p}}\cong \eqcoh{*}{M_{1}}$ is computed as the $\Gm(q/p)$-equivariant stalk of $\bR f'_{!}\QQ$ at $a=(0,\cdots,0,-\x^{q})\in\cA_{q/p}$. Let $d=\dim\cA'$. By definition, 
\begin{equation*}
P_{\le n}\eqcoh{*}{J_{q/p}}=\textup{Im}(\cohog{*}{[\pt/\Gm], (\ptau_{\le n+d}\bR f'_{!}\QQ)_{a}}\xr{i} \cohog{*}{[\pt/\Gm],(\bR f'_{!}\QQ})_{a}).
\end{equation*}
By the decomposition theorem applied to $f'$ (which is proper by Lemma~\ref{l:ell}\eqref{f proper} with $M'$ smooth by Proposition~\ref{p:sm}), or rather the induced map of stalks $[M'/\Gm(q/p)]\to [\cA'/\Gm(q/p)]$, $\ptau_{\le n+d}\bR f'_{!}\QQ$ is a direct summand of $\bR f'_{!}\QQ$ in the $\Gm$-equivariant derived category, therefore the arrow $i$ in the above equation is also the inclusion of a direct summand as a $\QQ[\ep]$-module. In particular, we see that $P_{\le n}\eqcoh{*}{J_{q/p}}$ is $\ep$-saturated, and we have 
\begin{equation}\label{perv fil eqcoh}
P_{\le n}\eqcoh{*}{J_{q/p}}\cong \cohog{*}{[\pt/\Gm], (\ptau_{\le n+d}\bR f'_{!}\QQ)_{a}}.
\end{equation}
Since $P_{\le n}\eqcoh{*}{J_{q/p}}$ is $\ep$-saturated,  it is the same as the $\ep$-saturation of the induced perverse filtration on $\spcoh{}{J_{q/p}}$. The same relation holds for the Chern filtrations on $\eqcoh{*}{J_{q/p}}$ and $\spcoh{}{J_{q/p}}$. Therefore,  \eqref{C=P eqcoh} follows from \eqref{C=P spcoh}.

For the usual cohomology, the perverse filtration on $\cohog{*}{J_{q/p}}\cong (\bR f'_{!}\QQ)_{a}$ is defined by
\begin{equation}\label{perv fil coh}
P_{\le n}\cohog{*}{J_{q/p}}\cong (\ptau_{\le n+d}\bR f'_{!}\QQ)_{a}.
\end{equation}
Comparing \eqref{perv fil eqcoh} and \eqref{perv fil coh}, we get  $P_{\le n}\cohog{*}{J_{q/p}}\cong P_{\le n}\eqcoh{*}{J_{q/p}}\otimes_{\QQ[\ep]}\QQ$ (specializing $\ep$ to $0$). The same relation $C_{\le n}\cohog{*}{J_{q/p}}= C_{\le n}\eqcoh{*}{J_{q/p}}\otimes_{\QQ[\ep]}\QQ$ holds by definition. Therefore,  \eqref{C=P coh} follows from \eqref{C=P eqcoh}.

Now it remains to show \eqref{C=P spcoh}. We use the notation and discussion from Section~\ref{ss:coho aff flag} on the cohomology of $\Sp_{q/p}$.  Recall we have a canonical surjection
\begin{equation}\label{Chern aff flag}
\QQ[\xi_{1},\cdots,\xi_{p}]\surj \spcoh{}{\Sp_{q/p}}
\end{equation}
sending $\xi_{i}$ to $c^{\Gm}_{1}(\cL_{i})$ for $1\leq i\leq p$. In \cite[Definition 8.2.1]{OY}, we defined the Chern filtration on $C_{\leq i}\spcoh{}{\Sp_{q/p}}$ to be the image of polynomials in $\xi_{1},\cdots, \xi_{p}$ of degree $\le i$. In \cite[Definition 8.5.1]{OY}, we have defined a perverse filtration on $\eqcoh{*}{\Sp_{q/p}}$  (note the group $\wt S_{a}$ that appears in the definition there acts trivially on $\eqcoh{*}{\Sp_{q/p}}$ in the case $a=\g_{q/p}$). Recall the construction in \cite{OY} uses the parabolic Hitchin fibration $f^{\pb}:\cM^{\pb}_{p,\D}(X,\cL)\to \cA$. The perverse filtration on $\eqcoh{*}{\Sp_{q/p}}$ induces one on $\spcoh{}{\Sp_{q/p}}$. By \cite[Proposition 8.5.4]{OY}, 
\begin{equation}\label{C=P spcoh aff fl}
C_{\le n}\spcoh{}{\Sp_{q/p}}=P_{\le n}\spcoh{}{\Sp_{q/p}}.
\end{equation}

The $\Gm(q/p)$-equivariant projection $\pi: \Sp_{q/p}\to \SG$ (sending a chain $\L_{\bullet}$ to $\L_{p}$) induces an inclusion
\begin{equation*}
\pi^{*}_{\ep=1}: \spcoh{}{\SG}\incl \spcoh{}{\Sp_{q/p}}.
\end{equation*}
In view of \eqref{C=P spcoh aff fl}, \eqref{C=P spcoh} would follow the following two claims:
\begin{eqnarray}
\label{P strict} P_{\leq n}\spcoh{}{\SG}=\pi^{*,-1}_{\ep=1} P_{\leq n}\pi^{*} \spcoh{}{\Sp_{q/p}};\\
\label{C strict} C_{\leq n}\spcoh{}{\SG}=\pi^{*,-1}_{\ep=1}C_{\leq n} \spcoh{}{\Sp_{q/p}}.
\end{eqnarray}
We first show \eqref{P strict}. By \cite[Section~8]{OY} that $\spcoh{}{\Sp_{q/p}}$ is an irreducible representation of the bigraded rational Cherednik algebra $\Hrat$ (we refer to \cite{OY} for details). In particular, the symmetric group  $S_p$ acts on $\eqcoh{*}{\Sp_{q/p}}$, and this action preserves the perverse filtration (see \cite[Construction 8.4.2]{OY}). Since the map $\pi^*_{\ep=1}$ identifies $ \spcoh{}{\SG}$ with the $S_{p}$-invariants on $\spcoh{}{\Sp_{q/p}}$,  \eqref{P strict} follows.

We now show \eqref{C strict}. We know that $\pi^{*}_{\ep=1}e_{i}\in\spcoh{}{\Sp_{q/p}}$ is the $i$-th elementary symmetric polynomial in $\xi_{1},\cdots, \xi_{p}$.  Since $C_{\leq n} \eqcoh{*}{\Sp_{q/p}}$ is spanned by polynomials of $\xi_{1},\cdots,\xi_{p}$ of degree $\le n$, $C_{\leq n}\eqcoh{*}{\SG}$ is spanned by polynomials of $e_{2},\cdots, e_{p}$ of degree $\le n$, therefore \eqref{C strict} holds.
\end{proof}

\sss{Proof of Theorem~\ref{th:perv mult}}
By definition, the Chern filtration on $\cohog{*}{J_{q/p}}$ are multiplicative. Since the Chern filtration on $\cohog{*}{J_{q/p}}$ coincides with the perverse filtration by Proposition~\ref{prp:perv_Chern}, the perverse filtration is also multiplicative.

\subsection{Proof of Theorem~\ref{th:main}} 

(1) We take the filtration $F$ on $\cO_{q/p}$ to be $F^{\ep}$.  By Lemma~\ref{l:Gr Fs O}, we have
\begin{equation*}
\Gr^{F^{\ep}}_{i}\cO_{q/p}[j]\cong\Gr^{F^{\ep}}_{i}\Gr^{F^{s}}_{j}R_{\ep=1, s=1}.
\end{equation*}
Combining \eqref{C=P coh} and \eqref{C=Fs coh}, we have
\begin{equation*}
\Gr^{P}_{j}\cohog{i}{J_{q/p}}=\Gr^{C}_{j}\cohog{i}{J_{q/p}}\cong \Gr^{F^{s}}_{j}R_{\ep=0,s=1}=\Gr^{F^{s}}_{j}\Gr^{F^{\ep}}_{i}R_{\ep=1,s=1}.
\end{equation*}
Finally we use the observation \eqref{two fil} to see that the RHS of the above two equations are the same, therefore
\begin{equation}\label{two graded}
\Gr^{P}_{j}\cohog{2i}{J_{q/p}}\cong \Gr^{F^{\ep}}_{i}\cO_{q/p}[j].
\end{equation}

(2) The containment \eqref{F contains m} is proved in Lemma~\ref{l:Gr Fs O}. 

The Hard Lefschetz theorem for the perverse filtration on  \(\cohog{*}{J_{q/p}}\) implies that cupping with $e_{2}^{\d-j}$ induces an isomorphism 
\begin{equation}\label{eq:Hard L}
\Gr_j^P \cohog{2i}{J_{q/p}}\isom\Gr^P_{2\delta-j}\cohog{2(\delta-j+i)}{J_{q/p}}.
\end{equation}

Suppose the equality in \eqref{F contains m} holds, then Conjecture \ref{conj:Gr m} holds, hence
\begin{eqnarray*}
\dim \textup{H}^{2i}(J_{q/p})&=&\sum_j \dim \Gr^P_j\textup{H}^{2i}(J_{q/p})\\
&=&\sum_j\dim \Gr^P_{2\delta-j}\textup{H}^{2(\delta-j+i)}(J_{q/p}) \quad \mbox{(by \eqref{eq:Hard L})}\\
&=& \sum_j\dim \Gr_{\mathfrak{m}}^{\delta-i}\mathcal{O}_{q/p}[2\delta-j] \quad \mbox{(by Conjecture \ref{conj:Gr m})}\\
&=&\dim \Gr_{\mathfrak{m}}^{\delta-i}\mathcal{O}_{q/p},
\end{eqnarray*}
which is the equality \eqref{H2i Grm}. 
 
Conversely, suppose \eqref{H2i Grm} holds for all $i$. We claim that \eqref{F contains m} must be an equality. Let
\begin{equation*}
a_{ij}=\dim\Gr^{F^{\ep}}_{\d-i}\cO_{q/p}[2\d-j], \quad b_{ij}=\dim \Gr^{i}_{\fm}\cO_{q/p}[j].
\end{equation*}
The equality \eqref{H2i Grm} implies that
\begin{equation}\label{col sum}
\sum_{j}a_{ij}=\sum_{j}b_{ij}.
\end{equation}Hard Lefschetz \eqref{eq:Hard L} implies $a_{ij}=a_{\d-j+i,2\d-j}$. The containment \eqref{F contains m} implies that for any $j$
\begin{eqnarray}\label{ab ineq}
&&\sum_{i'\ge i}b_{i',j}=\dim \fm^{i}\cap \cO_{q/p}[j]\le \dim F^{\ep}_{\le j-i}\cO_{q/p}[j]\\
\notag&=&\sum_{i'\ge i}\dim \Gr^{F^{\ep}}_{j-i'}\cO_{q/p}[j]=\sum_{i'\ge i}a_{\d-j+i',2\d-j}=\sum_{i'\ge i}a_{i',j}.
\end{eqnarray}
We prove by descreasing induction on $i$ that $a_{ij}=b_{ij}$ for all $j$. If $i>\d$, then $\sum_{j}a_{ij}=\dim \cohog{2i}{J_{q/p}}=0$ which implies $a_{ij}=0$ hence $b_{ij}=0$ by \eqref{col sum}. Now suppose $a_{i'j}=b_{i'j}$ holds for all $i'> i$ and all $j$, then \eqref{ab ineq} implies that $a_{ij}\ge b_{ij}$ for all $j$. Combined with \eqref{col sum} we see that $a_{ij}=b_{ij}$. This shows that $a_{ij}=b_{ij}$ for all $i,j\in\ZZ$. This forces the inequality in \eqref{ab ineq} to be an equality, which in turn implies that the containment \eqref{F contains m} is an equality. The proof of Theorem \ref{th:main} is complete.

\subsection{Conjectural flatness of $R$} Numerical evidence seems to support the following

\begin{conj}
The algebra $R$ is flat over $\QQ[\ep,s]$.
\end{conj}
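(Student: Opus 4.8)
We describe a strategy that would prove this conjecture. The plan is to reduce it to a single numerical bound on the most degenerate specialization of $R$, and then to attack that bound through the $\ep$-torsion of $\wt R$. Recall first what is already in hand: $R$ has no $\ep$-torsion by construction and no $s$-torsion by Proposition~\ref{p:eqcoh}; the proof of Proposition~\ref{p:eqcoh} moreover shows that $R_{\ep=1}$ is a free $\QQ[s]$-module of rank $N:=\frac{1}{p+q}\binom{p+q}{p}$ (since $\dim R_{\ep=1,s=1}=\dim R_{\ep=1,s=0}=\dim\cO_{q/p}=N$ by \eqref{RO} and Lemma~\ref{l:Oqp}, and $R_{\ep=1}/sR_{\ep=1}=\cO_{q/p}$ is finite dimensional), and $R_{s=1}=\eqcoh{*}{J_{q/p}}$ is a free $\QQ[\ep]$-module of the same rank $N$ by equivariant formality. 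Because $R$ is bigraded with $\ep$ in bidegree $(1,0)$ and $s$ in bidegree $(0,1)$, one has $R[\ep^{-1}]\cong R_{\ep=1}[\ep,\ep^{-1}]$, so $\Spec R\to\Spec\QQ[\ep,s]$ is flat over $\{\ep\neq0\}$; likewise the $s$-rescaling automorphisms identify $R_{\ep=0,s=c}$ with $R_{\ep=0,s=1}=\cohog{*}{J_{q/p}}$ for every $c\neq0$, a space of dimension $N$. Thus the only fibre not yet controlled is $R_{\ep=0,s=0}$, and semicontinuity gives $\dim_\QQ R_{\ep=0,s=0}\ge N$. Since $s$ is already a non-zero-divisor on $R$, the element $\ep$ is a non-zero-divisor on $R/sR$ if and only if $s$ is one on $R/\ep R$, and a short argument with graded Nakayama and the local flatness criterion then yields the equivalence: \emph{$R$ is flat over $\QQ[\ep,s]$ if and only if $\dim_\QQ R_{\ep=0,s=0}\le N$}. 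In particular, by Theorem~\ref{th:Gr m} this inequality would imply Conjecture~\ref{conj:Gr m}, and one expects it to be equivalent to $R_{\ep=0,s=0}\cong\cO_{q/p}$.

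The next step is to make $R_{\ep=0,s=0}$ explicit. Because $A(z)$ and $B(z)$ reduce to $z^p$ and $z^q$ at $\ep=0$, the whole expression \eqref{AB} is divisible by $\ep$, so $\wt R/\ep\wt R$ is the polynomial ring $\QQ[s,e_2,\dots,e_p,f_2,\dots,f_q]$ and hence $R_{\ep=0,s=0}=\QQ[e_2,\dots,e_p,f_2,\dots,f_q]/\bar T$, where $\bar T$ is the image modulo $(\ep,s)$ of the $\ep$-saturation $(F_0,\dots,F_{pq}):\ep^\infty$. Writing $F_d=\ep G_d$, each class $\bar G_d$ (the coefficient of $\ep$ in $F_d|_{s=0}$) already lies in $\bar T$; expanding \eqref{AB} to first order in $\ep$ one finds that these classes impose precisely $e_j=f_j$ for $2\le j\le\min(p,q)$ and $e_j=0$ for $\min(p,q)<j\le\max(p,q)$, so that $R_{\ep=0,s=0}$ is a quotient of $\QQ[e_2,\dots,e_{\min(p,q)}]$. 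The remaining, and essential, task is to show that the \emph{higher-order} $\ep$-torsion, namely the classes $(\sum_d c_d F_d)/\ep^k$ for combinations divisible by $\ep^k$, cuts this polynomial ring down to dimension exactly $N$ (and, conjecturally, reproduces the Beauville presentation of $\cO_{q/p}$).

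The main obstacle is exactly this: the $\ep$-torsion submodule of $\wt R$ has no manifest finite set of generators and there is no a priori bound on the power $k$ by which one must divide; equivalently, one must rule out that the scheme-theoretic closure $\Spec R=\overline{\Spec R[\ep^{-1}]}\subset\AA^{p+q}$ acquires extra dimension over the origin of $\AA^2_{\ep,s}$. I would try two complementary routes. First, a Gr\"obner/standard-basis analysis of the two-parameter ideal $(F_0,\dots,F_{pq})$ and its saturation, aiming to degenerate $R$ to a monomial ring whose bigraded Hilbert series matches that of a free $\QQ[\ep,s]$-module of rank $N$; the known edge specializations (free over $\QQ[\ep]$ and over $\QQ[s]$) should pin down the candidate leading-term ideal. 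Second, a geometric construction of $R$ as the equivariant cohomology of a smooth two-parameter family interpolating $J_{q/p}$ and the moduli of maps underlying $\cO_{q/p}$ --- the smooth model $\cJ(\cC/\cB')$ of Section~\ref{ssec:WeightFilt} equipped with the extra $\Gdil$-action is a natural candidate --- from which flatness would be automatic. Finally, since the displayed equivalence turns the conjecture into a finite computation for each $(p,q)$, it can be checked directly on a computer, which is presumably the source of the numerical evidence.
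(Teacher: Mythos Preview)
This statement is labeled a \emph{conjecture} in the paper and is not proved there; the paper offers only numerical evidence and the consequence (Theorem~\ref{th:Gr m}) that flatness of $R$ implies Conjecture~\ref{conj:Gr m}. There is therefore no proof to compare your attempt against.

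Your write-up is explicitly framed as a strategy, not a proof, and on that level it is sound and goes well beyond what the paper says. The reduction to the single inequality $\dim_{\QQ}R_{\ep=0,s=0}\le N$ is correct: $R$ is bigraded with $\ep,s$ in bidegrees $(1,0),(0,1)$, both are non-zero-divisors (by construction and Proposition~\ref{p:eqcoh}\eqref{no s torsion}), and in this graded setting flatness over $\QQ[\ep,s]$ is equivalent to $(\ep,s)$ being a regular sequence, hence to $s$ acting injectively on $R/\ep R$, hence (since $R_{\ep=0,s=1}\cong\cohog{*}{J_{q/p}}$ has dimension $N$) to $\dim R_{\ep=0,s=0}=N$. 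Your first-order computation at $s=0$ is also right (with the small caveat that when $p<q$ it is the $f_{j}$, not the $e_{j}$, that vanish for $p<j\le q$; either way one lands in $\QQ[e_{2},\dots,e_{\min(p,q)}]$).

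The genuine gap is exactly the one you name: you have not bounded $\dim R_{\ep=0,s=0}$, because that requires controlling arbitrarily high $\ep$-divisibility of combinations of the $F_{d}$, and neither of your two proposed routes (a Gr\"obner/standard-basis argument for the saturated ideal, or a geometric realisation of $R$ as equivariant cohomology of a two-parameter family) is carried out. In particular, the smooth model $\cJ(\cC/\cB')$ of Section~\ref{ssec:WeightFilt} carries only a one-parameter $\Gm(q/p)$-action, not an obvious $\Grot\times\Gdil$-action producing both $\ep$ and $s$, so the second route would need a genuinely new construction. As you note, for fixed $(p,q)$ the problem is a finite computation, consistent with the paper's remark about numerical evidence; but a uniform argument remains open.
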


If conjecture is true then the $\QQ[\ep,s]$-algebra $R$ is a bigraded algebra whose various specializations are related to $\cO_{q/p}$ and $\cohog{*}{J_{q/p}}$ in the following way (where $\leadsto$ denotes degeneration, or equivalently passing to the associated graded with respect to a certain filtration)
\begin{equation*}
\xymatrix{ R_{\ep=1,s=1}=\spcoh{*}{J_{q/p}}\ar@{~>}[d] \ar@{~>}[r] & R_{\ep=0, s=1}=\cohog{*}{J_{q/p}}\ar@{~>}[d] \\
R_{\ep=1,s=0}=\cO_{q/p}\ar@{~>}[r] & \Gr^{*}_{\fra}\cO_{q/p}\cong \Gr^{P}_{*}\cohog{*}{J_{q/p}}}
\end{equation*}

\sss{Proof of Theorem~\ref{th:Gr m}}
Examining the proof of Lemma~\ref{l:Gr Fs O}, the only place that caused the containment rather than equality is the use of \eqref{incl Rab}. If $R$ is flat over $\QQ[\ep,s]$, \eqref{incl Rab} is an equality and the argument of Lemma~\ref{l:Gr Fs O} then shows that $F^{\ep}_{\le i}\cO_{q/p}[j]=\fm^{j-i}\cap \cO_{q/p}[j]$ for $j\ge i\ge0$. For $j<i$ we have the trivial equality $F^{\ep}_{\le i}\cO_{q/p}[j]=\cO_{q/p}[j]$. Therefore 
\begin{equation*}
\Gr^{F^{\ep}}_{i}\cO_{q/p}[j]= \Gr^{j-i}_{\fm}\cO_{q/p}[j]
\end{equation*}
for all $i$ and $j$. Theorem~\ref{th:Gr m} then follows from \eqref{two graded}.

\section{Relation with \Cha and Hilbert scheme}
\label{sec:Hilb}

In this section we discuss another approach to Conjecture~\ref{conj:Gr m} in the case \(q=kp+1\) by making connections with the rational Cherednik algebras and the Hilbert scheme points on $\CC^{2}$.

\subsection{Rational \Cha}
\subsubsection{}
Let $\frh$ be the Cartan subaglebra of $\sl_p$ and $\frh^*$ be its dual. Let $W=S_{p}$ be the Weyl group.
The bigraded rational \Cha $\Hrat_\nu$ is, as a vector space, a tensor product
\begin{equation*}
\Hrat=\QQ[\epsilon]\otimes\Sym(\frh)\otimes\Sym(\frh^{*})\otimes\QQ[W]\otimes\QQ[\L_{\can}].
\end{equation*}
The bigrading is given by
\begin{eqnarray*}
&&\deg(\epsilon)=(2,0);\\
&&\deg(\eta)=(0,-1)\hspace{1cm}\forall\eta\in\frh;\\
&&\deg(\xi)=(2,1)\hspace{1cm}\forall\xi\in\frh^{*};\\
&&\deg(w)=(0,0)\hspace{1cm}\forall w\in W;
\end{eqnarray*}
The first and second components of the bigrading are called the {\em cohomological grading} and the {\em perverse grading}. The algebra structure is given by 
\begin{enumerate}
\item[(RC-1)] $\ep$ is central;
\item[(RC-2)] $\Sym(\frh),\Sym(\frh^{*})$ and $\QQ[W]$ are subalgebras;
\item[(RC-3)] For $\eta\in\frh, \xi\in\frh^{*}$ and $w\in W$, we have $w\cdot \eta=\leftexp{w}{\eta}\cdot w, w\cdot \xi=\leftexp{w}{\xi}\cdot w$ ;
\item[(RC-4)] For $\eta\in\frh,\xi\in\frh^{*}$, we have
\begin{equation*}
[\eta,\xi]=\jiao{\xi,\eta}\ep-\dfrac{\nu}{2}\left(\sum_{\alpha\in\Phi}\jiao{\xi,\alpha^\vee}\jiao{\alpha,\eta}r_{\alpha}\right)\ep,
\end{equation*}
where $\Phi$ is the set of roots in $\frh^{*}$, and $r_{\a}\in W$ is the reflection associate to a root $\a$. 
\end{enumerate}

The subalgebra $\Hrat_\nu^{sph}:=e\Hrat_\nu e$, $e=|W|^{-1}\sum_{w\in W} w$ is called spherical rational Cherednik algebra. By setting central element $\ep$ in $\Hrat_\nu$ to $1$ we obtain the algebra $\Hrat_{\nu,\epsilon=1}$ that is most commonly called the rational Cherednik algebra. Similarly we have the spherical version $\Hrat_{\nu,\ep=1}^{sph}$.

\subsubsection{}\label{sssection: perv}
 
The following theorem is a particular case of the main theorem in \cite{OY}.
\begin{theorem}\label{thm:L(triv)} Let $\nu=q/p>0$  and $(p,q)=1$. Then
\begin{enumerate}
\item There is a doubly-graded action of $\Hrat_{\nu}^{sph}$ on $\Gr^{P}_{*}\eqcoh{*}{\SG}$ under which $\ep$ acts as an equivariant parameter.
\item The $\Hrat_{\nu,\ep=1}^{sph}$-module $\Gr^{P}_{*}\spcoh{*}{\SG}$ is isomorphic to the irreducible spherical module $e\frL_{\nu}(\triv)$.
\end{enumerate}
\end{theorem}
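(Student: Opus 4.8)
Theorem~\ref{thm:L(triv)} is quoted here from \cite{OY}, so strictly speaking the ``proof'' is a pointer to that reference together with an explanation of how the pieces assembled in this paper feed into it; the plan below is how I would organize such an exposition.

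\smallskip

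\textbf{Overall strategy.} The plan is to reduce the statement to the three structural facts that have already been established in this excerpt, and to the representation-theoretic input of \cite{OY}. First I would recall that $\eqcoh{*}{\SG}\cong \eqcoh{*}{J_{q/p}}$ via \eqref{SGJ} and that, by Section~\ref{sss:paving}, this cohomology is pure, Tate, concentrated in even degrees and equivariantly formal; this makes $\spcoh{*}{\SG}=\eqcoh{*}{\SG}\otimes_{\QQ[\ep]}\QQ$ and lets us move freely between the equivariant, specialized, and associated-graded versions. Next I would recall that by Proposition~\ref{prp:perv_Chern} the perverse filtration coincides with the Chern filtration, and by Proposition~\ref{p:eqcoh} the Chern filtration on $\eqcoh{*}{\SG}$ is exactly the $F^{s}$-filtration coming from the second grading on the bigraded ring $R$. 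Thus $\Gr^{P}_{*}\eqcoh{*}{\SG}$ is a \emph{bigraded} ring with an explicit presentation, the first grading being cohomological and the second (perverse) grading being the associated-graded direction.

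\smallskip

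\textbf{Construction of the action (part (1)).} I would then transport the rational Cherednik algebra action of \cite{OY} along the identifications above. The mechanism in \cite{OY} is that the double affine Hecke algebra, or rather its graded/rational degeneration $\Hrat_{\nu}^{sph}$, acts on $\Gr^{P}_{*}\eqcoh{*}{\Sp_{q/p}}$ through the geometry of the parabolic Hitchin fibration (the creation operators $\xi\in\frh^{*}$ raising the perverse degree, the Lie-theoretic operators $\eta\in\frh$ lowering it, and the Weyl group $W=S_{p}$ acting by the symmetry of the $p$ parabolic line bundles $\cL_{1},\dots,\cL_{p}$). Restricting to $S_{p}$-invariants — which is precisely what $\pi^{*}_{\ep=1}:\spcoh{*}{\SG}\incl\spcoh{*}{\Sp_{q/p}}$ picks out, as used in the proof of \eqref{P strict} — yields the spherical subalgebra $\Hrat_{\nu}^{sph}=e\Hrat_{\nu}e$ acting on $\Gr^{P}_{*}\eqcoh{*}{\SG}$, with $\ep$ acting as the equivariant parameter by construction. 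One then checks that the bidegrees match: $\xi_{i}=c^{\Gm}_{1}(\cL_{i})$ has cohomological degree $2$ and perverse degree $1$, matching $\deg(\xi)=(2,1)$; $\ep$ has bidegree $(2,0)$; and the lowering operators have bidegree $(0,-1)$, so the action is doubly graded.

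\smallskip

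\textbf{Identification of the module (part (2)).} Finally, setting $\ep=1$, I would identify $\Gr^{P}_{*}\spcoh{*}{\SG}$ with $e\frL_{\nu}(\triv)$. The argument is: the vector $1\in\cohog{0}{\SG}$ is killed by all lowering operators $\eta\in\frh$ (it sits in perverse degree $0$, the bottom of the filtration), so it generates a quotient of the spherical standard (Verma) module $e M_{\nu}(\triv)$; irreducibility of $\Gr^{P}_{*}\spcoh{*}{\SG}$ as an $\Hrat_{\nu,\ep=1}^{sph}$-module is the content of \cite{OY}, forcing the quotient to be the simple module $e\frL_{\nu}(\triv)$. A sanity check is the dimension count: by \eqref{fixed pts} we have $\dim_{\QQ}\spcoh{*}{\SG}=\frac{1}{p+q}\binom{p+q}{p}$, which agrees with the known dimension of the finite-dimensional simple module $e\frL_{q/p}(\triv)$ when $(p,q)=1$ (this is the $(p,q)$ rational Catalan number, matching the classification of finite-dimensional modules over $\Hrat_{q/p,\ep=1}$). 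I expect the main obstacle — and the reason this is really a citation to \cite{OY} rather than a short self-contained proof — to be the verification that the geometrically defined operators satisfy the Cherednik relations (RC-1)--(RC-4), in particular the commutator relation (RC-4); this requires the full machinery of the global Springer action and the compatibility of the perverse filtration with the $\frh^{*}$-action coming from the Hitchin base, which is exactly what \cite{OY} establishes.
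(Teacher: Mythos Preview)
You correctly recognize that the paper treats this theorem as a citation to \cite{OY}; indeed the paper offers no proof beyond the sentence ``The following theorem is a particular case of the main theorem in \cite{OY}.''

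However, your proposed exposition inverts the logical dependencies. You invoke Proposition~\ref{prp:perv_Chern} (the equality $P=C$) and Proposition~\ref{p:eqcoh} as input toward understanding $\Gr^{P}_{*}\eqcoh{*}{\SG}$, but the proof of Proposition~\ref{prp:perv_Chern} itself \emph{uses} the irreducibility of $\spcoh{}{\Sp_{q/p}}$ as an $\Hrat$-module and the equality $C=P$ on the affine flag side (quoted there from \cite[Section~8]{OY} and \cite[Proposition 8.5.4]{OY}), which is precisely the non-spherical form of Theorem~\ref{thm:L(triv)}. So Proposition~\ref{prp:perv_Chern} sits downstream of Theorem~\ref{thm:L(triv)}, not upstream; using it here is circular. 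The argument in \cite{OY} constructs the $\Hrat$-action directly on $\Gr^{P}_{*}\eqcoh{*}{\Sp_{q/p}}$ from the geometry of the parabolic Hitchin fibration and proves irreducibility there, without needing any explicit ring presentation; the presentation via $R$ and the equality $P=C$ are \emph{consequences} developed in this paper, not inputs to \cite{OY}. Aside from this ordering issue, your description of the mechanism in \cite{OY} --- Chern-class raising operators in $\frh^{*}$, the $S_{p}$-Springer action, passage to invariants for the spherical subalgebra, the lowest-weight argument and the rational Catalan dimension check --- is a reasonable sketch of what happens there.
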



\subsubsection{} Unlike $\Hrat_{q/p}$ the algebra $\HrtE$ has only one grading, the perverse grading. Moreover, the perverse grading is the internal grading by the action of the element $h:=\frac{1}{2}\sum_i(\xi_{i}\eta_{i}+\eta_i\xi_i)$ (where $\{\xi_i\}$ is a basis of $\frh^{*}$ and $\{\eta_{i}\}$ is the dual basis of $\frh$). Respectively we 
obtain the decomposition of $\frL_{q/p,\ep=1}(triv)$ into the eigenspaces of $h$: $\frL_{q/p,\ep=1}(\triv)=\oplus_{j=-\delta}^\delta \frL_{q/p,\ep=1}(\triv)[j]$ (where $\d=(p-1)(q-1)/2$).

The module \(\frL_{q/p,\ep=1}(\triv)\) is an irreducible quotient of the Verma module of \(\Sym(\frh^{*})\). Respectively, the spherical module \(e\frL_{q/p,\ep=1}(\triv)\) is the quotient
of \(\Sym(\frh^{*})^W=\CC[e_2,\dots,e_p]\) by an ideal \(I^{\frH}_{q/p}\). It was shown by Gorsky \cite[Theorem 4.3]{Gor} that \(I^{\frH}_{q/p}=(g_{q+1},\dots,g_{q+p-1})=I_{\mathcal{O}}\). We conclude that there is a canonical isomorphism that shifts the grading by $\d$
\begin{equation}\label{Oqp eL}
\cO_{q/p}\cong e\frL_{q/p,\ep=1}(\triv).
\end{equation}

\subsection{Relation with the Hilbert scheme}
\label{sec:sheaves-hilbn_0cc2}

Another model for the finite dimensional representation \(\frL_{q/p,\ep=1}(\triv)\)  stems from the interpretation of the rational Cherednik algebra as a quantization of the Calogero-Moser
space \cite{EG}. The Calogero-Moser space is a hyper-K\"ahler twist of the balanced Hilbert scheme  of points on the plane \(\Hilb^p_0(\AA^2)\), a scheme over $\QQ$ with $\CC$-points:
\[ \Hilb_0^p(\AA^2)(\CC)=\{\textup{ideals } I\subset \CC[x,y]|\dim \CC[x,y]/I=p, | supp(\CC[x,y]/I)|=(0,0)\},\]
where \(|supp(\CC[x,y]/I)|\) stands for the total sum of coordinates of the $n$ points in the support of $\CC[x,y]/I$. Let $Z_{p}\subset \Hilb^{p}_{0}(\AA^{2})$ be the subscheme whose $\CC$ points consists of ideals \(I\) such that $supp(\CC[x,y]/I)$ is concentrated at $(0,0)$.

There are two \(\Gm\) actions on \(\AA^2\). The first is anti-diagonal \(\lambda\cdot_{a}(x,y)=(\lambda x,\lambda^{-1} y)\) and the second dilates the second coordinate:
\(\lambda\cdot_{d}(x,y)=(x,\lambda y)\). We  use notations \(\Gm^a\) and \(\Gm^d\) for these tori.
The Picard group of \(\Hilb_0^p(\AA^2)\) has an ample generator denoted \(\mathcal{O}(1)\). We denote its restriction to $Z_{p}$ also by $\cO(1)$.

In the work of Gordon and Stafford \cite{GS}, the relation between the \(\Gm^a\times \Gm^d\)-equivariant coherent sheaves on \(\Hilb_0^p(\AA^2)\) and modules over \(\HrtE^{sph}\) was established.
In particular it is shown in \cite[Theorem 5.11]{GS} that there is an isomorphism of \(\ZZ\)-graded vector spaces for $k\ge0$
\begin{equation}\label{eq:H0Hilb}
e\frL_{1/p+k,\ep=1}(\triv)[j]\simeq \textup{H}^0(Z_{p}, \mathcal{O}(k))[j],\end{equation}
where the grading on the LHS corresponds to the action of \(\Gm^a\) on the RHS and \(j=-\delta,\cdots,\delta\).

Please, notice that statement from \cite{GS} is about the associate graded \(\Gr^\Lambda e\frL_{1/p+k,\ep=1}(\triv)\)
of the module \(e\frL_{1/p+k,\ep=1}(\triv)\) with respect to "the good filtration". In the presence of "the good filtration" \cite{GS} strengthen \ref{eq:H0Hilb} to the equality of
doubly graded spaces. However, \cite{GS} does not provide an explicit description of "the good filtration". The grading in (\ref{eq:H0Hilb}) is compatible with "the good filtration"
(see \cite[section 5.8]{GS}) thus the specialization (\ref{eq:H0Hilb}) of the statement from \cite{GS} is valid.

We propose to extend the above statement to interpret the graded dimension of the RHS in \eqref{eq:H0Hilb} with respect to the action of \(\Gm^d\), this extension
should be closely related to "the good filtration" from \cite{GS}:

\begin{conj}\label{conj:H0Hilb}
For $k\ge0$ and $i\ge0$ there is an isomorphism of vector spaces
\[\Gr_{\mathfrak{m}}^{i} e\frL_{1/p+k,\ep=1}(\triv)\simeq \textup{H}^0(Z_{p},\mathcal{O}(k))\langle i\rangle, \]
where the RHS is the weight $i$ part of $\textup{H}^0(Z_{p},\mathcal{O}(k))$ under the action of \(\Gm^d\). 
\end{conj}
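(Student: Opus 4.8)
\medskip
\noindent\textbf{Towards a proof of Conjecture~\ref{conj:H0Hilb}.}
Fix $q=kp+1$, so $\nu=q/p=1/p+k$. The plan is to upgrade the Gordon--Stafford isomorphism \eqref{eq:H0Hilb} from one of $\ZZ$-graded vector spaces to one of \emph{bigraded} vector spaces, and then to recognize the resulting filtration on the Cherednik side as the $\fm$-adic one. Recall that \cite{GS} in fact establish, for an (unspecified) good filtration $\Lambda$ on $e\frL_{1/p+k,\ep=1}(\triv)$, an isomorphism of bigraded vector spaces $\Gr^{\Lambda}_{i}\,e\frL_{1/p+k,\ep=1}(\triv)[j]\cong\textup{H}^{0}(Z_{p},\mathcal{O}(k))\langle i\rangle[j]$, where $[j]$ records the internal $\ZZ$-grading ($h$-eigenspaces on the left, the $\Gm^{a}$-weight on the right) and $\langle i\rangle$ the $\Gm^{d}$-weight on the right. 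Passing through the identification \eqref{Oqp eL} of $e\frL_{1/p+k,\ep=1}(\triv)$ with $\cO_{q/p}$, Conjecture~\ref{conj:H0Hilb} becomes the statement that $\Lambda$ may be chosen to be the $\fm$-adic filtration on $\cO_{q/p}$. By Lemma~\ref{l:Gr Fs O} and Theorem~\ref{th:Gr m} this would follow from the flatness of the ring $R$ of Section~\ref{sss:R} over $\QQ[\ep,s]$, and by Theorem~\ref{th:main}(2) together with Corollary~\ref{c:equiv conj} it is tied to the dimension count \eqref{H2i Grm}.

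The first step is to make the Gordon--Stafford picture $\Gm^{a}\times\Gm^{d}$-equivariant. The torus $\Gm^{a}$ is responsible for the internal $\ZZ$-grading; under \cite{GS} it corresponds to the anti-diagonal rescaling of $\frh\oplus\frh^{*}$, hence to the $h$-grading on $e\frL$. The torus $\Gm^{d}$, dilating the $y$-coordinate of $\AA^{2}$, acts on $\Hilb^{p}(\AA^{2})$ and on the quantization underlying \cite{GS} by rescaling the quantization parameter, so after the specialization $\ep=1$ defining $\HrtE^{sph}$ it no longer acts by algebra automorphisms but only induces an increasing filtration on $e\frL_{1/p+k,\ep=1}(\triv)$. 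The content of this step is to check that this filtration is a good filtration in the sense of \cite{GS}; granting it, $\Gr^{\Lambda}$ of the module is identified, as a bigraded vector space, with $\bigoplus_{i}\textup{H}^{0}(Z_{p},\mathcal{O}(k))\langle i\rangle$ carrying its $\Gm^{a}$-grading.

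The second step is to transport $\Lambda$ to the presentation $\cO_{q/p}=\QQ[e_{2},\dots,e_{p}]/(g_{q+1},\dots,g_{q+p-1})$ of Proposition~\ref{l:Oqp} (see also \cite{Gor}) and to identify it. The bookkeeping through $R$ is suggestive here: since $\deg(e_{i})=(i-1,i)$ in the bigrading of $R$, the first $R$-grading of a monomial $e_{2}^{a_{2}}\cdots e_{p}^{a_{p}}$ of perverse degree $n=\sum_{i} i a_{i}$ equals $n-\sum_{i}a_{i}$, so the filtration measuring ``$\ep$-order'' --- which is what $\Gm^{d}$ controls --- assigns the \emph{same} weight to every $e_{i}$, exactly as the $\fm$-adic filtration does at the level of generators. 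What remains is to see that passing to the associated graded does not enlarge the defining ideal: one must show that the lowest-$\fm$-degree components of $g_{q+1},\dots,g_{q+p-1}$ again form a regular sequence in $\QQ[e_{2},\dots,e_{p}]$. This is exactly the flatness of $R$ over $\QQ[\ep,s]$, and is the main obstacle.

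To supply that input I see three routes, any one of which completes the argument. (i) Prove the flatness of $R$ directly, for instance by exhibiting a regular sequence among the generators $F_{0},\dots,F_{pq}$ of Section~\ref{sss:R}, equivalently by showing that the initial forms of $g_{q+1},\dots,g_{q+p-1}$ remain a regular sequence --- a concrete statement about a one-parameter degeneration of a complete intersection. (ii) Compute the bigraded dimensions $\dim\textup{H}^{0}(Z_{p},\mathcal{O}(k))\langle i\rangle[j]$ independently, by $\Gm^{a}\times\Gm^{d}$-localization on $Z_{p}$ or on $\Hilb^{p}(\AA^{2})$, and match the resulting generating function against the Poincar\'e polynomial of $\SG\cong J_{q/p}$ coming from \cite{LS}; by Corollary~\ref{c:equiv conj} this forces Conjecture~\ref{conj:Gr m}, and together with the bigraded \cite{GS} isomorphism it yields Conjecture~\ref{conj:H0Hilb}. (iii) Produce the $\fm$-adically refined character of $e\frL_{1/p+k,\ep=1}(\triv)$ from the realization of $\frL_{1/p+k,\ep=1}(\triv)$ as an irreducible quotient of a Verma module over $\Sym(\frh^{*})$, e.g.\ from a free resolution of $\cO_{q/p}$ over $\QQ[e_{2},\dots,e_{p}]$ compatible with the $\fm$-adic filtration; this is the ``character formula'' alluded to after Conjecture~\ref{conj:H0Hilb}. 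I expect route (ii) to be the most tractable; in all three cases the technical heart is the comparison, carried out in the previous paragraph, between the $\Gm^{d}$-filtration and the $\fm$-adic filtration.
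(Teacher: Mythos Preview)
The statement you are attempting to prove is a \emph{conjecture} in the paper; the paper contains no proof of it. What the paper does prove (Proposition~\ref{p:Hilb imply Conj}) is the implication ``Conjecture~\ref{conj:H0Hilb} $\Rightarrow$ Conjecture~\ref{conj:Gr m} for $q=kp+1$''. So there is no proof to compare against, and your proposal should be read as a strategy toward an open problem.

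As a strategy, your proposal correctly identifies the crux: the Gordon--Stafford isomorphism is bigraded for \emph{some} good filtration $\Lambda$, and the conjecture amounts to identifying $\Lambda$ with the $\fm$-adic filtration on $\cO_{q/p}$. You also correctly observe that this identification would follow from flatness of $R$ over $\QQ[\ep,s]$, which is itself an open conjecture in the paper. Routes (i) and (iii) thus reduce Conjecture~\ref{conj:H0Hilb} to other open problems of comparable difficulty; they are reasonable directions but not proofs.

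Route (ii), however, contains a genuine logical gap. The computation you describe --- matching the $\Gm^{d}$-character of $\textup{H}^{0}(Z_{p},\mathcal{O}(k))$ against the Poincar\'e polynomial of $J_{q/p}$ via localization, Haiman's vanishing, Mellit's shuffle theorem, and Gorsky--Mazin --- is exactly what the paper carries out in the proof of Proposition~\ref{p:Hilb imply Conj}. It yields $\dim\textup{H}^{0}(Z_{p},\mathcal{O}(k))\langle\delta-i\rangle=\dim\cohog{2i}{J_{q/p}}$. But Corollary~\ref{c:equiv conj} requires \eqref{H2i Grm}, namely $\dim\cohog{2i}{J_{q/p}}=\dim\Gr^{\delta-i}_{\fm}\cO_{q/p}$, and the quantity on the right involves $\Gr_{\fm}$, not $\textup{H}^{0}\langle\cdot\rangle$. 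Passing from one to the other is precisely Conjecture~\ref{conj:H0Hilb}. So route (ii) as written is circular: you use the conjecture to deduce Conjecture~\ref{conj:Gr m}, then invoke Conjecture~\ref{conj:Gr m} to re-derive the conjecture. The paper's Proposition~\ref{p:Hilb imply Conj} makes this one-way implication explicit, and nothing in the paper (or in your argument) supplies the reverse direction independently.
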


\begin{exam}
In the case \(p=3,q=4\) (hence $k=1$), the structure of the finite dimensional representation \(e\frL_{1/p+k,\ep=1}(\triv)\)  is discussed in details in \cite[Section 5.2]{GORS}
and illustrated with \cite[Figure 1]{GORS}. In particular, in this we have
\begin{equation}\label{eq:cat5}
  \sum_i t^i\dim\mathfrak{m}^i/\mathfrak{m}^{i+1} =1+2t+t^2+t^3.
  \end{equation}

The space \(\textup{H}^0(Z_p,\mathcal{O}(k))\) was studied in papers of Garsia and Haiman. If \(k=1\) then the \(\Gm^d\)-character of the space specializes to
the Catalan number at \(t=1\) \cite[Section 2]{H1} and  these characters were tabulated  in \cite[Appendix]{GH}. When \(p=3\) the above mentioned \(t\)-Catalan number
coincides with the RHS of (\ref{eq:cat5}).
\end{exam}

\begin{prop}\label{p:Hilb imply Conj}
Conjecture~\ref{conj:H0Hilb} implies Conjecture~\ref{th:Gr m} for \(q=1+kp\).
\end{prop}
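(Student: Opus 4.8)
The plan is to chase the isomorphisms we have already set up and reduce everything to the content of Conjecture~\ref{conj:H0Hilb}. Recall from \eqref{Oqp eL} that $\cO_{q/p}\cong e\frL_{q/p,\ep=1}(\triv)$ as graded rings (with a degree shift by $\d$), where the grading on the Cherednik side is the perverse/internal grading by $h$. Under this identification the maximal ideal $\fm\subset\cO_{q/p}$ goes to the augmentation ideal of $e\frL_{q/p,\ep=1}(\triv)$, so the $\fm$-adic filtration transports to a filtration on $e\frL_{q/p,\ep=1}(\triv)$. Therefore Conjecture~\ref{conj:H0Hilb}, for $q=1+kp$, gives
\begin{equation*}
\dim\Gr^{i}_{\fm}\cO_{q/p}=\dim\Gr^{i}_{\fm}e\frL_{1/p+k,\ep=1}(\triv)=\dim\cohog{0}{Z_{p},\cO(k)}\langle i\rangle
\end{equation*}
for all $i\ge 0$.

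By Corollary~\ref{c:equiv conj}, Conjecture~\ref{conj:Gr m} holds (for this $(p,q)$) as soon as \eqref{H2i Grm} holds for all $i$, i.e. as soon as
\begin{equation*}
\dim\cohog{2i}{J_{q/p}}=\dim\Gr^{\d-i}_{\fm}\cO_{q/p}
\end{equation*}
for all $i\ge0$, where $\d=(p-1)(q-1)/2$. So the remaining task is purely a dimension count: I need to produce, independently, a formula for $\dim\cohog{2i}{J_{q/p}}$ and match it against $\dim\cohog{0}{Z_{p},\cO(k)}\langle\d-i\rangle$ supplied by Conjecture~\ref{conj:H0Hilb}. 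For the left-hand side I would use the affine paving of $X_{q/p}\cong J_{q/p}$ from Section~\ref{sss:paving}: the Betti numbers of $J_{q/p}$ are the numbers of cells of each dimension, which are governed by the $(q/p)$-Dyck path / rational Catalan combinatorics indexing $\Sigma_{q/p}$, and by \eqref{fixed pts} these are exactly the combinatorics that also control the bigraded dimensions of the finite-dimensional representation $e\frL_{1/p+k,\ep=1}(\triv)$ — this is precisely the statement relating cells to the $h$-grading, which is contained in the results of \cite{OY} recalled in Theorem~\ref{thm:L(triv)}: $\Gr^{P}_{*}\spcoh{*}{\SG}\cong e\frL_{q/p,\ep=1}(\triv)$. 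Concretely, Theorem~\ref{thm:L(triv)} identifies $\bigoplus_{i}\cohog{2i}{J_{q/p}}$ (as a graded vector space, via the perverse-graded equivariant cohomology specialized at $\ep=1$, together with the paving which forces everything to be pure Tate) with $e\frL_{q/p,\ep=1}(\triv)$ equipped with its cohomological grading. Thus $\dim\cohog{2i}{J_{q/p}}=\dim e\frL_{q/p,\ep=1}(\triv)$ in cohomological degree $2i$, which by the general structure of the finite-dimensional module equals the dimension of the $(\d-i)$-graded piece of $e\frL_{1/p+k,\ep=1}(\triv)$ for the internal grading (the cohomological and perverse gradings on the finite-dimensional module are linked by $\deg_{\textup{coh}}=2\deg_{\textup{perv}}+\textup{const}$ up to the Hard Lefschetz symmetry used in the proof of Theorem~\ref{th:main}). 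Feeding in Conjecture~\ref{conj:H0Hilb} then yields \eqref{H2i Grm}, and Corollary~\ref{c:equiv conj} finishes.

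I expect the main obstacle to be bookkeeping the three gradings consistently: the cohomological grading on $\cohog{*}{J_{q/p}}$, the perverse grading (which by Proposition~\ref{prp:perv_Chern} equals the Chern grading and is the internal $h$-grading on the Cherednik module), and the $\fm$-adic filtration on $\cO_{q/p}$ — plus the degree shift by $\d$ in \eqref{Oqp eL} and the Hard Lefschetz symmetry \eqref{eq:Hard L}. One must be careful that the $\Gm^{d}$-weight on $\cohog{0}{Z_{p},\cO(k)}$ matches the $\fm$-adic grading and not the other grading; this is exactly the subtle point flagged in the text as ``closely related to the good filtration'' of \cite{GS}. Once the dictionary between $\Gm^{a}$-weight $\leftrightarrow$ internal $h$-grading and $\Gm^{d}$-weight $\leftrightarrow$ $\fm$-adic grading is pinned down using \eqref{eq:H0Hilb} and Conjecture~\ref{conj:H0Hilb}, the rest is a mechanical comparison of Hilbert series together with an application of Corollary~\ref{c:equiv conj}.
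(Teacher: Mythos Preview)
Your reduction to showing \eqref{H2i Grm} via Corollary~\ref{c:equiv conj}, and your use of Conjecture~\ref{conj:H0Hilb} together with \eqref{Oqp eL} to rewrite $\dim\Gr^{\d-i}_{\fm}\cO_{q/p}$ as $\dim\cohog{0}{Z_{p},\cO(k)}\langle\d-i\rangle$, are both correct. The gap is in the other half: you have not actually established
\[
\dim\cohog{2i}{J_{q/p}}=\dim\cohog{0}{Z_{p},\cO(k)}\langle\d-i\rangle,
\]
and the argument you sketch for it does not go through. Theorem~\ref{thm:L(triv)} identifies $\Gr^{P}_{*}\spcoh{*}{\SG}$ with $e\frL_{q/p,\ep=1}(\triv)$ as a module graded by the \emph{perverse} (internal $h$) degree; after specializing $\ep=1$ the cohomological grading is gone, and on the Cherednik side only the perverse grading survives (as the paper notes explicitly). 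So there is no ``cohomological grading on $e\frL$'' to compare with, and your assertion that $\deg_{\textup{coh}}=2\deg_{\textup{perv}}+\textup{const}$ is false: for a fixed cohomological degree $2i$ the perverse degree ranges over an interval, not a single value. Hard Lefschetz \eqref{eq:Hard L} relates graded pieces symmetrically but does not collapse this range.

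What is missing is an independent computation of each side of the displayed equality. The paper supplies exactly this: on the Jacobian side it invokes Gorsky--Mazin \cite[Theorem~3]{GM} to write $\sum_{i}\dim\cohog{2i}{J_{1/p+k}}t^{i}$ as a sum over Young diagrams inside the $(p,1+kp)$-triangle; on the Hilbert scheme side it uses Mellit's proof of the rational shuffle conjecture \cite{Mel} together with Haiman's vanishing and localization \cite[Theorem~3.1, Proposition~3.2]{H} to compute the $\Gm^{d}$-character of $\cohog{0}{Z_{p},\cO(k)}$ by the same combinatorics. Neither of these inputs follows from Theorem~\ref{thm:L(triv)} or from the affine paving alone; your allusion to ``rational Catalan combinatorics'' is the right intuition, but the actual matching requires these external results.
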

\begin{proof}
  It is shown by Gorsky and Mazin \cite[Theorem 3]{GM} that
   \[ \sum \dim \textup{H}^{2i}(J_{1/p+k}) t^i=\sum_{D}t^{|D|},\]
   where the sum is over the Young diagrams \(D\) that fit inside the triangle with vertices \((0,0)\), \((0,p)\), \((1+pk,0)\).
   
   The rational shuffle conjecture proved by Mellit \cite{Mel} combined with Haiman's results on the vanishing of higher cohomology \cite[Theorem 3.1]{H} and the
   localization formula for \(\mathcal{O}_Z\otimes \mathcal{O}(k)\) \cite[Proposition 3.2]{H} imply that 
   \[\sum_{D}t^{\delta-|D|}=\chi_t^{\Gm^d}(\textup{H}^0(Z_{p}, \mathcal{O}(k))).\]
Combining these facts and Conjecture \ref{conj:H0Hilb} we get
\begin{eqnarray*}
  \dim \cohog{2i}{J_{1/p+k}}&=&\dim\textup{H}^0(Z_{p},\mathcal{O}(k))\langle \delta-i\rangle \\
  &=&
  \dim \Gr^{\delta-i}_{\fm}e\frL_{1/p+k,\ep=1}(\triv)\stackrel{\eqref{Oqp eL}}{=}\dim \Gr^{\delta-i}_{\fm}\cO_{1/p+k}.
\end{eqnarray*}
Corollary \eqref{c:equiv conj} then implies that Conjecture~\ref{conj:Gr m} holds.

%
%
\end{proof}

\section{More conjectures}
\label{sec:conjectures}

We propose some conjectures that extend the results of the paper to more general curve singularities.

\subsection{Toric curves}
\label{sec:toric-curves}

A natural generalization of the curve \(C_{q/p}\) is a toric curve which we describe below. Let \(\Gamma\subset\ZZ_{\ge 0}\) be a sub-monoid, then we define \(\mathcal{O}_{\Gamma}\subset \CC[t]\)
to the ring with the basis \(t^i, i\in \Gamma\) and \(C^0_\Gamma=\Spec(\mathcal{O}_\Gamma)\).  We denote by \(C_\Gamma\) the projective curve which is the one-point compactification of \(C^0_\Gamma\) with a smooth point at infinity.

The curve \(C_{q/p}\) is isomorphic to the curve \(C_\Gamma\) with \(\Gamma=\langle p,q\rangle\). In general, the curve \(C_\Gamma\) is smooth outside of the origin $t=0$ and the singularity at the origin
has the tangent space of dimension equal to the minimal number of generators of \(\Gamma\). In particular, the singularity of \(C_\Gamma\) is planar iff \(\Gamma=\langle p,q\rangle\).

Similarly to the case of \(C_{q/p}\) one defines the one dimensional moduli space \(M_1((\mathbb{P}^1,\infty),(C_\Gamma,\infty))\) and its zero-dimensional rigidification
\(M^{rig}_1(\mathbb{P}^1,C_\Gamma)\). Our conjecture is concerned with the dimension of the Artinian ring \(\mathcal{O}_\Gamma:=\mathcal{O}(M^{rig}_1(\mathbb{P}^1,C_\Gamma))\):
\begin{conj}\label{conj:toric} We have
  \[\dim \mathcal{O}_\Gamma=\chi(J_\Gamma),\]
  where \(J_\Gamma\) is the moduli space of torsion free rank one coherent sheaves on $C_{\Gamma}$ with the same Euler characteristic as the structure sheaf $\cO_{C_{\Gamma}}$.
\end{conj}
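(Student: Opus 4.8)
The plan is to deduce Conjecture~\ref{conj:toric} from the two equalities
\[
\chi(J_{\Gamma})=\#\Sigma_{\Gamma}\qquad\textup{and}\qquad \dim_{\QQ}\mathcal{O}_{\Gamma}=\#\Sigma_{\Gamma},
\]
where, writing $a_{1}<\cdots<a_{k}$ for a minimal generating set of $\Gamma$, we let $\Sigma_{\Gamma}$ be the finite set of $\Gamma$-submodules $\Delta\subset\ZZ$ satisfying $\#(\Delta\setminus\Gamma)=\#(\Gamma\setminus\Delta)$ (the balancedness condition appearing in Section~\ref{FixedPoints} when $\Gamma=\langle p,q\rangle$). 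The first equality is the geometric input and should be routine; the second is the crux.

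\textbf{The geometric side.} Exactly as in Section~\ref{Xqp}, restricting a rank one torsion-free sheaf on $C_{\Gamma}$ to the formal neighbourhood of the singular point and trivializing it away from that point identifies $J_{\Gamma}$ with the moduli space $X_{\Gamma}$ of $\CC\tl{t^{a_{1}},\dots,t^{a_{k}}}$-lattices in $\CC\lr{t}$ of a fixed volume (equivalently, torsion-free rank one modules over $\mathcal{O}_{\Gamma}^{\wedge}$); this argument does not use planarity. The scaling torus $\Gm$ on $t$ --- which is the torus acting on $C_{\Gamma}$ in the statement --- acts on $X_{\Gamma}$, its fixed points are precisely the monomial lattices $M_{\Delta}$ spanned topologically by $\{t^{n}:n\in\Delta\}$ for $\Gamma$-submodules $\Delta$, and the volume condition cuts these down to $\Delta\in\Sigma_{\Gamma}$; in particular $J_{\Gamma}^{\Gm}$ is finite. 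Since $J_{\Gamma}$ is projective, $\chi_{c}$ is additive over strata, and $\chi_{c}(\Gm)=0$, one gets $\chi(J_{\Gamma})=\chi(J_{\Gamma}^{\Gm})=\#\Sigma_{\Gamma}$; following the planar case \cite{LS} and the work of Piontkowski one expects moreover that the Bialynicki--Birula cells are affine spaces, so that $\cohog{*}{J_{\Gamma}}$ is even and pure, but this refinement is not needed here.

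\textbf{The algebraic side.} By the verbatim analogue of Proposition~\ref{l:Oqp}(1), a point of $M^{rig}_1(\PP^{1},C_{\Gamma})$ is a tuple of monic polynomials $P_{i}(t)=t^{a_{i}}+(\deg\le a_{i}-2)$, $1\le i\le k$, and $\mathcal{O}_{\Gamma}$ is the quotient of the polynomial ring on the coefficients of the $P_{i}$ by the ideal $I_{\Gamma}$ generated by the graded pieces of the binomials $\prod_{i}P_{i}^{u_{i}}-\prod_{i}P_{i}^{v_{i}}$, one for each relation $\sum u_{i}a_{i}=\sum v_{i}a_{i}$ in the toric ideal of $\Gamma$. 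When $k=2$ this ideal is a complete intersection, and Beauville's computation (Proposition~\ref{l:Oqp}(2)) together with \cite{LS} gives $\dim_{\QQ}\mathcal{O}_{\Gamma}=\#\Sigma_{\Gamma}$; but for $k\ge3$ the ideal $I_{\Gamma}$ is no longer a complete intersection, and this is precisely where a new idea is required. The route I would pursue is to construct a $\QQ[\epsilon,s]$-flat bigraded deformation $R_{\Gamma}$ --- the analogue of the ring $R$ of Section~\ref{sss:R}, built from the characteristic data of the $\Gm$-equivariant tautological bundles on $X_{\Gamma}$ --- whose specializations recover $\mathcal{O}_{\Gamma}$ (at $\epsilon=1,s=0$) and the equivariant cohomology $\eqcoh{*}{J_{\Gamma}}$ (at $s=1$); flatness would then force $\dim_{\QQ}\mathcal{O}_{\Gamma}=\dim_{\QQ}\cohog{*}{J_{\Gamma}}=\chi(J_{\Gamma})$. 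Concretely, one would hope that a suitable weight degeneration of the presentation above has associated graded ring with an explicit monomial $\QQ$-basis indexed by the ``staircases'' attached to balanced $\Gamma$-modules, thereby supplying the missing generalization of Beauville's count. An alternative, in the spirit of Section~\ref{sec:Hilb}, would be to realize $\mathcal{O}_{\Gamma}$ (up to the degeneration of Theorem~\ref{th:main}) inside the sections of a line bundle on a $\Gm$-fixed locus in a Hilbert scheme attached to the multiplicity $a_{1}$, and to compute the relevant graded dimension by a shuffle-type formula.

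\textbf{The main obstacle.} The heart of the matter --- and what keeps this a conjecture --- is the loss of the complete-intersection property of $\mathcal{O}_{\Gamma}$: one must identify the correct combinatorial substitute for the rational-Catalan count $\tfrac{1}{p+q}\binom{p+q}{p}$, namely the identity $\dim_{\QQ}\mathcal{O}_{\Gamma}=\#\Sigma_{\Gamma}$, either by exhibiting the flat deformation $R_{\Gamma}$ (equivalently, a standard monomial basis of $\mathcal{O}_{\Gamma}$) or by a direct Hilbert-series computation. Once this is in hand, the same deformation should also match the $\Gm$-grading on $\mathcal{O}_{\Gamma}$ with a perverse-type grading on $\cohog{*}{J_{\Gamma}}$, yielding the analogue of Conjecture~\ref{conj:Gr m} for $C_{\Gamma}$ --- which for $\Gamma=\langle p,q\rangle$ is Conjecture~\ref{conj:Gr m} itself.
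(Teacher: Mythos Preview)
The paper does not prove this statement: it is explicitly labeled a conjecture, and the only evidence offered is computational verification ``for many examples of non-planar toric curves.'' There is therefore no proof in the paper to compare against.

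Your proposal is also not a proof, as you yourself acknowledge. The reduction on the geometric side, $\chi(J_{\Gamma})=\#\Sigma_{\Gamma}$ via torus localization, is correct and is in fact stated verbatim in the paper immediately following the conjecture. So your proposal amounts to reformulating the conjecture as the purely algebraic identity $\dim_{\QQ}\mathcal{O}_{\Gamma}=\#\Sigma_{\Gamma}$, which is its entire content.

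The two routes you sketch toward this identity are both speculative, and each inherits serious obstacles from the paper itself. The flat-deformation approach would require an analogue of the ring $R$ of Section~\ref{sss:R}; but the paper's construction of $R$ and the identification $R_{s=1}\cong\eqcoh{*}{J_{q/p}}$ rest on the generation theorem (Theorem~\ref{th:gen}), whose proof goes through Hitchin moduli spaces and is specific to planar singularities. For non-planar $\Gamma$ there is no such global model, and even the affine paving of $J_{\Gamma}$ (hence purity and evenness of its cohomology) is not known---the paper mentions van Straten's conjecture on vanishing of odd cohomology as open. The Hilbert-scheme route of Section~\ref{sec:Hilb} is even more restrictive: it applies only to $q=kp+1$ and relies on rational Cherednik algebra representation theory, which has no evident analogue for a general numerical semigroup. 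In short, the gap you name is genuine and is exactly why the statement remains a conjecture.
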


The moduli space \(J_\Gamma\) has a natural \(\Gm\)-action by scaling $t$. By localizing to the $\Gm$-fixed points, 
the Euler characteristics on the RHS of the above conjecture is equal to the number of
\(\Gamma\)-submodules \(M\) of \(\ZZ\) which have the same relative size as \(\Gamma\). 
 The ring on the LHS of the conjecture
has an explicit description by generators and relations and one can compute its dimension with the help of software. In particular, we verified the above conjecture for many examples
of non-planar toric curves. Let us also point out that the conjecture is similar to \cite[Theorem 2]{FGS} where curves with planar singularities were studied.

\subsection{Planar curves}
\label{sec:toric-curves-1}

Another class of  curves that generalizes \(C_{q/p}\) is  the
class of complex curve with rational normalization and only one singularity that is planar. Given such a curve one can define the moduli
spaces  \(M_1((\mathbb{P}^1,\infty),(C,\infty))\),
\(M^{rig}_1(\mathbb{P}^1,C)\) (now over $\CC$); we discuss the construction of these spaces for particular cases of the curves below.

The topology of the compactified Jacobians of the curve from the above-mentioned class was studied by Piontkowski in \cite{Pion}. In particular in \cite{Pion} the Betti numbers of the compactified Jacobians of large series of non-toric curves were computed.
Let us also point out that it was conjectured by van Straten that the odd degree cohomology of the compactified Jacobians of such curves should vanish.

Using software we attempted to match the Betti numbers from \cite{Pion} with some algebraic invariants of the Artinian coordinate ring \(\mathcal{O}(M^{rig}_1(\mathbb{P}^1,C))\).
We discovered that the following weaker version of our Conjecture~\ref{th:Gr m} (or rather its equivalent form \eqref{H2i Grm}) survives numerical tests.

\begin{conj}\label{conj:planar} Let \(C\) be a curve with rational normailaztion and unique singularity that is planar.  Then for any \(i\in\ZZ\) we have
\begin{equation}\label{ineq Grm}
\dim \textup{H}^{2i}(\cJ(C))\ge \dim \Gr^{\d-i}_{\fm}\mathcal{O}(M^{rig}_1(\mathbb{P}^1,C))
\end{equation} 
where \(\mathfrak{m}\) is the maximal ideal in the Artinian ring $\mathcal{O}(M^{rig}_1(\mathbb{P}^1,C))$ and \(\delta=\dim \cJ(C)\).
\end{conj}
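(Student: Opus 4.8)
The plan is to reproduce, for a general integral planar curve $C$ with rational normalization $\nu\colon\widetilde{C}=\PP^1\to C$, the chain of comparisons carried out here for $C_{q/p}$, and to keep from it only the one-sided estimate \eqref{ineq Grm}. First I would choose a deformation $\cC\to\cB$ of $C$ satisfying the conditions of Section~\ref{sss:perv} (such deformations exist for every curve with planar singularities), so that the perverse filtration $P_{\le\bullet}\cohog{*}{\cJ(C)}$ is defined via \eqref{define perv}. Realizing $\cJ(C)$, together with a good deformation, as (an open part of) a Hitchin fibre --- which can be arranged at least when $C$ embeds in a surface fibred over $\PP^1$, as in the examples of \cite{Pion} --- one applies the Hausel--Thaddeus/Markman generation result in the Deligne--Mumford form of Theorem~\ref{th:Hit gen} to conclude that $\cohog{*}{\cJ(C)}$ is generated as a $\QQ$-algebra by the K\"unneth components of the Chern classes of the universal torsion-free sheaf. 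This is the analogue of Theorem~\ref{th:gen}: it produces a surjection from a polynomial ring $\QQ[e_2,e_3,\dots]$ onto $\cohog{*}{\cJ(C)}$, the generator lying in $\cohog{2i-2}{\cJ(C)}$ being indexed by $i$.

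Next I would introduce the \emph{Chern filtration} $C_{\le\bullet}$ on $\cohog{*}{\cJ(C)}$, assigning the generator above weight $i$, and try to identify it with the perverse filtration, exactly as in Proposition~\ref{prp:perv_Chern}. Granting this identification, the perverse filtration is multiplicative (Theorem~\ref{th:perv mult}) and $\Gr^P_*\cohog{*}{\cJ(C)}$ is a bigraded ring. The moduli space $M^{rig}_1(\PP^1,C)$ and its Artinian coordinate ring $\cO=\cO(M^{rig}_1(\PP^1,C))$ make sense verbatim, and sending the map-coefficients $e_i\in\cO$ to the corresponding Chern classes should yield a surjective ring homomorphism $\Psi\colon\cO\twoheadrightarrow\Gr^P_*\cohog{*}{\cJ(C)}$ carrying $\fm^{k}$ into perverse codegree $\ge k$ --- the surjective-only shadow of Conjecture~\ref{conj:Gr m}, whose formal half, the containment $\fm^{j-i}\cap\cO[j]\subset F_{\le i}\cO[j]$, is obtained exactly as in Lemma~\ref{l:Gr Fs O}. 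Feeding $\Psi$, this containment, and the Hard Lefschetz isomorphisms $\Gr^P_j\cohog{2i}{\cJ(C)}\isom\Gr^P_{2\d-j}\cohog{2(\d-j+i)}{\cJ(C)}$ (valid since $\cJ(C)$ is projective and the total space of $\cJ(\cC/\cB)\to\cB$ is smooth) into the dimension-counting bookkeeping of the proof of Theorem~\ref{th:main}(2) --- now retaining only the inequalities it forces, not the equalities --- should produce $\dim\cohog{2i}{\cJ(C)}\ge\dim\Gr^{\d-i}_{\fm}\cO$.

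The crux, and where I expect the real difficulty, is the identification of the perverse filtration with the Chern filtration. For $C_{q/p}$ this was supplied by \cite{OY} via the rational Cherednik algebra action on the equivariant cohomology of the affine Springer fibre $\Sp_{q/p}$ --- a structure with no counterpart for a general planar germ, which is neither toric nor equipped with a canonical $\Gm$-action, so there is not even a two-variable ring $R$ to interpolate with. Plausible substitutes are: (a) a local-to-global argument, proving the statement for the local compactified Jacobian of the singularity, where a DAHA-type symmetry is expected for all planar germs, and propagating it by the decomposition/support theorems for families of compactified Jacobians of Maulik--Yun and Migliorini--Shende--Viviani; (b) a Hodge-theoretic route, deriving the required strictness from a $P=W$-type statement for the ambient Hitchin system; or (c) a direct analysis of $\cJ(C)$ near the point $[\nu_*\cO_{\widetilde{C}}]$ coming from the normalization, whose deformation theory should be governed by $\cO$, matched against the smooth model of $\cJ(C)$ built in Section~\ref{ssec:WeightFilt}. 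I would also expect that van Straten's conjectured vanishing of $\cohog{\mathrm{odd}}{\cJ(C)}$, and a flatness statement for the interpolating family analogous to the hypothesis of Theorem~\ref{th:Gr m}, are needed en route, and that the full inequality \eqref{ineq Grm} ultimately rests on the strict compatibility --- not merely the containment --- of the perverse and Chern filtrations, which remains the hard point.
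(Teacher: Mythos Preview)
This statement is a \emph{conjecture} in the paper, not a theorem: the paper offers no proof, only numerical evidence obtained by comparing Piontkowski's Betti number tables with computer calculations of $\dim\Gr^{\d-i}_{\fm}\cO(M^{rig}_1(\PP^1,C))$ in specific examples. Indeed, the paper explicitly notes that even in the toric case $C=C_{q/p}$ the inequality \eqref{ineq Grm} is not established; Theorem~\ref{th:main}(2) only shows that \emph{equality} in \eqref{ineq Grm} is equivalent to Conjecture~\ref{conj:Gr m}, which is itself open. So there is no ``paper's own proof'' to compare to.

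As for the substance of your strategy, the central step is wrong in a way that actually contradicts the conjecture. You propose a surjective ring homomorphism $\Psi\colon \cO\twoheadrightarrow \Gr^{P}_{*}\cohog{*}{\cJ(C)}$. But surjectivity would force $\dim\cO\ge \dim\cohog{*}{\cJ(C)}$, whereas the paper's own non-toric examples (the tables for $C_{4,2q,s}$) show $\chi(\cJ(C))>\dim\cO$ strictly; summing \eqref{ineq Grm} over $i$ gives precisely $\dim\cohog{*}{\cJ(C)}\ge\dim\cO$, with strict inequality expected outside the toric case. So $\Psi$ cannot exist as a surjection, and there is no reason the relations cutting out $\cO$ should hold among the Chern classes in $\cohog{*}{\cJ(C)}$. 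In the $C_{q/p}$ case the link between $\cO_{q/p}$ and $\cohog{*}{J_{q/p}}$ is not a map from one to the other but a common bigraded degeneration via the two-parameter ring $R$ of Section~\ref{sss:R}; that interpolation depends essentially on the $\Gm$-symmetry of the curve, and you correctly observe that there is no candidate replacement for general planar $C$.

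Your identification of the perverse/Chern comparison as the crux is accurate, and your list of possible substitutes (local DAHA-type symmetry, $P=W$, deformation theory near $\nu_{*}\cO_{\PP^{1}}$) is reasonable as a research program, but none of it constitutes a proof. The honest summary is that the statement remains open and your outline does not close the gap; the step you flag as ``the hard point'' is in fact not known, and the surjection you rely on earlier is false in the generality claimed.
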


As we proved in Theorem \ref{th:main}(2), in the case \(C=C_{q/p}\),  that \eqref{ineq Grm} become an equality is equivalent to Conjecture~\ref{th:Gr m}. 

On the other hand in all non-toric case that we analyzed the inequality \eqref{ineq Grm} is strict for some $i$, in particular in these cases we see the inequality
\(\chi(\cJ(C))>\dim \mathcal{O}(M^{rig}_1(\mathbb{P}^1,C)) \). Let us give more details on the construction of the moduli space \(M^{rig}_1(\mathbb{P}^1,C)\) that we used in our
experiments.

%
%
%

\subsubsection{An example} Here we treat the simplest family of non-toric curves studied in \cite{Pion}, these curves are one point compactifications of the curves \(C^0_{4,2q,s}\) which have parametrization:
\[t\mapsto (t^4,t^{2q}+t^s),\]
where \(s>2q>4\)  and  \(q,s\) are odd.

The curve has a natural compactification \(C_{4,2q,s}\subset \mathbb{P}(4,s,1)\) with parametrization \(\phi_0(t_0:t_1)=(t_1^4:t_1^s+t_0^{s-2q}t_1^{2q}:t_0)\).
The image of \(\phi_0\) is the curve defined by the equation \(F_{q,s}(X_0,X_1,X_2)\) and it avoids the singularities of \(\mathbb{P}(4,s,1)\).
Hence the moduli space \(M^{rig}_1(\mathbb{P}^1,C_{4,2q,s})\) parametrizes maps:
\[\phi_{e,f}(t_0:t_1)=(t_1^4+\sum_{i=1}^4 e_i t_0^i t_1^{4-i}:t_1^s+\sum_{i=1}^s f_i t_0^i t_1^{s-i}:t_0),\]
such that \(f_{s-2q}=1\), \(4f_1=se_{1}\) (the rigidifying conditions) and \(F_{q,s}(\phi_{e,f}(t_0:t_1))=0\). 

Let us denote \(\dim \Gr_{\mathfrak{m}}^i\mathcal{O}(M^{rig}_1(\mathbb{P}^1,C_{4,2q,s}))\) as \(b^{fake}_{2(\delta-i)}\). We summarize the outcome of our computational experiments
in the tables below. The Betti numbers for the compactified Jacobians are taken from \cite{Pion}.

\medskip

\(q=3,s=7\):
\begin{center}
\begin{tabular}{l|l|l|l|l|l|l|l|l|l}
  \(  i\)         &0&1&2&3&4&5&6&7&8\\ \hline
  \(b_{2(\delta-i)}\)       &1&3&4&4&4&3&2&1&1\\
  \(b_{2(\delta-i)}^{fake}\) &1&3&4&4&4&2&1&0&0  
\end{tabular}
\end{center}

\(q=3,s=9\):
\begin{center}
\begin{tabular}{l|l|l|l|l|l|l|l|l|l|l}
  \(  i\)         &0&1&2&3&4&5&6&7&8&9\\ \hline
  \(b_{2(\delta-i)}\)       &1&3&4&4&4&4&3&2&1&1\\
  \(b_{2(\delta-i)}^{fake}\) &1&3&4&4&4&4&2&1&0&0  
\end{tabular}

\end{center}

\end{document}